\newtheorem{cor}[subsubsection]{Corollary}
\newtheorem{lem}[subsubsection]{Lemma}
\newtheorem{prop}[subsubsection]{Proposition}
\newtheorem{conj}[subsubsection]{Conjecture}
\newtheorem{thm}[subsubsection]{Theorem}
\theoremstyle{definition}
\theoremstyle{remark}
\newtheorem{rem}[subsubsection]{Remark}
\newcommand{\thmref}[1]{Theorem~\ref{#1}}
\newcommand{\secref}[1]{Sect.~\ref{#1}}
\newcommand{\lemref}[1]{Lemma~\ref{#1}}
\newcommand{\propref}[1]{Proposition~\ref{#1}}
\newcommand{\corref}[1]{Corollary~\ref{#1}}
\newcommand{\conjref}[1]{Conjecture~\ref{#1}}
\numberwithin{equation}{section}
\newcommand{\nc}{\newcommand}
\nc{\renc}{\renewcommand}
\nc{\ssec}{\subsection}
\nc{\sssec}{\subsubsection}
\nc{\on}{\operatorname}
\nc\ol{\overline}
\nc\wt{\widetilde}
\nc\tboxtimes{\wt{\boxtimes}}
\nc\tstar{\wt{\star}}
\nc{\alp}{a}
\nc{\ZZ}{{\mathbb Z}}
\nc{\NN}{{\mathbb N}}
\nc{\OO}{{\mathbb O}}
\renc{\SS}{{\mathbb S}}
\nc{\DD}{{\mathbb D}}
\nc{\GG}{{\mathbb G}}
\nc{\Fq}{{\mathbb F}_q}
\nc{\Fqb}{\ol{{\mathbb F}_q}}
\nc{\Ql}{\ol{{\mathbb Q}_\ell}}
\nc{\id}{\text{id}}
\nc\X{\mathcal X}
\nc{\Hom}{\on{Hom}}
\nc{\Lie}{\on{Lie}}
\nc{\Loc}{\on{Loc}}
\nc{\Pic}{\on{Pic}}
\nc{\Bun}{\on{Bun}}
\nc{\IC}{\on{IC}}
\nc{\Aut}{\on{Aut}}
\nc{\rk}{\on{rk}}
\nc{\Sh}{\on{Sh}}
\nc{\Perv}{\on{Perv}}
\nc{\pos}{{\on{pos}}}
\nc{\Conv}{\on{Conv}}
\nc{\Sph}{\on{Sph}}
\nc{\Sym}{\on{Sym}}
\nc{\BunBb}{\overline{\Bun}_B}
\nc{\BunNb}{\overline{\Bun}_N}
\nc{\BunTb}{\overline{\Bun}_T}
\nc{\BunBbm}{\overline{\Bun}_{B^-}}
\nc{\BunBbel}{\overline{\Bun}_{B,el}}
\nc{\BunBbmel}{\overline{\Bun}_{B^-,el}}
\nc{\Buno}{\overset{o}{\Bun}}
\nc{\BunPb}{{\overline{\Bun}_P}}
\nc{\BunBM}{\Bun_{B(M)}}
\nc{\BunBMb}{\overline{\Bun}_{B(M)}}
\nc{\BunPbw}{{\widetilde{\Bun}_P}}
\nc{\BunBP}{\widetilde{\Bun}_{B,P}}
\nc{\GUb}{\overline{G/U}}
\nc{\GUPb}{\overline{G/U(P)}}
\nc{\Hhom}{\underline{\on{Hom}}}
\nc\syminfty{\on{Sym}^{\infty}}
\nc\lal{\ol{\kappa_x}}
\nc\xl{\ol{x}}
\nc\thl{\ol{\theta}}
\nc\nul{\ol{\nu}}
\nc\mul{\ol{\mu}}
\nc{\oX}{\overset{o}{X}{}}
\nc{\hl}{\overset{\leftarrow}h{}}
\nc{\hr}{\overset{\rightarrow}h{}}
\nc{\M}{{\mathcal M}}
\nc{\N}{{\mathcal N}}
\nc{\F}{{\mathcal F}}
\nc{\D}{{\mathcal D}}
\nc{\Q}{{\mathcal Q}}
\nc{\Y}{{\mathcal Y}}
\nc{\G}{{\mathcal G}}
\nc{\E}{{\mathcal E}}
\nc{\CalC}{{\mathcal C}}
\nc\Dh{\widehat{\D}}
\nc{\C}{{\mathcal C}}
\nc{\K}{{\mathcal K}}
\renewcommand{\H}{{\mathcal H}}
\nc{\T}{{\mathcal T}}
\nc{\V}{{\mathcal V}}
\renc{\P}{{\mathcal P}}
\nc{\A}{{\mathcal A}}
\nc{\B}{{\mathcal B}}
\nc{\U}{{\mathcal U}}
\nc{\Gr}{{\on{Gr}}}
\nc{\frn}{{\check{\mathfrak u}(P)}}
\nc{\fC}{\mathfrak C}
\nc{\p}{\mathfrak p}
\nc{\q}{\mathfrak q}
\nc\f{{\mathfrak f}}
\nc{\qo}{{\mathfrak q}}
\nc{\po}{{\mathfrak p}}
\nc{\s}{{\mathfrak s}}
\nc\w{\text{w}}
\renewcommand{\mod}{{\on{-mod}}}
\nc\Spec{\on{Spec}}
\nc\Mod{\on{Mod}}
\nc{\tw}{\widetilde{\mathfrak t}}
\nc{\pw}{\widetilde{\mathfrak p}}
\nc{\qw}{\widetilde{\mathfrak q}}
\nc{\jw}{\widetilde j}
\nc{\grb}{\overline{\Gr}}
\nc{\I}{\mathcal I}
\nc{\kappach}{{\check\kappa_x}}
\nc{\Lambdach}{{\check\Lambda}{}}
\nc{\much}{{\check\mu}}
\nc{\omegach}{{\check\omega}}
\nc{\nuch}{{\check\nu}}
\nc{\etach}{{\check\eta}}
\nc{\alphach}{{\checka}}
\nc{\oblvtach}{{\check\oblvta}}
\nc{\pich}{{\check\pi}}
\nc{\ch}{{\check h}}
\nc{\Hb}{\overline{\H}}
\nc{\BA}{{\mathbb{A}}}
\nc{\BC}{{\mathbb{C}}}
\nc{\BE}{{\mathbb{E}}}
\nc{\BF}{{\mathbb{F}}}
\nc{\BG}{{\mathbb{G}}}
\nc{\BM}{{\mathbb{M}}}
\nc{\BO}{{\mathbb{O}}}
\nc{\BD}{{\mathbb{D}}}
\nc{\BN}{{\mathbb{N}}}
\nc{\BP}{{\mathbb{P}}}
\nc{\BQ}{{\mathbb{Q}}}
\nc{\BR}{{\mathbb{R}}}
\nc{\BZ}{{\mathbb{Z}}}
\nc{\BS}{{\mathbb{S}}}
\nc{\CA}{{\mathcal{A}}}
\nc{\CB}{{\mathcal{B}}}
\nc{\CE}{{\mathcal{E}}}
\nc{\CF}{{\mathcal{F}}}
\nc{\CG}{{\mathcal{G}}}
\nc{\CH}{{\mathcal{H}}}
\nc{\CL}{{\mathcal{L}}}
\nc{\CC}{{\mathcal{C}}}
\nc{\CM}{{\mathcal{M}}}
\nc{\CN}{{\mathcal{N}}}
\nc{\CK}{{\mathcal{K}}}
\nc{\CO}{{\mathcal{O}}}
\nc{\CP}{{\mathcal{P}}}
\nc{\CQ}{{\mathcal{Q}}}
\nc{\CR}{{\mathcal{R}}}
\nc{\CS}{{\mathcal{S}}}
\nc{\CT}{{\mathcal{T}}}
\nc{\CU}{{\mathcal{U}}}
\nc{\CV}{{\mathcal{V}}}
\nc{\CW}{{\mathcal{W}}}
\nc{\CX}{{\mathcal{X}}}
\nc{\CY}{{\mathcal{Y}}}
\nc{\CZ}{{\mathcal{Z}}}
\nc{\CI}{{\mathcal{I}}}
\nc{\CJ}{{\mathcal{J}}}
\nc{\csM}{{\check{\mathcal A}}{}}
\nc{\oM}{{\overset{\circ}{\mathcal M}}{}}
\nc{\obM}{{\overset{\circ}{\mathbf M}}{}}
\nc{\oCA}{{\overset{\circ}{\mathcal A}}{}}
\nc{\obA}{{\overset{\circ}{\mathbf A}}{}}
\nc{\ooM}{{\overset{\circ}{M}}{}}
\nc{\osM}{{\overset{\circ}{\mathsf M}}{}}
\nc{\vM}{{\overset{\bullet}{\mathcal M}}{}}
\nc{\nM}{{\underset{\bullet}{\mathcal M}}{}}
\nc{\oD}{{\overset{\circ}{\mathcal D}}{}}
\nc{\obD}{{\overset{\circ}{\mathbf D}}{}}
\nc{\oA}{{\overset{\circ}{\mathbb A}}{}}
\nc{\op}{{\overset{\bullet}{\mathbf p}}{}}
\nc{\cp}{{\overset{\circ}{\mathbf p}}{}}
\nc{\oU}{{\overset{\bullet}{\mathcal U}}{}}
\nc{\oZ}{{\overset{\circ}{\mathcal Z}}{}}
\nc{\ofZ}{{\overset{\circ}{\mathfrak Z}}{}}
\nc{\oF}{{\overset{\circ}{\fF}}}
\nc{\fa}{{\mathfrak{a}}}
\nc{\fb}{{\mathfrak{b}}}
\nc{\fd}{{\mathfrak{d}}}
\nc{\ff}{{\mathfrak{f}}}
\nc{\fg}{{\mathfrak{g}}}
\nc{\fgl}{{\mathfrak{gl}}}
\nc{\fh}{{\mathfrak{h}}}
\nc{\fj}{{\mathfrak{j}}}
\nc{\fl}{{\mathfrak{l}}}
\nc{\fm}{{\mathfrak{m}}}
\nc{\fn}{{\mathfrak{n}}}
\nc{\fu}{{\mathfrak{u}}}
\nc{\fp}{{\mathfrak{p}}}
\nc{\fr}{{\mathfrak{r}}}
\nc{\fs}{{\mathfrak{s}}}
\nc{\ft}{{\mathfrak{t}}}
\nc{\fz}{{\mathfrak{z}}}
\nc{\fsl}{{\mathfrak{sl}}}
\nc{\hsl}{{\widehat{\mathfrak{sl}}}}
\nc{\hgl}{{\widehat{\mathfrak{gl}}}}
\nc{\hg}{{\widehat{\mathfrak{g}}}}
\nc{\chg}{{\widehat{\mathfrak{g}}}{}^\vee}
\nc{\hn}{{\widehat{\mathfrak{n}}}}
\nc{\chn}{{\widehat{\mathfrak{n}}}{}^\vee}
\nc{\fA}{{\mathfrak{A}}}
\nc{\fB}{{\mathfrak{B}}}
\nc{\fD}{{\mathfrak{D}}}
\nc{\fE}{{\mathfrak{E}}}
\nc{\fF}{{\mathfrak{F}}}
\nc{\fG}{{\mathfrak{G}}}
\nc{\fK}{{\mathfrak{K}}}
\nc{\fL}{{\mathfrak{L}}}
\nc{\fM}{{\mathfrak{M}}}
\nc{\fN}{{\mathfrak{N}}}
\nc{\fP}{{\mathfrak{P}}}
\nc{\fU}{{\mathfrak{U}}}
\nc{\fV}{{\mathfrak{V}}}
\nc{\fZ}{{\mathfrak{Z}}}
\nc{\bb}{{\mathbf{b}}}
\nc{\bc}{{\mathbf{c}}}
\nc{\bd}{{\mathbf{d}}}
\nc{\bbf}{{\mathbf{f}}}
\nc{\be}{{\mathbf{e}}}
\nc{\bi}{{\mathbf{i}}}
\nc{\bj}{{\mathbf{j}}}
\nc{\bn}{{\mathbf{n}}}
\nc{\bp}{{\mathbf{p}}}
\nc{\bq}{{\mathbf{q}}}
\nc{\bu}{{\mathbf{u}}}
\nc{\bv}{{\mathbf{v}}}
\nc{\bx}{{\mathbf{x}}}
\nc{\bs}{{\mathbf{s}}}
\nc{\by}{{\mathbf{y}}}
\nc{\bw}{{\mathbf{w}}}
\nc{\bA}{{\mathbf{A}}}
\nc{\bK}{{\mathbf{K}}}
\nc{\bB}{{\mathbf{B}}}
\nc{\bF}{{\mathbf{F}}}
\nc{\bC}{{\mathbf{C}}}
\nc{\bG}{{\mathbf{G}}}
\nc{\bD}{{\mathbf{D}}}
\nc{\bE}{{\mathbf{E}}}
\nc{\bH}{{\mathbf{H}}}
\nc{\bO}{{\mathbf{O}}}
\nc{\bI}{{\mathbf{I}}}
\nc{\bM}{{\mathbf{M}}}
\nc{\bN}{{\mathbf{N}}}
\nc{\bV}{{\mathbf{V}}}
\nc{\bW}{{\mathbf{W}}}
\nc{\bX}{{\mathbf{X}}}
\nc{\bZ}{{\mathbf{Z}}}
\nc{\bS}{{\mathbf{S}}}
\nc{\sA}{{\mathsf{A}}}
\nc{\sB}{{\mathsf{B}}}
\nc{\sC}{{\mathsf{C}}}
\nc{\sD}{{\mathsf{D}}}
\nc{\sF}{{\mathsf{F}}}
\nc{\sK}{{\mathsf{K}}}
\nc{\sM}{{\mathsf{M}}}
\nc{\sO}{{\mathsf{O}}}
\nc{\sW}{{\mathsf{W}}}
\nc{\sQ}{{\mathsf{Q}}}
\nc{\sP}{{\mathsf{P}}}
\nc{\sZ}{{\mathsf{Z}}}
\nc{\sr}{{\mathsf{r}}}
\nc{\bk}{{\mathsf{k}}}
\nc{\sg}{{\mathsf{g}}}
\nc{\sff}{{\mathsf{f}}}
\nc{\sfb}{{\mathsf{b}}}
\nc{\sfc}{{\mathsf{c}}}
\nc{\sd}{{\mathsf{d}}}
\nc{\BK}{{\bar{K}}}
\nc{\tA}{{\widetilde{\mathbf{A}}}}
\nc{\tB}{{\widetilde{\mathcal{B}}}}
\nc{\tg}{{\widetilde{\mathfrak{g}}}}
\nc{\tG}{{\widetilde{G}}}
\nc{\TM}{{\widetilde{\mathbb{M}}}{}}
\nc{\tO}{{\widetilde{\mathsf{O}}}{}}
\nc{\tU}{{\widetilde{\mathfrak{U}}}{}}
\nc{\TZ}{{\tilde{Z}}}
\nc{\tx}{{\tilde{x}}}
\nc{\tbv}{{\tilde{\bv}}}
\nc{\tfP}{{\widetilde{\mathfrak{P}}}{}}
\nc{\tz}{{\tilde{\zeta}}}
\nc{\tmu}{{\tilde{\mu}}}
\nc{\urho}{\underline{\pi}}
\nc{\uB}{\underline{B}}
\nc{\uC}{{\underline{\mathbb{C}}}}
\nc{\ui}{\underline{i}}
\nc{\uj}{\underline{j}}
\nc{\ofP}{{\overline{\mathfrak{P}}}}
\nc{\oB}{{\overline{\mathcal{B}}}}
\nc{\og}{{\overline{\mathfrak{g}}}}
\nc{\oI}{{\overline{I}}}
\nc{\eps}{\varepsilon}
\nc{\hrho}{{\hat{\pi}}}
\nc{\one}{{\mathbf{1}}}
\nc{\two}{{\mathbf{t}}}
\nc{\Rep}{{\mathop{\operatorname{\rm Rep}}}}
\nc{\Tot}{{\mathop{\operatorname{\rm Tot}}}}
\nc{\Ker}{{\mathop{\operatorname{\rm Ker}}}}
\nc{\Hilb}{{\mathop{\operatorname{\rm Hilb}}}}
\nc{\End}{{\mathop{\operatorname{\rm End}}}}
\nc{\Ext}{{\mathop{\operatorname{\rm Ext}}}}
\nc{\CHom}{{\mathop{\operatorname{{\mathcal{H}}\it om}}}}
\nc{\GL}{{\mathop{\operatorname{\rm GL}}}}
\nc{\gr}{{\mathop{\operatorname{\rm gr}}}}
\nc{\Id}{{\mathop{\operatorname{\rm Id}}}}
\nc{\de}{{\mathop{\operatorname{\rm def}}}}
\nc{\length}{{\mathop{\operatorname{\rm length}}}}
\nc{\supp}{{\mathop{\operatorname{\rm supp}}}}
\nc{\Cliff}{{\mathsf{Cliff}}}
\nc{\Fl}{\on{Fl}}
\nc{\Fib}{{\mathsf{Fib}}}
\nc{\Coh}{{\mathsf{Coh}}}
\nc{\QCoh}{{\on{QCoh}}}
\nc{\IndCoh}{{\on{IndCoh}}}
\nc{\FCoh}{{\mathsf{FCoh}}}
\nc{\reg}{{\text{\rm reg}}}
\nc{\cplus}{{\mathbf{C}_+}}
\nc{\cminus}{{\mathbf{C}_-}}
\nc{\cthree}{{\mathbf{C}_*}}
\nc{\Qbar}{{\bar{Q}}}
\nc\Eis{\on{Eis}}
\nc\Eisb{\ol\Eis{}}
\nc\Eisr{\on{Eis}^{rat}{}}
\nc\wh{\widehat}
\nc{\Def}{\on{Def_{\check{\fb}}(E)}}
\nc{\barZ}{\overline{Z}{}}
\nc{\barbarZ}{\overline{\barZ}{}}
\nc{\barpi}{\overline\iota}
\nc{\barbarpi}{\overline\barpi}
\nc{\barpip}{\overline\iota{}^+}
\nc{\barpim}{\overline\iota{}^-}
\nc{\fq}{\mathfrak q}
\nc{\fqb}{\ol{\fq}{}}
\nc{\fpb}{\ol{\fp}{}}
\nc{\fpr}{{\fp^{rat}}{}}
\nc{\fqr}{{\fq^{rat}}{}}
\nc{\hattimes}{\wh\otimes}
\nc{\bh}{{\bar{h}}}
\nc{\bOmega}{{\overline{\Omega(\check \fn)}}}
\nc{\seq}[1]{\stackrel{#1}{\sim}}
\nc{\cT}{{\check{T}}}
\nc{\cG}{{\check{G}}}
\nc{\cM}{{\check{M}}}
\nc{\cB}{{\check{B}}}
\nc{\ct}{{\check{\mathfrak t}}}
\nc{\cg}{{\check{\fg}}}
\nc{\cb}{{\check{\fb}}}
\nc{\cn}{{\check{\fn}}}
\nc{\cLambda}{{\check\Lambda}}
\nc{\cla}{{\check\kappa_x}}
\nc{\cmu}{{\check\mu}}
\nc{\cnu}{{\check\nu}}
\nc{\ceta}{{\check\eta}}
\nc{\DefbE}{{\on{Def}_{\cB}(E_\cT)}}
\nc{\imathb}{{\ol{\imath}}}
\nc{\rlr}{\overset{\longrightarrow}{\underset{\longrightarrow}\longleftarrow}}
\nc{\KG}{K\backslash G}
\nc{\comult}{{co\text{-}mult}}
\nc{\counit}{{co\text{-}unit}}
\nc{\uHom}{{\underline{\Maps}}}
\nc{\dgSch}{\on{Sch}}
\nc{\Sch}{\on{Sch}}
\nc{\affdgSch}{\on{Sch}^{\on{aff}}}
\nc{\affSch}{\on{Sch}^{\on{aff}}}
\nc{\Groupoids}{\on{Grpd}}
\nc{\inftygroup}{\on{Spc}}
\nc{\inftyCat}{\infty\on{-Cat}}
\nc{\StinftyCat}{\inftyCat^{\on{St}}}
\nc{\MoninftyCat}{\infty\on{-Cat}^{\on{Mon}}}
\nc{\SymMoninftyCat}{\infty\on{-Cat}^{\on{SymMon}}}
\nc{\SymMonStinftyCat}{\on{DGCat}^{\on{SymMon}}}
\nc{\MonStinftyCat}{\on{DGCat}^{\on{Mon}}}
\nc{\inftystack}{\on{Stk}}
\nc{\inftystackalg}{Stk^{1\text{-}alg}}
\nc{\inftyprestack}{\on{PreStk}}
\nc{\inftydgnearstack}{\on{NearStk}}
\nc{\inftydgstack}{\on{Stk}}
\nc{\inftydgstackalg}{DGStk^{1\text{-}alg}}
\nc{\inftydgprestack}{\on{PreStk}}
\nc{\dgindSch}{\on{indSch}}
\nc{\indSch}{{}^{\on{cl}}\!\on{indSch}}
\nc{\infSch}{\on{infSch}}
\nc{\dr}{{\on{dR}}}
\nc{\mmod}{{\on{-}\!{\mathbf{mod}}}}
\nc{\starr}{\text{\dh}}
\nc{\Spectra}{\on{Spectra}}
\nc{\Crys}{\on{Crys}}
\nc{\oblv}{{\mathbf{oblv}}}
\nc{\ind}{{\mathbf{ind}}}
\nc{\CMaps}{{\mathcal Maps}}
\nc{\Maps}{\on{Maps}}
\nc{\bMaps}{\mathbf{Maps}}
\nc{\BMaps}{\ul{\on{Maps}}}
\nc{\Grid}{\on{Grid}}
\nc{\hGrid}{\on{Grid}^{\geq\,\on{dgnl}}}
\nc{\Diag}{\on{Diag}}
\nc{\bDelta}{\mathbf{\Delta}}
\nc{\tCateg}{(\infty\on{-2)-Cat}}
\nc{\ul}{\underline}
\nc{\Seg}{\on{Seq}}
\nc{\biSeg}{\on{bi-Seq}}
\nc{\triSeg}{\on{tri-Seq}}
\nc{\quadSeg}{\on{quad-Seq}}
\nc{\nSeg}{\on{n-Seq}}
\nc{\Segm}{\on{Seg}^{\on{mkd}}}
\nc{\fLm}{\fL^{\on{mkd}}}
\nc{\inftyCatm}{\inftyCat^{\on{mkd}}}
\nc{\Blocks}{\mathbf{Blocks}}
\nc{\Snakes}{\mathbf{Snakes}}
\nc{\bifL}{\on{bi-}\!\fL}
\nc{\Sets}{\on{Sets}}
\nc{\Ran}{\on{Ran}}
\nc{\Vect}{\on{Vect}}
\nc{\Shv}{\on{Shv}}
\nc{\unn}{\mathbf{union}}
\nc{\Spc}{\on{Spc}}
\nc{\ppart}{(\!(t)\!)}
\nc{\qqart}{[\![t]\!]}
\nc{\Dmod}{\on{D-mod}}
\nc{\cD}{\mathcal D}
\nc{\ocD}{\overset{\circ}{\cD}}
\nc{\sfp}{\mathsf{p}}
\nc{\sfq}{\mathsf{q}}
\nc{\DGCat}{\on{DGCat}}
\renc{\det}{\on{det}}
\begin{document}

\title[Parameters for metaplectic Langlands theory]
{Parameters and duality for the  \\  metaplectic geometric Langlands theory}

\author{D. Gaitsgory and S.~Lysenko}

\dedicatory{For Sasha Beilinson} 

\date{\today}

\begin{abstract}

This is a corrected version of the paper, and it differs substantially from the original one.

\medskip

We introduce the space of parameters for the metaplectic Langlands theory
as \emph{factorization gerbes} on the affine Grassmannian, and develop metaplectic
Langlands duality in the incarnation of the metaplectic geometric Satake functor. 

\medskip

We formulate
a conjecture in the context of the global metaplectic Langlands theory, which is a metaplectic
version of the ``vanishing theorem" of \cite[Theorem 4.5.2]{Ga5}.
\end{abstract}

\maketitle

\section*{Introduction}

\ssec{What is this paper about?}

The goal of this paper is to provide a summary of the metaplectic Langlands theory. Our main objectives
are: 

\smallskip

\noindent--Description of the set (rather, space) of parameters for the metaplectic Langlands theory;

\smallskip

\noindent--Construction of the \emph{metaplectic Langlands dual} (see \secref{sss:L dual} for what we mean by this);

\smallskip

\noindent--The statement of the \emph{metaplectic geometric Satake}.

\sssec{The metaplectic setting}

Let $\bF$ be a local field and $G$ an algebraic group over $\bF$. The classical representation theory of locally compact
groups studies (smooth) representations of the group $G(\bF)$ on vector spaces over another field $E$.  Suppose now
that we are given a central extension 
\begin{equation} \label{e:metap ext}
1\to E^\times \to \wt{G(\bF)}\to G(\bF)\to 1.
\end{equation}

We can then study representations of $\wt{G(\bF)}$ on which the central $E^\times$ acts by the tautological character. We will refer
to \eqref{e:metap ext} as a \emph{local metaplectic extension} of $G(\bF)$, and to the above category of representations as
\emph{metaplectic representations} of $G(\bF)$ corresponding to the extension \eqref{e:metap ext}.

\medskip

Let now $\bF$ be a global field, and let $\BA_\bF$ be the corresponding ring of ad\`eles.  Let us be given a central extension
\begin{equation} \label{e:metap ext global}
1\to E^\times \to \wt{G(\BA_\bF)}\to G(\BA_\bF)\to 1,
\end{equation}
equipped with a splitting over $G(\bF)\hookrightarrow G(\BA_\bF)$. 

\medskip

We can then study the space of $E$-valued functions on the quotient $\wt{G(\BA_\bF)}/G(\bF)$, on which the 
central $E^\times$ acts by the tautological character. We will refer
to \eqref{e:metap ext global} as a \emph{global metaplectic extension} of $G(\bF)$, and to the above space of functions
as \emph{metaplectic automorphic functions} on $G(\bF)$ corresponding to the extension \eqref{e:metap ext global}.

\medskip

There has been a renewed interest in the study of metaplectic representations and metaplectic automorphic functions,
e.g., by B.Brubaker--D.Bump--S.Friedberg, P.McNamara, W.T.Gan--F.Gao. 

\medskip

M.~Weissman has initiated a program of constructing the L-groups corresponding to metaplectic extensions, to be used in the
formulation of the Langlands program in the metaplectic setting, see \cite{We}.  

\sssec{Parameters for metaplectic extensions}

In order to construct metaplectic extensions, in both the local and global settings, one starts with a datum of
algebro-geometric nature. Namely, one usually takes as an input what we call a \emph{Brylinski-Deligne datum}, by which we mean 
a central extension 
\begin{equation} \label{e:K2 ext}
1\to (K_2)_{\on{Zar}}\to \wt{G} \to G\to 1,
\end{equation}
of sheaves of groups on the big Zariski site of $\bF$, where $(K_2)_{\on{Zar}}$ is the sheafification of the presheaf of abelian groups that assigns 
to an affine scheme $S=\Spec(A)$ the group $K_2(A)$.

\medskip

For a local field $\bF$, let ${\mathbf f}$ denote its residue field and let us choose a homomorphism 
\begin{equation} 
{\mathbf f}^\times \to E^\times. 
\end{equation}
Then taking
the group of $\bF$-points of $\wt{G}$ and pushing out with respect to
$$K_2(\bF)\overset{\on{symbol}}\longrightarrow {\mathbf f}^\times \to E^\times,$$
we obtain a central extension \eqref{e:metap ext}. A similar procedure applies also in the global setting. 

\sssec{The geometric theory}

Let $k$ be a ground field and let $G$ be a reductive group over $k$. 

\medskip

In the local geometric Langlands theory one considers the loop group $G\ppart$ along with its action on various spaces, such as the affine
Grassmannian $\Gr_G=G\ppart/G\qqart$. Specfically one studies the behavior of categories of 
sheaves\footnote{See \secref{ss:sheaves} for what we mean by the category of sheaves.} on such spaces
with respect to this action. 

\medskip

In the global geometric Langlands theory one considers a smooth proper curve $X$, and one studies the stack $\Bun_G$ that classifies 
principal $G$-bundles on $X$. The main object of investigation is the category of sheaves on $\Bun_G$. 

\medskip

There are multiple ways in which the local and global theories interact. For example, given a ($k$-rational) point $x\in X$, 
and identifying the local ring $\CO_x$ of $X$ at $x$ with $k\qqart$, we have the map
\begin{equation} \label{e:Gr to Bun}
\Gr_G\to \Bun_G,
\end{equation} 
where we interpret $\Gr_G$ as the moduli space of principal $G$-bundles on $X$, trivialized over $X-x$.

\sssec{The setting of metaplectic geometric Langlands theory}

Let $E$ denote the field of coefficients of the sheaf theory that we consider. Recall (see \secref{sss:twist category sheaves}) that if $\CY$
is a space\footnote{By a ``space" we mean a scheme, stack, ind-scheme, or more generally a \emph{prestack},
see \secref{ss:prestacks} for what the latter word means.} and $\CG$ is a $E^\times$-gerbe on $\CY$, we can twist
the category of sheaves on $\CY$, and obtain a new category, denoted
$$\Shv_\CG(\CY).$$

\medskip

In the local metaplectic Langlands theory, the input datum (which is an analog of a central extension \eqref{e:metap ext}) is an $E^\times$-gerbe 
over the loop group $G\ppart$ that behaves \emph{multiplicatively}, i.e., one that is compatible with the group-law on $G\ppart$.

\medskip

Similarly, whenever we consider an action of $G\ppart$ on $\CY$, we equip $\CY$ with $E^\times$-gerbe that is compatible
with the given multiplicative gerbe on $G\ppart$. In this case we say that the category $\Shv_\CG(\CY)$ carries a \emph{twisted} 
action of $G\ppart$, where the parameter of the twist is our gerbe on $G\ppart$. 

\medskip

In the global setting we consider a gerbe $\CG$ over $\Bun_G$, and the corresponding category $\Shv_\CG(\Bun_G)$ of twisted 
sheaves.  

\medskip

Now, if we want to consider the local vs. global interaction, we need a compatibility structure on our gerbes. For example,
we need that for every point $x\in X$, the pullback along \eqref{e:Gr to Bun} of the given gerbe on $\Bun_G$ be a gerbe 
compatible with some given multiplicative gerbe on $G\ppart$. 

\medskip

So, it is natural to seek an algebro-geometric datum, akin to \eqref{e:K2 ext}, that would provide such a compatible family of gerbes.

\sssec{Geometric metaplectic datum}

It turns out that such a datum (let us call it ``the geometric metaplectic datum") is not difficult to describe, see \secref{sss:on Gr} below.
It amounts to the datum of a \emph{factorization gerbe} with respect to $E^\times$ on the 
\emph{affine Grassmannian}\footnote{Here the affine Grassmannian
appears in its factorization (a.k.a, Beilinson-Drinfeld) incarnation. I.e., it is a prestack mapping to the Ran space of $X$, rather than $G\ppart/G\qqart$,
which corresponds to a particular point of $X$.} $\Gr_G$ of the group $G$. 

\medskip

In a way, this answer is more elementary than \eqref{e:K2 ext} in that we are
dealing with \'etale cohomology rather than $K$-theory. 

\medskip

Moreover, in the original metaplectic setting, if the global field $\bF$ 
is the function field corresponding to the curve $X$ over a finite ground field $k$, a geometric metaplectic datum gives rise directly
to an extension \eqref{e:metap ext global}. 

\medskip

Finally, a Brylinski-Deligne datum (i.e.,  an extension \eqref{e:K2 ext}) and a choice of a character $k^\times\to E^\times$
gives rise to a geometric metaplectic datum, see \secref{ss:lines}.

\medskip

Thus, we could venture into saying that a geometric metaplectic datum
is a more economical way, sufficient for most purposes, to encode also the datum needed to set up the 
classical metaplectic representation/automorphic theory. 

\sssec{The metaplectic Langlands dual}  \label{sss:L dual}

Given a geometric metaplectic datum, i.e., a factorization gerbe $\CG$ on $\Gr_G$, we attach to it a certain reductive group $H$, 
a gerbe $\CG_{Z_H}$ on $X$ with respect to the center $Z_H$ of $H$,
and a character $\epsilon:\pm 1\to Z_H$. We refer to the triple
$$(H, \CG_{Z_H}, \epsilon)$$
as the \emph{metaplectic Langlands dual} datum corresponding to $\CG$. 

\medskip

The datum of $\CG_{Z_H}$ determines the notion of twisted $H$-local system of $X$. Such twisted local systems
are supposed to play a role vis-\`a-vis metaplectic representations/automorphic functions 
of $G$ parallel to that of usual $\cG$-local systems vis-\`a-vis usual representations/automorphic functions of $G$.

\medskip

For example, in the context of the global geometric theory (in the setting of D-modules), we will propose a conjecture (namely, \conjref{c:vanishing})
that says that the monoidal category $\QCoh\left(\on{LocSys}_{H}^{\CG_{Z_H}}\right)$ of quasi-coherent sheaves on the stack 
$\on{LocSys}_{H}^{\CG_{Z_H}}$ classifying such twisted local systems, \emph{acts} on the category 
$\Shv_\CG(\Bun_G)$. 

\medskip

The geometric input for such an action is provided by the metaplectic geometric Satake functor, see \secref{s:Satake}.

\medskip

Presumably, in the arithmetic context, the 
above notion of twisted $H$-local system coincides with that of homomorphism of
the (arithmetic) fundamental group of $X$ to Weissman's L-group.

\ssec{``Metaplectic" vs "Quantum"}

In the paper \cite{Ga4}, a program was proposed towards the \emph{quantum Langlands theory}.
Let us comment on the terminological difference between ``metaplectic" and ``quantum",
and how the two theories are supposed to be related.

\sssec{}

If $\CY$ is a scheme (resp., or more generally, a prestack) we can talk about $E^\times$-gerbes on it.
As was mentioned above, such gerbes on various spaces associated with the group $G$ and the geometry
of the curve $X$ are parameters for the metaplectic Langlands theory. 

\medskip

Let us now assume that $k$ has characteristic $0$, and let us work in the context of D-modules. Then,
in addition to the notion of $E^\times$-gerbe on $\CY$, there is another one: that of \emph{twisting}
(see \cite[Sect. 6]{GR1}). 

\medskip

There is a forgetful map from twistings to gerbes. Roughly speaking, a gerbe $\CG$ on $\CY$ defines the
corresponding twisted category of sheaves (=D-modules) $\Shv_\CG(\CY)=\Dmod_\CG(\CY)$, while 
if we lift our gerbe to a twsiting, we also have a forgetful functor
$$\Dmod_\CG(\CY)\to \QCoh(\CY).$$

\sssec{}

For the \emph{quantum} Langlands theory, our parameter will be a factorizable \emph{twisting} on the
affine Grassmannian, which one can also interpret as a \emph{Kac-Moody level}; we will denote it
by $\kappa$. 

\medskip

Thus, for example, in the global quantum geometric Langlands theory, we consider the category
$$\Dmod_\kappa(\Bun_G),$$
which is the same as $\Shv_\CG(\Bun_G)$, where $\CG$ is the gerbe corresponding to $\kappa$.

\medskip

As was mentioned above, the additional piece of datum that the twisting ``buys" us is the forgetful
functor
$$\Dmod_\kappa(\Bun_G)\to \QCoh(\Bun_G).$$

In the TQFT interpretation of geometric Langlands, this forgetful functor is called ``the big brane".
It allows us to relate the category $\Dmod_\kappa(\Bun_G)$ to representations of the Kac-Moody
algebra attached to $G$ and the level $\kappa$.

\sssec{}

Consider the usual Langlands dual group $\cG$ of $G$, and if $\kappa$ is non-degenerate, it gives rise to a twisting, denoted
$-\kappa^{-1}$, on the affine Grassmannian $\Gr_{\cG}$ of $\cG$. 

\medskip

In the global quantum geometric theory one expects to have an equivalence of categories
\begin{equation} \label{e:quatum L dual} 
\Dmod_{\kappa}(\Bun_G)\simeq  \Dmod_{-\kappa^{-1}}(\Bun_{\cG}).
\end{equation}

We refer to \eqref{e:quatum L dual} as the \emph{global quantum Langlands equivalence}.

\sssec{How are the two theories related?}

The relationship between the equivalence \eqref{e:quatum L dual} and the metaplectic Langlands dual is the following:

\medskip

Let $\CG$ (resp., $\check\CG$) be the gerbe on $\Gr_G$ (resp., $\Gr_{\cG}$) corresponding to $\kappa$ (resp., $-\kappa^{-1}$). 
We conjecture that the metaplectic Langlands dual data $(H,\CG_{Z_H},\epsilon)$ corresponding to 
$\CG$ and $\check\CG$ \emph{are isomorphic}.  

\medskip

Furthermore, we conjecture that the resulting actions of
$$\QCoh\left(\on{LocSys}_{H}^{\CG_{Z_H}}\right)$$ on 
$\Dmod_{\kappa}(\Bun_G)$ and $\Dmod_{-\kappa^{-1}}(\Bun_{\cG})$, respectively (see \secref{sss:L dual} above)
are intertwined by the equivalence \eqref{e:quatum L dual}.

\ssec{What is actually done in this paper?}

Technically, our focus is on the geometric metaplectic theory, with the goal of constructing the 
\emph{metaplectic geometric Satake} functor. 

\sssec{}  \label{sss:goals}

The mathematical content of this paper is the following: 

\medskip

\noindent--We define a geometric metaplectic datum to be a factorization gerbe on the (factorization version) of 
affine Grassmannian $\Gr_G$. This is done in \secref{s:Gr}. 

\medskip 

\noindent--We formulate the classification result that describes factorization gerbes on $\Gr_G$ in terms of \'etale
cohomology on the classifying stack $BG$ of $G$. This is done in \secref{s:param}. 

\medskip

This classification result is inspired by an analogous one in the topological setting, explained to us by J.~Lurie. 

\medskip 

\noindent--We make an explicit analysis of the space of factorization gerbes in the case when $G=T$ is a torus. 
This is done in \secref{s:torus}. 

\medskip 

\noindent--We study the relationship between factorization gerbes on $\Gr_G$ and those on $\Gr_M$, where $M$ is the
Levi quotient of a parabolic $P\subset G$. This is done in \secref{s:Jacquet}. 

\medskip

The main point is that the naive map from factorization gerbes on $\Gr_G$ to those on $\Gr_M$
needs to be corrected by a gerbe that has to do with signs. It is this correction that is responsible 
for the fact that the usual geometric Satake does not quite produce the category $\Rep(\cG)$, but rather its modification where we
alter the commutativity constraint by the element $2\rho(-1)\in Z(\cG)$. 

\medskip 

\noindent--We define the notion of \emph{metaplectic Langlands dual} datum, denoted $(H, \CG_{Z_H}, \epsilon)$,
attached to a given geometric metaplectic datum $\CG$. We introduce the notion of $\CG_{Z_H}$-twisted 
$H$-local system on $X$; when we work with D-modules, these local systems are $k$-points  of a (derived) algebraic
stack, denoted $\on{LocSys}_{H}^{\CG_{Z_H}}$. This is done in \secref{s:metap dual}. 

\medskip 

\noindent--We show that a factorization gerbe on $\Gr_G$ gives rise to a \emph{multiplicative} gerbe over the loop group
$G\ppart$ for every point $x\in X$. Moreover, these multiplicative gerbes also admit a natural factorization structure when
instead of a single point $x$ we consider the entire Ran space. This is done in \secref{s:loop}.  

\medskip 

\noindent--We introduce the various twisted versions of the category of representations of a reductive group,
and the associated notion of twisted local system. This is done in \secref{s:tw rep}.

\medskip 

\noindent--We define metaplectic geometric Satake as a functor between 
\emph{factorization categories} over the Ran space. This is done in \secref{s:Satake}.

\medskip 

\noindent--We formulate a conjecture about the action of the monoidal category 
$\QCoh\left(\on{LocSys}_{H}^{\CG_{Z_H}}\right)$ on $\Shv_\CG(\Bun_G)$.
This is also done in \secref{s:Satake}.

\sssec{A disclaimer}

Although most of the items listed in \secref{sss:goals} have not appeared in the previously existing literature,
this is mainly due to the fact that these earlier sources, specifically the paper \cite{FL} of M.~Finkelberg
and the second-named author and the paper \cite{Re} of R.~Reich, did not use the language of $\infty$-categories, 
while containing most of the relevant mathematics. 

\medskip

So, one can regard the present paper as a summary of results that are ``almost known", but formulated in the
language that is better adapted to the modern take on the geometric Langlands theory\footnote{This excludes, however, 
the material in \secref{ss:global Hecke} and the statement of \conjref{c:vanishing} (the latter is new, to the
best of our knowledge)}. 

\medskip

We felt that there was a need for such a summary in order to facilitate further research in this area.  

\medskip

Correspondingly, our focus is on statements, rather than proofs. Most of the omitted proofs can be found 
in either \cite{FL} or \cite{Re}, or can be obtained from other sources cited in the paper. 

\medskip

Below we give some details on the relation of contents of this paper and some of previously existing literature. 

\sssec{Relation to other work: geometric theory}

As was just mentioned, a significant part of this paper is devoted to reformulating the results of \cite{FL} and \cite{Re} in a way tailored for  
the needs of the geometric metaplectic theory.

\medskip

The paper \cite{Re} develops the theory of factorization gerbes on $\Gr_G$ (in {\it loc. cit.} they are called ``symmetric factorizable 
gerbes"). One caveat is that in the setting of \cite{Re} one works with schemes over $\BC$ and sheaves in the analytic topology, while in the present paper 
we work over a general ground field and \'etale sheaves. 

\medskip

The main points of the theory developed in \cite{Re} are the description of the \emph{homotopy groups} of the space of 
factorization gerbes (but not of the space itself; the latter is done in \secref{s:param} of the present paper), and the fact that a factorization gerbe
on $\Gr_G$ gives rise to a multiplicative gerbe on (the factorization version of) the loop group (we summarize this construction in \secref{s:loop}
of the present paper). 

\medskip

The proofs of the corresponding results in \cite{Re} are obtained by reducing assertions for a reductive group $G$ to that
for its Cartan subgroup, and an explicit analysis for tori. We do not reproduce these proofs in the present paper. 

\medskip

In both \cite{FL} and \cite{Re}, metaplectic geometric Satake is stated as an equivalence of certain abelian categories. 
In \cite{FL}, this is an equivalence of symmetric monoidal categories (corresponding to a chosen point $x\in X$),
for a particular class of gerbes (namely, ones obtained from the determinant line  bundle). 

\medskip

In \cite{Re} more general gerbes are considered and the factorization structure on both sides of the equivalence 
is taken into account.  Our version of metaplectic geometric Satake 
is a statement at the level of DG categories; it is no longer an equivalence, but rather a functor in one
direction, between \emph{monoidal factorization categories}.  In this form, our formulation is a simple consequence
of that of \cite{Re}. 

\sssec{Relation to other work: arithmetic theory}

As was already mentioned above, our notion of the metaplectic Langlands dual datum is probably equivalent
to the datum constructed by M.~Weissman in \cite{We} for his definition of the L-group. 

\ssec{Conventions}

\sssec{Algebraic geometry}

In the main body of the paper we will be working over a fixed ground field $k$, assumed algebraically closed. 

\medskip

For arithmetic applications one would also be interested in the case of $k$ being a finite field $\BF_q$.
However, since all the constructions in this paper are canonical, the results over $\BF_q$ can be deduced from
those over $\ol\BF_q$ by Galois descent. 

\medskip

We will denote by $X$ a smooth connected algebraic curve over $k$ (we \emph{do not} need $X$ to be complete). 

\medskip

For the purposes of this paper, we \emph{do not need} derived algebraic geometry, with the exception of Sects. \ref{ss:tw loc syst}
and \ref{ss:global Hecke Dmod} (where we discuss the stack of local systems, which is a derived object). 

\medskip

In the last two sections of the paper we will make an extensive use of algebro-geometric objects more general than schemes, namely,
prestacks. We recall the definition of prestacks in \secref{ss:prestacks}, and refer the reader to \cite[Vol. 1, Chapter 2]{GR2} for a 
more detailed discussion. 

\sssec{Coefficients}

Gerbes, which constitute the object of study of this paper, can be used to \emph{twist} categories of sheaves, see \secref{ss:twist category}.

\medskip

We will mostly work with the \emph{sheaf theory} of D-modules. We will denote by $E$ the field of coefficients of our sheaves
(assumed algebraically closed and of characteristic $0$). 

\sssec{Groups}

We will work with a fixed connected algebraic group $G$ over $k$; our main interest is the case when $G$ is reductive. 

\medskip

We will denote by $\Lambda$ the coweight lattice of $G$ and by $\cLambda$ its dual, i.e., the weight lattice. 

\medskip

We will denote by $\alpha_i\in \Lambda$ (resp., $\check\alpha_i\in \cLambda$) the simple coroots (resp., roots), where $i$ runs over the set of 
vertices of the Dynkin diagram of $G$.

\medskip

If $G$ is reductive, we denote by $\cG$ its Langlands dual, viewed as a reductive group over $E$. 

\sssec{The usage of higher category theory}

Although, as we have said above, we do not need derived algebraic geometry, we do need higher category theory. 
However, we only really need
$\infty$-categories for one type of manipulation: in order to define the notion of the \emph{category of sheaves} on a given prestack
(and a related notion of a \emph{sheaf of categories} over a prestack); 
we will recall the corresponding definitions in Sects. \ref{ss:prestacks} and \ref{ss:shv-of-cat}), respectively.  
These definitions involve the procedure of taking the limit, and the language of higher categories is the adequate framework for doing so. 

\medskip

In their turn, sheaves of categories on prestacks appear for us as follows: the metaplectic spherical Hecke category, which is the recipient of
the metaplectic geometric Satake functor (and hence is of primary interest for us), is a sheaf of categories over the Ran space. 

\medskip

Thus, the reader who is only interested in the notion of geometric metaplectic datum (and does not wish to proceed to 
metaplectic geometric Satake) \emph{does not} need higher category theory either. 



\sssec{Glossary of $\infty$-categories}

We will now recall several most common pieces of notation, pertaining to $\infty$-categories,
used in this paper. We refer the reader to \cite{Lu1,Lu2} for the foundations of the theory,
or \cite[Vol. 1, Chapter 1]{GR2} for a concise summary. 

\medskip

We denote by $\Spc$ the $\infty$-category of spaces. We denote by $*$ the point-space. 
For a space $\CS$, we denote by $\pi_0(\CS)$ its \emph{set of connected components}. If $\CS$ is a space
we can view it as an $\infty$-category; its objects are also called the \emph{points} of $\CS$. 

\medskip

For an $\infty$-category $\bC$ and two objects $\bc_0,\bc_1\in \bC$, we let $\Maps_\bC(\bc_0,\bc_1)\in \Spc$ denote the mapping
space between them. 

\medskip

For an object $\bc\in \bC$ we let $\bC_{\bc/}$ (resp., $\bC_{/\bc}$) denote the corresponding under-category (resp., over-category). 

\medskip

In several places in the paper we will need the notion of left (resp., right) Kan extension. 
Let $F:\bC\to \bD$ be a functor, and let $\bE$ is an $\infty$-category with colimits.
Then the functor
\begin{equation} \label{e:restr}
\on{Funct}(\bD,\bE) \overset{\circ F}\longrightarrow \on{Funct}(\bC,\bE)
\end{equation}
admits a left adjoint, called the functor of \emph{left Kan extension} along $F$. 

\medskip

For $\Phi\in \on{Funct}(\bC,\bE)$, the value of its left Kan extension on $\bd\in \bD$ is calculated by the formula
$$\underset{(\bc,F(\bc)\to \bd)\in \bC\underset{\bD}\times \bD_{/\bd}}{\on{colim}}\, \Phi(\bc).$$

\medskip

The notion of \emph{right Kan extension} is obtained similarly: it is the right adjoint of \eqref{e:restr};
the formula for it is given by
$$\underset{(\bc,\bd\to F(\bc))\in \bC\underset{\bD}\times \bD_{\bd/}}{\on{lim}}\, \Phi(\bc).$$

\sssec{DG categories}

We let $\on{DGCat}$ denote the $\infty$-category of DG categories over $E$, see \cite[Vol. 1, Chapter 1, Sect. 10.3.3]{GR2}
(in {\it loc.cit.} it is denoted $\on{DGCat}_{\on{cont}}$). I.e., we will assume all our DG categories to be \emph{cocomplete} and 
we allow only colimit-preserving functors as 1-morphisms. 

\medskip

For example, let $R$ be a DG associative algebra over $k$. Then we let $R\mod$ denote the corresponding DG category
of $R$-modules (i.e., its homotopy category is the usual derived category of the abelian category of $R$-modules, without
any boundedness conditions). 

\medskip

For an algebraic group $H$ over $E$, we let $\Rep(H)$ denote the DG category of representations of $H$, see, e.g., \cite[Sects. 6.4.3-6.4.4]{DrGa}.

\medskip

The piece of structure on $\on{DGCat}$ that we will exploit extensively is the operation of tensor product, which makes 
$\on{DGCat}$ into a symmetric monoidal category.  

\medskip

For a pair of DG associative algebras $R_1$ and $R_2$, we have:
$$(R_1\mod)\otimes (R_2\mod)\simeq (R_1\otimes R_2)\mod.$$

\ssec{Acknowledgements}

The first author like to thank J.~Lurie for numerous helpful discussions related to factorization gerbes. 

\medskip

We are grateful to A.~Beilinson, N.~Rozenblyum and Y.~Zhao for many helpful discussions and comments. 
We are grateful to Ofer Gabber for pointing out a mistake in the previous version in \secref{sss:construct gerbe} and
for his help with the construction in \secref{sss:construct gerbe local}. 

\medskip

We would also like to thank the referee for some very helpful remarks. 

\section{Preliminaries}   \label{s:prel}

This section is included for the reader's convenience: we review some constructions in algebraic
geometry that involve higher category theory. The reader having a basic familiarity with this
material should feel free to skip it. 

\ssec{Some higher algebra}

To facilitate the reader's task, in this subsection we will review some notions from higher algebra that will be used
in this paper. The main reference for this material is \cite{Lu2}.

\medskip

We should emphasize that for the purposes of studying geometric metaplectic data, we only need higher
algebra in $\infty$-categories that are $(n,1)$-categories for small values of $n$. The corresponding objects
can be studied in a hands-on way (i.e., we do not need the full extent of higher category theory).

\medskip

The only place where we really need higher categories is for working with categories of sheaves on prestacks. 

\sssec{Monoids and groups}  

In any $\infty$-category $\bC$ that contains finite products (including the empty finite product, i.e., a final object), it makes
sense to consider the category $\on{Monoid}(\bC)$ of monoid-objects in $\bC$. This
is a full subcategory in the category of \emph{simplicial objects} of $\bC$ (i.e., $\on{Funct}(\bDelta^{\on{op}},\bC)$) that
consists of objects, satisfying the Segal condition. 

\medskip

One defines the category commutative monoids $\on{ComMonoid}(\bC)$ in $\bC$ similarly, but using the category $\on{Fin}_s$
of pointed finite sets instead of $\bDelta^{\on{op}}$.

\medskip 

For example, take $\bC=\infty\on{-Cat}$. In this way we obtain the notion of monoidal (resp., symmetric monoidal) category. 

\sssec{}

The $\infty$-category $\on{Monoid}(\bC)$ (resp., $\on{ComMonoid}(\bC)$) contains the full subcategory of group-like
objects, denoted  $\on{Grp}(\bC)$ (resp., $\on{ComGrp}(\bC)$). 

\medskip

Let $\on{Ptd}(\bC)$ be the category of pointed objects in $\bC$, i.e., $\bC_{*/}$, where $*$ denotes the final object in $\bC$.
We have the loop functor 
$$\Omega:\on{Ptd}(\bC)\to \on{Grp}(\bC), \quad (*\to \bc)\mapsto *\underset{\bc}\times *.$$

The left adjoint of this functor (if it exists) is called the functor of the \emph{classifying space} and is denoted
$$H\mapsto B(H).$$  

\sssec{}

For $\bC=\Spc$ (or $\bC=\on{Funct}(\bD,\Spc)$ for some other category $\bD$), the functor $B$ does exist and
is fully faithful. The essential image of $B:\on{Grp}(\Spc)\to \on{Ptd}(\Spc)$ consists of \emph{connected} spaces. 

\medskip

For an object $\CS\in \on{Ptd}(\Spc)$, its $i$-th homotopy group $\pi_i(\CS)$ is defined to be
$$\pi_0(\Omega^i(\CS)),$$
where $\Omega^i(\CS)$ is viewed as a plain object of $\Spc$. 

\sssec{}  \label{sss:E2}

For $k\geq 0$, we introduce the category $\BE_k(\bC)$ of $\BE_k$-objects in $\bC$ inductively, by setting 
$$\BE_0(\bC)=\on{Ptd}(\bC)$$
and
$$\BE_k(\bC)=\on{Monoid}(\BE_{k-1}(\bC)).$$
Let $\BE_k^{\on{grp-like}}(\bC)\subset \BE_k(\bC)$ the full subcategory of group-like objects, defined to be
the preimage of $$\on{Grp}(\bC)\subset \on{Monoid}(\bC)=\BE_1(\bC)$$ under any of the $k$ possible forgetful functors
$\BE_k(\bC)\to \BE_1(\bC)$. 

\medskip

The functor $B:\on{Grp}(\bC)\to \on{Ptd}(\bC)$ (if it exists) induces a functor 
$$B:\BE_k^{\on{grp-like}}(\bC)\rightleftarrows \BE_{k-1}^{\on{grp-like}}(\bC):\Omega$$
for $k\geq 2$, which is the left adjoint of 
$$\Omega: \BE_{k-1}^{\on{grp-like}}(\bC)\to \BE_k^{\on{grp-like}}(\bC).$$

\medskip

For $i\leq k$ we let $B^i$ denote the resulting functor
$$\BE_k^{\on{grp-like}}(\bC)\to \BE_{k-i}^{\on{grp-like}}(\bC).$$






\sssec{}  \label{sss:conn spectra}

One shows that the forgetful functor
$$\on{Monoid}(\on{ComMonoid}(\bC))\to \on{ComMonoid}(\bC)$$
is an equivalence.

\medskip

This implies that for every $k$ we have a canonically defined functor 
$$\on{ComMonoid}(\bC)\to \BE_k(\bC),$$
and these functors are compatible with the forgetful functors $\BE_k(\bC)\to \BE_{k-1}(\bC)$. Thus, we obtain a canonically defined
functor
\begin{equation} \label{e:Eoo}
\on{ComMonoid}(\bC)\to \BE_\infty(\bC):=\underset{\longleftarrow}{\on{lim}}\, \BE_k(\bC).
\end{equation} 

It is known (see \cite[Remark 5.2.6.26]{Lu2}) that the functor \eqref{e:Eoo} is an equivalence. 

\sssec{}

The category 
$$\on{ComGrp}(\Spc)\simeq \BE^{\on{grp-like}}_\infty(\Spc)$$
identifies with that of connective spectra. 

\medskip

For any $i\geq 0$, we have the mutually adjoint endo-functors
$$B^i:\on{ComGrp}(\Spc)\rightleftarrows \on{ComGrp}(\Spc):\Omega^i$$
with $B^i$ being fully faithful.

\sssec{} \label{sss:E2 on categ} 

Let $A$ be an object of $\BE_2^{\on{grp-like}}(\Spc)$, so that $B(A)$ is an object of $\on{Grp}(\Spc)$. 

\medskip

By an action of $A$ on an $\infty$-category $\bC$ we shall mean an action of $B(A)$ on $\bC$ as an object of 
$\inftyCat$.

\medskip

For example, taking $A=E^\times\in \on{ComGrp}(\Spc)$, we obtain an action of $E^\times$ on any DG category.
Explicitly, we identify $B(E^\times)$ with the space of $E^\times$-torsors, i.e., lines, and the action in question
sends a line $\ell$ to the endofunctor 
$$\bc\mapsto \ell\otimes \bc.$$

\ssec{Prestacks}  \label{ss:prestacks}

\sssec{}

Let $\Sch^{\on{aff}}$ be the category of \emph{classical} affine schemes over $k$. 

\medskip

We let $\on{PreStk}$ denote the category of all 
(accessible) functors
$$(\Sch^{\on{aff}})^{\on{op}}\to \Spc.$$

\medskip

We shall say that an object of $\on{PreStk}$ is $n$-truncated if it takes values in the full subcategory of $\Spc$
that consists of $n$-truncated spaces\footnote{An object of $\Spc$ is said to be truncated,
if for any choice of a base point, its homotopy groups $\pi_{n'}$ vanish for $n'>n$.}.

\medskip

The $\infty$-category of $n$-truncated prestacks is in fact an $(n+1,1)$-category. For small values of $n$, 
one can work with it avoiding the full machinery of higher category theory. 

\begin{rem}

There will be two types of prestacks in this paper: the ``source" type and the ``target" type.  The source type will be various geometric
objects associated to the group $G$ and the curve $X$, such as the Ran space, affine Grassmannian $\Gr_G$, the loop
group $\fL(G)$, etc. These prestacks are $0$-truncated, i.e., they take values in the full 
subcategory
$$\on{Sets}\subset \Spc.$$

\medskip

There will be a few other source prestacks (such as $\Bun_G$ or quotients of $\Gr_G$ by groups acting on it) and they will 
be $1$-truncated  (i.e., they take values in the full subcategory of $\Spc$ spanned by ordinary groupoids). 

\medskip

When we talk about the category of sheaves on a prestack, the prestack in question will be typically of the source type. 

\medskip

The target prestacks will be of the form $B^n(\CA)$ (see \secref{sss:conn spectra}), where $\CA$ is a prestack that takes a 
constant value $A$, where $A$ is a \emph{discrete} abelian group (or its sheafification in, say, the \'etale topology, denoted $B^n_{\on{et}}(\CA)$,
see below). Such a prestack is $n$-truncated. When $n$ is small, they can be described in a 
hands-on way by specifying objects, 1-morphisms, 2-morphisms, etc; in this paper 
$n$ will be $\leq 4$, and in most cases $\leq 2$. 

\medskip

For example, we will often use the notion of a \emph{multiplicative} $A$-gerbe on a group-prestack $\CH$. Such an object is the
same as a map of group-prestacks 
$$\CH\to B^2_{\on{et}}(A).$$

\end{rem} 

\sssec{}

Let $\Sch^{\on{aff}}_{\on{ft}}\subset \Sch^{\on{aff}}$ denote the full subcategory of affine schemes of finite type. Functorially,
thus subcategory can be characterized as consisting of \emph{co-compact} objects, i.e., $S\in \Sch^{\on{aff}}$ if
and only if the functor
$$S'\mapsto \Hom(S',S)$$
commutes with filtered limits. 

\medskip

Moreover, every object of $\Sch^{\on{aff}}$ can be written as a filtered limit of objects of $\Sch^{\on{aff}}_{\on{ft}}$.

\medskip

The two facts mentioned above combine to the statement that we can identify $\Sch^{\on{aff}}$ with the pro-completion
of $\Sch^{\on{aff}}_{\on{ft}}$.

\sssec{}  \label{sss:lft}

We let
$$\on{PreStk}_{\on{lft}}\subset \on{PreStk}$$
denote the full subcategory consisting of functors that preserve filtered colimits. I.e., $\CY\in \on{PreStk}$ is
locally of finite type if for
$$S=\underset{\alpha}{\on{lim}}\, S_\alpha,$$
the map
$$\underset{\alpha}{\on{colim}}\, \Maps(S_\alpha,\CY)\to \Maps(S,\CY)$$
is an isomorphism in $\Spc$.

\medskip

The functors of restriction and left Kan extension along 
\begin{equation} \label{e:aff sch ft}
(\Sch^{\on{aff}}_{\on{ft}})^{\on{op}}\hookrightarrow (\Sch^{\on{aff}})^{\on{op}}
\end{equation}
define an equivalence between $\on{PreStk}_{\on{lft}}$ and the category of all
functors
$$(\Sch^{\on{aff}}_{\on{ft}})^{\on{op}}\to \Spc.$$

\medskip

If $\CF\in \on{PreStk}_{\on{lft}}$ is such that its restriction to $\Sch^{\on{aff}}_{\on{ft}}$ takes 
values in $n$-truncated spaces, then $\CY$ itself is $n$-truncated.

\sssec{}  

In this paper we will work with the \'etale topology on $\Sch^{\on{aff}}$. Let
$$\on{Stk}\subset \on{PreStk}$$
be the full subcategory consisting of objects that satisfy descent for \v{C}ech nerves 
of \'etale morphisms, see \cite[Vol. 1, Chapter 2, Sect. 2.3.1]{GR2}.

\medskip

The inclusion $\on{Stk}\hookrightarrow \on{PreStk}$ admits a left adjoint, called 
the functor of \'etale sheafification, denoted $L_{\on{et}}$. 

\medskip

This functor sends $n$-truncated objects to $n$-truncated objects. 

\sssec{}

Denote 
$$\on{Stk}_{\on{lft}}:=\on{Stk}\cap \on{PreStk}_{\on{lft}}\subset \on{PreStk}.$$

\medskip

However, we can consider a different subcategory of $\on{PreStk}_{\on{lft}}$, denoted
$\on{NearStk}_{\on{lft}}$. Namely,
identifying  $\on{PreStk}_{\on{lft}}$ with $\on{Funct}((\Sch^{\on{aff}}_{\on{ft}})^{\on{op}},\Spc)$,
we can consider the full subcategory consisting of functors that satisfy descent for \v{C}ech
covers of \'etale morphisms (within $\Sch^{\on{aff}}_{\on{ft}}$).

\medskip

Restriction along \eqref{e:aff sch ft} sends $\on{Stk}_{\on{lft}}$ to $\on{NearStk}_{\on{lft}}$.
However, it is not true that the functor of left Kan extension along \eqref{e:aff sch ft} 
sends $\on{NearStk}_{\on{lft}}$ to $\on{Stk}_{\on{lft}}$. However, the following weaker
statement holds (see \cite[Vol. 1, Chapter 2, Proposition 2.7.7]{GR2}): 

\begin{lem}  \label{l:descent lft}
Assume that $\CY\in \on{Funct}((\Sch^{\on{aff}}_{\on{ft}})^{\on{op}},\Spc)$ is $n$-truncated 
for some $n$ and belongs to $\on{NearStk}_{\on{lft}}$. Then the left Kan extension of $\CY$ along \eqref{e:aff sch ft} belongs to
$\on{Stk}_{\on{lft}}$.
\end{lem}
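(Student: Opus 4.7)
The plan is to use noetherian approximation to reduce étale descent for the left Kan extension $\CY'$ on $\Sch^{\on{aff}}$ to étale descent of $\CY$ within $\Sch^{\on{aff}}_{\on{ft}}$, with the $n$-truncation hypothesis providing the essential ingredient that allows a cosimplicial limit to be exchanged with a filtered colimit. First I would denote by $\CY'$ the left Kan extension of $\CY$ along \eqref{e:aff sch ft}. For any $S\in \Sch^{\on{aff}}$, write $S = \underset{\alpha}{\on{lim}}\, S_\alpha$ as a cofiltered limit of objects of $\Sch^{\on{aff}}_{\on{ft}}$; the co-compactness of finite-type affines inside $\Sch^{\on{aff}}$ collapses the general Kan-extension colimit to
$$\CY'(S)\ \simeq\ \underset{\alpha}{\on{colim}}\, \CY(S_\alpha).$$
This formula immediately shows that $\CY'$ preserves filtered colimits, so it lies in $\on{PreStk}_{\on{lft}}$; and since $n$-truncated spaces are closed under filtered colimits in $\Spc$, $\CY'$ is itself $n$-truncated.

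Next I would verify étale descent for $\CY'$. Let $S_0\to S$ be an étale cover in $\Sch^{\on{aff}}$ with Čech nerve $S^\bullet$. By standard noetherian approximation for étale morphisms of finite presentation, there exist an index $\alpha_0$ and an étale cover $S_{0,\alpha_0}\to S_{\alpha_0}$ in $\Sch^{\on{aff}}_{\on{ft}}$ from which $S_0\to S$ is obtained by base change; for every $\alpha\geq\alpha_0$ we similarly have an étale cover $S_{0,\alpha}\to S_\alpha$, with compatible Čech nerves satisfying $S^k\simeq \underset{\alpha\geq\alpha_0}{\on{lim}}\, S^k_\alpha$ term-by-term. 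Combined with the formula for $\CY'$ above, the descent statement reduces to the claim that the natural map
$$\underset{\alpha}{\on{colim}}\, \CY(S_\alpha)\ \longrightarrow\ \underset{[k]\in \bDelta}{\on{lim}}\,\underset{\alpha}{\on{colim}}\, \CY(S^k_\alpha)$$
is an equivalence in $\Spc$.

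The key step, which I anticipate being the main obstacle, is to exchange the outer limit and colimit on the right-hand side. Because each $\CY(S^k_\alpha)$ is $n$-truncated, so is the cosimplicial object $[k]\mapsto \CY(S^k_\alpha)$, and its totalization over $\bDelta$ may be computed as a finite limit over the truncation $\bDelta_{\leq n+1}$. Filtered colimits commute with finite limits in $\Spc$, so this truncated totalization can be pulled through $\underset{\alpha}{\on{colim}}$. Applying the étale descent of $\CY$ within $\Sch^{\on{aff}}_{\on{ft}}$ level-by-level in $\alpha$ (which is precisely the condition implicit in the lemma's natural reading, i.e.\ $\CY\in \on{NearStk}_{\on{lft}}$) then identifies the inner limit $\underset{[k]\in\bDelta_{\leq n+1}}{\on{lim}}\,\CY(S^k_\alpha)$ with $\CY(S_\alpha)$, and one concludes. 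Without the $n$-truncation hypothesis, the exchange of totalization and filtered colimit fails in general, which is exactly why the present lemma is a \emph{weaker} assertion than the (false) statement that left Kan extension sends all of $\on{NearStk}_{\on{lft}}$ to $\on{Stk}_{\on{lft}}$.
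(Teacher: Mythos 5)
Your proof is correct and is the standard argument underlying the cited reference \cite[Vol. 1, Chapter 2, Proposition 2.7.7]{GR2}: approximate an \'etale cover of $S$ by \'etale covers of the finite-type approximations $S_\alpha$, then use $n$-truncatedness to replace the totalization over $\bDelta$ by a finite limit over $\bDelta_{\leq n+1}$, which commutes with the filtered colimit in $\alpha$. You also correctly flagged the implicit hypothesis that $\CY$ should lie in $\on{NearStk}_{\on{lft}}$ (i.e., satisfy \'etale descent within $\Sch^{\on{aff}}_{\on{ft}}$), which the lemma's wording omits but which the surrounding text makes clear.
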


This formally implies:

\begin{cor} \label{c:descent lft}
If $\CY\in \on{PreStk}_{\on{lft}}$ is $n$-truncated for some $n$, then $L_{\on{et}}(\CY)$ 
belongs to $\on{Stk}_{\on{lft}}$.
\end{cor}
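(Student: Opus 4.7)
The plan is to reduce the corollary to \lemref{l:descent lft} by first restricting to the finite-type site, sheafifying there, and then left Kan extending back. Let $\iota\colon (\Sch^{\on{aff}}_{\on{ft}})^{\on{op}} \hookrightarrow (\Sch^{\on{aff}})^{\on{op}}$, and write $\iota^*$ for restriction and $\iota_!$ for its left adjoint. By the equivalence described in \secref{sss:lft}, for $\CY \in \on{PreStk}_{\on{lft}}$ we have $\CY \simeq \iota_!\iota^*\CY$. Set $\CY_0 := \iota^*\CY$; since $\CY$ is $n$-truncated, so is $\CY_0$. Now \'etale-sheafify $\CY_0$ inside $\on{Funct}((\Sch^{\on{aff}}_{\on{ft}})^{\on{op}},\Spc)$ to obtain an object $\CY_0' \in \on{NearStk}_{\on{lft}}$; because sheafification in an $\infty$-topos preserves $n$-truncation, $\CY_0'$ remains $n$-truncated.

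By \lemref{l:descent lft} applied to $\CY_0'$, the prestack $\iota_!(\CY_0')$ lies in $\on{Stk}_{\on{lft}}$. It remains to identify this with $L_{\on{et}}(\CY)$, which I do by verifying the universal property. For any $\CZ \in \on{Stk}$, the adjunction $(\iota_!,\iota^*)$ gives
\[ \Maps_{\on{PreStk}}(\iota_!(\CY_0'), \CZ) \simeq \Maps(\CY_0', \iota^*\CZ). \]
Since every \'etale cover of a finite-type affine scheme is itself by finite-type affine schemes, $\iota^*\CZ$ satisfies \'etale descent on $\Sch^{\on{aff}}_{\on{ft}}$, so the universal property of $\CY_0'$ as the sheafification of $\CY_0$ yields
\[ \Maps(\CY_0', \iota^*\CZ) \simeq \Maps(\CY_0, \iota^*\CZ) \simeq \Maps_{\on{PreStk}}(\iota_!\CY_0, \CZ) \simeq \Maps_{\on{PreStk}}(\CY, \CZ). \]
Composing these equivalences shows that $\iota_!(\CY_0')$ corepresents $\Maps_{\on{PreStk}}(\CY, -)$ on $\on{Stk}$, which is the defining property of $L_{\on{et}}(\CY)$; hence $L_{\on{et}}(\CY) \simeq \iota_!(\CY_0') \in \on{Stk}_{\on{lft}}$.

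The main obstacle I expect is the truncation-preservation step: one must know that sheafification in the \'etale $\infty$-topos on $\Sch^{\on{aff}}_{\on{ft}}$ preserves the property of being $n$-truncated. This is a standard fact about $\infty$-topoi (see \cite{Lu1}), but without it the proof breaks down, since \lemref{l:descent lft} requires an $n$-truncated input. Once this is in hand, the remaining steps are formal manipulations with the $(\iota_!,\iota^*)$ adjunction and the universal property of sheafification.
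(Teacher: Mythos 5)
Your proof is correct and supplies exactly the argument that the paper compresses into the phrase ``this formally implies'': restrict to $\Sch^{\on{aff}}_{\on{ft}}$, \'etale-sheafify there (noting that the $\infty$-topos sheafification functor preserves $n$-truncatedness), apply \lemref{l:descent lft}, and identify the result with $L_{\on{et}}(\CY)$ via the $(\iota_!,\iota^*)$ adjunction and the cofinality of finite-type \'etale covers. One small remark: the sheafification step is indeed necessary because \lemref{l:descent lft}, although stated for an arbitrary $n$-truncated $\CY\in \on{Funct}((\Sch^{\on{aff}}_{\on{ft}})^{\on{op}},\Spc)$, implicitly requires $\CY\in \on{NearStk}_{\on{lft}}$ (as the surrounding discussion makes clear -- otherwise the LKE restricted back to $\Sch^{\on{aff}}_{\on{ft}}$ would fail descent already there), so passing to $\CY_0'$ before invoking the lemma is exactly the right move.
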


\ssec{Gerbes}

\sssec{}

Let $\CY$ be a prestack, and let $\CA$ be a group-like $\BE_n$-object in the category $\on{PreStk}_{/\CY}$, for $n\geq 1$.
In other words, for a given
$(S\overset{y}\to \CY)\in (\Sch^{\on{aff}})_{/\CY}$, the space 
\begin{equation} \label{e:lift to A}
\Maps(S,\CA)\underset{\Maps(S,\CY)}\times \{y\}
\end{equation} 
is a group-like $\BE_n$-object of $\Spc$, in a way functorial in 
$(S,y)$. 

\medskip

We include the case of $n=\infty$, when we stipulate that $\CA$ is a commutative group-object of $\on{PreStk}_{/\CY}$. I.e., 
\eqref{e:lift to A} should be a commutative group-object of $\Spc$, i.e., a connective spectrum. 

\medskip

For any $0\leq i\leq n$, we let $B^i(\CA)$ denote the $i$-fold classifying space of $\CA$. This is a group-like $\BE_{n-i}$-object 
in $\on{PreStk}_{/\CY}$. For $i=1$ we simply write $B(\CA)$ instead of $B^1(\CA)$. 

\sssec{}

We let $B^i_{\on{et},/\CY}(\CA)$ (resp., $B^i_{\on{Zar},/\CY}(\CA)$) denote the \'etale (resp., Zariski) sheafification of $B^i(\CA)$ in 
the category $(\Sch^{\on{aff}})_{/\CY}$ (see \cite[Vol. 1, Chapter 2, Sect. 2.3]{GR2}). 
We will be interested in spaces of the form 
\begin{equation} \label{e:general sect}
\Maps_{/\CY}(\CY,B^i_{\on{et},/\CY}(\CA)),
\end{equation} 
where $\Maps_{/\CY}(-,-)$ is short-hand for $\Maps_{\on{PreStk}_{/\CY}}(-,-)$.

\medskip 

Note that \eqref{e:general sect} is naturally a group-like $\BE_{n-i}$-space (resp., a commutative group object in $\Spc$ if $n=\infty$). 

\sssec{}  \label{sss:constant grp-sch}

In most examples, we will take $\CA$ to be of the form $A\times \CY$, where $A$ is a torsion abelian group, considered
as a constant prestack. In this case
$$\Maps_{/\CY}(\CY,B^i_{\on{et},/\CY}(\CA))\simeq \Maps(\CY,B^i_{\on{et}}(A)).$$

\medskip  

Note that
$$\pi_j\left(\Maps(\CY,B^i_{\on{et}}(A))\right)=
\begin{cases} 
&H^{i-j}_{\on{et}}(\CY,A), \quad j\leq i;\\
&0, \quad j>i.
\end{cases}
$$

Here $H^\bullet_{\on{et}}(\CY,A)$ refers to the \'etale cohomology of $\CY$ with coefficients in $A$. In other words,
it is the cohomology of the object 
$$\on{C}^\bullet_{\on{et}}(\CY,A):=\underset{(S,y)\in \Sch^{\on{aff}}/\CY}{\underset{\longleftarrow}{\on{lim}}}\, \on{C}^\bullet_{\on{et}}(S,A),$$
see \cite[Construction 3.2.1.1]{GL2}. 

\sssec{}

Note also that in this case the functor 
$$S\mapsto \Maps(S,B^i_{\on{et}}(A)), \quad (\Sch^{\on{aff}})^{\on{op}}\to \Spc$$
identifies with the \emph{left Kan extension} of its restriction to $(\Sch^{\on{aff}}_{\on{ft}})^{\on{op}}$. I.e., if an affine scheme $S$ is written as
a filtered limit
$$S=\underset{\alpha}{\underset{\longleftarrow}{\on{lim}}}\, S_\alpha,\quad S_\alpha\in \Sch^{\on{aff}}_{\on{ft}},$$ then the map
$$\underset{\alpha}{\underset{\longrightarrow}{\on{colim}}}\, \Maps(S_\alpha,B^i_{\on{et}}(A))\to \Maps(S,B^i_{\on{et}}(A))$$
is an isomorphism (this latter assertion means that $B^i_{\on{et}}(A)$ is locally of finite type as a prestack), see \corref{c:descent lft}.

\sssec{} 

For $k=1$, the points of the space
\begin{equation} \label{e:general sect 1}
\on{Tors}_\CA(\CY):=\Maps_{/\CY}(\CY,B_{\on{et},/\CY}(\CA))
\end{equation} 
are by definition $\CA$-torsors on $\CY$.

\sssec{}

Our primary interest is the cases of $k=2$. We will call objects of the space
\begin{equation} \label{e:general sect 2}
\on{Ge}_{\CA}(\CY):=\Maps_{/\CY}(\CY,B^2_{\on{et},/\CY}(\CA)).
\end{equation} 
$\CA$-gerbes on $\CY$. 

\medskip

When $\CA$ is of the form $A\times \CY$ (see \secref{sss:constant grp-sch} above), we will simply write $\on{Ge}_A(\CY)$. 









\ssec{Gerbes coming from line bundles}

In this subsection we will be studying gerbes for a constant commutative group-prestack, corresponding
to a torsion abelian group $A$. In what follows, we will be assuming that the orders of elements of $A$ are co-prime to $\on{char}(k)$. 

\sssec{}  \label{sss:ex line bundle}

Let $A(-1)$ denote the group
$$\underset{n\in \BN}{\on{colim}}\, \Hom(\mu_n,A).$$
In the above formula we regard $\BN$ as a poset via
$$n'\geq n \, \Leftrightarrow\, n\mid n',$$
and in forming the above colimit the transition maps are given by 
\begin{equation} \label{e:mu n}
\mu_{n'}\overset{x\mapsto x^{\frac{n'}{n}}}\twoheadrightarrow \mu_n, \quad \text{for }n\mid n'.
\end{equation} 

\medskip

For future reference, denote also 
$$A(1)=\underset{n\in \BN}{\on{colim}}\, \left(\mu_{n'}\underset{\BZ/n'\BZ}\otimes A_{n\on{-tors}}\right),$$
where $A_{n\on{-tors}}\subset A$ is the subgroup of $n$-torsion elements, and in the above formula
$n'$ is any integer divisible by $n$.

\sssec{}  \label{sss:line bundles to gerbes}

We claim that to any line bundle $\CL$ on a prestack $\CY$ and an element $a\in A(-1)$ 
one can canonically associate an $A$-gerbe, denoted $\CL^{a}$, over $\CY$.

\medskip

It suffices to perform this construction for $A=\mu_n$ and $a$ coming from the identity map $\mu_n\to \mu_n$. 
In this case, the corresponding $\mu_n$-gerbe will be denoted $\CL^{\frac{1}{n}}$.

\medskip

By definition, for an affine test scheme $S$ over $\CY$, the value of $\CL^{\frac{1}{n}}$ on $S$ is the groupoid of pairs
$$(\CL',(\CL')^{\otimes n}\simeq \CL|_{S}),$$
where $\CL'$ is a line bundle on $S$. 

\medskip

Note that if $\CL$ admits an $n$-th root $\CL'$, then this $\CL'$ determines a trivialization of $\CL^{\frac{1}{n}}$. 

\begin{rem}  \label{r:roots}
We emphasize the notational difference between the $\mu_n$-gerbe $\CL^{\frac{1}{n}}$, and the line bundle 
$\CL^{\otimes \frac{1}{n}}$, when the latter happens to exist. Namely, a choice of $\CL^{\otimes \frac{1}{n}}$
defines a trivialization of the gerbe $\CL^{\frac{1}{n}}$. 
\end{rem} 

\sssec{} \label{sss:trivialized gerbes}

Let $Y$ be a smooth scheme, and let $Z\subset Y$ be a subvariety of codimension one. 
Let $Z_i,i\in I$ denote the irreducible components of $Z$. 
For every $i$, let $\CO(Z_i)$ denote the corresponding line bundle on $Y$, trivialized away from $Z$.

\medskip
  
We obtain a homomorphism
\begin{equation} \label{e:all triv gerbes}
\Maps(I,A(-1))\to \on{Ge}_{A}(Y)\underset{\on{Ge}_{A}(Y-Z)}\times *, \quad (I\mapsto a_i)\rightsquigarrow \underset{i}\bigotimes\, \CO(Z_i)^{a_i}
\end{equation}

\begin{lem} \label{l:trivialized gerbes}
Assume that the orders of elements in $A$ 
are \emph{prime} to $\on{char}(k)$, i.e., that $A$ has no $p$-torsion, where $p=\on{char}(k)$. 
Then the map \eqref{e:all triv gerbes} is an isomorphism in $\Spc$. 
\end{lem}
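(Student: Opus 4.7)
The approach is to compute the fiber
$$F := \on{Ge}_A(Y) \underset{\on{Ge}_A(Y-Z)}\times *$$
via étale cohomology with supports, and then apply absolute cohomological purity. Since $\on{Ge}_A(-) = \Maps(-, B^2_{\on{et}}(A))$, the space $F$ is the fiber of a map of $2$-truncated spaces, and unwinding gives canonical isomorphisms
$$\pi_j(F) \simeq H^{2-j}_{Z,\on{et}}(Y, A) \quad \text{for } 0 \leq j \leq 2,$$
where the right-hand side denotes étale cohomology of $Y$ with supports along $Z$. The target $\Maps(I, A(-1))$ is a discrete space, so it suffices to show that $H^0_Z(Y,A) = H^1_Z(Y,A) = 0$, to identify $H^2_Z(Y,A) \simeq \Maps(I, A(-1))$ canonically, and to match the resulting isomorphism with the explicit formula in the statement.

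First I would reduce to the case $A = \mu_n$ with $n$ coprime to $p := \on{char}(k)$: both sides of \eqref{e:all triv gerbes} are compatible with filtered colimits in $A$, and the hypothesis that $A$ has no $p$-torsion lets us exhaust $A$ by finite subgroups built from $\mu_n$'s via the tautological pairing that defines $A(-1)$. Next I would dispose of the singularities of $Z$. Let $W := Z \setminus Z^{sm}$; since each $Z_i$ is irreducible of codimension $1$ in the smooth scheme $Y$, the locus $W$ has codimension $\geq 2$ in $Y$. Étale cohomological semi-purity (Gabber; valid because $A$ is $p$-torsion-free and $Y$ is smooth) gives $H^j_W(Y,A) = 0$ for $j < 4$, so the long exact sequence for the closed pair $W \subset Z$ produces
$$H^j_Z(Y, A) \simeq H^j_{Z^{sm}}(V, A) \quad \text{for } j \leq 2,$$
where $V := Y \setminus W$. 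We are thus reduced to a smooth closed pair $(V, Z^{sm})$ of codimension $1$.

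Absolute cohomological purity now furnishes a quasi-isomorphism $R\Gamma_{Z^{sm}}(V,A) \simeq R\Gamma(Z^{sm}, A(-1))[-2]$, so $H^0_Z = H^1_Z = 0$ and $H^2_Z(Y,A) \simeq H^0(Z^{sm}, A(-1))$. Since $k$ is algebraically closed, $A(-1)$ is the constant sheaf on $Z^{sm}$, and since each $Z_i^{sm}$ is the smooth locus of an irreducible variety (hence connected), we obtain $H^0(Z^{sm}, A(-1)) = \prod_i A(-1) = \Maps(I, A(-1))$ canonically. To match this with the map \eqref{e:all triv gerbes}, one reduces by functoriality and additivity to a single component $Z_i$, where it becomes the standard statement that, under purity, the refined first Chern class of $\CO(Z_i)$ in $H^2_{Z_i}(Y, \mu_n)$ equals the fundamental class $1 \in H^0(Z_i^{sm}, \mu_n(-1)) = \BZ/n$; this in turn follows from applying the Kummer sequence to $\CO(Z_i)$, which is canonically trivialized on $Y - Z_i$. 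The main technical input is the invocation of Gabber's purity and semi-purity; granted those, the rest is a routine unwinding.
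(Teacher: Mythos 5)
Your proof is correct and takes essentially the same approach as the paper: the paper's own proof is a one-line citation of the facts that $H^i_{\on{et},Z}(Y,A)$ vanishes for $i=0,1$ and identifies with $\Maps(I,A(-1))$ for $i=2$, and your argument simply supplies the derivation of those facts via semi-purity (to excise the codimension-$\geq 2$ locus $Z\setminus Z^{\on{sm}}$) and Gabber's absolute cohomological purity for the resulting smooth pair.
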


\begin{proof}
The assertion follows from the fact that the \'etale cohomology group $H^i_{\on{et},Z}(Y,A)$ identifies with $\Maps(I,A(-1))$ 
for $i=2$ and vanishes for $i=1,0$.
\end{proof} 

\ssec{The sheaf-theoretic context}  \label{ss:sheaves}

Most of this paper is devoted to the discussion of gerbes. However, in the last two sections, we will apply this
discussion in order to formulate metaplectic geometric Satake. The latter involves \emph{sheaves} and
more generally \emph{sheaves of categories} on various geometric objects. 

\medskip

When discussing sheaves (and sheaves of categories) we will only need to consider algebro-geometric objects that are 
\emph{locally of finite type}, i.e., prestacks that belong to $\on{PreStk}_{\on{lft}}$, see \secref{sss:lft}. 
In what follows, in order to simplify the notation, we will omit the subscripts $\on{ft}$ and $\on{lft}$. 

\sssec{}

There are several possible sheaf-theoretic contexts (for schemes of finite type): 

\medskip

\noindent(a) For any ground field $k$ we can consider the ind-completion of the constructible 
derived category of $\ell$-adic sheaves (viewed as a DG category);

\medskip

\noindent(b) When the ground field is $\BC$, then for an arbitrary algebraically closed field $E$ of characteristic $0$, we 
can consider the ind-completion of the constructible derived category of $E$-vector spaces (viewed as a DG category);

\medskip

\noindent(c) When the ground field $k$ has characteristic $0$, we can consider the ind-completion of the derived category of
holonomic D-modules (viewed as a DG category); 

\medskip

\noindent(c') In the setting of (c), we consider can consider the derived category of \emph{all} D-modules (viewed as a DG category). 

\medskip

We will refer to contexts (a), (b) and (c) as \emph{constructible}. 

\medskip

We will denote by $E$ the field of coefficients of our sheaves. So, in the above cases, this is $\ol\BQ_\ell$, $E$ and $k$,
respectively. For $Y\in \Sch$ we will denote by 
$$\Shv(Y)$$
the resulting category of sheaves. This is an $E$-linear DG category. 

\medskip
 
Note that in cases (c) and (c'), $E=k$. 
Yet, we will keep the notational distinction between $k$ (the ground field) and $E$ (the field of coefficients) even in this case. 

\medskip

When we need to emphasize the distinction between case (c') and the other cases, we will use the notation
$$\Dmod(Y).$$

\sssec{}

We will denote by 
\begin{equation} \label{e:sch ft}
\Shv:(\Sch^{\on{aff}})^{\on{op}}\to \DGCat
\end{equation} 
the functor constructed  that associates to an affine scheme $S$ of finite type the DG category $\Shv(S)$
and to a morphism $f:S_1\to S_2$
the functor 
$$f^!:\Shv(S_2)\to \Shv(S_1).$$

\medskip

A basic feature of this functor is that the functor
\eqref{e:sch ft} carries a natural \emph{lax symmetric monoidal structure}. In particular, for $S_1,S_2\in \Sch^{\on{aff}}$
we have a (fully faithful) functor 
\begin{equation} \label{e:extrnl ten prod}
\Shv(S_1)\otimes \Shv(S_2)\to \Shv(S_1\times S_2).
\end{equation} 

When $\Shv(-)=\Dmod(-)$, i.e., in context (c'), this lax structure is \emph{strict}, i.e., the functor \eqref{e:extrnl ten prod} is an
equivalence. This is \emph{emphatically not the case} in the constructible contexts. 

\sssec{}  

Yoneda embedding is a fully faithful functor
$$\Sch^{\on{aff}}\hookrightarrow \on{PreStk}.$$

The right Kan extension of $\Shv$ along the (opposite of the) Yoneda embedding $(\Sch^{\on{aff}})^{\on{op}}\to (\on{PreStk})^{\on{op}}$ 
defines a functor
$$\Shv:(\on{PreStk})^{\on{op}}\to \DGCat.$$

\medskip

Thus, if $\CY\in \on{PreStk}$ is written as
$$\CY=\underset{i}{\underset{\longrightarrow}{\on{colim}}}\, S_i, \quad S_i\in \Sch^{\on{aff}},$$ 
we have by definition
$$\Shv(\CY)=\underset{i}{\underset{\longleftarrow}{\on{lim}}}\, \Shv(S_i).$$













\ssec{Sheaves of categories} \label{ss:shv-of-cat}

Sheaves of categories appear in this paper as a language in which we formulate the metaplectic geometric Satake functor.
The reader can skip this subsection on the first pass, and return to it when necessary. 

\medskip

In this subsection we take our sheaf theory to be that of all D-modules, i.e., $\Shv(-)=\Dmod(-)$. 

\medskip

The discussion in this section is essentially borrowed from \cite{Ga1}. 

\sssec{}

Note that the diagonal morphism for affine schemes defines on every object of $\Sch^{\on{aff}}$ a canonical structure 
of co-commutative co-algebra. 

\medskip

Hence, the symmetric monoidal structure on $\Shv$ (see \cite[Vol. 2, Chapter 3, Corollary 6.1.2]{GR2}) naturally gives rise to a functor
$$(\Sch^{\on{aff}})^{\on{op}}\to \on{ComAlg}(\DGCat)=:\DGCat^{\on{SymMon}}.$$

\medskip

In particular, for every $S\in \Sch^{\on{aff}}$, the category $\Shv(S)$ has a natural symmetric monoidal structure, and for every $f:S_1\to S_2$,
the functor $f^!:\Shv(S_2)\to \Shv(S_1)$ is symmetric monoidal. 

\sssec{}

By a sheaf of DG categories $\CC$ over $\CY\in \on{PreStk}$ we will mean a functorial assignment
\begin{equation} \label{e:shv of cat data}
(S\overset{y}\to \CY)\in ((\Sch^{\on{aff}})_{/\CY})^{\on{op}} \rightsquigarrow \CC(S,y)\in \Shv(S)\mmod,
\end{equation} 
where $\Shv(S)\mmod$ denotes the category of modules in the (symmetric) monoidal category $\DGCat$ for the
(commutative) algebra object $\Shv(S)$. We impose the following quasi-coherence condition:

\medskip

For a morphism of affine schemes $f:S_1\to S_2$, 
$y_2:S_2\to \CY$ and $y_1=y_2\circ f$, consider the corresponding functor
\begin{equation} \label{e:pullback shv of cat}
\CC(S_2,y_2)\to \CC(S_1,y_1).
\end{equation} 

Part of the data of \eqref{e:shv of cat data} is that the functor \eqref{e:pullback shv of cat} should be
$\Shv(S_2)$-linear. Hence, it gives rise to a functor of $\Shv(S_1)$-module categories
\begin{equation} \label{e:tensor shv of cat}
\Shv(S_1)\underset{\Shv(S_2)}\otimes \CC(S_2,y_2)\to \CC(S_1,y_1),
\end{equation}
where $\otimes$ is the operation of tensor product of DG categories (see, e.g., \cite[Vol. 1, Chapter 1, Sect. 10.4]{GR2}).

\medskip

We require that \eqref{e:tensor shv of cat} should be an isomorphism.

\begin{rem}
What we defined as a sheaf of categories over $\CY$ would in the 
language of \cite{Ga1} be rather called a \emph{crystals of categories}. More precisely, \cite[Theorem 2.6.3]{Ga1} 
guarantees that our notion of a sheaf of categories over $\CY$ coincides with the notion of a sheaf of categories 
over $\CY_{\on{dR}}$ in the terminology of \cite{Ga1}.
\end{rem} 

\sssec{} \label{sss:ex prsheaf of categ}

A basic example of a sheaf of categories is denoted $\Shv_{/\CY}$; it is defined by setting
$$\Shv_{/\CY}(S,y):=\Shv(S).$$

\medskip

Let $Z$ be a prestack locally of finite type over $\CY$. We define a sheaf of categories $\Shv(Z)_{/\CY}$ over $\CY$ by setting for
$S\overset{y}\to \CY$,
$$\Shv(Z)_{/\CY}(S,y)=\Shv(S\underset{\CY}\times Z).$$

\medskip

The fact that for $f:S_1\to S_2$, the functor
$$\Shv(S_1)\underset{\Shv(S_2)}\otimes \Shv(S_2\underset{\CY}\times Z)\to \Shv(S_1\underset{\CY}\times Z)$$
is an equivalence follows from \cite[Theorem 2.6.3]{Ga1}. 

\sssec{Descent}

Forgetting the module structure, a sheaf of DG categories $\CC$ over $\CY$ defines a functor
\begin{equation} \label{e:preshv of cat as funct}
((\Sch^{\on{aff}})_{/\CY})^{\on{op}}\to \DGCat.
\end{equation} 

It follows from \cite[Corollary 1.5.2]{Ga1} that the assignment \eqref{e:preshv of cat as funct}
satisfies \'etale descent (in fact, it satisfies h-descent). 

\sssec{}  \label{sss:category prestack}  \label{sss:global sections}

Applying to the functor \eqref{e:preshv of cat as funct} the procedure of right Kan extension along
$$((\Sch^{\on{aff}})_{/\CY})^{\on{op}}\to ((\on{PreStk})_{/\CY})^{\on{op}},$$
we obtain that for every prestack $Z$ over $\CY$
there is a well-defined DG category $\CC(Z)$. 

\medskip

Namely, if 
$$Z\simeq \underset{i}{\underset{\longrightarrow}{\on{colim}}}\, S_i, \quad (S_i,y_i)\in (\Sch^{\on{aff}})_{/\CY},$$
then 
$$\CC(Z)=\underset{i}{\underset{\longleftarrow}{\on{lim}}}\, \CC(S_i,y_i).$$

We will refer to $\CC(Z)$ as the ``category of sections of $\CC$ over $Z$".
By construction the DG category $\CC(Z)$ is naturally an object of $\Shv(Z)\mmod$.

\medskip

When $Z$ is $\CY$ itself, we will refer to $\CC(\CY)$ as the ``category of global sections of $\CC$". 

\sssec{Example}
  
For $\CC=\Shv(Z)_{/Y}$ as in \secref{sss:ex prsheaf of categ}, we have
$$\CC(\CY)\simeq\Shv(Z).$$

\sssec{}

The construction in \secref{sss:global sections} defines a functor
\begin{equation} \label{e:bGamma}
\{\text{Sheaves of categories over }\CY\}\to \Shv(\CY)\mmod.
\end{equation}

The functor \eqref{e:bGamma} admits a left adjoint given by sending
\begin{equation} \label{e:bLoc}
\CC\rightsquigarrow \left((S\to \CY)\mapsto \Shv(S)\underset{\Shv(\CY)}\otimes \CC\right).
\end{equation}

\medskip

We have the following assertion from \cite[Theorem 2.6.3]{Ga1} states:

\begin{thm}   \label{t:1-aff}
For $\CY$ that is an ind-scheme of ind-finite type, the mutually adjoint functors \eqref{e:bGamma} and \eqref{e:bLoc}
are equivalences.
\end{thm}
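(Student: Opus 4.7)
The plan is to prove \thmref{t:1-aff} by induction on the complexity of the geometric object, starting from affine schemes of finite type and extending successively to schemes of finite type and then to ind-schemes.

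Step 1: Reduce to the affine case. For $\CY = S \in \Sch^{\on{aff}}_{\on{ft}}$, the assertion is essentially tautological: unwinding the definition of a sheaf of categories in \secref{ss:shv-of-cat}, the data \eqref{e:shv of cat data} for test schemes $(S' \to S)$ is completely determined by the value on $(S \overset{\id}\to S)$ via the equivalence \eqref{e:tensor shv of cat}. Thus the category of sheaves of categories over $S$ is literally equivalent to $\Shv(S)\mmod$, and the functor $\bGamma$ sends $\CC$ to $\CC(S,\id)$. The unit and counit of the adjunction $(\bLoc, \bGamma)$ are then evidently equivalences.

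Step 2: Extend to schemes of finite type via \'etale descent. Given $Y \in \Sch_{\on{ft}}$, cover it by a Zariski (or \'etale) atlas $\{S_i \to Y\}$ with $S_i \in \Sch^{\on{aff}}_{\on{ft}}$, and let $S_\bullet$ denote its \v{C}ech nerve. The category of sheaves of categories over $Y$ satisfies \'etale descent by design (this is invoked at \eqref{e:preshv of cat as funct}), so it computes as
\[
\underset{[n] \in \bDelta}{\underset{\longleftarrow}{\on{lim}}}\, \{\text{Sheaves of categories over } S_n\} \simeq \underset{[n] \in \bDelta}{\underset{\longleftarrow}{\on{lim}}}\, \Shv(S_n)\mmod,
\]
where the second equivalence uses Step 1. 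The key remaining input is the companion descent statement: that the assignment $T \mapsto \Shv(T)\mmod$ also satisfies \'etale descent along hypercovers, so that this limit identifies with $\Shv(Y)\mmod$. Under this identification, the limit equivalence intertwines the functor $\bGamma$ term by term with the restriction functor $\Shv(Y)\mmod \to \Shv(S_n)\mmod$, giving the equivalence for $Y$.

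Step 3: Pass to ind-schemes. Write $\CY = \underset{\alpha}{\on{colim}}\, Y_\alpha$ with $Y_\alpha \in \Sch_{\on{ft}}$ and transition maps closed embeddings. By the construction in \secref{sss:global sections}, a sheaf of categories over $\CY$ is the same as a compatible family $(\CC_\alpha)$ of sheaves of categories over the $Y_\alpha$, and likewise $\Shv(\CY) \simeq \underset{\alpha}{\on{lim}}\, \Shv(Y_\alpha)$ as symmetric monoidal categories, which yields $\Shv(\CY)\mmod \simeq \underset{\alpha}{\on{lim}}\, \Shv(Y_\alpha)\mmod$ (using that closed embeddings give well-behaved proper/smooth base change for the structure functors). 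Applying Step 2 termwise and taking the limit gives the desired equivalence, with $\bGamma$ and $\bLoc$ arising as the limits of their counterparts.

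The main obstacle is the descent statement in Step 2, namely that $T \mapsto \Shv(T)\mmod$ satisfies \'etale descent. This is genuinely non-formal: it requires that for an \'etale cover $S_0 \to Y$, the adjunction $\Shv(Y)\mmod \rightleftarrows \Shv(S_0)\mmod$ is monadic with monad given by the \v{C}ech comonad, which in turn rests on the fact that pushforward along \'etale covers satisfies appropriate base change and projection formulas at the level of D-modules, together with the fact that the resulting comonad on $\Shv(S_0)$-modules is computed by tensoring, not by an opaque limit. All remaining steps are then formal manipulations with limits of $\infty$-categories and adjunctions.
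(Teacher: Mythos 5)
The paper does not prove \thmref{t:1-aff}; it is quoted from \cite[Theorem 1.5.2]{Ga1} (and Theorem~2.6.3 of {\it loc.\ cit.}\ for the D-module version relevant here), so there is no in-paper argument to compare against and your proposal must be judged on its own merits. Your Step~1 is correct: for affine $S$ the slice category $(\Sch^{\on{aff}}_{\on{ft}})_{/S}$ has $(S,\on{id})$ as a final object, so the limit defining sheaves of categories over $S$ collapses to $\Shv(S)\mmod$; and your induction from affine schemes to schemes to ind-schemes does broadly match the shape of the argument in \cite{Ga1}.

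The serious issue is Step~3, which contains a genuine gap that you do not flag. From $\Shv(\CY)\simeq\lim_\alpha\Shv(Y_\alpha)$ as symmetric monoidal categories it does \emph{not} formally follow that $\Shv(\CY)\mmod\simeq\lim_\alpha\Shv(Y_\alpha)\mmod$: passage to module categories does not commute with limits of commutative algebra objects in $\DGCat$, and the natural functor $\Shv(\CY)\mmod\to\lim_\alpha\Shv(Y_\alpha)\mmod$, $\CM\mapsto(\Shv(Y_\alpha)\underset{\Shv(\CY)}\otimes\CM)_\alpha$, is in general neither fully faithful nor essentially surjective. Establishing this equivalence is precisely the technical content of the ind-scheme case, and it has to be proved, not observed. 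What makes it true here is that the transition functors $\Shv(Y_\beta)\to\Shv(Y_\alpha)$ are $!$-restrictions along closed embeddings, which admit continuous fully faithful left adjoints (the $*$-pushforwards); that $\Shv(\CY)$ is compactly generated by pushforwards of compact objects from the finite-type strata $Y_\alpha$; and ultimately that $\Shv(\CY)$ is dualizable in $\DGCat$. Your parenthetical appeal to ``well-behaved proper/smooth base change'' does not engage with any of this. Step~2 at least is honest that the descent statement is the crux; note though that the descent you invoke at \eqref{e:preshv of cat as funct} concerns the sections of a \emph{given} sheaf of categories, which is a different (if related) statement from descent for the category of all sheaves of categories and from descent for $T\mapsto\Shv(T)\mmod$, so even the first identification in your Step~2 is not free.
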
 

\begin{rem}
For the purposes of the present paper one can make do avoiding (the somewhat non-trivial) \thmref{t:1-aff}.
However, allowing ourselves to use it simplifies a lot of discussions related to sheaves of categories. 
\end{rem} 

\ssec{Some twisting constructions}  \label{ss:twist category}

The material in this subsection may not have proper references in the literature, so we provide
some details. The reader is advised to skip it and return to it when necessary. 

\medskip

When discussing sheaves of categories, we will be assuming that $\Shv(-)=\Dmod(-)$. 

\sssec{Twisting by a torsor}  \label{sss:twist sheaf by torsor}

Let $\CY$ be a prestack, and let $\CH$ (resp., $\CF$) a group-like object in $\on{PreStk}_{/\CY}$ (resp., an object in 
$\on{PreStk}_{/\CY}$, equipped with an action of $\CH$). In other words, these are functorial assignments
$$(S,y)\in (\Sch^{\on{aff}})_{/\CY}\rightsquigarrow \CH(S,y)\in \on{Grp}(\Spc), \quad 
(S,y)\in (\Sch^{\on{aff}})_{/\CY}\rightsquigarrow \CF(S,y)\in \Spc,$$
and an action of $\CH(S,y)$ on $\CF(S,y)$. 

\medskip

Let $\CT$ be an $\CH$-torsor on $\CY$. In this case, we can form a $\CT$-twist of $\CF$, denoted
$\CF_{\CT}$, and which is an \'etale \emph{sheaf}. Here is the construction\footnote{Note that when $\CT$ is the trivial torsor, 
the output of this construction is the \'etale sheafification of $\CF$.}:

\medskip

Consider the subcategory $\on{Split}(\CT)\subset (\Sch^{\on{aff}})_{/\CY}$ formed by $(S,y)\in (\Sch^{\on{aff}})_{/\CY}$ for which the torsor $\CT|_S$ 
\emph{admits a splitting}. This subcategory forms a basis of the \'etale topology, so it is sufficient to specify the restriction of $\CF_{\CT}$ to $\on{Split}(\CT)$. 

\medskip

The sought-for functor $\CF_{\CT}|_{\on{Split}(\CT)}$ is given by sending $(S,y)$ to
\begin{equation} \label{e:twist torsor}
\left(*\underset{\Maps_{/\CY}(S,B_{\on{et}}(\CH))}\times *\right)\overset{\Maps_{/\CY}(S,\CH)}\times \CF,
\end{equation}
where the two maps
$$*\to \Maps_{/\CY}(S,B_{\on{et}}(\CH))\leftarrow *$$
are (i) the trivial map, and (ii) the one given by the composition
$$S\to \CY \overset{\CT}\to  B_{\on{et}}(\CH).$$ 

Note that $$*\underset{\Maps_{/\CY}(S,B_{\on{et}}(\CH))}\times *$$
is a groupoid equipped with a simply-transitive action of the group $\Maps_{/\CY}(S,\CH)$. In formula
\eqref{e:twist torsor}, the notation $\overset{\Maps_{/\CY}(S,\CH)}\times$ means ``divide by the diagonal
action of $\Maps_{/\CY}(S,\CH)$".

\sssec{A twist of a sheaf of categories by a gerbe} \label{sss:twist sheaves of categ}

Let now $\CC$ be a sheaf of DG categories over $\CY$, and let $\CA$ be a group-like $\BE_2$-object 
in $(\on{PreStk}_{\on{lft}})_{/\CY}$. 

\medskip

Let us be given an action of $\CA$ on $\CC$. In other words, we are given a functorial
assignment for every $(S,y)\in (\Sch^{\on{aff}}_{\on{ft}})_{/\CY}$ of an action of $\CA(S,y)$ on $\CC(S,y)$,
see \secref{sss:E2 on categ}. 

\medskip

Let $\CG$ be an \'etale $\CA$-gerbe on $\CY$. 
Repeating the construction of \secref{sss:twist sheaf by torsor}, we obtain
that we can form the twist $\CC_\CG$ of $\CC$ by $\CG$, which is a new sheaf of DG categories over $\CY$. 

\medskip

In more detail, for $(S,y)\in (\Sch^{\on{aff}}_{\on{ft}})_{/\CY}$ such that $\CG|_S$ \emph{admits a splitting}, we define the
value of $\CC_\CG$ on $(S,y)$ to be
$$\left(*\underset{\Maps_{/\CY}(S,B^2_{\on{et}}(\CA))}\times *\right) 
\overset{\Maps_{/\CY}(S,B_{\on{et}}(\CA))}\times \CC(S,y).$$

\medskip

Concretely, for every $(S\overset{y}\to \CY)\in (\Sch^{\on{aff}}_{\on{ft}})_{/\CY}$ and a trivialization of $\CG|_S$ we have an identification
$$\CC_\CG(S,y)\simeq \CC(S,y).$$
The effect of change of trivialization by a point $a\in B_{\on{et}}(\CA)(S,y)$ has the effect of action of 
$$a\in \on{Funct}(\CC(S,y),\CC(S,y)).$$ 

\sssec{}  \label{sss:twist sheaves of categ 1}

Let $A$ be a torsion subgroup of $E^\times$.

\medskip

Let us take $\CA$ to be the constant group-prestack $\CY\times A$. In this case, the 
embedding $A\to E^{\times}$ gives rise to an action of $\CA$ on \emph{any} sheaf of DG
categories. 

\medskip

Thus, for every
$\CG\in \on{Ge}_A(\CY)$ and any sheaf of categories  $\CC$ over $\CY$, we can form its twisted
version $\CC_\CG$. 

\sssec{The category of sheaves twisted by a gerbe}  \label{sss:twist category sheaves}

Let $A$ and $\CG$ be as in \secref{sss:twist sheaves of categ 1}. 

\medskip

We apply the above construction to $\CC:=\Shv_{/\CY}$. Thus, for any $(S,y)\in (\Sch_{\on{ft}}^{\on{aff}})_{/\CY}$
we have the twisted version of the category $\Shv(S)$, denoted $\Shv_\CG(S)$. 

\medskip

As in \secref{sss:category prestack}, the procedure of Kan extension defines the category
$$\Shv_\CG(Z)$$
for any $Z\in \on{PreStk}_{/\CY}$. 

\section{Factorization gerbes on the affine Grassmannian}   \label{s:Gr}

In this section we introduce our main object of study: factorization gerbes on the affine Grassmannian,
which we stipulate to be the parameters for the metaplectic Langlands theory. 

\ssec{The Ran space}

The Ran space of a curve $X$ is an algebro-geometric device (first suggested in \cite{BD1}) that allows us to talk
about \emph{factorization structures} relative to our curve. 

\sssec{}

Let $X$ be a fixed smooth algebraic curve. We let $\Ran\in \on{PreStk}$ be the Ran space of $X$. By definition,
for an affine test scheme $S$, the space $\Maps(S,\Ran)$ is discrete (i.e., is a set), and equals the set of finite non-empty subsets
of the (set) $\Maps(S,X)$. 

\medskip

For a finite set $J$ we have a map
\begin{equation} \label{e:union map}
\Ran^J\to \Ran
\end{equation} 
given by the union of the corresponding finite subsets.

\medskip

This operation makes $\Ran$ into a (non-unital) semi-group object in $\on{PreStk}_{\on{lft}}$ (see \cite[Definition 5.4.1.1]{Lu2} for what this means). 

\sssec{}  \label{sss:Ran expl}

The Ran space admits the following explicit description as a colimit (as an object of $\on{PreStk}$):
$$\Ran=\underset{I}{\underset{\longrightarrow}{\on{colim}}}\, X^I,$$
where $I$ runs through the category opposite to that of non-empty finite
sets and surjective maps\footnote{We note that this category is \emph{not filtered}, and hence $\Ran$ is
\emph{not} an ind-scheme.}. For a surjection $\phi:I_1\to I_2$, the
corresponding map
$X^{I_2}\to X^{I_1}$ is the corresponding diagonal morphism, denoted
$\Delta_\phi$. 

\medskip

This presentation makes it manifest that $\Ran\in \on{PreStk}_{\on{lft}}$.

\sssec{}

We denote by
$$(\Ran\times \Ran)_{\on{disj}}\subset \Ran\times \Ran$$
the open substack corresponding to the following condition:

\medskip

For an affine test scheme $S$, and two points
$$I_1,I_2\in \Maps(S,\Ran),$$
the point $I_1\times I_2\in \Maps(S,\Ran\times \Ran)$ belongs to $(\Ran\times \Ran)_{\on{disj}}$
if the corresponding subsets
$$I_1,I_2\subset \Maps(S,X)$$
satisfy the following condition: for every $i_1\in I_1,i_2\in I_2$, the corresponding two maps $S\rightrightarrows X$ have non-intersecting images. 

\sssec{}

We give a similar definition for any power: for a finite set $J$ we let
$$\Ran^J_{\on{disj}}\subset \Ran^J$$
be the open substack corresponding to the following condition: 

\medskip

An $S$-point of $\Ran^J$, given by 
$$I_j\subset \Maps(S,X), \quad j\in J$$
belongs to $\Ran^J_{\on{disj}}$ if for every $j_1\neq j_2$ and $i_1\in I_{j_1}$, $i_2\in I_{j_2}$,
the corresponding two maps $S\rightrightarrows X$ have non-intersecting images. 

\ssec{Factorization patterns over the Ran space}

Let $Z$ be a prestack over $\Ran$. At the level of $k$-points, a factorization structure on $Z$ is the following system of isomorphisms:

\medskip

For a $k$-point $\ul{x}$ of $\Ran$ corresponding to a finite set $x_1,...,x_n$ of $k$-points of $X$, 
the fiber $Z_{\ul{x}}$ of $Z$ over the above point is supposed to be identified with
$$\underset{i}\prod\, Z_{\{x_i\}},$$
where $\{x_i\}$ are the corresponding singleton points of $\Ran$.

\medskip

We will now spell this idea, and some related notions, more precisely. 

\sssec{}

By a factorization structure on $Z$ we shall mean an assignment 
for any finite set $J$ of an 
isomorphism
\begin{equation} \label{e:factorization abs}
Z^J\underset{\Ran^J}\times \Ran^J_{\on{disj}} \overset{\gamma_J}\simeq Z\underset{\Ran}\times \Ran^J_{\on{disj}},
\end{equation}
where the morphism $\Ran^J\to \Ran$ is given by \eqref{e:union map}.

\medskip

We require the isomorphisms \eqref{e:factorization abs} to be compatible with surjections of finite sets
in the sense that for $I\overset{\phi}\twoheadrightarrow J$ the diagram

\begin{equation} \label{e:refinement}
\CD
Z^{I}\underset{\Ran^{I}}\times \Ran^{I}_{\on{disj}}  @>{\gamma_{I}}>>  Z\underset{\Ran}\times \Ran^{I}_{\on{disj}}  \\
@V{\sim}VV      @AA{\sim}A  \\
\left(\underset{j\in J}\prod\, Z^{I_j}\underset{\Ran^{I_j}}\times \Ran^{I_j}_{\on{disj}}\right)
\underset{\underset{j\in J}\prod\, \Ran^{I_j}_{\on{disj}}}\times \Ran^{I}_{\on{disj}}   &  & 
(Z\underset{\Ran}\times \Ran^{J}_{\on{disj}}) \underset{\Ran^{J}_{\on{disj}}} \times \Ran^{I}_{\on{disj}}   \\
@V{\underset{j\in J}\prod\, \gamma_{I_j}}VV   @AA{\gamma_{J}}A   \\
\left(\underset{j\in J}\prod\, Z\underset{\Ran}\times \Ran^{I_j}_{\on{disj}}\right)
\underset{\underset{j\in J}\prod\, \Ran^{I_j}_{\on{disj}}}\times \Ran^{I}_{\on{disj}}   & &  
(Z^{J}\underset{\Ran^{J}}\times \Ran^{J}_{\on{disj}}) \underset{\Ran^{J}_{\on{disj}}}\times \Ran^{I}_{\on{disj}}  \\
@V{\sim}VV   @A{\sim}AA  \\
\left(Z^{J} \underset{\Ran^{J}}\times \underset{j\in J}\prod\, \Ran^{I_j}_{\on{disj}}\right)
\underset{\underset{j\in J}\prod\, \Ran^{I_j}_{\on{disj}}}\times \Ran^{I}_{\on{disj}}  
@>{\sim}>>  Z^{J}\underset{\Ran^{J}}\times \Ran^{I}_{\on{disj}}, 
\endCD
\end{equation}
where $I_j:=\phi^{-1}(j)$, is required to commute. Furthermore, if $Z$ takes values in $\infty$-groupoids
(rather than sets), we require a homotopy-coherent system of compatibilities for higher order compositions,
see \cite[Sect. 6]{Ras1}.


\sssec{}  \label{sss:factorization}

Let $\CC$ be a sheaf of DG categories over $\Ran$ (recall that this means that we are working over a ground field
of characteristic $0$ and in the context of D-modules). 

\medskip

By a \emph{factorization structure} on $\CC$ we shall mean a functorial assignment 
for any finite set $J$ and an $S$-point of $\Ran^J_{\on{disj}}$, given by 
$$I_j\subset \Maps(S,X), \quad j\in J$$
of an identification 
\begin{equation} \label{e:factorization abs categ}
\underset{j,\Shv(S)}\bigotimes\, \CC(S,I_j) \simeq \CC(S,I),
\end{equation} 
where $I=\underset{j\in J}\sqcup\, I_j$. 

We require the functors \eqref{e:factorization abs categ} to be compatible with surjections $J_1\twoheadrightarrow J_2$ 
via the commutative diagrams analogous to \eqref{e:refinement}. A precise formulation of these compatibilities is given in
\cite[Sect. 6]{Ras1}.

\sssec{}    \label{sss:fact from Z}

Let $Z$ be a factorization prestack over $\Ran$.  Assume that for every finite set $I$, the category
$\Shv(X^I\underset{\Ran}\times Z)$ is dualizable. We claim that in this case the sheaf of categories $\Shv(Z)_{/\Ran}$, i.e., 
$$(S,I\subset \Maps(S,X))\rightsquigarrow  \Shv(S\underset{\Ran}\times Z),$$
has a natural factorization structure. 

\medskip

Indeed, for any $Z$ we have a canonically defined system of functors
$$\underset{j,\Shv(S)}\bigotimes\, \Shv(S\underset{I_j,\Ran}\times Z) \to \Shv\left(\underset{j,S}\Pi\, (S\underset{I_j,\Ran}\times Z)\right)=
\Shv(S\underset{\Ran^J}\times Z^J) \overset{\text{\eqref{e:factorization abs}}}\simeq \Shv(S \underset{I,\Ran}\times Z)$$
for a map $S\to \Ran^J_{\on{disj}}$. We claim that the first arrow is an equivalence if each  
$\Shv(X^I\underset{\Ran}\times Z)$ is dualizable. 

\medskip

To prove this, it suffices to can consider the universal case when 
$$S:=X^I_{\on{disj}}:=X^I\underset{\Ran^J}\times \Ran^J_{\on{disj}}$$
for a finite set $I$ and a surjection $I\twoheadrightarrow J$. 

\medskip

We have
\begin{multline*}
\underset{j,\Shv(S)}\bigotimes\, \Shv(S\underset{I_j,\Ran}\times Z)\simeq
\left(\underset{j\in J}\bigotimes\, \Shv(X^{I_j}\underset{\Ran}\times Z)\right)\underset{\Shv(X^I)}\otimes \Shv(X^I_{\on{disj}}) \to \\
\to \Shv\left(\underset{j\in J}\Pi\, (X^{I_j}\underset{\Ran}\times Z)\right) \underset{\Shv(X^I)}\otimes \Shv(X^I_{\on{disj}})\simeq
 \Shv\left(\left(\underset{j\in J}\Pi\, (X^{I_j}\underset{\Ran}\times Z)\right)\underset{X^I}\times X^I_{\on{disj}}\right)=
\Shv(S\underset{\Ran^J}\times Z^J),
\end{multline*}
where the second arrow is an isomorphism due to the assumption that the categories $\Shv(X^{I_j}\underset{\Ran}\times Z)$
are dualizable.

\sssec{}

Let $Z$ be a factorization prestack over $\Ran$, and let $A$ be a torsion abelian group. 
Let $\CG$ be an $A$-gerbe on $Z$. By a factorization structure 
on $\CG$ we shall mean a system of identifications 
\begin{equation} \label{e:factorization abs gerbe}
\CG^{\boxtimes J}|_{Z^J\underset{\Ran^J}\times \Ran^J_{\on{disj}}}\simeq \CG|_{Z\underset{\Ran}\times \Ran^J_{\on{disj}}},
\end{equation} 
where the underlying spaces are identified via \eqref{e:factorization abs}. 

\medskip

The identifications \eqref{e:factorization abs gerbe} are required to be compatible with surjections $J_1\twoheadrightarrow J_2$ 
via the commutative diagrams \eqref{e:refinement}.
Note that since gerbes form a 2-groupoid, we only need to specify the datum of \eqref{e:factorization abs gerbe} up to $|J|=3$,
and check the relations up to $|J|=4$.  

\medskip

Factorization gerbes over $Z$ naturally form a space (in fact, a 2-groupoid), equipped with a structure of commutative group in $\Spc$
(i.e., connective spectrum), to be denoted $\on{FactGe}_{A}(Z)$. 

\begin{rem}
Note that the diagrams \eqref{e:refinement} include those corresponding to automorphisms of finite sets. I.e., the datum of
factorization gerbe includes equivariance with respect to the action of the symmetric group. For this reason what we call
``factorization gerbe" in \cite{Re} was called ``symmetric factorizable gerbe".
\end{rem} 

\sssec{Variant} \label{sss:variant factor}

Let $Z$ be a factorization prestack over $\Ran$, and let $\CG$ be a factorization $A$-gerbe over it for $A\subset E^\times$. 
Assume that for every finite set $I$, the category $\Shv_\CG(X^I\underset{\Ran}\times Z)$ is dualizable. Then 
the sheaf of categories $\Shv_\CG(Z)_{/\Ran}$ defined by 
$$(S,I\subset \Maps(S,X))\rightsquigarrow \Shv_\CG(S\underset{\Ran}\times Z)$$
has a natural factorization structure. 

\sssec{}

By a similar token, we can consider factorization line bundles over factorization prestacks, and also $\BZ$- or $\BZ/2\BZ$-graded 
line bundles\footnote{Note that in the latter case, the compatibility involved in the factorization structure (arising from the diagrams 
\eqref{e:refinement} for automorphisms
of finite sets $J$) involves \emph{sign rules}. I.e., a factorization $\BZ/2\BZ$-graded line bundle \emph{does not} give rise to a factorization line bundle
by forgetting the grading.}. 

\medskip

If $\CL$ is a (usual, i.e., not graded) factorization line bundle and $a\in A(-1)$, we obtain a factorization gerbe $\CL^{a}$.

\ssec{The Ran version of the affine Grassmannian}

In this subsection we introduce the Ran version of the affine Grassmannian, which 
plays a crucial role in the geometric Langlands theory. 

\sssec{}  \label{sss:U}

For an algebraic group $G$, we define the Ran version of the affine Grassmannian of $G$,
denoted $\Gr_G$, to be the following prestack. 

\medskip

For an affine test scheme $S$, the groupoid (in fact, set) $\Maps(S,\Gr_G)$ consists of triples
$$(I,\CP_G,\alpha),$$
where $I$ is an $S$-point of $\Ran$, $\CP_G$ is a $G$-bundle on $S\times X$, and $\alpha$ is a trivialization
of $\CP_G$ over the open subset $U_I\subset S\times X$ equal to the complement of the union $\Gamma_I$ of the graphs of the maps $S\to X$
corresponding to the elements of $I\subset  \Maps(S,X)$. 

\sssec{}

It is known that for every finite set $I$, the prestack $X^I\underset{\Ran}\times \Gr_G$ is an ind-scheme of ind-finite type.
This implies, in partircular, that the dualizability assumptions in Sects. \ref{sss:fact from Z} and \ref{sss:variant factor} are satisfied. 

\sssec{}  

The basic feature of the prestack $\Gr_G$ is that it admits a natural factorization structure over $\Ran$, obtained
by gluing bundles. 

\medskip

Hence, for a torsion abelian group $A$, it makes sense to talk about factorization $A$-gerbes over $\Gr_G$. We denote the 
the resulting space (i.e., in fact, a connective $2$-truncated spectrum) by $$\on{FactGe}_{A}(\Gr_G).$$

\sssec{An example}  \label{sss:line bundles Grass}

Let $\CL$ be a factorization line bundle on $\Gr_G$, and let $a$ be an element of $A(-1)$. Then the $A$-gerbe 
$$\CL^{a}$$
of \secref{sss:ex line bundle}
is naturally a factorization gerbe on $\Gr_G$. 

\medskip

This example is important because there is a canonical factorization line bundle on $\Gr_G$, denoted $\det_\fg$; we will
encounter it in \secref{sss:det}. 

\sssec{}  \label{sss:global}

Recall that we are not assuming that $X$ be complete. Let $\ol{X}$ be its compactification. Let $D\subset \ol{X}$ be the
complementary divisor. Let $\Bun_G(\ol{X};D)$ be the moduli stack of $G$-bundles on $\ol{X}$ equipped with a trivialization
along $D$. (When $X$ is complete, we have $\Bun_G(\ol{X};D)$ is the usual stack $\Bun_G(X)$ classifying $G$-bundles on $X$.

\medskip

We have a naturally defined map 
\begin{equation} \label{e:to BunG}
\Gr_G\to \Bun_G(\ol{X};D).
\end{equation} 

\medskip

Recall now that \cite[Theorem 3.2.13]{GL2} says\footnote{This assertion was proved in {\it loc.cit.} under the additional
assumption that $G$ be semi-simple and simply connected. However, in the case of constant groups-schemes, the 
statement is known to hold in general: see \cite[Theorem 4.1.6]{Ga3}.} that the map \eqref{e:to BunG} is a \emph{universal homological equivalence}. 
This implies that any gerbe on $\Gr_G$ uniquely descends to a gerbe on $\Bun_G(\ol{X};D)$. 

\medskip

In particular, this is the case for factorization gerbes. 

\ssec{The space of geometric metaplectic data}

\sssec{}  \label{sss:on Gr}

Let $E^{\times,\on{tors}}$ denote the  group of roots of unity in $E$ of orders prime to  $\on{char}(k)$. 

\medskip

We stipulate that the space 
$$\on{FactGe}_{E^{\times,\on{tors}}}(\Gr_G)$$
is the space of parameters for the metaplectic Langlands theory. We also refer to it as the space of \emph{geometric metaplectic data}. 

\medskip

This includes both the global case (when $X$ is complete), and the local case when we take $X$ to be a Zariski neighborhood
of some point $x$. 

\sssec{}

Given an $E^{\times,\on{tors}}$-factorization gerbe $\CG$ on $\Gr_G$, we can thus talk about the factorization sheaf of categories,
denoted $$\Shv_\CG(\Gr_G)_{/\Ran},$$
whose value on $S,I\subset \Maps(S,X)$ is
$$\Shv_\CG(S\underset{\Ran}\times \Gr_G).$$

\section{Parameterization of factorization gerbes}  \label{s:param}

From now on we let $A$ be a torsion abelian group whose elements have orders prime to $\on{char}(k)$.
The main example is $A=E^{\times,\on{tors}}$. 

\medskip

The goal of this section is to describe the set of isomorphism classes (and, more ambitiously, the \emph{space})
of $A$-factorization gerbes on $\Gr_G$ in terms of more concise algebro-geometric objects. 

\ssec{Parameterization via \'etale cohomology}  \label{ss:param 1}

In this subsection we will create a space, provided by the theory of \'etale cohomology, that maps to the space   
$\on{FactGe}_A(\Gr_G)$, thereby giving a parameterization of geometric metaplectic data. 

\sssec{}




\medskip

Let $B_{\on{et}}(G)=:\on{pt}/G$ be the stack of $G$-torsors. I.e., this is the sheafification in the \'etale
topology of the prestack $B(G) $ that attaches to an affine test scheme $S$ the groupoid 
$$*/\Maps(S,G).$$

\sssec{}

Consider the space of maps
$$\Maps_{\on{Ptd}(\on{PreStk})}(B_{\on{et}}(G) \times X,B^4_{\on{et}}(A(1))),$$
where the subscript $\on{Ptd}$ stands for the the space of maps 
\begin{equation} \label{e:BG B4}
B_{\on{et}}(G)\times X\to B^4_{\on{et}}(A(1)),
\end{equation} 
equipped with an identification of the composite map
\begin{equation} \label{e:BG B4 triv}
X=\on{pt}\times X\to B_{\on{et}}(G)\times X\to B^4_{\on{et}}(A(1))
\end{equation} 
with 
$$X\to \on{pt}\to B^4_{\on{et}}(A(1)).$$

\medskip

We claim that there is a naturally defined map
\begin{equation} \label{e:from BG infty}
\Maps_{\on{Ptd}(\on{PreStk})}(B_{\on{et}}(G) \times X,B^4_{\on{et}}(A(1)))
\to \on{FactGe}_{A}(\Gr_G).
\end{equation} 

\sssec{} \label{sss:construct gerbe prel}



The construction of the map \eqref{e:from BG infty} proceeds as follows. Let us be given a map \eqref{e:BG B4}
equipped with a trivialization of the composition \eqref{e:BG B4 triv}. 

\medskip

For an affine test scheme $S$ and an $S$-point $(I,\CP_G,\alpha)$ of $\Gr_G$, we need to construct 
a $A$-gerbe $\CG_I$ on $S$. 

\medskip

Moreover, for $\phi:I\twoheadrightarrow J$, such that the point
$$\{\phi^{-1}(j) \subset \Maps(S,\Ran^J), \quad j\in J\}$$ hits $\Ran^J_{\on{disj}}$, we need to be
given an identification
\begin{equation} \label{e:gerbe on S factor}
\CG_{I}\simeq \underset{j\in J}\bigotimes\, \CG_{I_j}.
\end{equation} 

\sssec{} \label{sss:construct gerbe}

Let us interpret the datum of $\CP_G$ as a map
$$S\times X\to B_{\on{et}}(G)\times X.$$

Composing with \eqref{e:BG B4}, we obtain a map 
\begin{equation} \label{e:S to B4}
S\times X\to B^4_{\on{et}}(A(1)),
\end{equation} 
and a trivialization of the resulting map
\begin{equation} \label{e:U to B4}
U_I\to B^4_{\on{et}}(A(1)),
\end{equation} 
where $U_I$ is as in \secref{sss:U}.

\medskip

We claim that such a datum indeed gives rise to a $A$-gerbe $\CG_{I}$ on $S$, equipped 
with identifications \eqref{e:gerbe on S factor}. 

\sssec{} \label{sss:construct gerbe inter}

Recall that $\Gamma_I\subset S\times X$ denotes the union of the graphs of the maps that comprise $I\subset \Hom(S,X)$. 
Consider the maps
$$
\CD
\Gamma_I  @>{\iota}>> S\times X  \\
@V{\pi}VV  \\
S.
\endCD
$$

We regard the datum of \eqref{e:S to B4} together with a trivialization \eqref{e:U to B4} as a 4-cocycle
in
$$\on{C}^\bullet_{\on{et}}(\Gamma_I,\iota^!(A_{S\times X}(1))),$$
where for a scheme $Y$ we denote by $A_Y$ the constant \'etale sheaf with value $A$. 

\medskip 

Thus, in order to construct the gerbe $\CG_I$ on $S$ from \secref{sss:construct gerbe}, it suffices 
construct a map
\begin{equation} \label{e:trace map cohomology}
\on{C}^\bullet_{\on{et}}(\Gamma_I,\iota^!(A_{S\times X}(1)[2]))\to \on{C}^\bullet_{\on{et}}(S,A).
\end{equation}

\sssec{} \label{sss:construct gerbe inter next}

Let $p_X$ denote the projection $X\to \on{pt}$. We have a canonical identification
$$(p_X)^!(A)\simeq A_X(1)[2],$$
and hence
\begin{equation} \label{e:upper !}
(\on{id}_S\times p_X)^!(A_S)\simeq A_{S\times X}(1)[2].
\end{equation} 

\medskip

From here we obtain an isomorphism 
$$\iota^!(A_{S\times X})(1)[2]\simeq \pi^!(A_S),$$
and by the $(\pi_*,\pi^!)$-adjunction, a morphism 
\begin{equation}  \label{e:trace}
\pi_*\circ \iota^!(A_{S\times X})(1)[2]\to A_{S},
\end{equation}

\medskip

The sought-for morphism \eqref{e:trace map cohomology} is obtained from \eqref{e:trace} by applying 
$\on{C}^\bullet_{\on{et}}(S,-)$.  

\sssec{} \label{sss:construct gerbe end}

The factorization structure on the assignment
$$I\rightsquigarrow \CG_I$$
follows from the construction:

\medskip

For $\phi:I\twoheadrightarrow J$ as in \secref{sss:construct gerbe prel}, we have
$$\Gamma_I=\underset{j}\sqcup\, \Gamma_{I_j},$$
and under this identification, the map \eqref{e:trace map cohomology} is the sum of the corresponding maps
\eqref{e:trace map cohomology} with $\Gamma_I$ replaced by $\Gamma_{I_j}$. 

\sssec{}

We have the following assertion:

\begin{prop}  \label{p:parameterization}
The map \eqref{e:from BG infty} is an isomorphism.
\end{prop}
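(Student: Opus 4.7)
The plan is to verify that \eqref{e:from BG infty} induces isomorphisms on all homotopy groups. Both sides are $2$-truncated connective commutative group objects of $\Spc$, so it is enough to treat $\pi_0$, $\pi_1$, and $\pi_2$. The argument has three parts: compute the homotopy groups of the left-hand side; invoke Reich's classification to compute those of the right-hand side; and match them under the trace-map construction of \secref{sss:unwinding}.

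First, I would compute the homotopy groups of the left-hand side. Using that the map \eqref{e:BG B4 triv} has been trivialized, the pointed mapping space fits into a fiber sequence pulling apart the absolute cohomology of $B_{\on{et}}(G)\times X$ with coefficients in $A(1)$ from the contribution of the basepoint $X$. A K\"unneth decomposition relative to $X$, combined with the vanishing of $H^i_{\on{et}}(X,A(1))$ for $i\geq 3$ and with a standard computation of $H^*_{\on{et}}(B_{\on{et}}(G),A(1))$ in degrees $\leq 4$, expresses the three relevant $\pi_i$ in terms of $W$-invariant restricted quadratic forms on $\Lambda$ valued in $A$, bilinear data pulled back from $H^3\otimes H^1$, and a $\pi_2$-term coming from $H^2\otimes H^0$. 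This calculation is precisely what is carried out in \secref{ss:analysis}, so I would simply quote it.

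Second, I would invoke \cite[Theorem II.7.3]{Re} (with the correction indicated in the footnote, i.e., reading the target as $\on{Quad}(\Lambda,A)^W_{\on{restr}}$). Reich's proof proceeds by pulling back along $\Gr_T\to \Gr_B\to \Gr_G$ for a Borel $B\supset T$ to reduce to the torus case, then classifying factorization $A$-gerbes on $\Gr_T$ explicitly by pairs consisting of a restricted quadratic form $Q:\Lambda\to A(-1)$ and a multiplicative trivialization datum for the associated symmetric bilinear form; the equivariance needed to descend from $\Gr_T$ to $\Gr_G$ produces exactly the ``$W$-invariant'' and ``$\on{restr}$'' conditions. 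This gives an isomorphism between the right-hand side of \eqref{e:from BG infty} and the same explicit target that the left-hand side was shown to equal.

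Third, I would verify that \eqref{e:from BG infty} itself implements these two identifications compatibly. The key point is to compute, in the universal case where $S=B_{\on{et}}(T)\times X^I$ and $\CP_G$ is pulled back from the tautological $T$-bundle, the effect of the trace morphism \eqref{e:trace map cohomology}. The class in $H^4_{\on{et}}(B_{\on{et}}(T)\times X,A(1))$ arising from an element of $H^4_{\on{et}}(B_{\on{et}}(T),A(1))\otimes H^0(X,\BZ)$ produces, after applying the trace, the quadratic-form data of a factorization gerbe on $\Gr_T$, while the part coming from $H^3_{\on{et}}(B_{\on{et}}(T),A(1))\otimes H^1_{\on{et}}(X,A(1))$ recovers the bilinear trivialization. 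Combined with \lemref{l:trivialized gerbes}, this identifies the trace construction with Reich's explicit parametrization on the torus.

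The main obstacle will be the last step, namely the bookkeeping of signs. The symmetric-group equivariance built into the diagrams \eqref{e:refinement}, when transferred to the torus picture, is responsible for the sign modifications discussed in Section \ref{s:Jacquet}, and one must check that the natural signs of the trace map \eqref{e:trace} match these precisely. This is in fact the technical heart of Reich's argument, so rather than reproducing it I would refer the reader to \cite[Sect. II.7]{Re}.
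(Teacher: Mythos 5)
Your approach is essentially the same as the paper's: the paper itself states that \propref{p:parameterization} ``results from'' Reich's Theorem~II.7.3 (with the corrected target $\on{Quad}(\Lambda,A)^W_{\on{restr}}$) together with the homotopy-group computation of \secref{ss:analysis}, which is precisely your steps one and two. Your third step — checking that the map \eqref{e:from BG infty} actually implements the abstract isomorphism rather than merely matching homotopy groups — is the right thing to worry about, and the paper addresses it only implicitly (for tori it is carried out explicitly in \secref{ss:tori expl}, which identifies the trace construction with the quadratic-form data via \lemref{l:trivialized gerbes} as you anticipate). Two small imprecisions in your write-up are worth flagging: (i) the contribution you attribute to ``$H^3\otimes H^1$'' vanishes — under the divisibility hypothesis used in \secref{ss:analysis} one has $H^3_{\on{et}}(B(G),A(1))=0$, and the $\pi_1$-term is $H^1_{\on{et}}(X,\Hom(\pi_{1,\on{alg}}(G),A))$, arising from $H^2_{\on{et}}(B(G),A(1))\otimes H^1_{\on{et}}(X,A)$; it is not ``bilinear data''; (ii) your description of $\pi_0$ omits the extra summand $H^2_{\on{et}}(X,\Hom(\pi_{1,\on{alg}}(G),A))$ from $H^2(B(G))\otimes H^2(X)$. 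Since you defer to \secref{ss:analysis} for the actual computation, these do not affect the logic, but the verbal summary should be corrected.
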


\begin{rem}
As was explained to us by J.~Lurie, the assertion of \propref{p:parameterization} is nearly tautological if
one works over the field of complex numbers and in the context of sheaves in the analytic topology.
\end{rem} 

\sssec{}

From \propref{p:parameterization} we will obtain that  
$$\pi_i(\on{FactGe}_{A}(\Gr_G))=H^{4-i}_{\on{et}}(B_{\on{et}}(G) \times X;\on{pt}\times X,A(1)).$$

In \secref{ss:analysis} below we will analyze what these cohomology groups look like. 

\sssec{Status of \propref{p:parameterization}}

The assertion of \propref{p:parameterization} was initially claimed in \cite[Theorem II.7.3]{Re}, but it was stated incorrectly, 
and the proof was erroneous.

\medskip

Currently, a proof is available thanks to the paper \cite{Zhao}. Namely, in Theorem 5.4 of {\it loc.cit.} an explicit description 
of $\on{FactGe}_{A}(\Gr_G)$ is given in terms of what the author calls ``enhanced $\Theta$-data". In particular, this theorem 
gives an explicit description of the homotopy groups $\pi_i(\on{FactGe}_{A}(\Gr_G))$, and those turn out to be isomorphic to
the homotopy groups $\pi_i(\Maps_{\on{Ptd}(\on{PreStk})}(B_{\on{et}}(G) \times X,B^4_{\on{et}}(A(1)))$ (the \secref{sss:calc homotopy}
for the explicit description of the latter). 

\medskip

Further, the compatibility of the map \eqref{e:from BG infty} for $G$ with that for its Cartan subgroup $T$, and an explicit
description of the map \eqref{e:from BG infty} for $T$ (see Sects. \ref{ss:tori expl} and \ref{ss:matching}), show that the map \eqref{e:from BG infty} induces
an isomorphism on homotopy groups. This implies that \eqref{e:from BG infty} is an equivalence, see also Remark
\ref{r:theta G bis}. 

\medskip

We will supply a yet different argument in \secref{s:proof of class}. 

\ssec{Digression: \'etale cohomology of $B(G)$}

\sssec{}  \label{sss:alg fund}

Let $\pi_{1,\on{alg}}(G)$ denote the algebraic fundamental group of $G$. Explicitly, $\pi_{1,\on{alg}}(G)$ can be described as follows:

\medskip

Choose a short exact sequence
\begin{equation} \label{e:cover group}
1\to T_2\to \wt{G}_1\to G\to 1,
\end{equation}
where $T_2$ is a torus and $[\wt{G}_1,\wt{G}_1]$ is simply connected. Set $T_1=\wt{G}_1/[\wt{G}_1,\wt{G}_1]$. Let $\Lambda_1$ and $\Lambda_2$
be the coweight lattices of $T_1$ and $T_2$, respectively. Then $\pi_{1,\on{alg}}(G)\simeq \Lambda_1/\Lambda_2$. 

\medskip

Equivalently, $\pi_{1,\on{alg}}(G)$ is the quotient of $\Lambda$ by the coroot lattice.

\sssec{}  \label{sss:quad restr}

For an abelian group $A$, let $\on{Quad}(\Lambda,A)^W$ denote the set of $W$-invariant quadratic forms on $\Lambda$ with values
in $A$. For any such form, denoted $q$, let $b$ denote the associated symmetric bilinear form:
\begin{equation} \label{e:bilin from quad}
b(\lambda_1,\lambda_2)=q(\lambda_1+\lambda_2)-q(\lambda_1)-q(\lambda_2).
\end{equation} 

\medskip

Let $\on{Quad}(\Lambda,A)^W_{\on{restr}}\subset \on{Quad}(\Lambda,A)^W$ be the subset consisting of forms $q$ that satisfy
the following additional condition: for every coroot $\alpha\in \Lambda$ and any $\lambda\in \Lambda$
\begin{equation} \label{e:restr cond}
b(\alpha,\lambda)=\langle \check\alpha,\lambda\rangle\cdot q(\alpha),
\end{equation} 
where $\check\alpha$ is the root corresponding to $\alpha$. 

\begin{rem}
Note that the identity
$$2b(\alpha,\lambda)=2\langle \check\alpha,\lambda\rangle\cdot q(\alpha)$$
holds automatically. 

\medskip

Moreover, \eqref{e:restr cond} itself holds automatically if 
$\frac{\alpha}{2}\in \Lambda$. 

\end{rem}

\sssec{}  \label{sss:quad restr bis}

Note that we have injective maps
$$\on{Quad}(\Lambda,\BZ)^W \underset{\BZ}\otimes A \hookrightarrow \on{Quad}(\Lambda,A)^W \hookleftarrow \on{Quad}(\pi_{1,\on{alg}}(G),A).$$
whose images belongs to $\on{Quad}(\Lambda,A)^W_{\on{restr}}$. 

\medskip

Assume for a moment that $A$ is divisible. Then we have a surjection 
$$\left(\on{Quad}(\Lambda,\BZ)^W \underset{\BZ}\otimes A \right) \oplus \on{Quad}(\pi_{1,\on{alg}}(G),A)\twoheadrightarrow
\on{Quad}(\Lambda,A)^W_{\on{restr}}.$$

\medskip

In particular, the inclusion
$$\on{Quad}(\Lambda,\BZ)^W \underset{\BZ}\otimes A\hookrightarrow \on{Quad}(\Lambda,A)^W_{\on{restr}}$$
is an equality when the derived group $[G,G]$ of $G$ is simply connected. 

\sssec{}

We claim:

\begin{thm} \label{t:cohomology BG}
Let $A$ be a torsion abelian group $A$ whose elements have orders prime to  $\on{char}(k)$. Assume also that $A$
is divisible. Then: 
$$H^{i}_{\on{et}}(B_{\on{et}}(G) ,A(1))=0 \text{ for } i=1,3;$$
$$H^{2}_{\on{et}}(B_{\on{et}}(G) ,A(1))\simeq \Hom(\pi_{1,\on{alg}}(G),A);$$
$$H^{4}_{\on{et}}(B_{\on{et}}(G) ,A(1))\simeq \on{Quad}(\Lambda,A(-1))^W_{\on{restr}}.$$
\end{thm}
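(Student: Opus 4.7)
The plan is to reduce the computation to two base cases—$G$ a torus and $G$ semisimple simply connected—using the short exact sequence $1 \to T_2 \to \tilde G_1 \to G \to 1$ from \secref{sss:alg fund}. The associated Hochschild--Serre spectral sequence
$$E_2^{p,q} = H^p_{\on{et}}(B(G), H^q_{\on{et}}(B(T_2), A(1))) \Rightarrow H^{p+q}_{\on{et}}(B(\tilde G_1), A(1)),$$
together with the analogous input from the central isogeny $[\tilde G_1, \tilde G_1] \times T_1 \twoheadrightarrow \tilde G_1$, can be inverted in total degree $\leq 4$ to extract $H^*_{\on{et}}(B(G), A(1))$ once the two base cases are known. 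The vanishing in odd degrees is then automatic, since both a torus and a simply connected semisimple group have $H^*_{\on{et}}(B(-), A(1))$ concentrated in even degrees.

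For $G=T$ a torus, the Kummer sequence identifies $H^2_{\on{et}}(B(T), \mu_n)$ with $\check\Lambda_T/n$ via first Chern classes of characters, and the ring structure on $H^*_{\on{et}}(B(T), A(*))$ gives $H^{2i}_{\on{et}}(B(T), A(i)) = \on{Sym}^i(\check\Lambda_T) \otimes A$. A Tate twist then yields $H^2_{\on{et}}(B(T), A(1)) = \Hom(\Lambda_T, A)$ and $H^4_{\on{et}}(B(T), A(1)) = \on{Sym}^2(\check\Lambda_T) \otimes A(-1) \simeq \on{Quad}(\Lambda_T, A(-1))$; the last identification uses that $A(-1)$ is divisible so the polarization map from quadratic to symmetric bilinear forms is invertible.

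For $G$ semisimple simply connected, I would use the Borel fibration $G/B \to B(B) \to B(G)$, identified with $G/B \to B(T) \to B(G)$ since the unipotent radical of $B$ contributes trivially to \'etale cohomology. Because $G/B$ has a Schubert decomposition consisting only of even-dimensional cells indexed by $W$, the associated Leray--Serre spectral sequence degenerates and $H^*_{\on{et}}(B(T), A(1))$ is free of rank $|W|$ over $H^*_{\on{et}}(B(G), A(1))$. $W$-equivariance of pullback then identifies $H^{\leq 4}_{\on{et}}(B(G), A(1))$ with $H^{\leq 4}_{\on{et}}(B(T), A(1))^W$, yielding $H^2 = 0$ and $H^4 = \on{Quad}(\Lambda, A(-1))^W$; the restriction condition \eqref{e:restr cond} is automatically satisfied because in the simply connected case $\Lambda$ is the coroot lattice and $W$-invariance forces the identity $b(\alpha,\lambda) = \langle\check\alpha,\lambda\rangle q(\alpha)$ directly from the reflection formula.

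The main obstacle arises when assembling the pieces for general reductive $G$: the restriction condition \eqref{e:restr cond} is no longer automatic, so one must pin down the image of $H^4_{\on{et}}(B(G), A(1))$ inside $H^4_{\on{et}}(B(T), A(1))^W$ explicitly. I would establish containment by functoriality with respect to the homomorphisms $\on{SL}_2 \to G$ (or $\on{PGL}_2 \to G$ when $\alpha/2 \in \Lambda$) attached to each simple coroot $\alpha$: the pullback of a degree-$4$ class must land in the image already computed for the rank-one case, and unraveling this constraint on restrictions to $B(T)$ gives precisely \eqref{e:restr cond}. For surjectivity onto $\on{Quad}(\Lambda, A(-1))^W_{\on{restr}}$, I would realize each such quadratic form by producing an actual cohomology class, either via the Brylinski--Deligne machinery of central extensions by $K_2$ (pushed out along the fixed character $k^\times \to A$) or by hand on each simple factor of the Dynkin diagram, and then checking compatibility with the spectral sequence reduction of the first paragraph.
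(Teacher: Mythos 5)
Your torus computation is correct (though the last identification $\on{Sym}^2(\check\Lambda)\otimes A(-1)\simeq \on{Quad}(\Lambda,A(-1))$ does not actually require divisibility of $A$: for a free lattice $\Lambda$, the ``evaluation'' map $\on{Sym}^2(\check\Lambda)\to \on{Quad}(\Lambda,\BZ)$ is already an isomorphism, and one just tensors with $A$).

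The proposal breaks down at the claim that for simply connected $G$ one has $H^{\leq 4}_{\on{et}}(B(G),A(1))\simeq H^{\leq 4}_{\on{et}}(B(T),A(1))^W$. This is \emph{false} already for $G=SL_2$ in degree $2$: one has $H^2_{\on{et}}(B(SL_2),A(1))=\Hom(\pi_{1,\on{alg}}(SL_2),A)=0$, while $H^2_{\on{et}}(B(T),A(1))^W=\Hom(\Lambda,A)^W$ with $\Lambda=\BZ$ and $W=\BZ/2\BZ$ acting by $-1$, so $\Hom(\Lambda,A)^W=A_{2\on{-tors}}\neq 0$ (even for $A$ divisible, e.g.\ $A=\BQ_\ell/\BZ_\ell$ with $\ell=2$). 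The point is that the Leray--Hirsch degeneration gives you freeness of $H^*(B(T))$ as an $H^*(B(G))$-module (and even that needs care with torsion coefficients), but it never gives $H^*(B(G))\simeq H^*(B(T))^W$: the ``Chevalley restriction'' step fails integrally. Note also that taking $W$-invariants of $\Hom(\Lambda,-)$ is not exact, so the heuristic ``$\Lambda^W=0$ for semisimple $G$, hence the invariants vanish'' is wrong. Since your base case for simply connected $G$ is off, the whole assembly via the Hochschild--Serre spectral sequence for $T_2\to\tilde G_1\to G$ (which, incidentally, runs the wrong direction---it computes $H^*(B(\tilde G_1))$ from $H^*(B(G))$ and $H^*(B(T_2))$, and one would need to argue carefully to ``invert'' it) collapses. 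A second, smaller issue: $W$-invariance of $q$ gives $\langle\check\alpha,\lambda\rangle\bigl(b(\alpha,\lambda)-\langle\check\alpha,\lambda\rangle q(\alpha)\bigr)=0$, and the bracketed expression is $2$-torsion; so $W$-invariance does \emph{not} directly force the restriction condition when $\langle\check\alpha,\lambda\rangle$ is even, and your claim that it is ``automatic'' for the coroot lattice by the reflection formula is not quite right as stated.

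The paper avoids all of this by working with the Borel fibration $G/B\to B(B)\to B(G)$ directly for a general reductive $G$ (no reduction to the simply connected case): it first shows $H^{\leq 3}$ and injectivity of $H^4_{\on{et}}(B(G),A)\hookrightarrow H^4_{\on{et}}(B(T),A)$ by bounding the low-degree terms in the Leray spectral sequence; then characterizes the image by (i) a reduction to the subminimal parabolics $P_i\supset B$ and an explicit rank-one calculation for $SL_2$, $PGL_2$, $GL_2$ (this is where the ``restricted'' condition is detected, instead of by a $W$-invariance argument); and (ii) realizes every class in $\on{Quad}(\Lambda,A(-1))^W_{\on{restr}}$ by splitting into forms coming from $\on{Quad}(\Lambda,\BZ)^W\otimes A$ (handled by a $\BZ_\ell$-integral argument using torsion-freeness) and forms pulled back from $\pi_{1,\on{alg}}(G)$ (handled via the stack $B(T)/B(T_{\on{sc}})$). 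Your suggestion to produce classes via Brylinski--Deligne extensions would be another route to step (ii), but it would still need to be joined to a correct injectivity-plus-containment argument such as the one just described.
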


\begin{rem} \label{r:H3}
When $A$ is not divisible, the only difference will be that $H^3_{\on{et}}(B_{\on{et}}(G) ,A(1))\simeq \on{Ext}^1(\pi_{1,\on{alg}}(G),A)$;
in particular it will vanish if the derived group of $G$ is simply-connected.
\end{rem} 

\medskip

As we could not find a reference for this statement in the literature, we will supply the proof in \secref{s:BG}. 

\begin{rem}
In fact, this is the same computation as in the context of algebraic topology, where we calculate singular
cohomology with coefficients in $\BQ/\BZ$ of the classifying space of a compact connected Lie group,
for which we could not find a reference either (the cohomology with $\BZ$ coefficients is well-known of course). 
\end{rem} 

\ssec{Analysis of homotopy groups of the space of factorization gerbes}   \label{ss:analysis}

In this subsection we will assume that $A$ is divisible (this assumption is only necessary 
when the derived group of $G$ is not simply-connected, see Remark \ref{r:H3} above). 

\sssec{}

Consider the object
\begin{equation} \label{e:K}
\CQ:=\tau^{\geq 1,\leq 4}\left(\on{C}_{\on{et}}(B_{\on{et}}(G) ,A(1))\right).
\end{equation}

In \secref{sss:expl K} we will give a ``hands-on" description of $\CQ$. 

\medskip

By the Leray spectral sequence associated with the projection $B_{\on{et}}(G) \times X\to X\to \on{pt}$
and smooth base change along
$$
\CD
B_{\on{et}}(G) \times X @>>> B_{\on{et}}(G)  \\
@VVV @VVV \\
X @>{p_X}>> \on{pt},
\endCD
$$
we have
$$H^{4-i}_{\on{et}}(B_{\on{et}}(G) \times X;\on{pt}\times X,A(1))\simeq 
H^{4-i}_{\on{et}}(X,p_X^*(\CQ)).$$

\sssec{}  \label{sss:calc homotopy} 

By \thmref{t:cohomology BG}, we have a distinguished triangle
\begin{equation} \label{e:loc triv gerbes}
\Hom(\pi_{1,\on{alg}}(G),A)[-2] \to \CQ \to \on{Quad}(\Lambda,A(-1))^W_{\on{restr}}[-4].
\end{equation} 

In particular, we obtain that $H^4_{\on{et}}(B_{\on{et}}(G) \times X;\on{pt}\times X,A(1))$ 
fits into \emph{canonically split} short exact sequence 
$$0\to H^2_{\on{et}}(X,\Hom(\pi_{1,\on{alg}}(G),A))\to H^4_{\on{et}}(B_{\on{et}}(G) \times X;\on{pt}\times X,A(1))
\to \on{Quad}(\Lambda,A(-1))^W_{\on{restr}}\to 0,$$
while
$$
H^3_{\on{et}}(B_{\on{et}}(G) \times X;\on{pt}\times X,A(1))
\simeq H^1_{\on{et}}(X,\Hom(\pi_{1,\on{alg}}(G),A))$$
and 
$$
H^2_{\on{et}}(B_{\on{et}}(G) \times X;\on{pt}\times X,A(1))
\simeq H^0_{\on{et}}(X,\Hom(\pi_{1,\on{alg}}(G),A)),$$
and
$$H^1_{\on{et}}(B_{\on{et}}(G) \times X;\on{pt}\times X,A(1))=H^0_{\on{et}}(B_{\on{et}}(G) \times X;\on{pt}\times X,A(1))=0.$$

\sssec{} \label{sss:calc homotopy Ge} 

Combining with \propref{p:parameterization}, we obtain that $\pi_0(\on{FactGe}_{A}(\Gr_G))$ fits into \emph{canonically split} short exact sequence 
$$0\to H^2_{\on{et}}(X,\Hom(\pi_{1,\on{alg}}(G),A)) \to \pi_0(\on{FactGe}_{A}(\Gr_G))
\to \on{Quad}(\Lambda,A(-1))^W_{\on{restr}}\to 0,$$
while 
$$\pi_1(\on{FactGe}_{A}(\Gr_G))\simeq H^1_{\on{et}}(X,\Hom(\pi_{1,\on{alg}}(G),A))$$
and 
$$\pi_2(\on{FactGe}_{A}(\Gr_G))\simeq \Hom(\pi_{1,\on{alg}}(G),A).$$

\sssec{}

In particular, we obtain a map (of spectra)
$$\on{FactGe}_{A}(\Gr_G)\to \on{Quad}(\Lambda,A(-1))^W_{\on{restr}}.$$

Let $\on{FactGe}^0_{A}(\Gr_G)$ denote its fiber. 

\sssec{}

From \propref{p:parameterization} and the distinguished triangle \eqref{e:loc triv gerbes}, we obtain:  

\begin{cor} \label{c:parameterization neutral} 
There is a canonical isomorphism
\begin{equation} \label{e:from BG infty neutral}
\Maps(X,B^2_{\on{et}}(\Hom(\pi_{1,\on{alg}}(G),A)))\simeq \on{FactGe}^0_{A}(\Gr_G).
\end{equation}
The subspace 
$$\on{FactGe}^0_{A}(\Gr_G)\subset \on{FactGe}_{A}(\Gr_G)$$ consists of objects that are trivial 
\'etale-locally on $X$. 
\end{cor}

\ssec{Parametrization of factorization line bundles}  \label{ss:lines}

This subsection is included for the sake of completeness, in order to make contact with the theory of metaplectic extensions
developed in \cite{We}. 

\medskip

Recall from \secref{sss:line bundles Grass} that given a factorization line bundle $\CL$ on $\Gr_G$ and an element $a\in A(-1)$
we can produce a factorization gerbe $\CL^{a}$.  In this subsection we will describe a geometric data that gives rise
to factorization line bundles\footnote{We emphasize that this construction produces just factorization line bundles, 
and \emph{not} $\BZ/2\BZ$-graded ones.} on $\Gr_G$. 

\sssec{}

Let $K_2$ denote the prestack over $X$ that associates to an affine scheme $S=\Spec(A)$ mapping to $X$ the abelian group $K_2(A)$. Let
$(K_2)_{\on{Zar}}$ be the sheafification of $K_2$ in the Zariski topology.

\medskip

On the one hand, we consider the space $\on{CExt}(G,(K_2)_{\on{Zar}})$ (in fact, an ordinary groupoid) of \emph{Brylinski-Deligne data}, 
which are by definition \emph{central} extensions 
$$1\to (K_2)_{\on{Zar}}\to \wt{G}\to G\times X\to 1$$
of the constant group-scheme $G\times X$ by $(K_2)_{\on{Zar}}$. 

\medskip

The operation of Baer sum makes $\on{CExt}(G,(K_2)_{\on{Zar}})$ into a commutative group in spaces, i.e., into a Picard category. 

\medskip

On the other hand, consider the Picard category
$$\on{FactPic}(\Gr_G)$$
of factorizable line bundles on $\Gr_G$. 

\medskip

In the paper \cite{Ga6} a map of Picard groupoids is constructed:
\begin{equation} \label{e:from K_2}
\on{CExt}(G,(K_2)_{\on{Zar}})\to \on{FactPic}(\Gr_G),
\end{equation} 
and the following conjecture is stated (this is Conjecture 6.1.2 in {\it loc.cit.})\footnote{Since the previous version of this paper, this 
conjecture has been proved in \cite{TZ}.}: 

\medskip

\begin{conj} \label{c:line bundle class}
The map \eqref{e:from K_2} is an isomorphism.
\end{conj}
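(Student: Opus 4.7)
The plan is to verify that \eqref{e:from K_2} induces isomorphisms on $\pi_0$ (isomorphism classes) and on $\pi_1$ (automorphisms of the trivial object); since both sides are Picard groupoids concentrated in those two degrees, this suffices. The strategy is a structural reduction followed by explicit calculations in two base cases.

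I would first check compatibility with short exact sequences of groups. Choose a presentation as in Section \ref{sss:alg fund}, namely
$$1 \to T_2 \to \wt{G}_1 \to G \to 1$$
with $[\wt{G}_1,\wt{G}_1]$ simply-connected, where $T_2$ is an induced torus. Both sides of \eqref{e:from K_2} should send this to a fiber sequence of Picard groupoids: on the Brylinski--Deligne side this is part of the classification of \cite{Re}-style, and on the factorization side it follows from the fibration $\Gr_{T_2} \to \Gr_{\wt{G}_1} \to \Gr_G$ together with functoriality of the construction of factorization line bundles. A five-lemma argument, together with naturality of \eqref{e:from K_2}, reduces the claim to the case of a torus and of a simply-connected simple group.

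For $G$ simply-connected and simple, both Picard groupoids are canonically equivalent to $\BZ$ (viewed as a discrete Picard groupoid). On the Brylinski--Deligne side this is a classical computation whose generator is the basic $K_2$-extension associated to the unique-up-to-scalar $W$-invariant integral quadratic form on $\Lambda$. On the factorization side, the same integer (the ``level'') parameterizes factorization line bundles: this is the line-bundle analog of \thmref{t:cohomology BG}, giving $\pi_0 \simeq \on{Quad}(\Lambda,\BZ)^W \simeq \BZ$ with generator $\det_\fg$, while $\pi_1$ vanishes because $\pi_{1,\on{alg}}(G) = 0$. To show that \eqref{e:from K_2} sends generator to generator one compares the induced central extensions of the loop group at a formal disk: both produce the standard Kac--Moody extension at level one, so the map is an isomorphism in this case.

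The main obstacle is the torus case. For $G = T$ a torus with coweight lattice $\Lambda$, Brylinski--Deligne classify $\on{CExt}(T,(K_2)_{\on{Zar}})$ via a $\BZ$-valued symmetric bilinear form $b$ on $\Lambda$, a multiplicative $\O_X^\times$-extension of the constant sheaf $\Lambda$ whose commutator is controlled by $b$, and automorphism data given by $\Hom(\Lambda,\Gamma(X,\O_X^\times))$. On the factorization side, $\Gr_T$ decomposes (over each point of $X$) into components indexed by $\Lambda$, and factorization structure on a line bundle translates into the same combinatorial data: a bilinear form recording intersection contributions under fusion of two components $\Gr_T^\lambda$ and $\Gr_T^\mu$, together with a multiplicative line bundle over $\Lambda \times X$. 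One must then trace through the construction \eqref{e:from K_2} of \cite{Ga6} to verify that it induces the identity on these classifications. The combinatorial sign conventions inherent in the fusion-product description and the need to distinguish graded from ungraded factorization line bundles (see \secref{sss:variant factor} and the subsequent remark) make this the technical heart of the proof; once it is complete, the simply-connected case and the reduction via $T_2 \to \wt{G}_1 \to G$ assemble to yield the conjecture.
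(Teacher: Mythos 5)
The statement you are trying to prove is explicitly labeled a \emph{Conjecture} in the paper (and in \cite{Ga6}, where it originates as Conjecture 6.1.2); the paper offers no proof. The only evidence it gives is the remark immediately following: the torus case is known, deducible from \cite[Theorem 3.16]{BrDe} combined with the $\theta$-datum description of \secref{sss:theta}. So the case you flag as ``the main obstacle'' is precisely the one the authors consider settled, while the parts you treat as routine are where the actual difficulty lies.

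Two concrete gaps in your outline. First, your simply-connected step relies on ``the line-bundle analog of \thmref{t:cohomology BG}'', i.e.\ a classification $\pi_0(\on{FactPic}(\Gr_G))\simeq\on{Quad}(\Lambda,\BZ)^W$. No such classification is stated or proved in the paper: \thmref{t:cohomology BG} and \propref{p:parameterization} concern $A$-gerbes for \emph{torsion} $A$, and the argument via $B^4_{\on{et}}(A(1))$ does not transport to $\BG_m$-coefficients, which are not torsion. Classifying factorization line bundles on $\Gr_G$ for nonabelian $G$ is, in effect, what \conjref{c:line bundle class} is asking for (once \cite{BrDe}'s classification of $\on{CExt}(G,(K_2)_{\on{Zar}})$ is invoked), so your argument is circular at this step. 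Second, the five-lemma reduction requires that $1\to T_2\to\wt{G}_1\to G\to 1$ induce compatible fiber sequences of Picard groupoids on both sides and that \eqref{e:from K_2} be a map of these sequences. On the factorization side you would need a descent-type long exact sequence for $\on{FactPic}$ along $\Gr_{T_2}\to\Gr_{\wt{G}_1}\to\Gr_G$; this is not supplied by anything in the paper (the only fibration argument available, in \secref{s:Jacquet}, is for gerbes and uses the vanishing of \'etale cohomology of $\Gr_{N_P}$ with \emph{torsion} coefficients, which again does not apply to $\BG_m$). Until both of these are established, the proposal is a plausible strategy sketch but not a proof, and it does not resolve what the paper leaves open.
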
 

\begin{rem}
One can show that 
it follows from \cite[Theorem 3.16]{BrDe} combined with \secref{sss:theta} that \conjref{c:line bundle class} holds when $G=T$ is a torus. 
\end{rem}

\sssec{}

Let us fix an integer $\ell$ of order prime to $\on{char}(k)$. In \cite[Sect. 6.3.6]{Ga6} the following map was constructed
\begin{equation} \label{e:from K to etale}
\on{CExt}(G,(K_2)_{\on{Zar}})\to  \Maps_{\on{Ptd}(\on{PreStk}_{/X})}\left(B_{\on{et}}(G)\times X,B^4_{\on{et}}(\mu_\ell^{\otimes 2})\times X\right).
\end{equation} 

Let us take $A=\mu_\ell$, and note that $A(1)\simeq \mu_\ell^{\otimes 2}$. 
Note that the construction in \secref{sss:line bundles to gerbes} gives rise to a canonical map
\begin{equation} \label{e:line bundles to gerbes}
\on{FactPic}(\Gr_G)\to \on{FactGe}_{\mu_\ell}(\Gr_G).
\end{equation} 

\medskip

The following is equivalent to Conjecture 6.3.8 of {\it loc.cit.}:

\begin{conj}  \label{c:compat with K2}
The following diagram commutes: 
$$
\CD
\on{CExt}(G,(K_2)_{\on{Zar}})  @>{\text{\eqref{e:from K to etale}}}>>  
 \Maps_{\on{Ptd}(\on{PreStk}_{/X})}\left(B_{\on{et}}(G)\times X,B^4_{\on{et}}(\mu_\ell^{\otimes 2})\times X\right) \\
@V{\text{\eqref{e:from K_2}}}VV     @VV{\text{\eqref{e:from BG infty}}}V   \\
\on{FactPic}(\Gr_G)  @>{\text{\eqref{e:line bundles to gerbes}}}>>    \on{FactGe}_{\mu_\ell}(\Gr_G).
\endCD
$$
\end{conj}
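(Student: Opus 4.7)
The plan is to verify the commutativity by unpacking both routes on an arbitrary Brylinski-Deligne extension $\wt{G}$ and reducing the comparison to an explicit computation for a torus.

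First, I would record that all four arrows are natural in $G$: a homomorphism $G' \to G$ commutes with the pullback of extensions, of factorization line bundles, of factorization gerbes, and of maps to $B^4_{\on{et}}(\mu_\ell^{\otimes 2})$. Combined with the results of \cite{BrDe} controlling $\on{CExt}(G,(K_2)_{\on{Zar}})$ for a reductive $G$ in terms of the restriction to a maximal torus together with Weyl-invariance data, this reduces the problem to the case $G = T$ a torus; on the targets, the decomposition via $\on{Quad}(\Lambda,A(-1))^W_{\on{restr}}$ and $\Hom(\pi_{1,\on{alg}}(T),A)$ from \secref{sss:calc homotopy} is compatible with this reduction.

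Second, for a torus, \cite[Theorem 3.16]{BrDe} reduces the datum of a Brylinski-Deligne extension to a Weyl-invariant integer-valued quadratic form $Q$ on $\Lambda$ together with auxiliary central data; I would further reduce to the universal case in which $\wt{T}$ is the canonical extension attached to $Q$. On the lower-left route, \eqref{e:from K_2} produces the associated theta line bundle $\CL_Q$ on $\Gr_T$, and \eqref{e:line bundles to gerbes} produces the $\mu_\ell$-gerbe $\CL_Q^{1/\ell}$ of $\ell$-th roots. On the upper-right route, \eqref{e:from K to etale} composes the $(K_2)_{\on{Zar}}$-valued cocycle defining $\wt{T}$ with the universal Merkurjev-Suslin symbol $K_2 \to \mu_\ell^{\otimes 2}[2]$, and \eqref{e:from BG infty} then extracts a gerbe via the trace map \eqref{e:trace}.

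The comparison of these two gerbes at an $S$-point $(I,\CP_T,\alpha)$ of $\Gr_T$ reduces, using the Kummer sequence, to showing that the étale first Chern class mod $\ell$ of the fiber of $\CL_Q$ coincides with the output of the trace construction applied to the symbol-composed class on $S \times X$ supported along $\Gamma_I$. Both sides are ultimately governed by the same $K$-theoretic symbol $\{f,g\}$ of local equations cutting out the components of $\Gamma_I$, but one side sees it through the explicit tame-symbol presentation of $\CL_Q$ and the other through the Bloch-Kato identification $K_2(A)/\ell \cong H^2_{\on{et}}(\Spec A,\mu_\ell^{\otimes 2})$. I expect this last matching to be the main obstacle: one must reconcile the tame-symbol presentation of the Chern class of $\CL_Q$ along $\Gamma_I$ with the trace map \eqref{e:trace map cohomology} applied to the Merkurjev-Suslin image of $\wt{T}$, tracking the correct Tate twist and the signs arising from changes of local parameter. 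Once this local identity is established, the factorization compatibility is automatic from the definitions, so this single computation carries essentially the entire content of the conjecture.
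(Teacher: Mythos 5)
The statement in question is a \emph{conjecture} in the paper — it is stated without proof and noted to be equivalent to Conjecture~6.3.8 of~\cite{Ga6}. The paper's only substantive comment in this area is the remark following \conjref{c:line bundle class}, which concerns a different question (whether \eqref{e:from K_2} is an isomorphism for tori); the commutativity of this square is not established in the paper, even for $G=T$. So there is no argument in the paper to compare your proposal against, and what you have written should be judged on its own.

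Evaluated on its own merits, the proposal is a plausible roadmap, not a proof, and you say as much yourself. Two remarks. The reduction to the torus is not automatic: beyond naturality of the four arrows, you need restriction to $T$ to distinguish the two composite functors, which requires an injectivity (really, a full-faithfulness) statement for $\on{FactGe}_{\mu_\ell}(\Gr_G)\to\on{FactGe}_{\mu_\ell}(\Gr_T)$ on all homotopy groups, not just on isomorphism classes; this can be extracted from \thmref{t:cohomology BG} and \secref{sss:calc homotopy}, but it must be stated at the $2$-groupoidal level, and one also needs the Brylinski--Deligne classification to know that $\on{CExt}(G,(K_2)_{\on{Zar}})$ is detected by restriction to $T$ in the required sense. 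More importantly, the step you yourself flag as ``the main obstacle'' — matching the tame-symbol presentation of $c_1(\CL_Q)$ along $\Gamma_I$ with the trace map \eqref{e:trace map cohomology} applied to the Bloch--Kato/Merkurjev--Suslin image of the $K_2$-cocycle, including Tate twists and local-parameter-dependent signs — is precisely the content of the conjecture and exactly where~\cite{Ga6} stops. Naming the obstacle is not the same as overcoming it; as written, the proposal is a program for attacking \conjref{c:compat with K2}, not a proof of it.
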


\section{The case of tori}   \label{s:torus}

In this section we let $G=T$ be a torus. We will perform an explicit analysis of factorization gerbes on the affine Grassmannian
$\Gr_T$, and introduce related objects (multiplicative factorization gerbes) that will play an important
role in the sequel. 

\ssec{Factorization Grassmannian for a torus}  \label{ss:factor gerbes for tori}

In this section we will show that the affine Grassmannian of a torus can be approximated by a prestack
assembled from (=written as a colimit of) powers of $X$.

\sssec{}  \label{sss:Gr for tori}

Recall that $\Lambda$ denotes the coweight lattice of $G=T$. 
Consider the index category whose objects are pairs $(I,\lambda^I)$, where $I$ is a finite non-empty set and $\lambda^I$ is a map $I\to \Lambda$;
in what follows we will denote by $\lambda_i\in \Lambda$ is the value of $\lambda^I$ on $i\in I$.

\medskip

A morphism $(J,\lambda^J)\to (I,\lambda^I)$ is a surjection $\phi:I\twoheadrightarrow J$ such that
\begin{equation} \label{e:int}
\lambda_j=\underset{i\in \phi^{-1}(j)}\Sigma\, \lambda_i.
\end{equation}

\medskip 

Consider the prestack
$$\Gr_{T,\on{comb}}:=\underset{(I,\lambda^I)}{\on{colim}}\, X^I.$$

The prestack $\Gr_{T,\on{comb}}$ endowed with its natural forgetful map to $\Ran$, also has a natural factorization structure. 

\medskip

There is a canonical map 
\begin{equation} \label{e:combinatorial map}
\Gr_{T,\on{comb}}\to \Gr_T,
\end{equation} 
compatible with the factorization structures.

\medskip

Namely, for each $(I,\lambda^I)$ the corresponding $T$-bundle on $X^I\times X$ is
$$\underset{i\in I}\bigotimes\, \lambda_i\cdot \CO(\Delta_i),$$
where $\Delta_i$ is the divisor on $X^I\times X$ corresponding to the
$i$-th coordinate being equal to the last one. 

\sssec{}

As in \cite[Sect. 8.1]{Ga2} one shows that the map \eqref{e:combinatorial map} induces an isomorphism of the sheafifications
with respect to the topology on the category of affine schemes of finite type, in which coverings are finite surjective maps. 

\medskip

In particular, for any $S\to \Ran$, the map
$$\on{Ge}_A(S\underset{\Ran}\times \Gr_T)\to \on{Ge}_A(S\underset{\Ran}\times \Gr_{T,\on{comb}})$$
is an isomorphism, and hence, so is the map 
$$\on{FactGe}_A(\Gr_T)\to \on{FactGe}_A(\Gr_{T,\on{comb}}).$$

Furthermore, for a given $\CG\in \on{FactGe}_{E^{\times,\on{tors}}}(\Gr_T)$, the corresponding map
of sheaves of categories 
$$\Shv_\CG(\Gr_T)_{/\Ran}\to \Shv_\CG(\Gr_{T,\on{comb}})_{/\Ran}$$
is also an isomorphism. 

\sssec{}  \label{sss:factor gerbes tori}

The datum of a factorization gerbe on $\Gr_{T,\on{comb}}$ can be explicitly described as follows: 

\medskip

For a finite set
$I$ and a map  
$$\lambda^I:I\to \Lambda$$ 
we specify a gerbe $\CG^{\lambda^I}$ on $X^I$.  

\medskip

For a surjection of finite sets $I\overset{\phi}\twoheadrightarrow J$ such that \eqref{e:int} holds, 
we specify an identification
\begin{equation} \label{e:diag compat of gerbes}
(\Delta_\phi)^*(\CG^{\lambda^I})\simeq \CG^{\lambda^J}.
\end{equation} 


The identifications \eqref{e:diag compat of gerbes} must be compatible with compositions of maps of finite sets in the natural sense. 

\medskip

Let now $I\overset{\phi}\twoheadrightarrow J$ be a surjection of finite sets, and let 
$$X^I_{\phi,\on{disj}}\subset X^I, \quad x_{i_1}\neq x_{i_2} \text{ whenever } \phi(i_1)\neq \phi(i_2)$$ be the corresponding
open subset. For $j\in J$, let
$\lambda^{I_j}$ be the restriction of $\lambda^I$ to $I_j$. 

\medskip

We impose the structure of factorization that consists of isomorphisms
\begin{equation} \label{e:div factor of gerbes}
(\CG^{\lambda^I})|_{X^I_{\phi,\on{disj}}}\simeq \left(\underset{j\in J}\bigotimes\, \CG^{\lambda^{I_j}}\right)|_{X^I_{\phi,\on{disj}}}.
\end{equation}

The isomorphisms \eqref{e:div factor of gerbes} must be compatible with compositions of maps of finite sets in the natural sense. 

\medskip

In addition, the isomorphisms \eqref{e:div factor of gerbes} and \eqref{e:diag compat of gerbes} must be compatible in the natural
sense. 

\sssec{}

For a factorization gerbe $\CG$ on $\Gr_{T,\on{comb}}$, the value of the category $ \Shv_\CG(\Gr_{T,\on{comb}})_{/\Ran}$
on $X^I\to \Ran$ can be explicitly described as follows:

\medskip

It is the limit over the index category 
$$(J,\lambda^J,I\twoheadrightarrow J)$$
of the categories $\Shv_{\CG^{\lambda^J}}(X^J)$. 

\sssec{The case of factorization line bundles}  \label{sss:theta}

The datum of a factorization $\BZ/2\BZ$-graded line bundle on $\Gr_{T,\on{comb}}$ can be described in a way similar to that of factorization gerbes.
This description recovers the notion of what in \cite[Sect. 3.10.3]{BD1}
is called a $\theta$-datum. 

\medskip

We note that a factorization $\BZ/2\BZ$-graded line bundle is evenly (i.e., trivially) graded if and only if the corresponding
$\theta$-datum is even, i.e., if the corresponding symmetric bilinear $\BZ$-valued form on $\Lambda$
comes from a $\BZ$-valued quadratic form.

\medskip

We also note that \cite[Proposition 3.10.7]{BD1} says that restriction along 
$$\Gr_{T,\on{comb}}\to \Gr_T$$ defines an equivalence between the Picard categories of factorization ($\BZ/2\BZ$-graded) line bundles.

\ssec{Making the parameterization explicit for tori}  \label{ss:tori expl}

In this subsection we will show explicitly how a factorization $A$-gerbe on $\Gr_T$ gives rise to an $A$-valued quadratic form
$$q:\Lambda\to A(-1).$$

\medskip

For the duration of this section we assume that elements of $A$ have orders prime to $\on{char}(k)$. 


\sssec{}  \label{sss:bilin expl}

We first describe the bilinear form 
$$b:\Lambda\times \Lambda \to A(-1).$$

\medskip

For an element $\lambda\in \Lambda$, take $I$ to be the one-element set $\{*\}$, and consider the
corresponding map 
$$\lambda^I:I\to \Lambda, \quad *\mapsto \lambda.$$

Let $\CG^\lambda$ denote the resulting $A$-gerbe on $X$.

\medskip

Given two elements $\lambda_1,\lambda_2\in \Lambda$, consider $I=\{1,2\}$ and the map
$$\lambda^I:I\to \Lambda;\quad 1\mapsto \lambda_1,2\mapsto \lambda_2.$$

Consider the corresponding gerbe 
$$\CG^{\lambda_1,\lambda_2}:=\CG^{\lambda^I}$$ over $X^2$. 

\medskip 

By \eqref{e:div factor of gerbes}, $\CG^{\lambda_1,\lambda_2}$ is
identified with $\CG^{\lambda_1}\boxtimes \CG^{\lambda_2}$ over $X^2-\Delta$. By \lemref{l:trivialized gerbes},
there exists a well-defined element $a\in A(-1)$ such that
\begin{equation} \label{e:form as pole}
\CG^{\lambda_1,\lambda_2}\simeq (\CG^{\lambda_1}\boxtimes \CG^{\lambda_2})\otimes \CO(\Delta)^a.
\end{equation} 

We let
$$a=:b(\lambda_1,\lambda_2).$$

\sssec{}   \label{sss:bilin}

The fact that $b(-,-)$ is bilinear can be seen as follows. For a triple of elements $\lambda_1,\lambda_2,\lambda_3$
consider the corresponding gerbes
$$\CG^{\lambda_1,\lambda_2,\lambda_3} \text{ and } (\CG^{\lambda_1,\lambda_2}\boxtimes \CG^{\lambda_3})\otimes 
\CO(\Delta_{1,3})^{\otimes b(\lambda_1,\lambda_3)}\otimes \CO(\Delta_{2,3})^{\otimes b(\lambda_2,\lambda_3)}$$
over $X^3$.

\medskip

They are identified away from the main diagonal $\Delta_{1,2,3}$, and hence this identification extends to all of $X^3$,
since $\Delta_{1,2,3}$ has codimension 2. Restricting to $\Delta_{1,2}$, 
we obtain an identification 
$$\CG^{\lambda_1+\lambda_2,\lambda_3}\simeq (\CG^{\lambda_1+\lambda_2}\boxtimes \CG^{\lambda_3})\otimes 
\CO(\Delta)^{\otimes b(\lambda_1,\lambda_3)}\otimes \CO(\Delta)^{\otimes b(\lambda_2,\lambda_3)}$$
as gerbes over $X^2$. Comparing with the identification
$$ \CG^{\lambda_1+\lambda_2,\lambda_3}\simeq (\CG^{\lambda_1+\lambda_2}\boxtimes \CG^{\lambda_3})\otimes 
\CO(\Delta)^{\otimes b(\lambda_1+\lambda_2,\lambda_3)},$$
we obtain the desired
$$b(\lambda_1,\lambda_3)+b(\lambda_2,\lambda_3)=b(\lambda_1+\lambda_2,\lambda_3).$$

\sssec{}  

It is easy to see that the resulting map 
$$b:\Lambda\times \Lambda\to A(-1)$$ 
is symmetric. In fact, we have a canonical datum of commutativity for the diagram
\begin{equation} \label{e:transposition unequal}
\CD
\sigma^*(\CG^{\lambda_1,\lambda_2}) @>>>  \sigma^*((\CG^{\lambda_1}\boxtimes \CG^{\lambda_2})\otimes \CO(\Delta)^{b(\lambda_1,\lambda_2)})  \\
@VVV  @VVV  \\
\CG^{\lambda_2,\lambda_1} @>>>  (\CG^{\lambda_2}\boxtimes \CG^{\lambda_1})\otimes \CO(\Delta)^{b(\lambda_2,\lambda_1)}
\endCD
\end{equation}
that extends the given one over $X\times X-\Delta$  (in the above formula, $\sigma$ denotes the transposition acting on $X\times X$): 

\medskip

Indeed, the measure of \emph{non-commutativity} of the above diagram is an \'etale $A$-torsor over $X\times X$, which is trivialized over $X\times X-\Delta$,
and hence this trivialization uniquely extends to all of $X\times X$. 

\medskip

For the sequel we will need to understand in more detail the behavior of the restriction of diagram \eqref{e:transposition unequal} 
to the diagonal. 

\sssec{}  \label{sss:Kummer}

We start with the following observation. We claim that to an element $a\in A(-1)$ one can canonically attach an $A$-torsor $(-1)^a$:

\medskip

The Kummer cover
$$\BG_m\overset{x\mapsto x^n}\longrightarrow \BG_m$$
defines a group homomorphism
\begin{equation} \label{e:Kummer}
\BG_m\to B_{\on{et}}(\mu_n).
\end{equation}

From here we obtain a bilinear map
\begin{equation} \label{e:Kummer again}
A(-1)\times \BG_m\to B_{\on{et}}(A),
\end{equation}
i.e., an element $a\in A(-1)$ defines an \'etale $A$-torsor $\chi_a$ over $\BG_m$, which behaves multiplicatively. 

\sssec{} \label{sss:-1}

We let $(-1)^a$
denote the fiber of $\chi_a$ at $-1\in \BG_m$.

\medskip

The multiplicativity of \eqref{e:Kummer again} along $\BG_m$ implies that we have a canonical trivialization
\begin{equation} \label{e:-1 triv}
((-1)^a)^{\otimes 2}\simeq \on{triv}. 
\end{equation}

\medskip

The multiplicativity of \eqref{e:Kummer again} along $A(-1)$ implies that a choice of $a'\in A(-1)$ such that $2a'=a$ defines
a trivialization of $(-1)^a$. Moreover, this trivialization is compatible with \eqref{e:-1 triv}. 

\medskip

This construction is a morphism (and 
hence an \emph{isomorphism}) of $A_{2\on{-tors}}$-torsors:  
$$\{a'\in A(-1)\,,\, 2a'=a\}\to \{\text{trivializations of }(-1)^a\text{ compatible with \eqref{e:-1 triv}}\}.$$
(By enlarging $A$ if needed, one shows that the LHS is empty if and only if the RHS is.) 

\sssec{} \label{sss:-1 diag}

Consider now the $A$-gerbe $\CO(\Delta)^a$ on $X\times X$, equipped with the natural identification
\begin{equation} \label{e:sigma Delta}
\sigma^*(\CO(\Delta)^a)\simeq \CO(\Delta)^a,
\end{equation}
which uniquely extends the tautological one over $X\times X-\Delta$. 

\medskip

Restricting \eqref{e:sigma Delta} to the diagonal, and using the fact that $\sigma|_{\Delta}$ is trivial, 
we obtain an identification of $A$-gerbes
\begin{equation} \label{e:former phi}
\CO(\Delta)^a|_{\Delta}\simeq \CO(\Delta)^a|_{\Delta},
\end{equation}
whose square is the identity map.

\medskip

The map \eqref{e:former phi} is given by tensoring by an $A$-torsor that squares to the trivial one. It is easy to see that
this torsor is constant along $X$ and identifies canonically with $(-1)^a$ in a way compatible with 
\eqref{e:-1 triv}. This follows from the fact that the composite
$$\CO(\Delta)|_{\Delta} \simeq \sigma^*(\CO(\Delta))|_{\Delta}\simeq  \sigma^*(\CO(\Delta)|_{\Delta})\simeq \CO(\Delta)|_{\Delta}$$
acts as $-1$. 

\sssec{}

The identification \eqref{e:diag compat of gerbes} for the map $\{1,2\}\to \{*\}$ yields an identification
\begin{equation} \label{e:diag compat of gerbes 2}
\CG^{\lambda_1,\lambda_2}|_\Delta\simeq \CG^{\lambda_1+\lambda_2},
\end{equation} 
compatible with the transposition of factors, i.e., the diagram  
\begin{equation} \label{e:transposition on diag}
\CD
\CG^{\lambda_1+\lambda_2}   @>>>  \CG^{\lambda_1,\lambda_2}|_{\Delta} \\
@V{\on{id}}VV  @VV{\sim}V  \\
\CG^{\lambda_2+\lambda_1}   @>>>  \CG^{\lambda_2,\lambda_1}|_{\Delta} 
\endCD
\end{equation}
is endowed with a datum of commutativity that squares to one. In the above diagram the right vertical arrow is the map
$$\CG^{\lambda_1,\lambda_2}|_{\Delta} \simeq \sigma^*(\CG^{\lambda_1,\lambda_2})|_{\Delta} \simeq 
\CG^{\lambda_2,\lambda_1}|_{\Delta}.$$ 

\medskip

Restricting  diagram \eqref{e:transposition unequal} to the diagonal and concatenating with 
diagrams \eqref{e:transposition on diag}, we obtain that we have a datum of commutativity for the diagram
\begin{equation} \label{e:transposition unequal diag}
\CD
\CG^{\lambda_1+\lambda_2}   @>>>  (\CG^{\lambda_1}\otimes \CG^{\lambda_2})\otimes \CO(\Delta)^{b(\lambda_1,\lambda_2)}|_{\Delta} \\
@V{\on{id}}VV  @VV{\text{tautological}\otimes (-1)^{b(\lambda_1,\lambda_2)}}V  \\
\CG^{\lambda_2+\lambda_1}   @>>>  (\CG^{\lambda_2}\otimes \CG^{\lambda_1})\otimes \CO(\Delta)^{b(\lambda_2,\lambda_1)}|_{\Delta} 
\endCD
\end{equation}
that squares to the tautological one.

\sssec{}  \label{sss:quad}

We are finally ready to recover the quadratic form 
$$q:\Lambda\to A(-1).$$

Namely, in \eqref{e:transposition unequal diag}, let us set $\lambda_1=\lambda=\lambda_2$. We obtain a datum of commutativity
of the diagram
\begin{equation} \label{e:recover quad}
\CD
\CG^{2\lambda}   @>>>  (\CG^{\lambda}\otimes \CG^{\lambda})\otimes \CO(\Delta)^{b(\lambda,\lambda)}|_{\Delta} \\
@V{\on{id}}VV  @VV{\on{id}\otimes (-1)^{b(\lambda,\lambda)}}V  \\
\CG^{2\lambda}   @>>>  (\CG^{\lambda}\otimes \CG^{\lambda})\otimes \CO(\Delta)^{b(\lambda,\lambda)}|_{\Delta},
\endCD
\end{equation}
where the upper and lower horizontal arrows are canonically identified, and which squares to the tautological one.

\medskip

This datum is equivalent to that of trivialization of the $A$-torsor $(-1)^{b(\lambda,\lambda)}$,
that squares to the identity. By \secref{sss:-1}, this datum is equivalent to that of an element $q(\lambda)\in A(-1)$ 
such that $2q(\lambda)=b(\lambda,\lambda)$. This is the value of our quadratic form on $\lambda$.

\sssec{}

The relation 
$$q(\lambda_1+\lambda_2)=q(\lambda_1)+q(\lambda_2)+b(\lambda_1,\lambda_2)$$
is verified in a way similar to \secref{sss:bilin}. 

\ssec{Matching the parameters} \label{ss:matching}

\sssec{}

Let us start with a datum of a based map
\begin{equation} \label{e:based map tori}
B_{\on{et}}(T)\times X\to B^4_{\on{et}}(A(1)),
\end{equation} 
and produce an object $\CG$ of $\on{FactGe}_{A}(\Gr_T)$ by the map \eqref{e:from BG infty}.

\medskip

In \secref{ss:tori expl}, to $\CG$ we have attached a quadratic form
$$q:\Lambda\to A(-1).$$

In this section we will show that $q$ equals to the form attached to \eqref{e:based map tori} via the map
\begin{multline}   \label{e:form again}
\pi_0\left(\Maps_{\on{Ptd}(\on{PreStk})}(B_{\on{et}}(T)\times X,B^4_{\on{et}}(A(1)))\right)=
H^4_{\on{et}}(B_{\on{et}}(T) \times X;\on{pt}\times X,A(1))\to \\
\to H^4_{\on{et}}(B_{\on{et}}(T) ;\on{pt},A(1))\simeq \on{Quad}(\Lambda,A(-1)).
\end{multline} 

I.e., we want to establish the commutativity of the diagram
\begin{equation} \label{e:compare forms}
\CD
\pi_0\left(\Maps_{\on{Ptd}(\on{PreStk})}(B_{\on{et}}(T)\times X,B^4_{\on{et}}(A(1)))\right) 
@>{\text{\eqref{e:from BG infty}}}>> \pi_0(\on{FactGe}_{A}(\Gr_T)) \\
@V{\sim}VV  @VV{\text{\secref{ss:tori expl}}}V   \\
H^4_{\on{et}}(B_{\on{et}}(T) \times X;\on{pt}\times X,A(1)) @>>> \on{Quad}(\Lambda,A(-1)).
\endCD
\end{equation} 

\sssec{}

From the fiber sequence \eqref{e:loc triv gerbes}, we obtain a fiber sequence 

\begin{equation} \label{e:loc triv gerbes again}
\Maps(X,B^2_{\on{et}}(\Hom(\Lambda,A)))\to \Maps_{\on{Ptd}(\on{PreStk})}(B_{\on{et}}(T)\times X,B^4_{\on{et}}(A(1)))\to 
\on{Quad}(\Lambda,A(-1)).
\end{equation} 

\medskip

First, we claim that if $\CG$ comes from an object in $\Maps(X,B^2_{\on{et}}(\Hom(\Lambda,A)))$, then the form $q$,
attached to $\CG$ by \secref{ss:tori expl}, equals $0$.

\medskip

Indeed, in this case $\CG$ is trivial \'etale-locally on $X$. And the form $q$ is zero by construction. 

\medskip

Thus, it remains to exhibit a collection of objects $\CG$ of $\Maps_{\on{Ptd}(\on{PreStk})}(B_{\on{et}}(T)\times X,B^4_{\on{et}}(A(1)))$,
whose images under \eqref{e:form again} span $\on{Quad}(\Lambda,A(-1))$, on which the two circuits of the diagram \eqref{e:compare forms}
produce the same result.

\sssec{}

With a future application in mind, we will choose our collection of objects $\CG$ to be obtained as compositions
$$B_{\on{et}}(T)\times X \to B_{\on{et}}(T)\to B^4_{\on{et}}(A(1))$$
for some particular collection of based maps $B_{\on{et}}(T)\to B^4_{\on{et}}(A(1))$.

\medskip

Let $A_1$ and $A_2$ be a pair of finite abelian groups (of orders prime to $\on{char}(k)$),
equipped with homomorphisms
$$\chi_i:\Lambda\to A_i(-1)$$
and a bilinear map
$$b':A_1\times A_2\to A(1),$$
which we can also think of as a bilinear map
$$A_1(-1)\times A_2(-1)\to A(-1).$$

Let $q$ be the quadratic form on $\Lambda$ with values in $A(-1)$ given by
\begin{equation} \label{e:q from b'}
q(\lambda)=b'(\chi_1(\lambda),\chi_2(\lambda)).
\end{equation}

Clearly, forms $q$ obtained in this way span $\on{Quad}(\Lambda,A(-1))$. 

\medskip

Let $b$ denote the associated symmetric bilinear form. Explicitly,
\begin{equation} \label{e:b from b'}
b(\lambda_1,\lambda_2)=b'(\chi_1(\lambda_1),\chi_2(\lambda_2))+b'(\chi_1(\lambda_2),\chi_2(\lambda_1)).
\end{equation} 

\sssec{}

We can regard $\chi_i$ as a map of pointed prestacks
\begin{equation} \label{e:chi}
B_{\on{et}}(T)  \to  B^2_{\on{et}}(A_i).
\end{equation} 

The map $b'$ and cup-product give rise to a (based) map 
$$B^2(A_1)\times B^2(A_2)\to B^4(A(1)),$$
which in turn gives rise to a (based) map
$$B^2_{\on{et}}(A_1)\times B^2_{\on{et}}(A_2)\to B^4_{\on{et}}(A(1)).$$

Precomposing with the $\chi_i$'s we obtain a (based) map
\begin{equation} \label{e:cup product}
B_{\on{et}}(T)\overset{\chi_1,\chi_2}\longrightarrow B^2_{\on{et}}(A_1)\times B^2_{\on{et}}(A_2) \to B^4_{\on{et}}(A(1)).
\end{equation}

\medskip

The class of the map \eqref{e:cup product} in $H^4_{\on{et}}(B_{\on{et}}(T) ;\on{pt},A(1))$
is the cup product of the classes of $\chi_i\in H^2_{\on{et}}(B_{\on{et}}(T);\on{pt},A_i)$,
$i=1,2$. 

\medskip

The corresponding element in 
$$H^4_{\on{et}}(B_{\on{et}}(T);\on{pt},A(1))\simeq \on{Quad}(\Lambda,A(-1))$$
is the form $q$ from \eqref{e:q from b'}. 

\sssec{}

Let $\CG$ be the object of $\on{FactGe}_{A}(\Gr_T)$ corresponding via \eqref{e:from BG infty} to the composition
$$B_{\on{et}}(T)\times X \to B_{\on{et}}(T)\overset{\text{\eqref{e:cup product}}}\longrightarrow B^4_{\on{et}}(A(1)).$$

We are going to show that the quadratic form attached to $\CG$ by the procedure of \secref{ss:tori expl} equals $q$. 
To show this, we will have to unwind the construction in Sects. \ref{sss:construct gerbe prel}-\ref{sss:construct gerbe end}. 

\sssec{} \label{sss:identify 2}

First, we identify explicitly the corresponding gerbes $\CG^\lambda$. We claim that we have
\begin{equation} \label{e:id G lambda}
\CG^\lambda \simeq (\omega^{\otimes -1}_X)^{q(\lambda)},
\end{equation} 
where $\omega_X$ is the sheaf of $1$-forms on $X$. 

\medskip

We take $S=X$ with $I$ being the one-element set corresponding to the identity map $X\to X$. Consider the map
\begin{equation} \label{e:diag lambda}
X\times X \to B_{\on{et}}(T),
\end{equation} 
corresponding to the $T$-bundle $\lambda\cdot \CO(\Delta)$, equipped with its natural trivialization over
$$U_I=X\times X-\Delta.$$

We identify $\Gamma_I=X$ with the map $\iota$ being the diagonal map. The composition of \eqref{e:diag lambda} with
\eqref{e:chi} (say, for $i=1$) 
defines a 2-cocycle in 
\begin{equation} \label{e:2-cocyle}
\on{C}^\bullet_{\on{et}}(X,\Delta^!((A_1)_{X\times X})).
\end{equation} 

We identify 
$$\Delta^!((A_1)_{X\times X})\simeq A_1(-1)[-2].$$

Thus, the above 2-cocycle corresponds to an element of $A_1(-1)$. 
We claim that the resulting element of $A_1(-1)$ equals $\chi_1(\lambda)$. 

\medskip

Indeed, the object 
$$\chi_1\in \Maps_{\on{Ptd}(\on{PreStk})}(B_{\on{et}}(T) ,B^2_{\on{et}}(A_1))$$
is the $A_1$-gerbe over $B_{\on{et}}(T)$ attached by 
the procedure in \secref{sss:line bundles to gerbes} to the tautological $T$-bundle
and the map $\chi_1:\Lambda\to A_1(-1)$. Hence, the 2-cocyle in \eqref{e:2-cocyle}
is obtained by the procedure \secref{sss:line bundles to gerbes} applied to the
$T$-bundle $\lambda\cdot \CO(\Delta)$ equipped with its trivialization over $X\times X-\Delta$.
The required assertion follows now from \secref{sss:trivialized gerbes}. 

\sssec{} \label{sss:cup prod}

Now, the composition of \eqref{e:diag lambda} with \eqref{e:cup product}, viewed as a 4-cocycle in
$$\on{C}^\bullet_{\on{et}}(X,\Delta^!(A_{X\times X})(1))$$
is obtained from the above 2-cocycle in \eqref{e:2-cocyle} by multiplication with the 2-cocycle in
\begin{equation} \label{e:pullback to diag}
\on{C}^\bullet_{\on{et}}(X,\Delta^*((A_2)_{X\times X})))
\end{equation}
under the cup-product map
\begin{equation} \label{e:cup prod on curve}
\on{C}^\bullet_{\on{et}}(X,\Delta^!((A_1)_{X\times X}))) \otimes 
\on{C}^\bullet_{\on{et}}(X,\Delta^*((A_2)_{X\times X}))\overset{b'}\to
\on{C}^\bullet_{\on{et}}(X,\Delta^!(A_{X\times X}(1)))
\end{equation}
where the 2-cocycle in \eqref{e:pullback to diag} is 
$$X \overset{\Delta}\to X\times X \overset{\text{\eqref{e:diag lambda}}}\longrightarrow B_{\on{et}}(T) 
\overset{\text{\eqref{e:chi} for $i=2$}}\longrightarrow 
B^2_{\on{et}}(A_2).$$

\sssec{}

The map \eqref{e:cup prod on curve} fits into a commutative diagram
$$
\CD 
\on{C}^\bullet_{\on{et}}(X,\Delta^!((A_1)_{X\times X}))) \otimes  \on{C}^\bullet_{\on{et}}(X,\Delta^*((A_2)_{X\times X})) @>{b'}>> 
\on{C}^\bullet_{\on{et}}(X,\Delta^!(A_{X\times X}(1))) \\
@V{\sim}VV @VV{\sim}V \\
\on{C}^\bullet_{\on{et}}(X,(A_1)_{X}(-1)[-2])  \otimes 
\on{C}^\bullet_{\on{et}}(X,(A_2)_X) @>{b'}>> 
\on{C}^\bullet_{\on{et}}(X,A_X[-2]).
\endCD
$$

Hence, in order to prove \eqref{e:id G lambda}, it suffices to show that the 2-cocycle in \eqref{e:pullback to diag},
thought of as a 2-cocycle in 
\begin{equation} \label{e:Chern class}
\on{C}^\bullet_{\on{et}}(X,(A_2)_X),
\end{equation} 
interpreted as a $A_2$-gerbe on $X$, identifies with
$$(\omega^{\otimes -1}_X)^{\chi_2(\lambda)}.$$

\sssec{}

Note that the map \eqref{e:chi} (say, for $i=2$) is obtained by the procedure of 
\secref{sss:line bundles to gerbes} applied to the tautological $T$-bundle on $B_{\on{et}}(T)$ and 
$\chi_2:\Lambda\to A_2(-1)$. 

\medskip

Hence, the above class in \eqref{e:Chern class} is attached by the procedure of 
\secref{sss:line bundles to gerbes} applied to the $T$-bundle $\lambda\cdot \CO(\Delta)|_{\Delta}$ and 
$\chi_2:\Lambda\to A_2(-1)$.

\medskip

The required assertion follows now from the fact that
$$\CO(\Delta)|_{\Delta}\simeq \omega^{\otimes -1}_X.$$

\begin{rem} \label{r:swap lambda}

The above description of the resulting $A$-gerbe $\CG^\lambda$ relied on breaking the symmetry in the roles of $A_1$ and $A_2$.
We claim that if we swap the roles, the resulting automorphism of the gerbe $(\omega^{\otimes -1}_X)^{q(\lambda)}$
will be given by the $A_{2\on{-tors}}$-torsor $(-1)^{q(\lambda)}$. 

\medskip

Indeed, the 4-cocycle we obtained 
$$\on{C}_{\on{et}}^\bullet(X,\Delta^!(A_{X\times X}(1)))$$ 
equals the image of the 4-cocycle in 
$$\chi_1(\lambda)\otimes \chi_2(\lambda) \in 
\on{C}^\bullet_{\on{et}}(X,(A_1)_X(-1)[-2] \otimes (A_2)_X(-1)[-2])\simeq  
\on{C}^\bullet_{\on{et}}(X,\Delta^!((A_1)_{X\times X})\otimes \Delta^!((A_2)_{X\times X})),$$ 
along the map
\begin{multline*} 
\Delta^!((A_1)_{X\times X})\otimes \Delta^!((A_2)_{X\times X}) \to
\Delta^!((A_1)_{X\times X})\otimes (A_2)_{X\times X} \to \\
\to \Delta^!((A_1)_{X\times X})\otimes \Delta^*((A_2)_{X\times X})\to 
\Delta^!((A_1\otimes A_2)_{X\times X})\overset{b'}\to \Delta^!(A_{X\times X}(1)).
\end{multline*} 

We note that the latter map is canonically identified with
\begin{multline*} 
\Delta^!((A_1)_{X\times X})\otimes \Delta^!((A_2)_{X\times X}) \to
(A_1)_{X\times X}\otimes \Delta^!((A_2)_{X\times X}) \to \\
\to \Delta^*((A_1)_{X\times X})\otimes \Delta^!((A_2)_{X\times X}) \to 
\Delta^!((A_1\otimes A_2)_{X\times X})\overset{b'}\to \Delta^!(A_{X\times X}(1)).
\end{multline*} 

\medskip

The exchange of roles of $A_1$ and $A_2$ thus acts as the swap of the two factors in $X\times X$, resulting in the automorphism 
$(-1)$ on $\omega_X^{-1}\simeq \CO(\Delta)|_\Delta$. 

\end{rem} 

\sssec{}

Fix now two elements $\lambda_1,\lambda_2\in \Lambda$, and let us describe explicitly the resulting
gerbe $\CG^{\lambda_1,\lambda_2}$. We claim that we will obtain a canonical identification
\begin{equation} \label{e:id G lambda 2}
\CG^{\lambda_1,\lambda_2}\simeq \left((\omega_X^{\otimes -1})^{q(\lambda_1)}\boxtimes 
(\omega_X^{\otimes -1})^{q(\lambda_2)}\right)\otimes 
\CO(\Delta)^{b'(\chi_1(\lambda_1),\chi_2(\lambda_2))}\otimes \CO(\Delta)^{b'(\chi_1(\lambda_2),\chi_2(\lambda_1))}.
\end{equation} 

\medskip

This would imply that the symmetric bilinear form attached to $\CG$ by the procedure of \secref{ss:tori expl} equals $b$
of \eqref{e:b from b'}.

\sssec{}

We take $S=X\times X$ and $I=\{1,2\}$, where the two maps $S\to X$ are the two projections. 
The subset $\Gamma_I \subset S\times X$ identifies with $X\times X\underset{\Delta}\sqcup\, X\times X$.
It admits a normalization 
$$\wt\Gamma_I\simeq (X\times X) \sqcup (X\times X) \overset{s}\to \Gamma_I.$$

Denote the composite map
$$\wt\Gamma_I\to \Gamma_I\to S\times X$$
by $\wt\iota$.

\medskip

Denote the two maps
$$X\times X \to \Gamma_I$$
by $s_i$. We have
$$\iota\circ s_1=\Delta_{1,3} \text{ and } \iota\circ s_2=\Delta_{2,3}.$$ 
Let $s_{1,2}$ denote the diagonal map
$$X\to \Gamma_I,$$
so that $\iota\circ s_{1,2}$ is the main diagonal $\Delta_{1,2,3}$. 

\sssec{}

We consider the map
\begin{equation} \label{e:diag lambda 2}
(X\times X)\times X \to B_{\on{et}}(T),
\end{equation} 
corresponding to the $T$-bundle $(\lambda_1\cdot \CO(\Delta_{1,3}))\otimes (\lambda_2\cdot \CO(\Delta_{2,3}))$,
equipped with its natural trivialization on $U_I=(X\times X)\times X-(\Delta_{1,3}\cup \Delta_{2,3})$. 

\medskip

The composition of \eqref{e:diag lambda 2} with \eqref{e:chi} (for $i=1$) defines a 2-cocycle 
\begin{equation} \label{e:2-cocyle 2}
\on{C}^\bullet_{\on{et}}(\Gamma_I,\iota^!((A_1)_{(X\times X)\times X})).
\end{equation} 

We have a distinguished triangle 
$$(s_{1,2})_!((A_1)_X(-2)[-4])\to (s_1)_!((A_1)_{X\times X}(-1)[-2]) \oplus (s_2)_!((A_1)_{X\times X}(-1)[-2])  \to \iota^!((A_1)_{(X\times X)\times X}).$$ 

Hence, the above 2-cocycle canonically comes from a 2-cocycle in
$$\on{C}^\bullet_{\on{et}}(\wt\Gamma_I,\wt\iota^!((A_1)_{(X\times X)\times X}))\simeq 
\on{C}^\bullet_{\on{et}}(X\times X,(A_1)_{X\times X}(-1)[-2])\oplus \on{C}^\bullet_{\on{et}}(X\times X,(A_1)_{X\times X}(-1)[-2]).$$

I.e., this cocycle corresponds to a pair of elements in $A_1(-1)$. The computation in \secref{sss:identify 2} implies that this pair of 
elements is given by
\begin{equation} \label{e:components}
(\chi_1(\lambda_1),\chi_1(\lambda_2))\in A_1(-1)\oplus A_1(-1).
\end{equation} 

\sssec{}

From here, using the cup-product manipulation as in \secref{sss:cup prod}, we obtain that the 4-cocycle in 
\begin{equation} \label{e:4-cocyle 2}
\on{C}^\bullet_{\on{et}}(\Gamma_I,\iota^!(A_{(X\times X)\times X}(1))),
\end{equation} 
corresponding to the composition of \eqref{e:diag lambda 2} with \eqref{e:cup product}, also comes from a 4-cocycle in 
\begin{equation} \label{e:4-cocyle 2 tilde}
\on{C}^\bullet_{\on{et}}(\wt\Gamma_I,\wt\iota^!(A_{(X\times X)\times X})(1))\simeq 
\on{C}^\bullet_{\on{et}}(X\times X,A_{X\times X}[-2])\oplus \on{C}^\bullet_{\on{et}}(X\times X,A_{X\times X}[-2]),
\end{equation} 
with components obtained via  
$$\on{C}^\bullet_{\on{et}}(X\times X,(A_2)_{X\times X}[-2]) \overset{b'(\chi_1(\lambda_1),-)} \longrightarrow 
\on{C}^\bullet_{\on{et}}(X\times X,A_{X\times X}[-2])$$
and  
$$\on{C}^\bullet_{\on{et}}(X\times X,(A_2)_{X\times X}[-2]) \overset{b'(\chi_1(\lambda_2),-)} \longrightarrow 
\on{C}^\bullet_{\on{et}}(X\times X,A_{X\times X}[-2]),$$
respectively, from the 2-cocycles in 
$$\on{C}^\bullet_{\on{et}}(X\times X,(A_2)_{X\times X}[-2])$$
attached by the procedure of \secref{sss:line bundles to gerbes} to the $T$-bundles
\begin{equation} \label{e:T-bundles 2}
X\times X \overset{\Delta_{i,3}}\longrightarrow (X\times X)\times X \overset{\text{\eqref{e:diag lambda 2}}}\longrightarrow B_{\on{et}}(T), \quad i=1,2
\end{equation} 
and $\chi_2:\Lambda\to A_2(-1)$. 

\sssec{}

The $T$-bundles in \eqref{e:T-bundles 2} are
$$\lambda_1\cdot (\omega_X^{\otimes -1}\boxtimes \CO_X) \otimes \lambda_2\cdot \CO(\Delta) \text{ and }
\lambda_2\cdot (\CO_X\boxtimes \omega_X^{\otimes -1}) \otimes \lambda_1\cdot \CO(\Delta),$$
respectively. 

\medskip

Hence, we obtain that the 4-cocycle in \eqref{e:4-cocyle 2 tilde} has components in 
$$\on{C}^\bullet_{\on{et}}(X\times X,A_{X\times X}[-2])\oplus \on{C}^\bullet_{\on{et}}(X\times X,A_{X\times X}[-2]),$$
thought of as a pair of $A$-gerbes on $X\times X$, equal to
$$(\omega_X^{\otimes -1}\boxtimes \CO_X)^{b'(\chi_1(\lambda_1),\chi_2(\lambda_1))}\otimes
\CO(\Delta)^{b'(\chi_1(\lambda_1),\chi_2(\lambda_2))}$$ 
and 
$$(\CO_X\boxtimes \omega_X^{\otimes -1})^{b'(\chi_1(\lambda_2),\chi_2(\lambda_2))}
\otimes
\CO(\Delta)^{b'(\chi_1(\lambda_2),\chi_2(\lambda_1))},$$
respectively. 

\medskip

Finally, the trace map
$$\on{C}^\bullet_{\on{et}}(\wt\Gamma_I,\wt\iota^!(A_{S\times X}(1))) \simeq
\on{C}^\bullet_{\on{et}}(\wt\Gamma_I,s^!\circ \pi^!(A_S)[-2]) \to \on{C}^\bullet_{\on{et}}(S,A_S[-2]),$$
thought of as a map
$$\on{C}^\bullet_{\on{et}}(X\times X,A_{X\times X}[-2])\oplus \on{C}^\bullet_{\on{et}}(X\times X,A_{X\times X}[-2])\to
\on{C}^\bullet_{\on{et}}(X\times X,A_{X\times X}[-2]),$$
is the sum of two identity maps.

\medskip

This implies the specified description of \eqref{e:id G lambda 2}. 

\sssec{}

We have showed that the symmetric bilinear form corresponding to $\CG$ via the procedure of 
\secref{ss:tori expl} equals the form $b$ from \eqref{e:b from b'}. We will now proceed to showing
that the quadratic form equals $q$.  For that we will have to describe explicitly the identifications
$$\CG^{\lambda_1,\lambda_2}|_{\Delta}\simeq \CG^{\lambda_1+\lambda_2}$$
of \eqref{e:diag compat of gerbes 2} and the datum of commutativity of the corresponding diagram \eqref{e:recover quad}

\medskip

Unwinding the definitions, we obtain that the resulting map 
\begin{multline*}
(\omega_X^{\otimes -1})^{q(\lambda_1+\lambda_2)}\simeq 
(\omega_X^{\otimes -1})^{q(\lambda_1)+q(\lambda_2)+b(\lambda_1,\lambda_2)}\simeq \\
\simeq (\omega_X^{\otimes -1})^{b'(\chi_1(\lambda_1),\chi_2(\lambda_1))+b'(\chi_1(\lambda_1),\chi_2(\lambda_2))+
b'(\chi_1(\lambda_2),\chi_2(\lambda_2))+b'(\chi_1(\lambda_2),\chi_2(\lambda_1))}\simeq \\
\simeq
\biggl((\omega_X^{\otimes -1}\boxtimes \CO_X)^{b'(\chi_1(\lambda_1),\chi_2(\lambda_1))}\otimes
\CO(\Delta)^{b'(\chi_1(\lambda_1),\chi_2(\lambda_2))}\otimes \\
\otimes (\CO_X\boxtimes \omega_X^{\otimes -1})^{b'(\chi_1(\lambda_2),\chi_2(\lambda_2))}
\otimes \CO(\Delta)^{b'(\chi_1(\lambda_2),\chi_2(\lambda_1))}\biggr)|_\Delta \simeq \\
\simeq \CG^{\lambda_1,\lambda_2}|_{\Delta}\simeq \CG^{\lambda_1+\lambda_2} \simeq (\omega_X^{\otimes -1})^{q(\lambda_1+\lambda_2)}
\end{multline*}
equals the tautological map tensored with the $A_{2\on{-tors}}$-torsor 
$$(-1)^{b'(\chi_1(\lambda_1),\chi_2(\lambda_2))}.$$

This torsor comes from the fact that the map 
$$\iota\circ s_1:X\times X\to X\times X\times X$$
acts as
$$(x_1,x_2)\mapsto (x_1,x_2,x_1),$$
which involves the transposition of the last two factors.

\medskip

Furthermore, the datum of commutativity of the diagram \eqref{e:transposition unequal diag} is given by the identification 
of the $A_{2\on{-tors}}$-torsors 
\begin{multline*}
(-1)^{b'(\chi_1(\lambda_1),\chi_2(\lambda_2))}\otimes (-1)^{b(\lambda_1,\lambda_2)}\simeq
(-1)^{-b'(\chi_1(\lambda_1),\chi_2(\lambda_2))}\otimes (-1)^{b(\lambda_1,\lambda_2)}\simeq \\
\simeq (-1)^{b'(\chi_1(\lambda_2),\chi_2(\lambda_1))},
\end{multline*}
where the last isomorphism comes from
$$b(\lambda_1,\lambda_2)=b'(\chi_1(\lambda_1),\chi_2(\lambda_2))+b'(\chi_1(\lambda_2),\chi_2(\lambda_1)).$$

From here we obtain that the datum of commutativity of the diagram \eqref{e:recover quad} is given by the identification
$$(-1)^{q(\lambda)} \otimes (-1)^{b(\lambda,\lambda)} \simeq (-1)^{-q(\lambda)} \otimes (-1)^{b(\lambda)}\simeq (-1)^{q(\lambda)},$$
where the last isomorphism comes from
$$b(\lambda,\lambda)=2q(\lambda),$$
as required. 

\ssec{Proof of \propref{p:parameterization} for tori} \label{ss:proof for tori}

We will now give an alternative proof of \propref{p:parameterization} in the special case of tori.

\sssec{}

From the commutative diagram \eqref{e:compare forms}, we obtain a commutative diagram 
$$
\CD
\Maps_{\on{Ptd}(\on{PreStk})}(B_{\on{et}}(T) \times X,B^4_{\on{et}}(A(1)))  @>{\text{\eqref{e:from BG infty}}}>> \on{FactGe}_{A}(\Gr_T)  \\
@VVV   @VV{\text{\secref{ss:tori expl}}}V   \\
\on{Quad}(\Lambda,A(-1))  @>{\on{Id}}>>  \on{Quad}(\Lambda,A(-1)),
\endCD 
$$
where the left vertical arrow corresponds to the projection 
$$H^4_{\on{et}}(B_{\on{et}}(T) \times X;\on{pt}\times X,A(1))\to H^4_{\on{et}}(B_{\on{et}}(T);\on{pt},A(1))\simeq \on{Quad}(\Lambda,A(-1)).$$

\medskip

Let $\on{FactGe}^0_{A}(\Gr_T)$ denote the fiber of the right vertical arrow. From the distinguished triangle 
\eqref{e:loc triv gerbes again} we obtain that the fiber of the left vertical arrow identifies canonically with
$\Maps\left(X,B^2_{\on{et}}(\Hom(\Lambda,A))\right)$. 

\medskip

Hence, it remains to show that the induced map 
\begin{equation} \label{e:0 gerbes}
\Maps\left(X,B^2_{\on{et}}(\Hom(\Lambda,A))\right)\to \on{FactGe}^0_{A}(\Gr_T)
\end{equation} 
is an isomorphism. 

\medskip

We will deduce this from the description of $\on{FactGe}_{A}(\Gr_T)$
in \secref{sss:factor gerbes tori}.

\sssec{} \label{sss:0 gerbes}

Namely, the groupoid $\on{FactGe}^0_{A}(\Gr_T)$ is isomorphic to that of
assignments 
$$\lambda \mapsto \CG^\lambda\in \on{Ge}_A(X),$$
equipped with the following pieces of data:

\begin{itemize} 

\item One is \emph{multiplicativity}, i.e., we must be given 
isomorphisms of gerbes 
$$\CG^{\lambda_1+\lambda_2}\simeq \CG^{\lambda_1}\otimes \CG^{\lambda_2}$$
that are associative in the natural sense.

\item The other one is that of \emph{commutativity}, i.e., 
we must be given the data of commutativity for the squares
\begin{equation} \label{e:com lambda12}
\CD 
\CG^{\lambda_1+\lambda_2} @>>>  \CG^{\lambda_1}\otimes \CG^{\lambda_2} \\
@VVV  @VVV  \\
\CG^{\lambda_2+\lambda_1} @>>>  \CG^{\lambda_2}\otimes \CG^{\lambda_1} 
\endCD
\end{equation}
that satisfy the hexagon axiom.

\end{itemize}

In addition, the following conditions must be satisfied:

\begin{enumerate} 

\item
The datum of commutativity for the outer square in 
\begin{equation} \label{e:com lambda sq}
\CD 
\CG^{\lambda_1+\lambda_2} @>>>  \CG^{\lambda_1}\otimes \CG^{\lambda_2} \\
@VVV  @VVV  \\
\CG^{\lambda_2+\lambda_1} @>>>  \CG^{\lambda_2}\otimes \CG^{\lambda_1}  \\
@VVV  @VVV \\
\CG^{\lambda_1+\lambda_2} @>>>  \CG^{\lambda_1}\otimes \CG^{\lambda_2} 
\endCD
\end{equation}
is the identity one. 

\item 
The datum of commutativity in \eqref{e:com lambda12} for $\lambda_1=\lambda=\lambda_2$
\begin{equation} \label{e:com lambda}
\CD 
\CG^{2\lambda} @>>>  \CG^{\lambda}\otimes \CG^{\lambda} \\
@V{\on{id}}VV  @VV{\on{id}}V  \\
\CG^{2\lambda} @>>>  \CG^{\lambda}\otimes \CG^{\lambda} 
\endCD
\end{equation}
is the identity one. 

\end{enumerate}

\sssec{}

The above description implies that for $T\simeq T_1\times T_2$, the natural map
$$\on{FactGe}^0_{A}(\Gr_{T_1})\times \on{FactGe}^0_{A}(\Gr_{T_2})\to \on{FactGe}^0_{A}(\Gr_{T})$$
is an isomorphism.

\medskip

Hence, it is sufficient to show that the map \eqref{e:0 gerbes} is an equivalence for $T=\BG_m$.

\sssec{}

Now, the description in \secref{sss:0 gerbes} implies that for $T=\BG_m$ (so $\Lambda=\BZ$)
we have the obvious equivalence
$$\on{FactGe}^0_{A}(\Gr_{\BG_m})\simeq \on{Ge}_A(X),$$
given by
$$\CG \mapsto \CG^1, \quad 1\in \BZ,$$
and the composition
$$\on{Ge}_A(X)\simeq \Maps\left(X,B^2_{\on{et}}(\Hom(\BZ,A))\right)\to \on{FactGe}^0_{A}(\Gr_{\BG_m})\to \on{Ge}_A(X)$$
is the identity map.

\medskip

Hence, \eqref{e:0 gerbes} is an isomorphism for $\BG_m$. 

\ssec{Relation to $\Theta$-data}

In this section we will make contact with the paper \cite{Zhao}, and describe the category $\on{FactGe}_{A}(\Gr_T)$
in terms of what the author of {\it loc. cit.} calls $\Theta$ data for the lattice $\Lambda$. 

\sssec{} \label{sss:Theta}

Following \cite[Sect. 5.3.5]{Zhao}, we let $\Theta(\Lambda)$ be the space of the following data:

\begin{itemize}

\item A quadratic form $q:\Lambda \to A(-1)$; we denote the associated symmetric bilinear form by $b$; 

\item An assignment $\lambda\in \Lambda \rightsquigarrow \CG^\lambda\in \on{Ge}_A(X)$;

\item A system of isomorphisms 
$$\CG^{\lambda_1+\lambda_2}\overset{c_{\lambda_1,\lambda_2}}\simeq \CG^{\lambda_1}\otimes \CG^{\lambda_2}\otimes
(\omega_X^{-1})^{b(\lambda_1,\lambda_2)},$$
endowed with an associativity constraint;

\item A datum $h_{\lambda_1,\lambda_2}$ of commutativity for the squares
$$
\CD
\CG^{\lambda_1+\lambda_2} @>{c_{\lambda_1,\lambda_2}}>>  \left(\CG^{\lambda_1}\otimes \CG^{\lambda_2}\right) \otimes (\omega_X^{-1})^{b(\lambda_1,\lambda_2)} \\
@V{\on{id}}VV  @VV{\on{id}\otimes (-1)^{b(\lambda_1,\lambda_2)}}V \\
\CG^{\lambda_2+\lambda_1} @>{c_{\lambda_2,\lambda_1}}>>  \left(\CG^{\lambda_2}\otimes \CG^{\lambda_1}\right)\otimes (\omega_X^{-1})^{b(\lambda_2,\lambda_1)},
\endCD
$$
where $(-1)^{b(\lambda_1,\lambda_2)}$ is as in \secref{sss:-1}, equipped with a datum of compatibility with the associativity constraint, and that squares to the
identity; 

\item For $\lambda_1=\lambda=\lambda_2$, the datum of identification of $h_{\lambda,\lambda}$ in the diagram 
$$
\CD
\CG^{\lambda+\lambda} @>{c_{\lambda,\lambda}}>>  \left(\CG^{\lambda}\otimes \CG^{\lambda}\right) \otimes (\omega_X^{-1})^{b(\lambda,\lambda)} \\
@V{\on{id}}VV  @VV{\on{id}\otimes (-1)^{b(\lambda,\lambda)}}V \\
\CG^{\lambda+\lambda} @>{c_{\lambda,\lambda}}>>  \left(\CG^{\lambda}\otimes \CG^{\lambda}\right)\otimes (\omega_X^{-1})^{b(\lambda,\lambda)},
\endCD
$$
with the trivialization of $(-1)^{b(\lambda,\lambda)}$ resulting from the identity $b(\lambda,\lambda)=2q(\lambda)$. 

\end{itemize} 

\bigskip

Let $\Theta^0(\Lambda)$ denote the fiber of the natural projection
$$\Theta(\Lambda)\to \on{Quad}(\Lambda,A(-1)).$$

We have 
$$\Theta^0(\Lambda)\simeq \Maps(X,B^2_{\on{et}}(\Hom(\Lambda,A))).$$

\sssec{} \label{sss:theta basis}

Note that if $\Lambda$ is equipped with a basis $e_1,...,e_n$, then evaluation on basis elements defines a map
$$\Theta(\Lambda) \to (\on{Ge}_A(X))^{\times n},$$
and the resulting map
$$\Theta(\Lambda) \to \on{Quad}(\Lambda,A(-1))\times (\on{Ge}_A(X))^{\times n}$$
is an equivalence. 

\medskip

Indeed, it fits into a map of fiber sequences
$$
\CD
\Theta^0(\Lambda) @>>> \Theta(\Lambda) @>>> \on{Quad}(\Lambda,A(-1)) \\
@VVV  @VVV @VV{\on{id}}V  \\
(\on{Ge}_A(X))^{\times n} @>>> \on{Quad}(\Lambda,A(-1))\times (\on{Ge}_A(X))^{\times n} @>>> \on{Quad}(\Lambda,A(-1)),
\endCD
$$
where the left vertical arrow is the isomorphism 
$$\Theta^0(\Lambda)\simeq \Maps(X,B^2_{\on{et}}(\Hom(\Lambda,A))) \simeq (\on{Ge}_A(X))^{\times n}.$$

\sssec{} \label{sss:theta T}

Following \cite[Lemma 5.6]{Zhao}, we claim that there is a canonical equivalence
\begin{equation} \label{e:theta}
\on{FactGe}_{A}(\Gr_T)\simeq \Theta(\Lambda).
\end{equation} 

Indeed, the description of $\on{FactGe}_{A}(\Gr_T)$ in \secref{ss:tori expl} provides a functor
$$\on{FactGe}_{A}(\Gr_T)\to \Theta(\Lambda).$$

Now, we have a morphism of fiber sequences
$$
\CD
\on{FactGe}^0_{A}(\Gr_T) @>>> \on{FactGe}_{A}(\Gr_T) @>>>  \on{Quad}(\Lambda,A(-1)) \\
@VVV @VVV @VV{\on{id}}V \\
\Theta^0(\Lambda) @>>> \Theta(\Lambda) @>>> \on{Quad}(\Lambda,A(-1)),
\endCD
$$
where the left vertical arrow is an equivalence by \secref{sss:0 gerbes}. 

\sssec{} \label{sss:theta G new 1}

Let now $G$ be a reductive group with $T$ as its Cartan subgroup. Assume that $A$ is divisible.
We define the category $\Theta(\Lambda)_G$
as follows\footnote{Our definition is different, yet equivalent to that in \cite[Sect.5.3.6]{Zhao}.}:

\medskip

An object of $\Theta(\Lambda)_G$ is an object of $\Theta(\Lambda)$, whose bilinear form $q$ belongs to
$$\on{Quad}(\Lambda,A(-1))^W_{\on{restr}}\subset \on{Quad}(\Lambda,A(-1)),$$
and which is endowed with isomorphisms
\begin{equation} \label{e:rigidification i}
\CG^{\alpha_i}\simeq (\omega_X^{\otimes -1})^{q(\alpha_i)}
\end{equation}
for every simple coroot $\alpha_i$. 

\sssec{} \label{sss:theta G new 2}

Restriction along the embedding $T\hookrightarrow G$ defined a map
$$\on{FactGe}_{A}(\Gr_G)\to \on{FactGe}_{A}(\Gr_T).$$

We claim that we have a naturally defined map
\begin{equation} \label{e:theta enh}
\on{FactGe}_{A}(\Gr_G)\to \Theta(\Lambda)_G
\end{equation} 
that makes the diagram
$$
\CD
\on{FactGe}_{A}(\Gr_G) @>>> \on{FactGe}_{A}(\Gr_T) \\
@VVV  @VVV \\
\Theta(\Lambda)_G @>>> \Theta(\Lambda)
\endCD
$$
commute. 

\medskip

Indeed, by \secref{sss:calc homotopy} we know that the composition
$$\on{FactGe}_{A}(\Gr_G) \to \on{FactGe}_{A}(\Gr_T) \to \on{Quad}(\Lambda,A(-1))$$
takes values in $\on{Quad}(\Lambda,A(-1))^W_{\on{restr}}$. 

\medskip

Hence, it remains to construct the data of \eqref{e:rigidification i}. The latter reduces to the
case of $G=SL_2$.

\sssec{} \label{sss:theta sl2}

Note that by \secref{sss:calc homotopy Ge}, any factorizable $A$-gerbe on $\Gr_{SL_2}$ is \emph{canonically} of the form 
$(\det_{SL_2,\on{St}})^{a}$ for some element $a\in A(-1)$, where $\det_{SL_2,\on{St}}$ is the determinant line bundle on $\Gr_{SL_2}$
corresponding to the action on the \emph{standard} representation. 

\medskip

For an integer $k$ let $\det_{\BG_m,k}$ denote the determinant line bundle on $\Gr_{\BG_m}$ associated with the action of $\BG_m$
on the one-dimensional vector space given by the $k$-th power of the tautological character. This is a $\BZ$-graded factorization line bundle,
and we note that the grading is even if $k$ is even.

\medskip

The restriction of $\det_{SL_2,\on{St}}$ to $\Gr_{\BG_m}$ identifies with $\det_{\BG_m,1}\otimes \det_{\BG_m,-1}$, and hence
the restriction of $(\det_{SL_2,\on{St}})^{a}$ to $\Gr_{\BG_m}$ identifies with $(\det_{\BG_m,1}\otimes \det_{\BG_m,-1})^a$.
The associated quadratic form 
$$q:\BZ\to A(-1)$$
takes value $a$ on the generator $1\in \BZ$. 

\medskip

In order to construct \eqref{e:rigidification i}, we have to show that the value of $(\det_{\BG_m,1}\otimes \det_{\BG_m,-1})^a$
on the generator $1\in \BZ=\Lambda$ identifies canonically with $(\omega_X^{\otimes -1})^a$. For that, it suffices to construct 
an isomorphism between the restriction of the line bundle $\det_{SL_2,\on{St}}$ to the section
$$X\to \Gr_{\BG_m,X}$$ 
corresponding to $1\in \BZ$ identifies canonically with $\omega_X^{\otimes -1}$.

\medskip

However, we indeed have a canonical isomorphism
$$\on{det.rel.}(\CO(x)\oplus \CO(-x),\CO\oplus \CO)\simeq \omega_X^{\otimes -1}|_x, \quad x\in X.$$

\sssec{}

Thus, the map \eqref{e:theta enh} has been constructed. It fits into a map of fiber sequences
\begin{equation} \label{e:theta enh diag}
\CD
\on{FactGe}^0_{A}(\Gr_{G}) @>>> \on{FactGe}_{A}(\Gr_{G}) @>>> \on{Quad}(\Lambda,A(-1))^W_{\on{restr}} \\
@VVV @VVV @VV{\on{id}}V \\
\Theta^0(\Lambda)_G @>>> \Theta(\Lambda)_G @>>> \on{Quad}(\Lambda,A(-1))^W_{\on{restr}}, 
\endCD
\end{equation}
where
$$\Theta^0(\Lambda)_G\simeq 
\on{Fib}\left(\Theta^0(\Lambda)\to \underset{i\in I}\Pi\, \on{Ge}_A(X)\right)\simeq \Maps(X,B^2_{\on{et}}(\Hom(\pi_{1,\on{alg}}(G),A))).$$

By unwinding the construction, one obtains that in terms of the above identification, the 
left vertical arrow in \eqref{e:theta enh diag} is the identity map on $\Maps(X,B^2_{\on{et}}(\Hom(\pi_{1,\on{alg}}(G),A)))$. 

\medskip

From here we obtain that the map \eqref{e:theta enh} is also an isomorphism.

\begin{rem}  \label{r:theta G bis}
In \cite[Theorem 5.4]{Zhao}, it is shown directly that the map \eqref{e:theta enh} are isomorphism. This, in turn, can be used to deduce 
\propref{p:parameterization} by reversing the steps. 
\end{rem}

\ssec{The notion of \emph{multiplicative} factorization gerbe} \label{ss:mult com}

In order to be able to state the metaplectic version of geometric Satake, we will need to discuss 
the notion of \emph{multiplicative} factorization gerbe, first on $\Gr_T$,
and then when the lattice $\Lambda=\Hom(\BG_m,T)$ is replaced by a general finitely generated
abelian group. 

\sssec{}

Note that since $T$ is commutative, $\Gr_T$ is naturally a factorization \emph{group}-prestack over $\Ran$.
Hence, along with $\on{FactGe}_{A}(\Gr_T)$, 
we can consider the corresponding space (in fact, commutative group in spaces)
\begin{equation} \label{e:mult and com T}
\on{FactGe}^{\on{mult}}_{A}(\Gr_T) 
\end{equation} 
that corresponds to gerbes that respect the group structure on $\Gr_T$ over $\Ran$.

\medskip

We have the evident forgetful map
\begin{equation} \label{e:forget mult T}
\on{FactGe}^{\on{mult}}_{A}(\Gr_T)\to \on{FactGe}_{A}(\Gr_T). 
\end{equation}

\medskip

Explicitly, a multiplicative structure on a gerbe $\CG$ is an identification 
$$\on{mult}^*(\CG)\simeq \CG\boxtimes \CG$$
as \emph{factorization gerbes} on $\Gr_T\underset{\Ran}\times \Gr_T$ (in the above formula $\on{mult}$ denotes the
multiplication map $\Gr_T\underset{\Ran}\times \Gr_T\to \Gr_T$), equipped with a compatibility datum over
triple product $\Gr_T\underset{\Ran}\times \Gr_T\underset{\Ran}\times \Gr_T$, and an identity satisfied over the quadruple 
product. 

\medskip

We will prove:

\begin{prop}  \label{p:classify mult T} 
The forgetful map
$$\on{FactGe}^{\on{mult}}_{A}(\Gr_T)\to \on{FactGe}_{A}(\Gr_T)$$
is fully faithful. Its essential image is the preimage under
$$\on{FactGe}_{A}(\Gr_T)\to \on{Quad}(\Lambda,A(-1))$$
of the subset consisting of those quadratic forms, whose
associated bilinear form is zero.
\end{prop}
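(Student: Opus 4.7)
The strategy is to translate both the factorization gerbe data and the multiplicative structure into the combinatorial description of \secref{sss:factor gerbes tori}, and then analyze the extension problem that defines multiplicativity on $X^{I_1 \sqcup I_2}$. The group structure on $\Gr_T$ over $\Ran$ corresponds, on the combinatorial level, to the natural identification $X^{I_1}\times X^{I_2} \simeq X^{I_1 \sqcup I_2}$ with weight function $\lambda^{I_1} \sqcup \lambda^{I_2}$. Thus a multiplicative structure on a factorization gerbe $\CG$ amounts, after reduction to $\Gr_{T,\on{comb}}$, to specifying isomorphisms
\[
\CG_{\lambda^{I_1} \sqcup \lambda^{I_2}} \simeq \CG_{\lambda^{I_1}} \boxtimes \CG_{\lambda^{I_2}}
\]
on all of $X^{I_1 \sqcup I_2}$, extending the factorization isomorphism already given over the open locus $X^{I_1 \sqcup I_2}_{\on{disj}}$, together with higher coherences (associativity, symmetry, and compatibility with the diagonal embeddings).

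\textbf{Necessity.} Taking $I_1=I_2=\{*\}$ with arbitrary weights $\lambda_1, \lambda_2 \in \Lambda$, the definition of $b$ given in \secref{sss:bilin expl} is precisely that the obstruction to extending the factorization isomorphism across $\Delta(X)\subset X^2$ is the gerbe $\CO(\Delta)^{b(\lambda_1,\lambda_2)}$. For a multiplicative structure to exist it must vanish canonically for all $\lambda_1,\lambda_2$, and by \lemref{l:trivialized gerbes} the map $A(-1)\to \on{Ge}_A(X^2)\underset{\on{Ge}_A(X^2-\Delta)}\times *$ is an isomorphism, so $b\equiv 0$ is forced.

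\textbf{Sufficiency and fully faithfulness.} Assume $b=0$. We construct the multiplicative isomorphism over $X^{I_1 \sqcup I_2}$ by extending across the stratification by coincidence patterns: each pairwise collision $x_i=x_j$ for $i\in I_1$, $j\in I_2$ contributes an obstruction $\CO(\Delta_{i,j})^{b(\lambda_i,\lambda_j)} = \CO(\Delta_{i,j})^0$, which is canonically trivial, while all loci of codimension $\geq 2$ contribute nothing since $H^0_Z(Y,A)=H^1_Z(Y,A)=0$ for $Z\subset Y$ of codimension $\geq 2$ (the cohomological input behind \lemref{l:trivialized gerbes}). The same codimension vanishing yields fully faithfulness: any two multiplicative structures on a given $\CG$ differ by an automorphism of $\CG\boxtimes\CG$ on $\Gr_T\underset{\Ran}\times\Gr_T$ that restricts to the identity on the disjoint locus, hence by an $A$-torsor supported on the non-disjoint locus; after reduction to $X^{I_1 \sqcup I_2}$ this torsor is classified by $H^1$-with-support on a closed subset of codimension $\geq 1$, which equals $H^{-1}$ of a lower-dimensional variety with $A(-1)$ coefficients and so vanishes. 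The analogous $H^0_Z$ vanishing takes care of the 2-automorphisms, so the space of multiplicative structures on $\CG$ is either empty or contractible. The associativity, symmetry, and factorization compatibilities of the constructed isomorphisms are automatic by the same codimension considerations, since any discrepancy between two choices lives in an $H^i_Z$ group whose vanishing we have just invoked.

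\textbf{Main obstacle.} The delicate part is \emph{not} the single-collision calculation but the coherent bookkeeping of higher strata and of all compatibility diagrams (\eqref{e:diag compat of gerbes}, \eqref{e:div factor of gerbes}, and their interaction with multiplication). Once one accepts that on codimension-one strata the obstruction is exactly $b$, every subsequent level of obstruction and every cell in the coherence complex is supported on a locus of codimension $\geq 2$, which kills it automatically by Gysin. Carrying out this bookkeeping rigorously is where one appeals to the techniques of \cite{Re}; given those, the remaining content of the proposition is exactly the two calculations above.
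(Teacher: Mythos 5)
Your proof takes essentially the same approach as the paper's: both reduce to the combinatorial description on $\Gr_{T,\on{comb}}$, identify the multiplicative structure with the problem of extending the factorization isomorphism from $X^2 - \Delta(X)$ to $X^2$, observe that the obstruction is precisely $b$ via \lemref{l:trivialized gerbes}, and handle uniqueness and the higher coherences (associativity, symmetry, compatibility with factorization) by the codimension-$\geq 2$ vanishing. The only difference is one of exposition: the paper compresses the uniqueness and automatic-coherence assertions into a single sentence, while you spell out the Gysin/purity argument behind them.
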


\begin{proof}

We will use the description of factorization on gerbes on $\Gr_T$ given in \secref{sss:factor gerbes tori}.  
In these terms, the multiplicative structure on $\CG$
amounts to specifying isomorphisms
\begin{equation} \label{e:mult lambda}
\CG^{\lambda_1,\lambda_2}\simeq \CG^{\lambda_1}\boxtimes \CG^{\lambda_2}
\end{equation} 
equipped with an associativity constraint, and equipped with the datum of the identification of \eqref{e:mult lambda}
with the factorization isomorphism over $X\times X-\Delta$.

\medskip

In other words, we need that the factorization isomorphisms
$$\CG^{\lambda_1,\lambda_2}|_{X\times X-\Delta}\simeq \CG^{\lambda_1}\otimes \CG^{\lambda_2}|_{X\times X-\Delta}$$
extend to all of $X\times X$. If they extend, they do so uniquely, and the extended isomorphisms are automatically equipped with
an associativity constraint. 

\medskip

Thus, by \secref{sss:bilin expl}, we obtain that the category $\on{FactGe}^{\on{mult}}_{A}(\Gr_T)$ identifies with the full subcategory of 
$\on{FactGe}_{A}(\Gr_T)$, consisting of objects for which the bilinear form $b(-,-)$ vanishes.

\end{proof}

\begin{rem}
Note that the set of quadratic forms $q:\Lambda\to A$ whose associated bilinear form vanishes, is in bijection with the 
set of \emph{linear} maps $\Lambda\to A_{2\on{-tors}}$.

\medskip

Note also that we have a tautological identification 
\begin{equation} \label{e:2-tors tate}
A(-1)_{2\on{-tors}}\simeq A_{2\on{-tors}}, 
\end{equation}
since $\mu_2\simeq\pm 1\simeq  \BZ/2\BZ$
canonically.
\end{rem}

\sssec{} \label{sss:expl mult}

Note that it follows from \secref{sss:theta T} that we can describe the space 
$$\on{FactGe}^{\on{mult}}_{A}(\Gr_T)\simeq 
\on{Fib}\Bigl(\on{FactGe}_{A}(\Gr_T)\to \on{SymBilin}(\Lambda,A(-1))\Bigr)$$
as follows:

\medskip

It consists of 

\begin{itemize}

\item A linear map
$$q:\Lambda\to A_{2\on{-tors}}$$

\item An assignment 
$$\lambda\in \Lambda \rightsquigarrow \CG^\lambda\in \on{Ge}_A(X),$$

\item A system of isomorphisms 
$$\CG^{\lambda_1+\lambda_2}\overset{c_{\lambda_1,\lambda_2}}\simeq \CG^{\lambda_1}\otimes \CG^{\lambda_2},$$
equipped with an associativity constraint;

\item A datum $h_{\lambda_1,\lambda_2}$ of commutativity for the squares 
$$
\CD
\CG^{\lambda_1+\lambda_2} @>{c_{\lambda_1,\lambda_2}}>>  \CG^{\lambda_1}\otimes \CG^{\lambda_2} \\
@V{\on{id}}VV   @VV{\on{id}}V \\
\CG^{\lambda_2+\lambda_1} @>{c_{\lambda_2,\lambda_1}}>>  \CG^{\lambda_2}\otimes \CG^{\lambda_1},
\endCD
$$
compatible with the associativity constraint and that square to the identity;

\smallskip

\item For $\lambda_1=\lambda=\lambda_2$, we require that the datum of $h_{\lambda,\lambda}$, which in this case 
is the trivialization of the trivial $A_{2\on{-tors}}$-torsor, i.e., an element of $2\on{-tors}$, equals $q(\lambda).$

\end{itemize} 

\sssec{}  \label{sss:higher Ek}
Note that $\Gr_T$ is not just a group-prestack over $\Ran$, but a \emph{commutative} group-prestack. Hence, along with 
$$\on{FactGe}^{\on{mult}}_{A}(\Gr_T)=:\on{FactGe}^{\BE_1}_{A}(\Gr_T),$$
we can consider the spaces $\on{FactGe}^{\BE_k}_{A}(\Gr_T)$ for any $k\geq 1$ and also
$$\on{FactGe}^{\on{com}}_{A}(\Gr_T):= \on{FactGe}^{\BE_\infty}_{A}(\Gr_T):=\underset{k}{\on{lim}}\, \on{FactGe}^{\BE_k}_{A}(\Gr_T).$$

We claim, however, that the forgetful maps
$$\on{FactGe}^{\BE_k}_{A}(\Gr_T)\to \on{FactGe}^{\BE_1}_{A}(\Gr_T)$$
are all equivalences. 

\medskip

First off, the maps $\on{FactGe}^{\BE_{k+1}}_{A}(\Gr_T)\to \on{FactGe}^{\BE_k}_{A}(\Gr_T)$ are automatically 
equivalences for $k\geq 3$ because $A$-gerbes are 2-categorical objects. Similarly, the forgetful map
$\on{FactGe}^{\BE_{3}}_{A}(\Gr_T)\to \on{FactGe}^{\BE_2}_{A}(\Gr_T)$ is automatically fully faithful.

\medskip

An $\BE_2$-structure on a multiplicative gerbe $\CG$
translates as a datum of commutativity for the squares
\begin{equation} \label{e:lambda com}
\CD
\sigma^*(\CG^{\lambda_1,\lambda_2})  @>{\sigma^*\text{\eqref{e:mult lambda}}}>> \sigma^*(\CG^{\lambda_1}\boxtimes \CG^{\lambda_2}) \\
@VVV  @VVV  \\
\CG^{\lambda_2,\lambda_1}  @>{\text{\eqref{e:mult lambda}}}>> \CG^{\lambda_2}\boxtimes \CG^{\lambda_1}
\endCD
\end{equation} 
that coincides with the one coming from factorization over $X\times X-\Delta$.  

\medskip

Thus, we are already given the datum of commutation of \eqref{e:lambda com} over $X\times X-\Delta$. Therefore, this datum automatically
uniquely extends to all of $X\times X$. This implies that 
$$\on{FactGe}^{\BE_{2}}_{A}(\Gr_T)\to \on{FactGe}^{\BE_1}_{A}(\Gr_T)$$ is an equivalence.

\medskip

An object in $\on{FactGe}^{\BE_{2}}_{A}(\Gr_T)$ comes from $\on{FactGe}^{\BE_{3}}_{A}(\Gr_T)$ if and only if the diagrams \eqref{e:lambda com}
square to the identity, in the sense that the datum of commutativity for the outer square in 
$$
\CD
\CG^{\lambda_1,\lambda_2}  @>>> \CG^{\lambda_1}\boxtimes \CG^{\lambda_2}  \\
@VVV @VVV  \\
\sigma^*\circ \sigma^*(\CG^{\lambda_1,\lambda_2})  @>>> \sigma^*\circ \sigma^*(\CG^{\lambda_1}\boxtimes \CG^{\lambda_2}) \\
@VVV   @VVV  \\
\sigma^*(\CG^{\lambda_2,\lambda_1})  @>>> \sigma^*(\CG^{\lambda_2}\boxtimes \CG^{\lambda_1})  \\
@VVV  @VVV  \\
\CG^{\lambda_1,\lambda_2}  @>>> \CG^{\lambda_1}\boxtimes \CG^{\lambda_2}
\endCD
$$
is the tautological one. But this is automatic because this condition holds over $X\times X-\Delta$.

\begin{rem}  \label{r:Ek AG}
Note that from \propref{p:parameterization} we obtain the following a priori description of the groupoid $\on{FactGe}^{\BE_k}_{A}(\Gr_T)$
(here $k\geq 1$) as 
$$\Maps^{\BE_k}_{\on{PreStk}}(B_{\on{et}}(T) \times X,B^4_{\on{et}}(A(1))\simeq 
\Maps_{\on{Ptd}(\on{PreStk})}(B_{\on{et}}^{1+k}(T)\times X,B^{4+k}_{\on{et}}(A(1)).$$

From \secref{sss:higher Ek} we obtain that the looping map
$$\Maps_{\on{Ptd}(\on{PreStk})}(B_{\on{et}}^{1+k}(T)\times X,B^{4+k}_{\on{et}}(A(1))\to 
\Maps_{\on{Ptd}(\on{PreStk})}(B_{\on{et}}(T)\times X,B^4_{\on{et}}(A(1))$$
has the following properties: 

\begin{itemize}

\item Both sides have vanishing homotopy groups $\pi_i$ for $i\geq 3$, i.e.,
$$H^{i+k}_{\on{et}}(B^{1+k}_{\on{et}}(T)\times X;\on{pt}\times X,A(1))=
H^i_{\on{et}}(B_{\on{et}}(T)\times X;\on{pt}\times X,A(1))=0$$
for $i\leq 1$. 

\item It induces an isomorphism on $\pi_2$ for any $k\geq 1$, i.e., the map 
$$H^{2+k}_{\on{et}}(B^{1+k}_{\on{et}}(T)\times X;\on{pt}\times X,A(1))\to H^2_{\on{et}}(B_{\on{et}}(T)\times X;\on{pt}\times X,A(1))$$
is an isomorphism (note that the RHS identifies with $\Hom(\Lambda,A)$). 

\item It induces an isomorphism on $\pi_1$ for any $k\geq 1$, i.e., the map 
$$H^{3+k}_{\on{et}}(B^{1+k}_{\on{et}}(T)\times X;\on{pt}\times X,A(1))\to H^3_{\on{et}}(B_{\on{et}}(T)\times X;\on{pt}\times X ,A(1))$$
is an isomorphism; in fact both sides are isomorphic to $H^1_{\on{et}}(X,\Hom(\Lambda,A))$. 

\item For any $k\geq 1$, the induced map on $\pi_0$, i.e., the map 
$$H^{4+k}_{\on{et}}(B^{1+k}_{\on{et}}(T)\times X;\on{pt}\times X,A(1))\to H^4_{\on{et}}(B_{\on{et}}(T)\times X;\on{pt}\times X,A(1))$$
is injective with the image corresponding to the subset of $\on{Quad}(\Lambda,A(-1))$, consisting of those quadratic forms, whose
associated bilinear form is zero.

\end{itemize}

\end{rem} 

\begin{rem} \label{r:Ek top}

The isomorphisms of Remark \ref{r:Ek AG} are the \'etale counterparts of the corresponding isomorphisms
in the context of \emph{algebraic topology}, which we will now explain. (We will come back and do a similar 
analysis in the \'etale setting in \secref{ss:etale computation}.) 

\medskip

Let $T$ be a topological torus with coweight lattice $\Lambda$. We can think of $B(T) $ as $B^2(\Lambda)$. 

\medskip

We start with the groupoid
$$\Maps_{\on{Ptd}(\Spc)}(B(\Lambda),B^3(A))\simeq \Maps_{\on{Grp}(\Spc)}(\Lambda,B^2(A)).$$

We can think of its objects as monoidal categories $\CC$ that are groupoids such that 
$\pi_0(\CC)=\Lambda$ (as monoids) and $\pi_1({\bf 1}_\CC)=A$ (as groups). 

\medskip

A datum of lifting of such a point to a point of 
$$\Maps_{\on{Ptd}(\Spc)}(B^2(\Lambda),B^4(A))\simeq \Maps_{\BE_1(\Spc)}(B(\Lambda),B^3(A))$$
amounts to endowing the monoidal category $\CC$ with a braiding. A further lift to an object of
$$\Maps_{\on{Ptd}(\Spc)}(B^{2+k}(\Lambda),B^{4+k}(A))\simeq \Maps_{\BE_{k+1}(\Spc)}(B(\Lambda),B^3(A))$$
for $k\geq 1$ amounts to the \emph{condition} that the resulting braided monoidal category
be \emph{symmetric}. This already implies that the forgetful map
$$\Maps_{\on{Ptd}(\Spc)}(B^{2+k+1}(\Lambda),B^{4+k+1}(A))\to \Maps_{\on{Ptd}(\Spc)}(B^{2+k}(\Lambda),B^{4+k}(A))$$
is an isomorphism for $k\geq 1$ and is fully faithful for $k=0$.

\medskip

Moreover, for $k\geq 0$, 
the group $\pi_2\left(\Maps_{\on{Ptd}(\Spc)}(B^{2+k}(\Lambda),B^{4+k}(A))\right)$
identifies with 
$$\Maps_{\on{Grp}(\Spc)}(\Lambda,A)=\Hom_{\on{Ab}}(\Lambda,A),$$ and $\pi_1\left(\Maps_{\on{Ptd}(\Spc)}(B^{2+k}(\Lambda),B^{4+k}(A))\right)$ 
identifies with
$$\Maps_{\BE_2(\Spc)}(\Lambda,B(A))\simeq \Maps_{\BE_\infty(\Spc)}(\Lambda,B(A))=\Maps_{\on{Ab}}(\Lambda,B(A))=0,$$
where $\on{Ab}$ denotes the $\infty$-category of chain complexes of abelian groups. 

\medskip

Finally, the set of isomorphism classes of braided monoidal categories as above is in bijection with
$\on{Quad}(\Lambda,A)$. Indeed, for a given $\CC$, the corresponding bilinear form $b(\lambda_1,\lambda_2)$
is recovered as the square of the braiding 
$$c^{\lambda_1}\otimes c^{\lambda_2}\to c^{\lambda_2}\otimes c^{\lambda_1}\to c^{\lambda_1}\otimes c^{\lambda_2},$$
and the quadratic form $q(\lambda)$ is recovered as the value of the braiding
$$c^\lambda\otimes c^\lambda\to c^\lambda\otimes c^\lambda.$$

In particular, this braided monoidal category is symmetric if and only if $b(-,-)=0$. 

\end{rem}

\sssec{}

Consider the connective spectrum 
$$\Maps_{\BE_\infty(\Spc)}(\Lambda,B^2(A)).$$

It follows 
from Remark \ref{r:Ek top} that it fits into a fiber sequence
\begin{equation} \label{e:maps_to_B2}
B^2(\Hom(\Lambda,A))\to \Maps_{\BE_\infty(\Spc)}(\Lambda,B^2(A)) \to \Hom(\Lambda,A_{2\on{-tors}}).
\end{equation}

\begin{rem}

Note that since $\Lambda$ is projective in the category of abelian groups, we have
$$B^2(\Hom(\Lambda,A))\simeq \Maps_{\on{Ab}}(\Lambda,B^2(A)).$$

So the map $B^2(\Hom(\Lambda,A))\to \Maps_{\BE_\infty(\Spc)}(\Lambda,B^2(A))$ can be interpreted as a map
$$\Maps_{\on{Ab}}(\Lambda,B^2(A))\to \Maps_{\BE_\infty(\Spc)}(\Lambda,B^2(A))$$
given by the Dold-Kan functor $\on{Ab}^{\leq 0}\to \BE_\infty(\Spc)$.

\end{rem} 

\sssec{}

Similarly, we have a fiber sequence 
\begin{equation} \label{e:maps_to_B2X}
\Maps(X,B^2_{\on{et}}(\Hom(\Lambda,A)))\to \Maps_{\BE_\infty(\Spc)}(\Lambda,\on{Ge}_A(X)) \to \Hom(\Lambda,A_{2\on{-tors}}),
\end{equation}
so that the diagram
$$
\CD
B^2(\Hom(\Lambda,A)) @>>> \Maps_{\BE_\infty(\Spc)}(\Lambda,B^2(A))  \\
@VVV @VVV \\
\Maps(X,B^2_{\on{et}}(\Hom(\Lambda,A))) @>>> \Maps_{\BE_\infty(\Spc)}(\Lambda,\on{Ge}_A(X)) 
\endCD
$$
is a push-out square. 

\sssec{}

The explicit description of $\on{FactGe}^{\on{mult}}_{A}(\Gr_T)$ in \secref{sss:expl mult} implies that we have a canonical identification
\begin{equation} \label{e:Einfty to Theta}
\Maps_{\BE_\infty(\Spc)}(\Lambda,\on{Ge}_A(X))\simeq \on{FactGe}^{\BE_\infty}_{A}(\Gr_T)\simeq \on{FactGe}^{\on{mult}}_{A}(\Gr_T)
\end{equation} 
that fits into the commutative diagram 
$$
\CD
\Maps(X,B^2_{\on{et}}(\Hom(\Lambda,A))) @>>> \Maps_{\BE_\infty(\Spc)}(\Lambda,\on{Ge}_A(X)) \\
@V{\sim}VV @VV{\sim}V \\
\on{FactGe}^0_{A}(\Gr_T) @>>> \on{FactGe}^{\on{mult}}_{A}(\Gr_T).
\endCD
$$

Finally, the fiber sequence \eqref{e:maps_to_B2X} is compatible with the fiber sequence
$$\on{FactGe}^0_{A}(\Gr_T) \to  \on{FactGe}_{A}(\Gr_T)  \to  \on{Quad}(\Lambda,A(-1))$$
via the commutative diagram 
$$ 
\CD
\Maps(X,B^2_{\on{et}}(\Hom(\Lambda,A))) @>>> \Maps_{\BE_\infty(\Spc)}(\Lambda,\on{Ge}_A(X))  @>>>  \Hom(\Lambda,A_{2\on{-tors}}) \\
@V{\sim}VV @VV{\sim}V @VV{=}V \\
\on{FactGe}^0_{A}(\Gr_T) @>>> \on{FactGe}^{\on{mult}}_{A}(\Gr_T) @>>> \Hom(\Lambda,A_{2\on{-tors}}) \\
@V{=}VV @VVV @VVV \\
\on{FactGe}^0_{A}(\Gr_T) @>>>  \on{FactGe}_{A}(\Gr_T)  @>>>  \on{Quad}(\Lambda,A(-1)).
\endCD
$$

\sssec{}

In \secref{ss:splitting} we will see that if $A_{2\on{-tors}}\simeq \BZ/2\BZ$, there a \emph{canonical} identification
$$\Maps_{\BE_\infty(\Spc)}(\Lambda,B^2(A)) \simeq B^2(\Hom(\Lambda,A))\times  \Hom(\Lambda,\BZ/2\BZ).$$

This implies that for $A_{2\on{-tors}}\simeq \BZ/2\BZ$, we have a canonical identification 
$$\on{FactGe}^{\on{mult}}_{A}(\Gr_T)\simeq \on{FactGe}^0_{A}(\Gr_T)
\times  \Hom(\Lambda,\BZ/2\BZ).$$

\ssec{More general abelian groups}

In this section we generalize the discussion of \secref{ss:mult com} to the case when instead of a
lattice $\Lambda$ (thought of as a lattice of cocharacters of a torus) we take a general finitely 
generated abelian group. We need this in order to state the metaplectic version of geometric Satake. 

\sssec{}  \label{sss:Gamma as quot}

Let $\Gamma$ be a finitely generated abelian group whose torsion part is of order prime to $\on{char}(k)$. Let 
$$\Gr_{\Gamma\otimes \BG_m}$$
be the group-prestack defined as in \secref{s:finite Gr}. 

\medskip

It s basic feature is that if $\Gamma$ is written as $\Lambda_1/\Lambda_2$, where $\Lambda_2\subset \Lambda_1$ are lattices, then
we have a map
\begin{equation} \label{e:finite as quotient}
\Gr_{T_1}/\Gr_{T_2}\to \Gr_{\Gamma\otimes \BG_m},
\end{equation} 
which becomes an isomorphism after sheafification in the topology generated by finite surjective maps (in the above formula 
$T_i$ is the torus $\Lambda_i\otimes \BG_m$. 

\medskip

In particular, pullback with respect to \eqref{e:finite as quotient} defines an equivalence on the categories of $A$-gerbes, $A$-torsors, etc. 

\sssec{}

The group-prestack $\Gr_{\Gamma\otimes \BG_m}$ has a natural factorization structure over $\Ran$, so we can talk about the space
$\on{FactGe}_{A}(\Gr_{\Gamma\otimes \BG_m})$.

\medskip

Since $\Gr_{\Gamma\otimes \BG_m}$ is a (commutative) 
group-prestack over $\Ran$, along with $\on{FactGe}_{A}(\Gr_{\Gamma\otimes \BG_m})$, 
we can consider the space (in fact, commutative group in spaces)
\begin{equation} \label{e:mult arb}
\on{FactGe}^{\on{mult}}_{A}(\Gr_{\Gamma\otimes \BG_m}),
\end{equation}
that correspond to gerbes that respect that group structure on $\Gr_{\Gamma\otimes \BG_m}$ over $\Ran$.

\begin{rem} \label{r:com mult arb}
Along with $\on{FactGe}^{\on{mult}}_{A}(\Gr_{\Gamma\otimes \BG_m})$, one can also consider its variants
$$\on{FactGe}^{\BE_k}_{A}(\Gr_{\Gamma\otimes \BG_m}),\,\, \on{FactGe}^{\BE_\infty}_{A}(\Gr_{\Gamma\otimes \BG_m})\simeq
\on{FactGe}^{\on{com}}_{A}(\Gr_{\Gamma\otimes \BG_m}).$$

However, as in \secref{sss:higher Ek}, one shows that the forgetful maps
$$\on{FactGe}^{\BE_k}_{A}(\Gr_{\Gamma\otimes \BG_m})\to \on{FactGe}^{\BE_1}_{A}(\Gr_{\Gamma\otimes \BG_m})=
\on{FactGe}^{\on{mult}}_{A}(\Gr_{\Gamma\otimes \BG_m})$$
are equivalences for all $k\geq 1$.
\end{rem} 

\sssec{}

The following results from \propref{p:classify mult T}: 

\begin{cor}   \label{c:when descends}
Let $\Gamma$ be written as a quotient of two lattices as in \secref{sss:Gamma as quot}. Let $\CG_1$ be a factorization 
$A$-gerbe on $\Gr_{T_1}$, and let $b_1$ and $q_1$ be the associated bilinear and quadratic forms on $\Lambda_1$, 
respectively.   

\smallskip

\noindent{\em(a)} The gerbe $\CG_1$ can be descended to a factorization gerbe $\CG$ on $\Gr_{\Gamma\otimes \BG_m}$ 
\emph{only if}  $b_1(\Lambda_2,-)=0$. 

\smallskip

\noindent{\em(a')} In the situation of \emph{(a)}, a descent exists \'etale-locally on $X$ if and only if the restriction of $q_1$ to 
$\Lambda_2$ is trivial. 

\smallskip

\noindent{\em(a'')} In the situation of \emph{(a')}, a descent datum is equivalent to the trivialization 
of $\CG_2:=\CG_1|_{\Gr_{T_2}}$ as a factorization gerbe on $\Gr_{T_2}$. 

\smallskip

\noindent{\em(b)} In the situation of \emph{(a'')}, the descended gerbe $\CG$ admits a multiplicative structure 
if and only if $b_1$ is trivial. In the latter case, the multiplicative structure
is unique up to a unique isomorphism.

\end{cor}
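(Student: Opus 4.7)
The plan is to work with the combinatorial presentation of \secref{ss:factor gerbes for tori}, which applies equally well to $\Gr_{\Gamma\otimes \BG_m}$ once we replace the indexing lattice $\Lambda$ by $\Gamma$ throughout. Since $\Gr_{\Gamma\otimes \BG_m}$ is defined as the quotient of $\Gr_{T_1}$ by $\Gr_{T_2}$ (sheafified in the finite surjective topology), a descent of $\CG_1$ is the same datum as a $\Gr_{T_2}$-equivariance structure on $\CG_1$, i.e., an identification $\alpha\colon \on{mult}^*(\CG_1)\simeq \on{pr}_{T_1}^*(\CG_1)$ on $\Gr_{T_2}\underset{\Ran}\times \Gr_{T_1}$, together with the higher cocycle data.

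\medskip

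For \emph{(a)}, I would first extend the construction of the associated bilinear form of \secref{ss:tori expl} to factorization gerbes on $\Gr_{\Gamma\otimes \BG_m}$: the arguments in \secref{sss:bilin expl}--\secref{sss:bilin} carry over verbatim with $\Lambda$ replaced by the finitely generated abelian group $\Gamma$. Thus a descended gerbe $\CG$ produces a bilinear form $b\colon \Gamma\times \Gamma \to A(-1)$, and by naturality its pullback to $\Lambda_1\times \Lambda_1$ must agree with $b_1$. Since $b$ factors through $\Gamma$, this forces $b_1(\Lambda_2,\Lambda_1)=0$.

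\medskip

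For \emph{(a'')} and \emph{(a')}, assume $b_1(\Lambda_2,\Lambda_1)=0$ and analyze the equivariance datum. On the disjoint open locus of $\Gr_{T_2}\underset{\Ran}\times \Gr_{T_1}$, the factorization structure of $\CG_1$ supplies canonical identifications
$$\on{mult}^*(\CG_1)|_{\on{disj}}\simeq (\CG_2\boxtimes \CG_1)|_{\on{disj}},\qquad \on{pr}_{T_1}^*(\CG_1)|_{\on{disj}}\simeq (\on{triv}\boxtimes \CG_1)|_{\on{disj}},$$
under which an equivariance datum on the disjoint locus is precisely the same as a trivialization of $\CG_2|_{\on{disj}}$. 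The heart of the argument is then to show that the hypothesis $b_1(\Lambda_2,\Lambda_1)=0$ allows such a datum to extend uniquely across the incidence divisors to all of $\Gr_{T_2}\underset{\Ran}\times \Gr_{T_1}$; this is a direct analog of the extension principle used to define the bilinear form, where on pairs $(I_2,I_1)$ of finite sets equipped with maps $\lambda^{I_2}\colon I_2\to \Lambda_2$ and $\lambda^{I_1}\colon I_1\to \Lambda_1$ the obstruction at each incidence divisor $\Delta_{i,j}$ is the gerbe $\CO(\Delta_{i,j})^{b_1(\lambda_i,\lambda_j)}$, which is trivial exactly when $b_1(\Lambda_2,\Lambda_1)=0$. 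This yields \emph{(a'')}. Then \emph{(a')} follows because $\CG_2$ has associated quadratic form $q_1|_{\Lambda_2}$ and associated bilinear form $0$, so by \corref{c:parameterization neutral} it lies in $\on{FactGe}^0_A(\Gr_{T_2})$ exactly when $q_1|_{\Lambda_2}=0$, in which case it is classified by an element of $\Maps(X,B^2_{\on{et}}(\Hom(\Lambda_2,A)))$ and so trivializable \'etale-locally on $X$.

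\medskip

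For \emph{(b)}, a multiplicative structure on the descended gerbe $\CG$ corresponds, via the extension of \propref{p:classify mult T} from $\Gr_T$ to $\Gr_{\Gamma\otimes \BG_m}$ (the same proof, with $\Lambda$ replaced by $\Gamma$), to the vanishing of the bilinear form of $\CG$, which by pullback is equivalent to $b_1\equiv 0$; uniqueness up to unique isomorphism is part of the same statement. I expect the main obstacle to be the canonicity and uniqueness of the extension step in \emph{(a'')}: beyond verifying that $\alpha$ extends from the disjoint locus when $b_1(\Lambda_2,\Lambda_1)=0$, one must also check that the higher cocycle conditions propagate from their validity on the corresponding disjoint loci, and that the resulting bijection between descent data and factorization trivializations of $\CG_2$ is natural in the appropriate sense.
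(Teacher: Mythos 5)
Your proof is correct and fills in exactly what the paper leaves implicit when it cites \propref{p:classify mult T}: one adapts the extension-across-the-diagonal argument from that proposition (and the extraction of the bilinear form in \secref{sss:bilin expl}) to equivariance/descent data for the quotient map $\Gr_{T_1}\to\Gr_{\Gamma\otimes\BG_m}$, using the combinatorial model. The logical order you adopt --- (a), then (a''), then deduce (a') from (a''), then (b) --- is the right one, and the points you flag (propagation of the higher coherence data from the disjoint locus, naturality of the resulting bijection) are exactly the ones that need attention; they are disposed of by the same unique-extension principle that the paper invokes in the proof of \propref{p:classify mult T}.

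One small caution about your argument for (a). You appeal to a bilinear form $b\colon\Gamma\times\Gamma\to A(-1)$ attached to a factorization gerbe on $\Gr_{\Gamma\otimes\BG_m}$, asserting that the construction of \secref{sss:bilin expl}--\secref{sss:bilin} ``carries over verbatim'' with $\Lambda$ replaced by $\Gamma$. In the paper this map to $\on{Quad}(\Gamma,A(-1))$ is only formalized afterwards, in \corref{c:classify mult arb}(b), which is itself deduced from \corref{c:when descends}; moreover $\Gamma$ may have torsion, so the combinatorial model needs a word of justification (since $\Gr_{\Gamma\otimes\BG_m}$ is defined as a sheafified quotient rather than directly as a colimit of $X^I$'s). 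To sidestep both issues one can argue (a) directly inside $\Gr_{T_2}\underset{\Ran}\times\Gr_{T_1}$: for $\mu\in\Lambda_2$, $\lambda\in\Lambda_1$, restrict the equivariance isomorphism $\on{mult}^*\CG_1\simeq\on{pr}_{T_1}^*\CG_1$ to the copy of $X^2$ labeled by $(\mu,\lambda)$; using $\CG_{\mu,\lambda}\simeq(\CG_\mu\boxtimes\CG_\lambda)\otimes\CO(\Delta)^{b_1(\mu,\lambda)}$ from \secref{sss:bilin expl}, it amounts to a trivialization of $\CG_\mu\boxtimes\on{triv}\otimes\CO(\Delta)^{b_1(\mu,\lambda)}$ extending the factorization trivialization over $X^2-\Delta(X)$, and \lemref{l:trivialized gerbes} then forces $b_1(\mu,\lambda)=0$. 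This is the same computation you already invoke for (a''), so no new input is needed, and it keeps the dependency chain acyclic.
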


From here, we obtain: 

\begin{cor}  \label{c:classify mult arb new new} 
We have a canonically defined map 
\begin{equation} \label{e:quad finite}
\on{FactGe}_{A}(\Gr_{\Gamma\otimes \BG_m})\to \on{Quad}(\Gamma,A(-1))
\end{equation} 
such that:

\medskip

\noindent{\em(a)} The forgetful map 
$$\on{FactGe}^{\on{mult}}_{A}(\Gr_{\Gamma\otimes \BG_m})\to \on{FactGe}_{A}(\Gr_{\Gamma\otimes \BG_m})$$
fits in the fiber square 
\begin{equation}  \label{e:mult Gamma}
\CD 
\on{FactGe}^{\on{mult}}_{A}(\Gr_{\Gamma\otimes \BG_m})  @>>> \on{FactGe}_{A}(\Gr_{\Gamma\otimes \BG_m}) \\
@VVV  @VVV  \\
\Hom(\Gamma,A)_{2\on{-tors}}  @>>> \on{Quad}(\Gamma,A(-1)).
\endCD
\end{equation}

\medskip

\noindent{\em(b)}
The fiber of the map \eqref{e:quad finite}, denoted $\on{FactGe}^0_{A}(\Gr_{\Gamma\otimes \BG_m})$, consists
of objects that are \'etale-locally trivial on $X$. 

\medskip

\noindent{\em(c)} We have a push-out square
$$
\CD
\Maps_{\on{Ab}}(\Gamma,B^2(A)) @>>> \Maps_{\BE_\infty(\Spc)}(\Gamma,B^2(A))  \\
@VVV @VVV \\
\on{FactGe}^0_{A}(\Gr_{\Gamma\otimes \BG_m}) @>>> \on{FactGe}^{\on{mult}}_{A}(\Gr_{\Gamma\otimes \BG_m}) 
\endCD
$$

\end{cor}

\begin{rem} \label{r:classify mult arb new new}

Note that it follows from \corref{c:classify mult arb new new}(b) that we have a canonical isomorphism
$$\on{FactGe}^0_{A}(\Gr_{\Gamma\otimes \BG_m})  \simeq 
\Maps_{\on{Ab}}(\Gamma,\on{C}^\bullet_{\on{et}}(X,A)[2])\simeq
\Maps(X,\Maps_{\on{Ab}}(\Gamma,B^2(A))_{\on{et}}),$$
where we denote by $\Maps_{\on{Ab}}(\Gamma,B^2(A))_{\on{et}}$ the \'etale sheafification of the constant presheaf
with value $\Maps_{\on{Ab}}(\Gamma,B^2(A))$. 

\medskip

Note also that $\Maps_{\on{Ab}}(\Gamma,B^2(A))$ has non-trivial homotopy groups in degrees $2$ and $1$, equal to
$\Hom(\Gamma,A)$ and $\Ext^1(\Gamma,A)$, respectively.

\medskip

Similarly, it follows from \corref{c:classify mult arb new new}(c) that we have
$$\on{FactGe}^{\on{mult}}_{A}(\Gr_{\Gamma\otimes \BG_m}) \simeq
\Maps_{\BE_\infty(\Spc)}(\Gamma,\on{Ge}_A(X))\simeq 
\Maps(X,\Maps_{\BE_\infty(\Spc)}(\Gamma,B^2(A))_{\on{et}}),$$
where we denote by $\Maps_{\BE_\infty(\Spc)}(\Gamma,B^2(A))_{\on{et}}$ the \'etale sheafification of the constant presheaf
with value $\Maps_{\BE_\infty(\Spc)}(\Gamma,B^2(A))$. 

\medskip

As we have seen above, the non-trivial homotopy groups of $\Maps_{\BE_\infty(\Spc)}(\Gamma,B^2(A))$ are in degrees $2$, $1$ and $0$,
where the homotopy groups in degrees $-2$ and $-1$ are the same as for $\Maps_{\on{Ab}}(\Gamma,B^2(A))$, while in degree $0$ we have
$\Hom(\Gamma,A_{2\on{-tors}})$. 

\end{rem}

\sssec{}   \label{sss:more general param}   \label{sss:com fact G}

Let now $G$ be a connective reductive group. By \secref{ss:grp to pi1}, we have a canonically defined map 
\begin{equation} \label{e:map of Gr new}
\Gr_G \to \Gr_{\Gamma\otimes \BG_m},
\end{equation}
compatible with the factorization structure. 

%
%
%


\medskip

From here we obtain a map 
\begin{equation} \label{e:com fact G real}
\on{FactGe}_{A}(\Gr_{\pi_{1,\on{alg}}(G)\otimes \BG_m})\to \on{FactGe}_{A}(\Gr_G).
\end{equation} 


\medskip

By construction, the diagram 
$$
\CD
\on{FactGe}_{A}(\Gr_{\pi_{1,\on{alg}}(G)\otimes \BG_m})  @>>>  \on{FactGe}_{A}(\Gr_G)  \\
@VVV   @VVV  \\
\on{Quad}(\pi_{1,\on{alg}}(G),A(-1)) @>>> \on{Quad}(\Lambda,A(-1))^W_{\on{restr}}
\endCD
$$
commutes. Hence, we obtain a map
\begin{equation} \label{e:com fact G real neutral}
\on{FactGe}^0_{A}(\Gr_{\pi_{1,\on{alg}}(G)\otimes \BG_m})\to \on{FactGe}^0_{A}(\Gr_G).
\end{equation} 

\medskip

From Corollaries \ref{c:parameterization neutral} and \ref{c:classify mult arb new new}(b) and Remark \ref{r:classify mult arb new new} we obtain:

\begin{cor}   \label{c:com fact G T}
The map \eqref{e:com fact G real neutral} is an isomorphism.
\end{cor}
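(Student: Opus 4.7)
The plan is to apply the two \'etale-cohomological parameterizations of neutral factorization gerbes that have already been established, and to observe that \eqref{e:com fact G real neutral} becomes the identity under them. By \corref{c:parameterization neutral},
\[
\on{FactGe}^0_{A}(\Gr_G)\simeq \Maps(X, B^2_{\on{et}}(\Hom(\pi_{1,\on{alg}}(G), A))),
\]
and by \corref{c:classify mult arb}(e) applied to $\Gamma=\pi_{1,\on{alg}}(G)$,
\[
\on{FactGe}^0_{A}(\Gr_{\pi_{1,\on{alg}}(G)\otimes \BG_m})\simeq \Maps(X, B^2_{\on{et}}(\Hom(\pi_{1,\on{alg}}(G), A))).
\]
Since both target groupoids coincide, it suffices to show that \eqref{e:com fact G real neutral}, slotted into the square formed by these two equivalences, reduces to the identity on the common target.

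First, I would unpack both equivalences as instances of \propref{p:parameterization}. In each case the neutral part consists of those objects of $\Maps_{\on{Ptd}(\on{PreStk}_{/X})}(B_{\on{et}}(\cdot)\times X, B^4_{\on{et}}(A(1))\times X)$ that are trivial \'etale-locally on $X$, which by the argument in \secref{sss:neutral} is $\Maps(X, B^2_{\on{et}}(H^2_{\on{et}}(B(\cdot), A(1))))$ once one invokes the vanishing $H^3_{\on{et}}(B(\cdot), A(1))=0$ (valid for $G$ by \thmref{t:cohomology BG} under the divisibility assumption on $A$, and for the torus quotient $\pi_{1,\on{alg}}(G)\otimes \BG_m$ by direct computation from its presentation $T_1/T_2$). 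The final identification then uses $H^2_{\on{et}}(B(\cdot), A(1)) \simeq \Hom(\pi_{1,\on{alg}}(\cdot), A)$.

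Next, the map \eqref{e:com fact G real neutral} is induced by pullback along \eqref{e:map of Gr}. By functoriality of the construction in \secref{sss:unwinding} with respect to the input group, this pullback corresponds on the classifying-space side to precomposition with the morphism $B_{\on{et}}(G)\to B_{\on{et}}(\pi_{1,\on{alg}}(G)\otimes \BG_m)$. Under the identifications of the previous paragraph, it therefore becomes the map on $\Hom(\pi_{1,\on{alg}}(\cdot), A)$ induced by $G\to \pi_{1,\on{alg}}(G)\otimes \BG_m$, which is the identity because this morphism induces an isomorphism on $\pi_{1,\on{alg}}$.

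The main obstacle is verifying this last functoriality statement cleanly: namely, that the isomorphism $H^2_{\on{et}}(B(G), A(1))\simeq \Hom(\pi_{1,\on{alg}}(G), A)$ of \thmref{t:cohomology BG} is natural in $G$, and that it is compatible with the analogous identification for the torus quotient $\pi_{1,\on{alg}}(G)\otimes \BG_m$ used in \corref{c:classify mult arb}(e). For a presentation $1\to T_2 \to \wt G_1 \to G \to 1$ as in \secref{sss:alg fund}, this reduces to checking that the composite pullback along $T_1 \hookrightarrow \wt G_1 \twoheadrightarrow G \to \pi_{1,\on{alg}}(G)\otimes \BG_m$ recovers the map on $H^2_{\on{et}}$ dual to the projection $\Lambda_1 \twoheadrightarrow \Lambda_1/\Lambda_2 = \pi_{1,\on{alg}}(G)$; this compatibility is tracked through the (deferred) computation of \secref{s:BG}. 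Granting it, the corollary follows.
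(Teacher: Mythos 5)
Your overall plan — identify both fibers with $\Maps(X, B^2_{\on{et}}(\Hom(\pi_{1,\on{alg}}(G), A)))$ via \corref{c:parameterization neutral} and \corref{c:classify mult arb}(e), and then verify that \eqref{e:com fact G real neutral} respects these identifications — is precisely the paper's (terse) argument, and you are right that the compatibility of the two parameterizations is the real content.

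However, the compatibility sketch itself contains steps that fail. First, $T_1 = \wt G_1/[\wt G_1, \wt G_1]$ in \secref{sss:alg fund} is the \emph{abelianization}, a quotient of $\wt G_1$; there is no inclusion $T_1 \hookrightarrow \wt G_1$, so the composite you propose to pull back along is ill-formed. Second, and more substantively, $\Gr_{\Gamma\otimes\BG_m}$ is \emph{not} the affine Grassmannian of the group scheme $\Gamma\otimes\BG_m$: by \secref{sss:Gamma as quot} it is defined as the sheafified quotient $\Gr_{T_1}/\Gr_{T_2}$, which has $\pi_0 = \Gamma$, whereas the Grassmannian of the diagonalizable group $\Gamma\otimes\BG_m$ would kill the torsion part of $\Gamma$ (bundles for $\mu_n$ on the punctured disc extend). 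Hence there is no classifying prestack $B_{\on{et}}(\pi_{1,\on{alg}}(G)\otimes\BG_m)$ to precompose with in the sense you intend, \propref{p:parameterization} (stated only for reductive $G$) does not apply on the left-hand side, and your claim that the putative morphism $G\to\pi_{1,\on{alg}}(G)\otimes\BG_m$ induces an isomorphism on $\pi_{1,\on{alg}}$ is false when $\pi_{1,\on{alg}}(G)$ has torsion. The correct way to thread the compatibility is to stay at the level of Grassmannians: the identification in \corref{c:classify mult arb}(e) comes from the explicit torus analysis of \secref{ss:tori expl} applied to $\Gr_{T_1}$ together with descent along $\Gr_{T_2}$, while the identification in \corref{c:parameterization neutral} restricts, via $\Gr_T\to\Gr_G$, to the same torus parameterization; one then checks the two agree under the map $\Gr_G\to\Gr_{T_1}/\Gr_{T_2}$ built from the presentation.
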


From here we obtain:

\begin{cor}
The map \eqref{e:com fact G real} is fully faithful. 
\end{cor}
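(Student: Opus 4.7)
Both source and target of \eqref{e:com fact G real} are commutative groups in spaces (i.e., connective spectra), and the map in question is a morphism of such. For a morphism $f$ of connective spectra, being fully faithful as a map of $\infty$-groupoids is equivalent to the induced map on $\pi_i$ being an isomorphism for $i\geq 1$ and injective for $i=0$: indeed, translating to the basepoint using the group structure, one sees that fully faithfulness reduces to $\Omega f$ being an equivalence (which is the $\pi_{\geq 1}$ condition) together with $\pi_0(f)$ being injective (to ensure that empty mapping spaces are sent to empty ones). We will deduce both properties by comparing \eqref{e:com fact G real} with the forgetful map to quadratic forms.

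Concretely, the commutative square of \secref{sss:com fact G} extends to a morphism of fiber sequences of connective spectra
$$
\begin{CD}
\on{FactGe}^0_{A}(\Gr_{\pi_{1,\on{alg}}(G)\otimes \BG_m})  @>>>  \on{FactGe}_{A}(\Gr_{\pi_{1,\on{alg}}(G)\otimes \BG_m})  @>>>  \on{Quad}(\pi_{1,\on{alg}}(G),A(-1))  \\
@VV{\sim}V  @VV{\Phi}V  @VV{\ol\Phi}V  \\
\on{FactGe}^0_{A}(\Gr_G)  @>>>  \on{FactGe}_{A}(\Gr_G)  @>>>  \on{Quad}(\Lambda,A(-1))^W_{\on{restr}},
\end{CD}
$$
where the left vertical arrow is an equivalence by \corref{c:com fact G T}. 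The rightmost terms are \emph{discrete} abelian groups, so $\pi_{\geq 1}$ of each base vanishes and the long exact sequence on each row collapses to isomorphisms $\pi_i\,\on{FactGe}^0_A\xrightarrow{\sim}\pi_i\,\on{FactGe}_A$ for $i\geq 1$, together with a short exact sequence $0\to\pi_0\,\on{FactGe}^0_A\to\pi_0\,\on{FactGe}_A\to\pi_0\,\on{Quad}\to 0$. Combined with the equivalence on the left, this shows that $\Phi$ is an isomorphism on $\pi_{\geq 1}$, and (by the five lemma) that injectivity of $\pi_0(\Phi)$ is equivalent to injectivity of $\ol\Phi$.

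It remains to verify that $\ol\Phi$ is injective. By construction it is the pullback of quadratic forms along the quotient $\Lambda\twoheadrightarrow\pi_{1,\on{alg}}(G)=\Lambda/(\text{coroot lattice})$ of \secref{sss:alg fund}, which is surjective, so this is immediate; the image indeed lies in $\on{Quad}(\Lambda,A(-1))^W_{\on{restr}}$ because any form pulled back from $\pi_{1,\on{alg}}(G)$ satisfies $q(\alpha)=0$ on each coroot $\alpha$, and $b(\alpha,\lambda)=q(\alpha+\lambda)-q(\lambda)=0$ since $\alpha+\lambda\equiv\lambda$ modulo the coroot lattice (and $W$-invariance is automatic, as $W$ acts trivially on $\pi_{1,\on{alg}}(G)$). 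The only substantive input is \corref{c:com fact G T}, which is already established; the rest of the argument is a formal diagram chase, and no serious obstacle is expected.
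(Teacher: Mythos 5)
Your proof is correct and fills in precisely the diagram chase that the paper leaves implicit when it writes ``From here we obtain'' after \corref{c:com fact G T}: namely, reduce fully faithfulness to an isomorphism on $\pi_{\geq 1}$ and injectivity on $\pi_0$, use the morphism of fiber sequences over the discrete quadratic-form groups, invoke \corref{c:com fact G T} for the fibers, and note that pullback of quadratic forms along the surjection $\Lambda\twoheadrightarrow\pi_{1,\on{alg}}(G)$ is injective. (One small overstatement: injectivity of $\pi_0(\Phi)$ and of $\ol\Phi$ are not formally ``equivalent'' from the four-term exact sequences alone, but only the direction you actually use is needed, and it is correct.)
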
 

\ssec{Splitting multiplicative gerbes}  \label{ss:more mult}  \label{ss:splitting}

In this subsection we will assume that $A_{2\on{-tors}}=\BZ/2\BZ$. (Note that this happens, e.g., if $A\subset E^\times$.)

\medskip

We will show that in this case the fiber sequence
$$\on{FactGe}^0_{A}(\Gr_{\Gamma\otimes \BG_m}) \to \on{FactGe}^{\on{mult}}_{A}(\Gr_{\Gamma\otimes \BG_m}) \to 
\Hom(\Gamma,A_{2\on{-tors}})$$
admits a canonical splitting, functorial in $\Gamma$. 

\sssec{}  \label{sss:univ case}

According to \corref{c:classify mult arb new new}(c), it suffices to construct a splitting of the fiber sequence
$$B^2(\Hom(\Gamma,A))\to \Maps_{\BE_\infty(\Spc)}(\Gamma,B^2(A))\to \Hom(\Gamma,A_{2\on{-tors}}).$$

By functoriality, it suffices to treat the universal case: i.e., when $\Gamma=\BZ/2\BZ$ and we need to construct an object
of $$\Maps_{\BE_\infty(\Spc)}(\BZ/2\BZ,B^2(\BZ/2\BZ))$$
that projects to the identity map $\BZ/2\BZ\to \BZ/2\BZ$.

\sssec{}
 
We will construct the sought-for object in $\Maps_{\BE_\infty(\Spc)}(\BZ/2\BZ,B^2(\BZ/2\BZ))$ as a symmetric monoidal groupoid $\CC$ 
with $\pi_0(\CC)\simeq \pi_1(\CC)\simeq \BZ/2\BZ$.

\medskip

As a monoidal groupoid, we set $\CC$ be the product $\BZ/2\BZ\times B(\BZ/2\BZ)$. A braided monoidal structure on such a $\CC$
is equivalent to a choice of a bilinear form $b'$ on $\BZ/2\BZ$ with values in $\BZ/2\BZ$. We set it to be 
$$b'(\ol{1},\ol{1})=\ol{1}.$$

The resulting braided monoidal structure is automatically symmetric, and the associated quadratic form $q:\BZ/2\BZ\to \BZ/2\BZ$
is the identity map, as required. 

\sssec{}  \label{sss:splitting spec}  

In what follows, for a given element $\epsilon\in \Hom(\Gamma,A_{2\on{-tors}})$, we will denote by $\CG^\epsilon$ the resulting
multiplicative factorization gerbe on $\Gr_{\Gamma\otimes \BG_m}$. 

\medskip

For $\Gamma=\BZ/2\BZ$ and the identity map, we will denote this gerbe by $\CG^{\epsilon_{\on{taut}}}$. We will refer to it
as the \emph{sign gerbe}. 

\begin{rem} \label{r:does not factor}

Note that $\CG^\epsilon$, viewed as a gerbe on $\Gr_{\Gamma\otimes \BG_m}$, equipped with the multiplicative structure, 
admits a canonical trivialization. However, this trivialization is \emph{not} compatible with the factorization structure. 

\end{rem} 

\sssec{}  \label{sss:splitting}

For a given object $\CG\in \on{FactGe}^{\on{mult}}_{A}(\Gr_{\Gamma\otimes \BG_m})$ let us denote by $\epsilon$ the map
$$\Gamma\to A_{2\on{-tors}}\simeq \BZ/2\BZ$$
that measures the obstruction of $\CG$ to belong to $\on{FactGe}^0_{A}(\Gr_{\Gamma\otimes \BG_m})$.

\medskip

We obtain that, canonically attached to $\CG$, there exists an object
$$\CG^0\in \on{FactGe}^0_{A}(\Gr_{\Gamma\otimes \BG_m})\simeq \Maps(X,B^2_{\on{et}}(\Hom(\Gamma,A))),$$
such that
$$\CG\simeq \CG^0\otimes \CG^\epsilon,$$
where $\CG^\epsilon$ is as in \secref{sss:splitting spec}.

\ssec{More on the sign gerbe}

In this subsection we will make a digression, needed for the sequel, in which we will 
discuss several manipulations with the gerbe $\CG^{\epsilon_{\on{taut}}}$ introduced in 
\secref{ss:splitting}.

\sssec{} \label{sss:sq root}

Let $Z$ be a prestack over $\Ran$, equipped with a factorization structure, and equipped with a map
to $\Gr_{\BZ/2\BZ\otimes \BG_m}$, compatible with the factorization structures. 

\medskip

Let $\CL$ be a line bundle on $Z$. We equip it with a $\BZ/2\BZ$-graded structure as follows:
for an affine test scheme $S$ and a map $S\to Z$ such that the composite $S\to Z\to \Gr_{\BZ/2\BZ\otimes \BG_m}$
maps to the even/odd connected component, the grading on $\CL|_S$ is even/odd.

\medskip

Let us be given a factorization structure on $\CL$, viewed as a $\BZ/2\BZ$-graded line bundle. Note that $\CL^{\otimes 2}$
is then a plain factorization line bundle. Assume that $\on{char}(k)\neq 2$, and consider the $\BZ/2\BZ$-gerbe on $Z$ given by 
$(\CL^{\otimes 2})^{\frac{1}{2}}$, equipped with its natural factorization structure.

\medskip

It is easy to see, however, that $(\CL^{\otimes 2})^{\frac{1}{2}}$ identifies canonically with $\CG^{\epsilon_{\on{taut}}}|_Z$. Indeed,
both gerbes are canonically trivialized as plain gerbes, and the factorization structures on both are given by the
sign rules. 

\sssec{An example}

Let us take $Z=\Gr_{\BG_m}$. We can take as $\CL$ the \emph{determinant line bundle} on $\Gr_{\BG_m}$,
denoted $\det_{\BG_m,\on{St}}$, corresponding to the standard one-dimensional representation of $\BG_m$. 

\sssec{}

Let $\CC$ be a sheaf of categories over $\Gr_{\BZ/2\BZ\otimes \BG_m}$, and let $\CC$ be equipped with a factorization
structure, compatible with the factorization structure on $\Gr_{\BZ/2\BZ\otimes \BG_m}$.


\medskip

Viewing $\BZ/2\BZ$ as 2-torsion in $E^{\times}$, and using the twisting construction of \secref{sss:twist sheaves of categ}, 
we can twist $\CC$ by $\CG^{\epsilon_{\on{taut}}}$ and obtain a new factorization sheaf of categories, denoted 
$\CC^{\epsilon_{\on{taut}}}$. 

\medskip

Suppose that in the above situation $\CC$ is endowed with a monoidal (resp., symmetric monoidal) 
structure, compatible with the group structure on $\Gr_{\BZ/2\BZ\otimes \BG_m}$.
Then $\CC^{\epsilon_{\on{taut}}}$ also acquires a monoidal  (resp., symmetric monoidal) structure. 

\begin{rem}
Note that for any $S\to \Ran$, the corresponding categories $\CC(S)$ and $\CC^{\epsilon_{\on{taut}}}(S)$ are canonically identified
(since $\CG^{\epsilon_{\on{taut}}}$, viewed as a plain gerbe, is trivial). However, the factorization structures on $\CC(S)$ and $\CC^{\epsilon_{\on{taut}}}(S)$
are different. The same applies to the monoidal (resp., symmetric monoidal) situation. 
\end{rem} 

\sssec{}  \label{sss:shift and sign}

Assume for a moment that the structure on $\CC$ of sheaf of categories over $\Gr_{\BZ/2\BZ\otimes \BG_m}$ has been 
refined to that of sheaf over $\Gr_{\BG_m}$, also compatible with the factorization structures.

\medskip

Note that $\Gr_{\BG_m}$ carries a locally constant function, denoted $d$, given by the degree. Hence, we have a
well-defined endo-functor on $\CC$,
\begin{equation} \label{e:shift functor}
c\mapsto c[d].
\end{equation}

Note that this functor is \emph{not} compatible with the factorization structure, due to sign rules. 
However, when we view \eqref{e:shift functor} as a functor 
$$\CC\to \CC^{\epsilon_{\on{taut}}},$$
it is an equivalence of factorization categories.

\section{Jacquet functors for factorization gerbes}  \label{s:Jacquet}

In this section we take $G$ to be reductive.  We will study the interaction between factorization gerbes on $\Gr_G$ 
and those on $\Gr_M$, where $M$ is the Levi quotient of a parabolic of $G$. 

\ssec{The naive Jacquet functor} \label{ss:Jacquet}

Let $P$ be a parabolic subgroup of $G$, and we let $P\twoheadrightarrow M$ be its Levi quotient.
Let $N_P$ denote the unipotent radical of $P$. 

\sssec{}

Consider the diagram of the Grassmannians
$$\Gr_G \overset{\sfp}\longleftarrow \Gr_P \overset{\sfq}\longrightarrow \Gr_M.$$

\medskip

We claim that pullback along $\sfq$ defines an equivalence,
\begin{equation} \label{e:gerbe along N gr}
\on{Ge}_A(S\underset{\Ran}\times \Gr_M)\to \on{Ge}_A(S\underset{\Ran}\times \Gr_P)
\end{equation} 
for any $S\to\Ran$, in particular, inducing an equivalence
$$\on{FactGe}_{A}(\Gr_M)\to \on{FactGe}_{A}(\Gr_P).$$

\sssec{}

To show that \eqref{e:gerbe along N gr} is an equivalence, 
let us choose a splitting $M\hookrightarrow P$ of the projection $P\twoheadrightarrow M$. In particular, we obtain
an adjoint action of $M$ on $N_P$. Hence, we obtain an action of the group-prestack $\fL^+(M)$ (see \secref{sss:arc space} for the definition
of this group-prestack) over $\Ran$ on $\Gr_{N_P}$. 

\medskip

We can view $\Gr_M$ as a quotient $\fL(M)/\fL^+(M)$ (see \secref{sss:BL}), and hence we can view the map
$$\fL(M)\to \Gr_M$$
as a $\fL^+(M)$-torsor. Then $\Gr_P$, when viewed as a prestack over $\Gr_M$ is obtained 
by twisting $\Gr_{N_P}$ by the above $\fL^+(M)$-torsor. 

\medskip

Now, the equivalence in \eqref{e:gerbe along N gr}
follows from the fact that for any $S\to \Ran$, pullback defines an isomorphism
$$H^i_{\on{et}}(S,A)\to H^i_{\on{et}}(S\underset{\Ran}\times \Gr_{N_P},A)$$
for all $i$. 

\sssec{}

In terms of the parameterization given by \propref{p:parameterization}, the map 
$$\on{FactGe}_{A}(\Gr_G)\to \on{FactGe}_{A}(\Gr_M)$$
can be interpreted as follows:

\medskip

It corresponds to the map
\begin{multline*} 
\Maps_{\on{Ptd}(\on{PreStk})}(B_{\on{et}}(G) \times X,B^4_{\on{et}}(A(1))) \to \\ \to 
\Maps_{\on{Ptd}(\on{PreStk}_{/X})}(B(P)\times X,B^4_{\on{et}}(A(1)))\overset{\sim}\longleftarrow 
\Maps_{\on{Ptd}(\on{PreStk}_{/X})}(B(M)\times X,B^4_{\on{et}}(A(1))),
\end{multline*} 
where the second arrow is an isomorphism since the map $B(P)\to B(M)$ induces an isomorphism in \'etale cohomology with
constant coefficients.

\medskip

Thus, if $\CG^G$ is a factorization $A$-gerbe on $\Gr_G$, and $\CG^M$ is the corresponding the factorization $A$-gerbe on $\Gr_M$,
the corresponding quadratic forms
$$q:\Lambda\to A(-1)$$
coincide. 

\sssec{}   \label{sss:gerbes G and M}

We now take $A:=E^{\times,\on{tors}}$.
Given a factorization $E^{\times,\on{tors}}$-gerbe $\CG^G$ over $\Gr_G$, consider its pullback to $\Gr_P$, denoted $\CG^P$.  We let $\CG^M$
denote the canonically defined factorization gerbe on $\Gr_M$, whose pullback to $\Gr_P$ gives $\CG^P$. 

\medskip

By construction, for any $S\to \Ran$, we have a well-defined pullback functor
$$\sfp^!:\Shv_{\CG^G}(S\underset{\Ran}\times \Gr_G)\to \Shv_{\CG^P}(S\underset{\Ran}\times \Gr_P).$$

\medskip

Furthermore, since the
morphism $\sfq$ is ind-schematic, we have a well-defined push-forward functor
$$\sfq_*:\Shv_{\CG^P}(S\underset{\Ran}\times \Gr_P)\to 
\Shv_{\CG^M}(S\underset{\Ran}\times \Gr_M).$$

\medskip

Thus, the composite $\sfq_*\circ \sfp^!$ defines a map between factorization sheaves of categories
\begin{equation} \label{e:naive Jacquet}
\Shv_{\CG^G}(\Gr_G)_{/\Ran}\to \Shv_{\CG^M}(\Gr_M)_{/\Ran}.
\end{equation}

We will refer to \eqref{e:naive Jacquet} as the \emph{naive Jacquet functor}. 

\ssec{The critical twist}

 The functor \eqref{e:naive Jacquet} is not quite what we need for the purposes of geometric Satake. 
Namely, we will need to correct this functor by a cohomological shift that depends on the connected component of 
$\Gr_M$ (this is needed in order to arrange that the corresponding functor on the spherical categories maps perverse sheaves
to perverse sheaves). However, this cohomological shift will destroy the compatibility of the Jacquet functor
with factorization, due to sign rules. In order to compensate for this, we will apply an additional twist of
our categories by the square root of the determinant line bundle.

\medskip

The nature of this additional twist will be explained in the present subsection. 

\medskip

For the rest of this subsection we will assume that $\on{char}(k)\neq 2$. 

\sssec{}   \label{sss:det}

Let $\det_\fg$ denote the determinant line bundle on $\Gr_G$, corresponding to the adjoint representation. 
It is constructed as follows. For an affine test 
scheme $S$ and an $S$-point $I\subset \Maps(S,X)$ of $\Ran$, consider the corresponding $G$-bundle
$\CP_G$ on $S\times X$, equipped with an isomorphism
$$\alpha:\CP_G\simeq \CP^0_G$$
over $U_I\subset S\times X$.  Consider the corresponding vector bundles associated with the adjoint representation
$$\fg_{\CP_G}|_{U_I}\simeq \fg_{\CP^0_G}|_{U_I}.$$

\medskip

Then 
\begin{equation} \label{e:rel det}
\on{det.rel.}(\fg_{\CP_G},\fg_{\CP^0_G})
\end{equation}
is a well-defined line bundle\footnote{Note that the line bundle \eqref{e:rel det} is a priori $\BZ$-graded, but since $G$ is reductive,
and in particular, unimodular, this grading is actually trivial (i.e., concentrated in degree $0$).} on $S$.  

\medskip

This construction is compatible with pullbacks under $S'\to S$, thereby giving rise to the sought-for line bundle 
$\det_\fg$ on $\Gr_G$. 

\medskip

It is easy to see that $\det_\fg$ is equipped with a factorization structure over $\Ran$. 

\sssec{}

Consider the factorization $\BZ/2\BZ$-gerbe $\det^{\frac{1}{2}}_\fg$ over $\Gr_G$. 

\medskip

\noindent{\bf Convention:} From now on we will choose a square root, denoted $\omega^{\otimes \frac{1}{2}}_X$ of the canonical line bundle $\omega_X$ on $X$
(see again Remark \ref{r:roots} for our notational conventions).

\medskip

Let $P$ be again a parabolic of $G$. Consider the factorization gerbes 
$\det^{\frac{1}{2}}_\fg|_{\Gr_P}$ and  $\det^{\frac{1}{2}}_\fm|_{\Gr_P}$
over $\Gr_P$. We claim that the choice of $\omega^{\otimes \frac{1}{2}}_X$ gives rise to an identification of the gerbes 
\begin{equation} \label{e:gerbes on P}
\det^{\frac{1}{2}}_\fg|_{\Gr_P} \simeq \det^{\frac{1}{2}}_\fm|_{\Gr_P}\otimes \CG^{\epsilon_P}|_{\Gr_P},
\end{equation} 
where $\CG^{\epsilon_P}$ is the $\BZ/2\BZ$-gerbe on $\Gr_{M/[M,M]}$ corresponding to the map $\epsilon_P$
$$\Lambda_{M/[M,M]}\overset{2\check\rho_{G,M}}\longrightarrow \BZ \to \BZ/2\BZ,$$
where $2\check\rho_{G,M}:M/[M,M]\to \BG_m$ is the determinant of the action of $M$ on $\fn(P)$. 

\medskip

In fact, we claim that the ratio of the line bundles $\det_\fg|_{\Gr_P}$ and  $\det_\fm|_{\Gr_P}$, i.e.,
$$\det_\fg|_{\Gr_P} \otimes (\det_\fm|_{\Gr_P})^{\otimes -1},$$
admits a square root, to be denoted $\det_{\fn(P)}$, which is a $\BZ$-graded
(and, in particular, $\BZ/2\BZ$-graded) factorization line bundle on $\Gr_P$, with the grading given by the map
\begin{equation} \label{e:deg on GrP}
\Gr_P\to \Gr_M \to \Gr_{M/[M,M]} \overset{2\check\rho_{G,M}}\longrightarrow  \Gr_{\BG_m},
\end{equation} 
see Sects. \ref{sss:shift and sign} and \ref{sss:sq root}. 

\begin{rem}
In fact, more is true: the construction of \cite[Sect. 4]{BD2} defines a square root of $\det_\fg$ itself,
which is as a factorization $\BZ/2\BZ$-graded line bundle, where the grading is given by the map
$$\Gr_G\to \Gr_{\pi_{1,\on{alg}}(G)\otimes \BG_m} \to \Gr_{\BZ/2\BZ\otimes \BG_m},$$
where $\pi_{1,\on{alg}}(G)\to \BZ/2\BZ$ is is the canonical map that fits into the diagram
$$
\CD
\Lambda @>>> \pi_{1,\on{alg}}(G) \\
@V{2\check\rho}VV  @VVV  \\
\BZ  @>>>  \BZ/2\BZ,
\endCD
$$
where $2\check\rho$ is the sum of positive roots. 
\end{rem} 

\sssec{}  \label{sss:compare det}

The graded line bundle $\det_{\fn(P)}$ is constructed as follows. For an $S$-point  $(I,\CP_P,\CP_G|_{U_I}\simeq \CP^0_G|_{U_I})$
of $\Gr_P$ we set the value of $\det_{\fn(P)}$ on $S$ to be 
$$\on{rel.det.}(\fn(P)_{\CP_P}\otimes \omega^{\otimes \frac{1}{2}}_X,\fn(P)_{\CP^0_P}\otimes \omega^{\otimes \frac{1}{2}}_X).$$

\medskip

Let us construct the isomorphism
$$(\det_{\fn(P)})^{\otimes 2}\otimes \det_\fm|_{\Gr_P}\simeq \det_\fg|_{\Gr_P}.$$

\medskip

Let us identify the vector space $\fg/\fp$ with the dual of
$\fn(P)$ (say, using the Killing form). For an $S$-point 
$(I,\CP_P,\CP_G|_{U_I}\simeq \CP^0_G|_{U_I})$
of $\Gr_P$, denote
$$\CE:=\fn(P)_{\CP_P} \text{ and } \CE_0:=\fn(P)_{\CP^0_P}.$$

Then the ratio of $\det_\fg|_S$ and $\det_\fm|_S$ identifies with the line bundle
$$\on{rel.det.}(\CE,\CE_0)\otimes \on{rel.det.}(\CE^\vee,\CE^\vee_0)\simeq
\on{rel.det.}(\CE,\CE_0)\otimes \on{rel.det.}(\CE_0^\vee,\CE^\vee)^{\otimes -1}.$$.

Note, however, that for any line bundle $\CL$ on $S\times X$, we have
$$\on{rel.det.}(\CE,\CE_0)\otimes \on{rel.det.}(\CE_0^\vee,\CE^\vee)^{\otimes -1}\simeq
\on{rel.det.}(\CE\otimes \CL,\CE_0\otimes \CL)\otimes \on{rel.det.}(\CE_0^\vee\otimes \CL,\CE^\vee\otimes \CL)^{\otimes -1}.$$

Thus, letting $\CL$ be the pullback of $\omega^{\otimes \frac{1}{2}}_X$, we thus need to construct an isomorphism
$$\on{rel.det.}(\CE\otimes \omega^{\otimes \frac{1}{2}}_X,\CE_0\otimes \omega^{\otimes \frac{1}{2}}_X)\simeq 
\on{rel.det.}(\CE_0^\vee\otimes \omega^{\otimes \frac{1}{2}}_X,\CE^\vee\otimes \omega^{\otimes \frac{1}{2}}_X)^{-1}.$$

However, this follows from the (relative to $S$) local Serre duality on $S\times X$:
$$\BD^{\on{Serre}}_{/S}(\CE\otimes \omega^{\otimes \frac{1}{2}}_X)\simeq \CE^\vee\otimes \omega^{\otimes \frac{1}{2}}_X[1]
\text{ and }  \BD^{\on{Serre}}_{/S}(\CE_0\otimes \omega^{\otimes \frac{1}{2}}_X)\simeq \CE_0^\vee\otimes \omega^{\otimes \frac{1}{2}}_X[1].$$

\begin{rem} \label{r:spin}

The construction of the isomorphism \eqref{e:gerbes on P} depended on the choice of 
$\omega^{\otimes \frac{1}{2}}_X$.

\medskip

By analyzing the above construction one can show that the discrepancy between the two sides in \eqref{e:gerbes on P}
is canonically isomorphic to the factorizable $\BZ/2\BZ$-gerbe pulled back via
$$\Gr_P\to \Gr_M\to \Gr_{M/[M,M]} \overset{2\check\rho_{G,M}}\to \Gr_{\BG_m}$$
from the object in
$$\on{FactGe}^0_{\BZ/2\BZ}(\Gr_{\BG_m})\subset \on{FactGe}_{\BZ/2\BZ}(\Gr_{\BG_m})$$
attached to the \emph{gerbe} 
$$\omega^{\frac{1}{2}}_X\in \on{Ge}_{\BZ/2\BZ}(X)$$
via
$$\on{Ge}_{\BZ/2\BZ}(X)\simeq \on{FactGe}^0_{\BZ/2\BZ}(\Gr_{\BG_m}).$$

\end{rem}

\ssec{The corrected Jacquet functor}

We will now use the gerbe $\CG^{\epsilon_P}$ from the previous subsection in order to introduce a correction to
the naive Jacquet functor from \secref{sss:gerbes G and M}.

\sssec{}

Let $d_{G,M}:\Gr_P\to \BZ$ be locally constant function on $\Gr_P$ corresponding to the map \eqref{e:deg on GrP},
see \secref{sss:shift and sign}. 

\medskip

Given a factorization $E^{\times,\on{tors}}$-gerbe $\CG^G$ on $\Gr_G$ and the corresponding factorization gerbe $\CG^M$
on $\Gr_M$ (see \secref{sss:gerbes G and M}), we will now define the corrected Jacquet functor as a map between
factorization sheaves of categories:
\begin{equation} \label{e:corrected Jacquet}
J^G_M:\Shv_{\CG^G\otimes \det_\fg^{\frac{1}{2}}}(\Gr_G)_{/\Ran}\to 
\Shv_{\CG^M\otimes \det^{\frac{1}{2}}_\fm}(\Gr_M)_{/\Ran}. 
\end{equation}

\sssec{}

Namely, $J^G_M$ is the composition of the following four factorizable operations:

\medskip

\noindent(i) The pullback functor
$$\sfp^!:\Shv_{\CP^G\otimes \det^{\frac{1}{2}}_\fg}(\Gr_G)_{/\Ran}\to 
\Shv_{(\CP^G\otimes \det^{\frac{1}{2}}_\fg)|_{\Gr_P}}(\Gr_P)_{/\Ran};$$

\medskip

\noindent(ii) The identification 
$$\Shv_{(\CP^G\otimes \det^{\frac{1}{2}}_\fg)|_{\Gr_P}}(\Gr_P)_{/\Ran}\simeq
\Shv_{(\CG^M\otimes \det^{\frac{1}{2}}_\fm\otimes \CG^{\epsilon_P})|_{\Gr_P}}(\Gr_P)_{/\Ran},$$
given by the isomorphism of gerbes \eqref{e:gerbes on P};

\medskip

\noindent(iii) The cohomological shift functor $\CF\mapsto \CF[-d_{G,M}]$ 
$$\Shv_{(\CG^M\otimes \det^{\frac{1}{2}}_\fm\otimes \CG^{\epsilon_P})|_{\Gr_P}}(\Gr_P)_{/\Ran}\to
\Shv_{(\CG^M\otimes \det^{\frac{1}{2}}_\fm)|_{\Gr_P}}(\Gr_P)_{/\Ran},$$
see \secref{sss:shift and sign}. 

\medskip

\noindent(iv) The pushforward functor
$$\sfq_*:\Shv_{(\CG^M\otimes \det^{\frac{1}{2}}_\fm)|_{\Gr_P}}(\Gr_P)_{/\Ran}\to 
\Shv_{\CG^M\otimes \det^{\frac{1}{2}}_\fm}(\Gr_M)_{/\Ran}.$$

\begin{rem}
Note that since the identification \eqref{e:gerbes on P} depended on the choice of $\omega_X^{\otimes \frac{1}{2}}$,
so does the functor $J^G_M$. 
\end{rem} 

\section{The metaplectic Langlands dual datum}  \label{s:metap dual}

In section we take $G$ to be reductive\footnote{We will assume that the algebraic fundamental group of the derived 
group of $G$, i.e., the torsion part of $\pi_{1,\on{alg}}(G)$, is of order prime to  $\on{char}(k)$.}.  

\medskip

Given a factorization gerbe $\CG$ on $\Gr_G$, we will define the 
\emph{metaplectic Langlands dual datum} attached to $\CG$, and the corresponding notion of twisted local system on $X$. 

\ssec{The metaplectic Langlands dual \emph{root datum}}

The first component of the metaplectic Langlands dual datum is purely combinatorial and consists of a certain
root datum that only depends on the root datum of $G$ and $q$. This is essentially the same as the root datum defined by
G.~Lusztig as a recipient of the quantum Frobenius. 

\sssec{}

Given a factorization $A$-gerbe $\CG^G$ on $\Gr_G$, let  
$$q:\Lambda\to A(-1)$$
$$b:\Lambda\times \Lambda\to A(-1)$$
be the associated quadratic and bilinear forms, respectively. Let $\Lambda^\sharp \subset \Lambda$ be the kernel of $b$.
Let $\cLambda^\sharp$ be the dual of $\Lambda^\sharp$. Note that the inclusions
$$\Lambda^\sharp\subset \Lambda \text{ and } \cLambda\subset \cLambda^\sharp$$
induce isomorphisms after tensoring with $\BQ$. 

\medskip

Let
$$(\Delta \subset \Lambda,\,\check\Delta\subset \cLambda)$$
be the root datum for $G$. Following \cite{Lus}, we will now define a new root datum 
\begin{equation} \label{e:new root system}
(\Delta^\sharp\subset \Lambda^\sharp, \check\Delta^\sharp\subset \cLambda^\sharp).
\end{equation}

\sssec{}

We let $\Delta^\sharp$ be equal to $\Delta$ as an \emph{abstract set}. For each element $\alpha\in \Delta$, we let
the corresponding element $\alpha^\sharp\in \Delta^\sharp$
be equal to 
$$\on{ord}(q(\alpha))\cdot \alpha\in \Lambda,$$
and the corresponding element $\check\alpha^\sharp\in \check\Delta^\sharp$ be
$$\frac{1}{\on{ord}(q(\alpha))}\cdot \check\alpha\in  \cLambda\underset{\BZ}\otimes \BQ.$$

\medskip

The fact that $q$ lies in $\on{Quad}(\Lambda,A(-1))^W_{\on{restr}}$ implies that 
$\alpha^\sharp$ and $\check\alpha^\sharp$ defined in this way indeed belong to 
$\Lambda^\sharp\subset \Lambda$ and 
$\cLambda^\sharp\subset \cLambda\underset{\BZ}\otimes \BQ$, respectively. 

\sssec{}

Since $q$ is $W$-invariant, the action of $W$ on $\Lambda$ preserves $\Lambda^\sharp$. Moreover, 
for each $\alpha\in \Delta$, the action of the corresponding reflection $s_\alpha\in W$ on $\Lambda^\sharp$
equals that of $s_{\alpha^\sharp}$. 

\medskip

This implies that restriction defines an isomorphism from $W$ to the
group $W^\sharp$ of automorphisms of $\Lambda^\sharp$ generated by the elements $s_{\alpha^\sharp}$.

\medskip

Hence, \eqref{e:new root system} is a finite root system with Weyl group $W^\sharp$, isomorphic to the original
Weyl group $W$. 

\medskip

It follows from the construction that if $\alpha_i$ are the simple coroots of $\Delta$, then the corresponding elements
$\alpha_i^\sharp\in \Lambda^\sharp$ form a set of simple coroots of $\Delta^\sharp$.

\sssec{}

We let $G^\sharp$ denote the reductive group (over $k$) 
corresponding to \eqref{e:new root system}.

\ssec{The ``$\pi_1$-gerbe"}

Let $\CG^G$ be as above. In this subsection we will show that in addition to the reductive group $G^\sharp$, the datum of 
$\CG^G$ defines a certain \emph{multiplicative} factorization gerbe on the affine Grassmannian attached to
the abelian group $\pi_{1,\on{alg}}(G^\sharp)$. 

\sssec{}  \label{sss:construct gerbe pi1}

Let $\CG^T$ be the factorization gerbe on $\Gr_T$, corresponding to $\CG^G$ via \secref{sss:gerbes G and M}. 
Consider the corresponding torus $T^\sharp$.

\medskip

Let $\CG^{T^\sharp}$ be the factorization gerbe on $\Gr_{T^\sharp}$ equal to the pullback of $\CG^T$
under $T^\sharp\to T$. By \propref{p:classify mult T}, the gerbe $\CG^{T^\sharp}$ carries a canonical
multiplicative structure. 

\medskip

Consider the algebraic fundamental group $\pi_{1,\on{alg}}(G^\sharp)$ of $G^\sharp$, and the projection
$\Lambda^\sharp\to \pi_{1,\on{alg}}(G^\sharp)$. Consider the corresponding map
\begin{equation} \label{e:torus to center}
\Gr_{T^\sharp}\to \Gr_{\pi_{1,\on{alg}}(G^\sharp)\otimes \BG_m}.
\end{equation}

\medskip

We claim that there exists a canonically defined multiplicative factorization $A$-gerbe 
$\CG^{\pi_{1,\on{alg}}(G^\sharp)\otimes \BG_m}$ on $\Gr_{\pi_{1,\on{alg}}(G^\sharp)\otimes \BG_m}$, 
whose pullback under \eqref{e:torus to center} identifies with
$\CG^{T^\sharp}$. 

\sssec{}

By \corref{c:when descends}, we need to show that for every simple coroot $\alpha_i$, the pullback of $\CG^T$ to $\Gr_{\BG_m}$ under
\begin{equation} \label{e:root pullback}
\BG_m\overset{\alpha_i^\sharp}\longrightarrow T
\end{equation}
is trivialized. 

\medskip

By the transitivity of the construction in \secref{sss:gerbes G and M}, we can replace $G$ by its Levi subgroup $M_i$ of semi-simple rank $1$,
corresponding to $\alpha_i$. Furthermore, using the map $SL_2\to M_i$, we can assume that $G=SL_2$.

\sssec{}

Let $\det_{SL_2,\on{St}}$ and $\det_{\BG_m,k}$ be as in \secref{sss:theta sl2}. We can assume that object in $\on{FactGe}_A(\Gr_{SL_2})$
is of the form $(\det_{SL_2,\on{St}})^{a}$ for some element $a\in A(-1)$. Its restriction to $\on{FactGe}_A(\Gr_{\BG_m})$ 
identifies with $(\det_{\BG_m,1}\otimes \det_{\BG_m,-1})^a$.

\medskip

The associated quadratic form 
$$q:\BZ\to A(-1)$$
takes value $a$ on the generator $1\in \BZ$. Let $n:=\on{ord}(a)$. 

\medskip

We need to show that the pullback of $(\det_{\BG_m,1}\otimes \det_{\BG_m,-1})^a$ under the isogeny
\begin{equation} \label{e:isogeny}
\BG_m\overset{x\mapsto x^n}\longrightarrow \BG_m
\end{equation} 
is canonically trivial as a factorization gerbe on $\Gr_{\BG_m}$. For this, it suffices to show that the pullback of
the factorization line bundle $\det_{\BG_m,1}\otimes \det_{\BG_m,-1}$ under the above isogeny admits a canonical 
$n$-th root.

\medskip

We note that the pullback of $\det_{\BG_m,1}\otimes \det_{\BG_m,-1}$ under \eqref{e:isogeny} identifies with
$\det_{\BG_m,n}\otimes \det_{\BG_m,-n}$. However, a local calculation shows that we have a canonical isomorphism 
$$\det_{\BG_m,n}\otimes \det_{\BG_m,-n} \simeq (\det_{\BG_m,1}\otimes \det_{\BG_m,-1})^{\otimes n^2},$$
which implies the desired assertion. 

%

\begin{rem}
One can see explicitly what $\CG^{T^\sharp}$ is in terms of $\CG^G$ by interpreting both as 
$\Theta$-data, see \secref{sss:theta G new 2}.
In terms of {\it loc.cit.}, the $\Theta$-data corresponding to $\CG^{T^\sharp}$ is obtained from one for $\CG^G$
by restriction along 
$$\Lambda^\sharp\to \Lambda.$$

\end{rem}

\ssec{The metaplectic Langlands dual datum as a triple}

In this subsection we take $A:=E^{\times,\on{tors}}$.

\sssec{}

By \secref{sss:splitting}, to $\CG^{\pi_{1,\on{alg}}(G^\sharp)\otimes \BG_m}$ we can canonically attach an object
$$(\CG^{\pi_{1,\on{alg}}\otimes \BG_m})^0\in 
\on{FactGe}^0_A(\Gr_{\pi_{1,\on{alg}}(G^\sharp)\otimes \BG_m})\simeq \Maps(X,B^2_{\on{et}}(\Hom(\pi_{1,\on{alg}}(G^\sharp),E^{\times,\on{tors}})))$$
and a map
$$\epsilon:\pi_{1,\on{alg}}(G^\sharp)\to \BZ/2\BZ.$$

\sssec{}

Let $H$ denote the Langlands dual of $G^\sharp$, viewed as an algebraic group over $E$. Note that 
$$\Hom(\pi_{1,\on{alg}}(G^\sharp),E^\times)$$
identifies with $Z_{H}(E)$, where $Z_{H}$ denotes the center of $H$. 

\medskip

Hence, we can think of $(\CG^{\pi_{1,\on{alg}}\otimes \BG_m})^0$ as a 
$Z_{H}$-gerbe on $X$, to be denoted $\CG_Z$. Furthermore, we interpret the above map $\epsilon$ as a homomorphism 
\begin{equation} \label{e:epsilon to}
\epsilon:\BZ/2\BZ\to Z_{H}(E).
\end{equation} 

\sssec{} \label{sss:dual triple}

We will refer to the triple
\begin{equation} \label{e:MLLD}
(H,\CG_Z,\epsilon)
\end{equation}
as the \emph{metaplectic Langlands dual datum} corresponding to $\CG^G$. 

\sssec{Example}

Suppose that $\CG^G$ is trivial. Then $T^\sharp=T$ and $G^\sharp=G$, so $H=\cG$. In this case 
$\CG^{\pi_{1,\on{alg}}(G^\sharp)\otimes \BG_m}$ and $\epsilon$ are trivial.

\sssec{Example} \label{sss:det example}

Take now $\CG^G=(\det_{\fg})^{\frac{1}{2}}$. In this case we will have $T^\sharp=T$ and $G^\sharp=G$,
so $H=\cG$.

\medskip

However, the element $\epsilon$ equals now the image of $-1\in \BZ/2\BZ$ under the homomorphism
\begin{equation} \label{e:rho -1}
\BZ/2\BZ \to Z_{\cG}
\end{equation}
given by
$$\BZ/2\BZ \to \BG_m\overset{2\check\rho}\to \cT\to \cG.$$

Further, by Remark \ref{r:spin}, the $Z_H$-gerbe $\CG_Z$ is induced by means of \eqref{e:rho -1}
from the $\BZ/2\BZ$-gerbe $\omega_X^{\frac{1}{2}}$.





\section{Factorization gerbes on loop groups}  \label{s:loop}

In this section we will perform a crucial geometric construction that will explain why our definition of 
geometric metaplectic datum was ``the right thing to do": 

\medskip

We will show that a factorization gerbe on $\Gr_G$ give rise to a (factorization) gerbe on (the factorization version of) the loop 
group of $G$. 

\ssec{Digression: factorization loop and arc spaces}


Up until this point, the geometric objects that have appeared in this paper were all locally of finite type, considered 
as prestacks. However, the objects that we will introduce below \emph{do not} have this property. 

\sssec{}

For an affine test scheme $S$ and an $S$-point of $\Ran$, given by a finite set $I\subset \Maps(S,X)$, let $\hat\cD_I$ be the corresponding
relative formal disc:

\medskip

By definition, $\hat\cD_I$ is the formal scheme equal to the completion of $S\times X$ along the union $\Gamma_I$ of the graphs of the maps $S\to X$
corresponding to the elements of $I$. 

\medskip

Note that for a finite set $J$ and a point
$$\{I_j,j\in J\}\in  \Ran^J_{\on{disj}},$$
we have
\begin{equation} \label{e:disc factor}
\hat\cD_I\simeq \underset{j}\sqcup\, \hat\cD_{I_j},
\end{equation} 
where $I=\underset{j}\sqcup\, I_j$.

\sssec{}

Since $S$ was assumed affine, $\hat\cD_I$ is an ind-object in the category $\Sch^{\on{aff}}$. 
Let $\cD_I$ be the affine scheme corresponding to the formal scheme $\hat\cD_I$, i.e., the image of $\hat\cD_I$ under the functor
$$\on{colim}: \on{Ind}(\Sch^{\on{aff}}) \to \Sch^{\on{aff}}.$$

In other words, if 
$$\hat\cD_I\simeq \underset{\alpha}{\on{colim}}\, Z_\alpha,$$
where $Z_\alpha=\Spec(A_\alpha)$ and the colimit is taken in $\on{PreStk}$, then $\cD_I=\Spec(A)$, where
$$A=\underset{\alpha}{\on{lim}}\, A_\alpha.$$

\medskip

Let $\ocD_I$ be the open subscheme of $\cD_I$, obtained by removing the closed subscheme $\Gamma_I$ equal to the union of the graphs
of the maps $S\to X$ corresponding to the elements of $I$. 

\sssec{}  \label{sss:arc space}

Let $Z$ be a prestack. We define the prestacks $\fL^+(Z)$ (resp., $\fL(Z)$) over $\Ran$ as follows. 

\medskip

For an affine test scheme $S$ and an $S$-point of $\Ran$, given by a finite set $I\subset \Maps(S,X)$,
its lift to an $S$-point of $\fL^+(Z)$ (resp., $\fL(Z)$) is the datum of a map
$\cD_I\to Z$ (resp., $\ocD_I\to Z$). 

\medskip

The isomorphisms \eqref{e:disc factor} imply that $\fL^+(Z)$ and $\fL(Z)$ are naturally factorization prestacks 
over $\Ran$.

\sssec{}

Assume for a moment that $Z$ is an affine scheme. 
Note that in this case the definition of $\fL^+(Z)$, the datum of a map $\cD_I\to Z$ 
is equivalent to that of a map of prestacks $\hat\cD_I\to Z$. 

\medskip

Assume now that $Z$ is a smooth scheme of finite type (but not necessarily affine). Then one shows that 
for every $S\to \Ran$, the fiber product
$$S\underset{\Ran}\times \fL^+(Z)$$ is a projective
limit (under smooth maps) of smooth affine schemes over $S$.

\ssec{Factorization loop and arc groups}

\sssec{}  

Let us recall that the Beauville-Laszlo Theorem says that the definition of $\Gr_G$ can be rewritten in terms
of the pair $\ocD_I\subset \cD_I$. 

\medskip

Namely, given $I$ as above, the datum of its lift to a point of $\Gr_G$ is a pair $(\CP_G,\alpha)$, where 
$\CP_G$ is a $G$-bundle on $\cD_I$, and $\alpha$ is the trivialization of $\CP_G|_{\ocD_I}$.
(Note that restriction along $\hat\cD_I\to \cD_I$ induces an equivalence between the category of $G$-bundles
on $\cD_I$ and that on $\hat\cD_I$.) 

\medskip

In other words, the Beauville-Laszlo says that restriction along
$$(\ocD_I\subset \cD_I)  \to (U_I\subset S\times X)$$
induces a bijection on the corresponding pairs $(\CP_G,\alpha)$. In the above formula, the notation $U_I$ 
is as in \secref{sss:U}.

\sssec{}  \label{sss:BL}

This interpretation of $\Gr_G$ shows that the group-prestack $\fL(G)$ acts naturally on $\Gr_G$,
with the stabilizer of the unit section being $\fL^+(G)$. Furthermore, the natural map
\begin{equation} \label{e:Gr as quotient}
\fL(G)/\fL^+(G)\to \Gr_G,
\end{equation} 
is an isomorphism, where the quotient is understood in the sense of stacks in the \'etale topology. 

\medskip 

The isomorphism \eqref{e:Gr as quotient} implies that for every $S\to \Ran$, the fiber product
$$S\underset{\Ran}\times \fL(G),$$ 
is an ind-scheme over $S$. 

\sssec{}

Recall that given a group-prestack $\CH$ over a base $Z$, we can talk about a gerbe over $\CH$
being \emph{multiplicative}, i.e., compatible with the group-structure.

\medskip

In particular, we can consider the spaces 
$$\on{FactGe}^{\on{mult}}_A(\fL(G)) \text{ and } \on{FactGe}^{\on{mult}}_A(\fL^+(G))$$
of multiplicative factorization gerbes on $\fL(G)$ and $\fL^+(G)$, respectively. 

\sssec{}

The isomorphism \eqref{e:Gr as quotient} defines a map
\begin{equation} \label{e:gerbe on loop group}
\on{FactGe}^{\on{mult}}_A(\fL(G)) 
\underset{\on{FactGe}^{\on{mult}}_A(\fL^+(G))}\times *\to 
\on{FactGe}_A(\Gr_G).
\end{equation} 

\medskip

We have the following result\footnote{This result was claimed in \cite[Theorem III.2.10]{Re}, but the proof was incomplete;
specifically the argument in Proposition III.2.8 of {\it loc.cit.} is incomplete.}:

\begin{prop}  \label{p:gerbe on loop group}
The map \eqref{e:gerbe on loop group} is an isomorphism.
\end{prop}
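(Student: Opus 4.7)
The plan is to exhibit an inverse to \eqref{e:gerbe on loop group}. Given a factorization gerbe $\CG$ on $\Gr_G$, set $\CG^{\fL} := \pi^*(\CG)$ for the quotient map $\pi:\fL(G)\to \Gr_G$. The étale-local triviality of the $\fL^+(G)$-torsor $\pi$ (equivalently, the isomorphism \eqref{e:Gr as quotient}) already gives faithful descent of 2-truncated objects such as gerbes. So the real work is to upgrade $\CG^{\fL}$ canonically to a \emph{multiplicative} factorization gerbe together with a multiplicative trivialization of $\CG^{\fL}|_{\fL^+(G)}$, and to check that passing to descent recovers $\CG$.

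The trivialization on $\fL^+(G)$ is immediate: $\fL^+(G)$ is the preimage of the base point of $\Gr_G$, and the fiber of any gerbe over a point is canonically trivial as a gerbe on $*$. Since $\fL^+(G)$ is a subgroup-prestack of $\fL(G)$ and $\pi$ is equivariant for the left $\fL(G)$-action, this trivialization is automatically a morphism of \emph{multiplicative} factorization gerbes over $\fL^+(G)$. The multiplicative structure on $\CG^{\fL}$ itself is constructed using factorization. Over $(\Ran\times\Ran)_{\on{disj}}$, the factorization isomorphism for $\fL(G)$ identifies the restriction of the multiplication map $\fL(G)\underset{\Ran}\times\fL(G)\to \fL(G)$ with the factorization iso $\fL(G)\boxtimes\fL(G)|_{\on{disj}}\simeq \fL(G)|_{\on{disj}}$; under this identification the factorization datum for $\CG^{\fL}$ becomes exactly a multiplicativity datum over the disjoint locus. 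One then extends across the diagonal using the fact, exploited systematically in Sects.~\ref{sss:bilin} and \ref{sss:higher Ek}, that a datum of identification (or of higher coherence) between gerbes on $\fL(G)\underset{\Ran}\times\fL(G)$ which is already specified over the open locus $(\Ran\times\Ran)_{\on{disj}}$ extends uniquely to the full product, the obstruction and automorphism spaces being supported on the deep diagonal and hence vanishing by a codimension/\lemref{l:trivialized gerbes}-type argument.

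The inverse composition is tautological: descent of $\pi^*(\CG)$ back along $\fL(G)\to\Gr_G$ returns $\CG$. In the other direction, if we start with a multiplicative factorization gerbe $\CG^{\fL}$ on $\fL(G)$ with its multiplicative trivialization on $\fL^+(G)$, the descended factorization gerbe $\CG$ on $\Gr_G$ has $\pi^*(\CG)$ canonically isomorphic to $\CG^{\fL}$, with the given multiplicative and trivialization data matching the ones produced by the construction above; this follows from the fact that descent data for the $\fL^+(G)$-torsor $\pi$ amount to an equivariance structure, which for a left-multiplicative gerbe is rigidified by the fixed trivialization along the unit subgroup.

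The hard part will be the homotopy-coherent bookkeeping. Factorization gerbes carry coherence data specified up to the triple product and relations up to the quadruple product; multiplicative factorization gerbes carry an analogous tower on $\fL(G)$. Matching these two packages requires verifying that at each level the obstruction to extending from the ``disjoint'' locus to the full product lives in a cohomology group supported in codimension $\geq 2$, and therefore vanishes uniquely. The cleanest way to organize this, following \cite[Theorem III.2.10]{Re}, is to reduce to the case $G=T$ a torus via the Plücker-type embedding into a product of Grassmannians for tori (where the required extensions are checked explicitly in \secref{ss:tori expl} and \secref{ss:more mult}), and then descend the resulting data along the maps $\Gr_G\to \Gr_{\pi_{1,\on{alg}}(G)\otimes \BG_m}$ using Corollaries~\ref{c:when descends} and \ref{c:com fact G T}.
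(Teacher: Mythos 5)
The framing of the proposal (construct an inverse, pull back along $\pi:\fL(G)\to\Gr_G$, supply a multiplicative structure) is the right shape, but the construction of the multiplicative structure has a genuine conceptual gap. You write that the multiplication map $\fL(G)\underset{\Ran}\times\fL(G)\to \fL(G)$ restricted over $(\Ran\times\Ran)_{\on{disj}}$ coincides with the factorization isomorphism, so that the factorization datum becomes a multiplicativity datum over the disjoint locus, to be extended across the diagonal. But the multiplication map lives entirely over $\Ran$ (it is a map of prestacks \emph{over} $\Ran$), whereas factorization is a datum over $(\Ran\times\Ran)_{\on{disj}}$; there is no ``multiplication over $\Ran^2$'' of which $(\Ran\times\Ran)_{\on{disj}}$ is an open piece and the diagonal is a closed piece. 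Indeed, for non-disjoint $I_1,I_2$, there is no natural map $G(\ocD_{I_1})\times G(\ocD_{I_2})\to G(\ocD_{I_1\cup I_2})$ — the loop around $I_1$ is not naturally a loop around $I_1\cup I_2$. So the object you would want to ``extend across the diagonal'' is not defined away from the diagonal in the first place, and the codimension argument via \lemref{l:trivialized gerbes} has nothing to act on.

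The structure that actually interpolates between the factorization (over the disjoint locus) and group/equivariance phenomena (over the diagonal) is the convolution Grassmannian $\wt\Gr{}^{n}_G$ of \secref{ss:constr equiv}, which is a \emph{twisted} product, not a literal product; its construction presupposes a choice of $\fL^+(G)$-equivariant structure on $\CG$. The key point the paper isolates, and which is absent from your proposal, is that a factorization gerbe $\CG$ on $\Gr_G$ admits a \emph{unique} $\fL^+(G)$-equivariant structure making $\CG\wt\boxtimes\CG$ on $\wt\Gr{}^2_G$ an extension of the factorization identification over $(\Ran\times\Ran)_{\on{disj}}$; that uniqueness is the statement proved by reduction to the torus. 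Once the equivariant structure is in hand, the simplicial object $Z^\bullet$ of \eqref{e:iterated over diag}, which packages the equivariance and the factorization together, is identified with the \v{C}ech nerve of $B_{\on{et}}(\fL^+(G))\to B_{\on{et}}(\fL(G))$, and the resulting $2$-gerbe on $B_{\on{et}}(\fL(G))$, trivialized on $B_{\on{et}}(\fL^+(G))$, \emph{is} the desired multiplicative gerbe with multiplicative trivialization. Your proposal never introduces the equivariance or the convolution, and the claim that the trivialization on $\fL^+(G)$ is ``automatically'' a morphism of multiplicative gerbes is likewise circular — it presupposes a multiplicative structure that is yet to be built, and its compatibility with that structure is exactly the content of the bar construction above.
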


We will sketch the proof of this proposition in \secref{ss:constr equiv}. 
It consists of explicitly constructing the inverse map. 


\sssec{}

Let us restate \propref{p:gerbe on loop group} in words. It says that, given a factorization gerbe on $\Gr_G$, its pullback 
under the projection
$$\fL(G)\to \Gr_G,$$
carries a uniquely defined multiplicative structure that is compatible with that of factorization and the trivialization of the further
restriction of our gerbe to $\fL^+(G)$. 

\ssec{Sketch of proof of \propref{p:gerbe on loop group}} \label{ss:constr equiv}

The proof we will sketch was suggested to us by Y.~Zhao. We will produce an inverse map to \eqref{e:gerbe on loop group} by appealing to
\propref{p:parameterization}. 

\medskip

Namely, using  a variant of the construction described in Sects. \ref{sss:construct gerbe prel}-\ref{sss:construct gerbe end},
we will show that a based map
\begin{equation} \label{e:based map again}
B_{\on{et}}(G) \times X\to B^4_{\on{et}}(A(1)),
\end{equation}
gives rise to an object of $\on{FactGe}^{\on{mult}}_A(\fL(G)) \underset{\on{FactGe}^{\on{mult}}_A(\fL^+(G))}\times *$.

\sssec{}

Let 
$$\on{Hecke}^{\on{loc}}_G$$ denote the version of the local Hecke stack over $\Ran$.

\medskip

Namely, an $S$-points of $\on{Hecke}^{\on{loc}}_G$ is a triple 
$$(I,\CP'_G,\CP''_G,\alpha),$$
where:

\smallskip

\noindent--$I\subset \Hom(S,X)$ is a $S$-point of $\Ran$;

\smallskip

\noindent--$\CP'_G$ and $\CP''_G$ are $G$-bundles on $\cD_I$;

\smallskip

\noindent--$\alpha$ is an isomorphism between $\CP'_G$ and $\CP''_G$ over $\ocD_I$.

\medskip

We can identify
$$\on{Hecke}^{\on{loc}}_G\simeq \fL^+(G)\backslash \Gr_G\simeq \fL^+(G)\backslash \fL(G)/\fL^+(G).$$

\medskip

Consider also the prestack 
$$\fL^+(G)\backslash \Ran,$$
whose $S$-points are pairs $(I,\CP_G)$, where $I$ is as above and $\CP_G$ is a $G$-bundle on $\cD_I$.

\medskip

We have the two projections
$$\on{Hecke}^{\on{loc}}_G\rightrightarrows \fL^+(G)\backslash \Ran$$
that remember the data of $\CP'_G$ and $\CP''_G$, respectively.

\medskip

Furthermore, composition of isomorphisms defines on $\on{Hecke}^{\on{loc}}_G$ a natural structure of groupoid
acting on $\fL^+(G)\backslash \Ran$. This structure is compatible with factorization in a natural way. 

\medskip

Hence, it makes sense to talk about factorization gerbes on $\on{Hecke}^{\on{loc}}_G$, equipped with a
\emph{multiplicative structure} with respect to the groupoid operation.

\medskip

It is clear that the space $\on{FactGe}^{\on{mult}}_A(\fL(G)) \underset{\on{FactGe}^{\on{mult}}_A(\fL^+(G))}\times *$
is canonically equivalent to the space of such gerbes. 

\medskip

We will now show how to associate to a based map \eqref{e:based map again} such a gerbe on $\on{Hecke}^{\on{loc}}_G$.

\sssec{} 

Consider the diagram
$$
\CD
\Gamma_I @>{\hat\iota}>> \cD_I @<{\wh\jmath}<< \ocD_I\\
@V{\pi}VV \\
S.
\endCD
$$

Given a based map \eqref{e:based map again} and the data of $(I,\CP'_G,\CP''_G)$, we obtain two maps
\begin{equation} \label{e:two maps from disc}
\cD_I\rightrightarrows B^4_{\on{et}}(A(1)).
\end{equation} 

The datum of $\alpha$ defines a trivialization of the difference between these two maps to $\ocD_I$. Hence, this difference
can be viewed as a 4-cochain in 
$$\on{C}^\bullet_{\on{et}}(\Gamma_I,\hat\iota{}^!(A_{\cD_I}(1))).$$

To this data we need to associate a 2-cochain in
$$\on{C}^\bullet_{\on{et}}(S,A_S).$$

\sssec{} \label{sss:construct gerbe local}

We have a commutative diagram
$$
\CD
\Gamma_I @>{\hat\iota}>> \cD_I @<{\wh\jmath}<< \ocD_I \\
@V{=}VV @VVV @VVV \\
\Gamma_I @>{\iota}>> S\times X @<{\jmath}<< U_I
\endCD
$$

Not it follows from the Fujiwara-Gabber comparison theorem\footnote{We are grateful to Ofer Gabber for explaining this to us.},
(see \cite[Corollary 6.6.4]{Fu}), that the natural map
$$\hat\iota{}^!(A_{\cD_I}(1))\to \iota^!(A_{S\times X}(1))$$
is an isomorphism. 

\medskip

Hence, from \eqref{e:upper !}, we obtain an isomorphism
$$\hat\iota{}^!(A_{\cD_I}(1))\simeq \pi^!(A_S)[-2].$$

The rest of the construction proceeds as in Sects. \ref{sss:construct gerbe inter}-\ref{sss:construct gerbe end}. 
The multiplicative structure on the resulting gerbe on $\on{Hecke}^{\on{loc}}_G$ follows from the construction.

\sssec{An alternative construction}

We will now sketch a construction of the map from
$$\Maps_{\on{Ptd}(\on{PreStk})}(B_{\on{et}}(G) \times X,B^4_{\on{et}}(A(1)))$$ to the space
of multiplicative factorization gerbes on $\on{Hecke}^{\on{loc}}_G$
that avoids the use of the Gabber-Fujiwara theorem (see \secref{sss:construct gerbe local}). Its main geometric ingredient is of independent interest.

\medskip

Let $\Bun_G^{\on{loc,Zar}}$ be the prestack that attaches to a test affine scheme $S$ the groupoid of pairs $(I,\CP_G)$,
where $I\subset \Hom(S,X)$ is an $S$-point of $\Ran$, and $\CP_G$ is a $G$-bundle defined on \emph{some} Zariski-open
subset 
$$\cD_I^{\on{Zar}}\subset S\times X$$
that contains $\Gamma_I$, and such that $\CP_G$ can be trivialiazed \'etale-locally on $S$. Morphisms in this groupoid are
isomorphisms of $G$-bundles defined over the intersection of their domains of definition. 

\medskip

Let $\on{Hecke}^{\on{loc,Zar}}_G$ be the following version of $\on{Hecke}^{\on{loc}}_G$. Its $S$-points are quadruples $(I,\CP'_G,\CP''_G,\alpha)$,
where $I$ is as above, $\CP'_G$ and $\CP''_G$ are $G$-bundles defined on some $\cD_I^{\on{Zar}}$ as above, and $\alpha$ is an isomorphism 
between $\CP'_G$ and $\CP''_G$ defined over 
$$\ocD{}_I^{\on{Zar}}:=\cD_I^{\on{Zar}}-\Gamma_I.$$

\medskip

The prestack $\on{Hecke}^{\on{loc,Zar}}_G$ has a natural structure of groupoid acting on $\Bun_G^{\on{loc,Zar}}$. So we can talk about
mutiplicative factorization gerbes on $\on{Hecke}^{\on{loc,Zar}}_G$. The construction in Sects. \ref{sss:construct gerbe inter}-\ref{sss:construct gerbe end}
defines a map
$$\Maps_{\on{Ptd}(\on{PreStk})}(B_{\on{et}}(G) \times X,B^4_{\on{et}}(A(1)))\to \on{FactGe}^{\on{mult}}_A(\on{Hecke}^{\on{loc,Zar}}_G).$$

Now, we claim:  

\begin{prop}
The forgetful map $\on{Hecke}^{\on{loc,Zar}}_G\to \on{Hecke}^{\on{loc}}_G$ defines equivalences
$$\on{FactGe}_A(\on{Hecke}^{\on{loc}}_G)\to \on{FactGe}_A(\on{Hecke}^{\on{loc,Zar}}_G)$$
and
$$\on{FactGe}^{\on{mult}}_A(\on{Hecke}^{\on{loc}}_G)\to \on{FactGe}^{\on{mult}}_A(\on{Hecke}^{\on{loc,Zar}}_G)$$
\end{prop}

\begin{proof}

Note that the map $\on{Hecke}^{\on{loc,Zar}}_G\to \on{Hecke}^{\on{loc}}_G$ fits into a Cartesian square
$$
\CD
\on{Hecke}^{\on{loc,Zar}}_G @>>>  \on{Hecke}^{\on{loc}}_G \\
@VVV @VVV \\
\Bun_G^{\on{loc,Zar}} @>>> \fL^+(G)\backslash \Ran,
\endCD
$$
where the vertical maps can be taken to be \emph{either} of the projections. The same is true for the other terms of the \v{C}ech nerve
of $\on{Hecke}^{\on{loc}}_G$ (resp., $\on{Hecke}^{\on{loc,Zar}}_G$) over $\fL^+(G)\backslash \Ran$ (resp., $\Bun_G^{\on{loc,Zar}}$). 

\medskip

Hence, to prove the proposition, it suffices to show that the map
$$\Bun_G^{\on{loc,Zar}} \to \fL^+(G)\backslash \Ran$$
is \emph{universally homologically contractible} in the sense of \cite[Sect. 2.5]{Ga7}. 

\medskip

Fix a map $S\to \Ran$ corresponding to a finite non-empty set $I\subset \Hom(S,X)$. It suffices to show that the map
$$S\underset{\Ran}\times \Bun_G^{\on{loc,Zar}} \to S\underset{\Ran}\times (\fL^+(G)\backslash \Ran)$$
is universally homologically contractible. Let us view $\Gamma_I$ as subscheme in $S\times X$, which is finite and flat over $S$, 
and let $G_I$ be the group-scheme over $S$ that classifies maps from $\Gamma_I$ to $G$. Evaluation defines a map of group-schemes over $S$
$$S\underset{\Ran}\times \fL^+(G)\to G_I$$
Denote by $\CK_I$ its kernel.

\medskip

It suffices to show that the map
$$(S\underset{\Ran}\times \Bun_G^{\on{loc,Zar}})\underset{G_I\backslash S}\times S \to
(S\underset{\Ran}\times (\fL^+(G)\backslash \Ran)) \underset{G_I\backslash S}\times S\simeq \CK_I\backslash S$$
is universally homologically contractible. 

\medskip

Since $\CK_I$ is pro-unipotent, it suffices to show that the projection
$$(S\underset{\Ran}\times \Bun_G^{\on{loc,Zar}})\underset{G_I\backslash S}\times S \to S$$
is universally homologically contractible. 

\medskip

Note that $(S\underset{\Ran}\times \Bun_G^{\on{loc,Zar}})\underset{G_I\backslash S}\times S$ is the \'etale sheafification 
of the prestack $B(\CK_I^{\on{Zar}})$, where $\CK_I^{\on{Zar}}$ is the group-prestack over $S$ that attaches to $S'\to S$
the group of maps
$$g:\cD_I^{\on{Zar}}\to G,\quad S'\underset{S}\times \Gamma_I \subset \cD_I^{\on{Zar}}\subset S'\times X,$$
such that 
$$g|_{S'\underset{S}\times \Gamma_I}=1.$$

Hence, it suffices to show that 
$$\CK_I^{\on{Zar}}\to S$$ is universally homologically contractible. 
However, this is the assertion of (the easy case of) \cite[Theorem 3.3.2]{GL2}, see Proposition 3.5.3 in {\it loc. cit}. 

\end{proof} 

\ssec{What does \propref{p:gerbe on loop group} say in concrete terms?} \label{ss:equiv concrete}

In this subsection we will give a ``hands-on" explanation of the concrete meaning of \propref{p:gerbe on loop group}.

\sssec{}

Denote
$$\Gr_{G,X}:=X\underset{\Ran}\times \Gr_G,\,\, \Gr_{G,X^2}:=X^2\underset{\Ran}\times \Gr_G,$$
$$\on{Hecke}^{\on{loc}}_{G,X}:=X\underset{\Ran}\times  \on{Hecke}^{\on{loc}}_G,\,\, \on{Hecke}^{\on{loc}}_{G,X^2}:=X^2\underset{\Ran}\times  \on{Hecke}^{\on{loc}}_G,$$
$$\fL^+(G)_X:=X\underset{\Ran}\times \fL^+(G),\,\, \fL^+(G)_{X^2}:=X^2\underset{\Ran}\times \fL^+(G),$$
etc. 

\medskip

Consider the fiber product
\begin{equation} \label{e:pre-convolution}
\Gr_{G,X^2}\underset{\fL^+(G)_{X^2} \backslash X^2}\times \on{Hecke}^{\on{loc}}_{G,X^2}.
\end{equation}

It classifies the data of $(x_1,x_2,\CP^1_G,\CP^2_G,\alpha_1,\alpha_2)$, where

\smallskip

\noindent--$x_1,x_2$ are $S$-points of X; 

\smallskip

\noindent--$\CP^1_G$ and $\CP^2_G$ are $G$-bundles on $\cD_{x_1\cup x_2}$;

\smallskip

\noindent--$\alpha_1$ is an isomorphism between $\CP^0_G$ (the trivial bundle) and $\CP^1_G$ over $\ocD_{x_1\cup x_2}$;

\smallskip

\noindent--$\alpha_2$ is an isomorphism between $\CP^1_G$ (the trivial bundle) and $\CP^2_G$ over $\ocD_{x_1\cup x_2}$. 

\medskip

Let 
$$\wt\Gr_{G,X^2}\subset \Gr_{G,X^2}\underset{\fL^+(G)_{X^2} \backslash X^2}\times \on{Hecke}^{\on{loc}}_{G,X^2}$$
be the ``convolution diagram". I.e., this is a closed subfunctor of \eqref{e:pre-convolution} defined by the conditions that:

\medskip

\noindent--$\alpha_1$ extends to an isomorphism over 
$$(\cD_{x_1\cup x_2}-\Gamma_{x_1})\supset \ocD_{x_1\cup x_2}=(\cD_{x_1\cup x_2}-\Gamma_{x_1\cup x_2}).$$

\medskip

\noindent--$\alpha_2$ extends to an isomorphism over 
$$(\cD_{x_1\cup x_2}-\Gamma_{x_2})\supset \ocD_{x_1\cup x_2}=(\cD_{x_1\cup x_2}-\Gamma_{x_1\cup x_2}).$$

\medskip

The groupoid structure on $\on{Hecke}^{\on{loc}}_{G,X^2}$ gives rise a map
\begin{equation} \label{e:conv map}
\on{conv}: \wt\Gr_{G,X^2}\to \Gr_{G,X^2},
\end{equation} 
defined by sending
$$(x_1,x_2,\CP^1_G,\CP^2_G,\alpha_1,\alpha_2) \mapsto (x_1,x_2,\CP^2_G,\alpha_2\circ \alpha_1).$$

\sssec{}

We also have a projection
$$\on{pr}: \wt\Gr_{G,X^2} \to \Gr_{G,X}\times X,$$
defined by sending 
$$(x_1,x_2,\CP^1_G,\CP^2_G,\alpha_1,\alpha_2) \mapsto (x_1,\CP^1_G,\alpha_1,x_2).$$

This projection allows to view $\wt\Gr_{G,X^2}$ as a \emph{twisted product} 
\begin{equation} \label{e:tw prod gr}
(\Gr_{G,X}\times X) \underset{X^2}{\widetilde\times} (X\times \Gr_{G,X}),
\end{equation} 
by which we mean that it is associated to a $\fL^+(G)_{X^2}$-torsor over $\Gr_{G,X}\times X$ 
(thought of as the first factor in \eqref{e:tw prod gr}) and the ind-scheme $X\times \Gr_{G,X}$
(thought of as the second factor in \eqref{e:tw prod gr}), equipped with an action of $\fL^+(G)_{X^2}$. 

\medskip

The above torsor and action are obtained as follows. The $\fL^+(G)_{X^2}$-torsor over $\Gr_{G,X}\times X$ corresponds to the map
$$\Gr_{G,X}\times X \hookrightarrow \Gr_{G,X^2}\to (\fL^+(G)_{X^2} \backslash X^2),$$
and the $\fL^+(G)_{X^2}$-action on $X\times \Gr_{G,X}$ is induced by the $\fL^+(G)_X$-action on $\Gr_{G,X}$ via the projection 
$$\fL^+(G)_{X_2} \overset{p_2}\to X\times \fL^+(G)_X,$$
obtained by restriction along $\cD_{x_1\cup x_2} \hookleftarrow \cD_{x_2}$. 

\medskip

Note, in particular, that the induced $(X\times \fL^+(G)_X)$-torsor on $\Gr_{G,X}\times X$ is canonocally trivialized over
$$X^2-\Delta\subset X^2.$$

In particular, we have a canonical identification
\begin{equation} \label{e:conv away}
\wt\Gr_{G,X^2}|_{X^2-\Delta} \simeq (\Gr_{G,X}\times \Gr_{G,X})|_{X^2-\Delta}. 
\end{equation} 

Under this identification, the projection $\on{conv}$ of \eqref{e:conv map} identifies with the factorization isomorphism 
$$(\Gr_{G,X}\times \Gr_{G,X})|_{X^2-\Delta}\simeq \Gr_{G,X^2}|_{X^2-\Delta}$$
for $\Gr$. 

\sssec{}

Now, (part of) the assertion of \propref{p:gerbe on loop group} can be formulated as follows. Let $\CG$ be an object of 
$\on{FactGe}_A(\Gr_G)$. Let $\CG_X$ (resp., $\CG_{X^2}$) be its restriction to $\Gr_{G,X}$ (resp., $\Gr_{G,X^2}$). 

\medskip

Then the claim is that $\CG_X$ \emph{admits a unique structure of equivariance} with resect to $\fL^+(G)_X$ such that
the following holds:

\medskip

On the one hand, having a structure of $\fL^+(G)_X$-equivariance on $\CG_X$, we can form the twisted product
$$\CG_X\wt\boxtimes \CG_X\in \on{Ge}_A(\wt\Gr_{G,X^2}),$$
see formula \eqref{e:tw prod gr}. 

\medskip

On the other hand, we can consider the object
$$\on{conv}^*(\CG_{X^2})\in \on{Ge}_A(\wt\Gr_{G,X^2}),$$
see formula \eqref{e:conv map}. 

\medskip

Now, the factorization structure on $\CG$ and the identification \eqref{e:conv away} imply that we have a canonical isomorphism
\begin{equation} \label{e:conv away gerbes}
\CG_X\wt\boxtimes \CG_X|_{X^2-\Delta}\simeq \on{conv}^*(\CG_{X^2})|_{X^2-\Delta}.
\end{equation} 

The requirement on the $\fL^+(G)_X$-equivariance on $\CG_X$ is that the identification \eqref{e:conv away gerbes} extends to
an identification
\begin{equation} \label{e:conv away gerbes extends}
\CG_X\wt\boxtimes \CG_X\simeq \on{conv}^*(\CG_{X^2})
\end{equation} 
over all of $X^2$. 

\ssec{The case of tori}  \label{ss:another interp}

We will now explain how the contents of \secref{ss:equiv concrete} play out in the case when $G=T$ is a torus. 

\sssec{}

Up to nilpotents, we can identify 
$$\Gr_{T,X}\simeq \Lambda \times X,$$
and the action of $\fL^+(T)_X$ on it is trivial. 

\medskip

Hence, the a datum of $\fL^+(T)_X$-equivariance on $\CG_X$ amounts to a map from $\Lambda$ to the set of
multiplicative $A$-torsors on $\fL^+(T)_X$. 

\medskip

Since elements of $A$ have orders prime to $\on{char}(k)$, such torsors are pulled back via the evaluation map
\begin{equation} \label{e:ev T}
\fL^+(T)_X \to X\times T
\end{equation} 
from multiplicative $A$-torsors on $T$. The latter are, by Kummer theory, in bijection with maps $\Lambda\to A(-1)$.

\medskip

We will show that the resulting map
$$\Lambda \to \Hom_{\on{Ab}}(\Lambda,A(-1)),$$
which can be viewed as a map 
$$\wt{b}:\Lambda \times \Lambda \to A(-1),$$
is given by $b(-,-)$, the bilinear form corresponding to $\CG$. 

\sssec{}

Fix a connected component of $\Gr_{G,X}$ corresponding to $\lambda\in \Lambda$, and the corresponding connected component 
in $\Gr_{G,X}\times X$, thought if as the base of the fibration in \eqref{e:tw prod gr}. Consider the induced $X\times \fL^+(T)_X$-torsor,
and further the $X\times T$-torsor, induced by \eqref{e:ev T}. 

\medskip

This torsor is trivialized over $X^2-\Delta$, and over all of $X^2$ it identifies with $\lambda\cdot \CO(\Delta)$. 

\medskip

Consider a connected component
$$\wt\Gr^{\lambda,\mu}_{T,X^2}:=(\Gr^\lambda_{T,X}\times X) \underset{X^2}{\widetilde\times} (X\times \Gr^\mu_{T,X})$$
of $\wt\Gr_{T,X^2}$, corresponding $\lambda,\mu\in \Lambda$. 

\medskip

Since the action of $X\times \fL^+(T)_X$ on $X\times \Gr^\mu_{T,X}$ is trivial, we obtain an identification
\begin{equation} \label{e:conv T}
\wt\Gr^{\lambda,\mu}_{T,X^2}\simeq \Gr^\lambda_{T,X}\times \Gr^\mu_{T,X}\simeq X\times X.
\end{equation} 

We obtain that with respect to this identification, the gerbe 
$$\CG^\lambda\wt\boxtimes \CG^\mu:=\CG_X\wt\boxtimes \CG_X|_{\wt\Gr^{\lambda,\mu}_{T,X^2}}$$
identifies with
$$\CO(\Delta)^{\wt{b}(\lambda,\mu)}.$$

\sssec{}

Consider the restriction of the map $\on{conv}$ to the connected component $\wt\Gr^{\lambda,\mu}_{T,X^2}$. This map is an isomorphism onto an 
irreducible component $\Gr^{\lambda,\mu}_{T,X^2}$ of $\Gr_{T,X^2}$. 

\medskip

Furthermore, the factorization isomorphism 
$$\Gr^{\lambda,\mu}_{T,X^2}|_{X^2-\Delta}\simeq (\Gr^\lambda_{T,X}\times \Gr^\mu_{T,X})|_{X^2-\Delta}$$
also extends to an isomorphism over all of $X^2$. 

\medskip

The isomorphism
$$\CG^\lambda\wt\boxtimes \CG^\mu\simeq \CG^{\lambda,\mu}$$
of \eqref{e:conv away gerbes extends} extends the tautological identification of the two sides with
$$\CG^\lambda\boxtimes \CG^\mu$$
over $X$. Hence, by \eqref{e:form as pole}, we obtain
$$\wt{b}(\lambda,\mu)=b(\lambda,\mu),$$
as claimed.

\section{Factorization category of representations}  \label{s:tw rep}

From now on, until the end of the paper we will assume that $k=E$ and we will work in the context of D-modules. 

\ssec{Digression: factorization categories arising from symmetric monoidal categories}  \label{ss:fact sym mon} 

In this subsection we will explain a procedure that produces a factorization sheaf of categories from 
a sheaf symmetric monoidal categories on $X$. The source of the metaplectic geometric Satake functor will be
a factorization sheaf of categories obtained in this way. 

\medskip

For a more detailed discussion see \cite[Sect. 6]{Ras2}.

\sssec{}  \label{sss:fact sym mon}

Let $\CC$ be a sheaf of symmetric monoidal categories over $X$. To it we will associate 
a sheaf of symmetric monoidal categories over $\Ran$, equipped with a factorization structure, denoted $\on{Fact}(\CC)$.

\medskip

We will construct  $\on{Fact}(\CC)$ as a family of sheaves of symmetric monoidal categories over $X^I$ for all finite non-empty
sets $I$, compatible under surjections $I_1\twoheadrightarrow I_2$. 
We will use \thmref{t:1-aff} that says that the datum of sheaf of categories over $X^I$ is equivalent to that of
a category acted on by $\Shv(X^I)$. So, we will produce system of symmetric monoidal categories $\on{Fact}(\CC)(X^I)$,
compatible under 
\begin{equation} \label{e:I1 and I2}
\on{Fact}(\CC)(X^{I_2})\simeq \Shv(X^{I_2})\underset{\Shv(X^{I_1})}\otimes \on{Fact}(\CC)(X^{I_1}).
\end{equation} 

\sssec{}

Let $\CC(X)$ denote the category of sections of $\CC$ over $X$; this is a symmetric monoidal category over $\Shv(X)$. 
For a finite set $J$ we let $\CC^{\otimes J}(X)$ the $J$-fold tensor product of $\CC(X)$ \emph{over} $\Shv(X)$.

\medskip

Note that for a surjection of finite sets $I\twoheadrightarrow J$ we have a canonical isomorphism
\begin{equation} \label{e:finite set decomp}
\CC^{\otimes I}(X)\simeq \left(\underset{j\in J}\bigotimes\, \CC^{\otimes I_j}(X)\right)\underset{\Shv(X^J)}\otimes \Shv(X),
\end{equation}  
where $I_j$ denotes the preimage of $j\in J$ under $I\to J$. 

\medskip

In addition, for $I\twoheadrightarrow J$, the symmetric monoidal structure on $\CC(X)$ gives rise to the functors
\begin{equation} \label{e:sym mon mult}
\CC^{\otimes I}(X)\to \CC^{\otimes J}(X). 
\end{equation} 

\begin{rem} \label{r:constant sym mon}

We will be particularly interested in the case when $\CC$ is constant, i.e., $$\CC(X)\simeq \CC_{\on{pt}}\otimes \Shv(X)$$
for a symmetric monoidal category $\CC_{\on{pt}}$. Note that in this case $\CC^{\otimes J}(X)$ is just $\CC_{\on{pt}}^{\otimes J}\otimes \Shv(X)$.  

\end{rem}

\sssec{}

For a given $I$,  let $\on{Tw}(I)$ be the category whose objects are pairs
\begin{equation} \label{e:Tw}
I\twoheadrightarrow J\twoheadrightarrow K
\end{equation}
(here $J$ and $K$ are sets (automatically, finite and non-empty)),
and where morphisms from $(J,K)$ to $(J',K')$ are commutative diagrams
\begin{equation} \label{e:map Tw}
\CD
I  @>>>  J @>>> K \\
@V{\on{id}}VV   @VVV   @AAA  \\   
I  @>>>  J' @>>> K'.
\endCD
\end{equation} 
(Note that the arrows between the $K$'s go in the opposite direction.)

\sssec{}

Consider the functor
\begin{equation} \label{e:Tw functor}
\on{Tw}(I)\to \on{DGCat}
\end{equation}
that sends an object \eqref{e:Tw} to
$$\underset{k\in K}\bigotimes\, \CC^{\otimes J_k}(X),$$
where $J_k$ is the preimage under $J\to K$ of the element $k\in K$. The above tensor product is naturally a symmetric
monoidal category over $\Shv(X^K)$.

\medskip

For a morphism \eqref{e:map Tw} in $\on{Tw}(I)$, we let the corresponding functor 
$$\underset{k\in K}\bigotimes\, \CC^{\otimes J_k}(X) \to \underset{k'\in K'}\bigotimes\, \CC^{\otimes J'_{k'}}(X)$$
be given by the composition
\begin{multline*} 
\underset{k\in K}\bigotimes\, \CC^{\otimes J_k}(X) \overset{\text{\eqref{e:sym mon mult}}}\longrightarrow \underset{k\in K}\bigotimes\, \CC^{\otimes J'_k}(X) 
\overset{\text{\eqref{e:finite set decomp}}} \simeq 
\underset{k\in K}\bigotimes\,  \left(\Bigl(\underset{k'\in K'_k}\bigotimes\, \CC^{\otimes J'_{k'}}(X)\Bigr)\underset{\Shv(X^{K'_k})}\otimes \Shv(X)\right) = \\
=\left(\underset{k'\in K'}\bigotimes\, \CC^{\otimes J'_{k'}}(X)\right)\underset{\Shv(X^{K'})}\otimes \Shv(X^K) \to 
\underset{k'\in K'}\bigotimes\, \CC^{\otimes J'_{k'}}(X),
\end{multline*}
where the the last arrow is given by the direct image functor along $X^K\to X^{K'}$. 

\sssec{}

We let $\on{Fact}(\CC)(X^I)$ on be the object of $\on{DGCat}$ equal to the colimit of the functor
\eqref{e:Tw functor} over $\on{Tw}(I)$. 

\medskip

The compatibilities \eqref{e:I1 and I2}, as well as the factorization structure on $\on{Fact}(\CC)$ 
follow from the construction. 

\sssec{}

Let $\on{Fact}(\CC)(\Ran)$ denote the category of global sections of $\on{Fact}(\CC)$ over $\Ran$. 

\medskip

As in \cite[Sect. 4.2]{Ga5}, the (symmetric) monoidal structure on $\on{Fact}(\CC)$ as a sheaf of categories
over $\Ran$ and
the operation of union of finite sets makes $\on{Fact}(\CC)(\Ran)$ into
a \emph{non-unital} (symmetric) monoidal category.

\ssec{Twisting procedures on the Ran space}  \label{ss:tw on Ran}

In this subsection we will start with a symmetric monoidal category $\CC$ and some twisting data,
and associate to it a sheaf of categories on the Ran space. 

\sssec{}

First, to $\CC$ we associate the constant sheaf of symmetric monoidal categories over $X$,
which, by a slight abuse of notation we denote by the same symbol $\CC$; we have  
$\CC(X)=\CC\otimes \Shv(X)$, see Remark \ref{r:constant sym mon}. 

\medskip

Consider the corresponding factorization sheaf $\on{Fact}(\CC)$ of
symmetric monoidal categories over $\Ran$. 

\sssec{}

Let now $A$ be a torsion abelian group that acts by automorphisms of the identity functor on $\CC$ (viewed as  
a symmetric monoidal category), and let $\CG_A$ be an $A$-gerbe on $X$.

\medskip

Using \secref{sss:twist sheaves of categ}, we can twist $\CC$ by $\CG_A$ and obtain a new
sheaf of symmetric monoidal categories over $X$, denoted $\CC_{\CG_A}$. 

\medskip

In particular,
we have the symmetric monoidal category $\CC_{\CG_A}(X)$ over $\Shv(X)$.

\sssec{}  \label{sss:tw sym mon gerbe} 

Applying to $\CC_{\CG_A}$ the construction from \secref{ss:fact sym mon}, we obtain a new sheaf
of symmetric monoidal categories over $\Ran$, denoted $\on{Fact}(\CC)_{\CG_A}$. 

\medskip

In particular, we obtain the non-unital symmetric monoidal category $\on{Fact}(\CC)_{\CG_A}(\Ran)$.

\medskip

Note that the value of $\on{Fact}(\CC)_{\CG_A}$ on $X$ under the canonical map $X\to \Ran$ is the
symmetric monoidal category $\CC_{\CG_A}(X)$

\sssec{} \label{sss:tw sym mon sign}

Let now $\epsilon$ be a 2-torsion element of $A$. Then we can further twist 
$\on{Fact}(\CC)_{\CG_A}$ to obtain a factorization sheaf of symmetric
monoidal DG categories, denoted $\on{Fact}(\CC)_{\CG_A}^\epsilon$.

\medskip

Namely, the element $\epsilon$ can be used to modify the braiding on $\CC$ and thereby
obtain a \emph{new} symmetric monoidal category, denoted $\CC^\epsilon$. We let
$$\on{Fact}(\CC)_{\CG_A}^\epsilon:=\on{Fact}(\CC^\epsilon)_{\CG_A}.$$

A key feature of the latter twist is that we have a canonical isomorphism
\begin{equation} \label{e:epsilon twist trivial}
\on{Fact}(\CC)_{\CG_A}\simeq \on{Fact}(\CC)_{\CG_A}^\epsilon,
\end{equation}
as sheaves of \emph{monoidal} categories over $\Ran$. But this identification is \emph{not} compatible
with either the symmetric monoidal nor factorization structure. 

\begin{rem}
At the level of underlying triangulated categories, the modification
$$\on{Fact}(\CC)_{\CG_A}\rightsquigarrow \on{Fact}(\CC)_{\CG_A}^\epsilon$$
can be described as follows\footnote{However, it may not be so straightforward to perform
this construction at the level of $\infty$-categories as it involves ``explicit formulas".}:

\medskip

We let $\on{Fact}(\CC)_{\CG_A}^\epsilon$ be the same as $\on{Fact}(\CC)_{\CG_A}$ as a plain sheaf of 
monoidal categories. We define the factorization structure on $\on{Fact}(\CC)_{\CG_A}^\epsilon$ as follows:

\medskip

The action of $\epsilon$ on $\CC$ defines a direct sum decomposition 
$$\on{Fact}(\CC)_{\CG_A}(S)\simeq \on{Fact}(\CC)_{\CG_A}(S)^{1}\oplus \on{Fact}(\CC)_{\CG_A}(S)^{-1}$$
for any $S\to \Ran$.

\medskip

Hence, for $S\to \Ran^J$ we have a direct sum decomposition
\begin{equation} \label{e:sign split}
(\on{Fact}(\CC)_{\CG_A})^{\otimes J}(S)\simeq \underset{\gamma^J:J\to \pm 1}\oplus\, (\on{Fact}(\CC)_{\CG_A})^{\otimes J}(S)^{\gamma^J}.
\end{equation} 

\medskip

For a given $\gamma^J$, let $J_{-1}\subset J$ be the preimage of the element $-1\in \pm 1$. 

\medskip

We define the factorization functor
for $\on{Fact}(\CC)^\epsilon_{\CG_A}(S)$ and $S\to \Ran^J_{\on{disj}}$ to be equal to the one for $\on{Fact}(\CC)_{\CG_A}(S)$ on each factor
of \eqref{e:sign split}, for every choice of an ordering on $J_{-1}$. A change of ordering will result in 
multiplication by the sign character of the group of permutations of $J_{-1}$. 

\end{rem} 

\begin{rem}
A general framework that performs both twistings
$$\on{Fact}(\CC)\rightsquigarrow \on{Fact}(\CC)_{\CG_A}^\epsilon$$
in one shot is explained in \secref{s:twist by sign}.  

\medskip

The construction in {\it loc.cit.} also makes the identification \eqref{e:epsilon twist trivial}
as plain sheaves of monoidal categories over $\Ran$, manifest. In particular, we have an identification
\begin{equation} \label{e:epsilon twist Ran}
\on{Fact}(\CC)_{\CG_A}(\Ran)\simeq \on{Fact}(\CC)_{\CG_A}^\epsilon(\Ran),
\end{equation}
as monoidal (but \emph{not} symmetric monoidal) categories. 
\end{rem}

\ssec{Twisting the category of representations}  \label{ss:tw rep}

In this subsection we will introduce a factorization sheaf of symmetric monoidal categories on the Ran space,
which will appear as the source of the metaplectic geometric Satake functor. 

\sssec{}

Let $H$ be an algebraic group. We apply the discussion in \secref{ss:tw on Ran} to the pair 
$$\CC=\Rep(H), \quad A=Z_{H}(E)^{\on{tors}}.$$

\medskip

Thus, let $\CG_Z$ be a gerbe (of finite order) on $X$ with respect to $Z_H$, and let 
$\epsilon$ be an element of order $2$ in $Z_H$. 

\sssec{}

Thus, we obtain the symmetric monoidal category $\Rep(H)_{\CG_Z}(X)$, and sheaves of symmetric monoidal categories
over $\Ran$:
$$\on{Fact}(\Rep(H))_{\CG_Z} \text{ and } \on{Fact}(\Rep(H))_{\CG_Z}^\epsilon,$$
and a \emph{monoidal} equivalence
\begin{equation} \label{e:remove epsilon}
\on{Fact}(\Rep(H))_{\CG_Z}(\Ran)\simeq \on{Fact}(\Rep(H))_{\CG_Z}^\epsilon(\Ran).
\end{equation} 

The case of interest for us is when the triple $(H,\CG_Z,\epsilon)$ is the metaplectic datum
attached to a geometric metaplectic datum of a reductive group $G$. 

\sssec{Example of tori}

Consider the particular case when $G=T$ is a torus, and we start with a factorization gerbe
$\CG^T$ on $\Gr_T$ that is multiplicative\footnote{Recall that ``multiplicative"=``commutative", see Remark \ref{r:com mult arb}.}.
In this case, 
$$\Shv_{\CG^T}(\Gr_T)_{/\Ran}$$
is naturally a sheaf of symmetric monoidal DG categories on $\Ran$, equipped with a factorization structure.

\medskip

Note also that by \propref{p:classify mult T}(a), we have $T^\sharp=T$, and so $H\simeq \cT$.
Let $(\CG_Z,\epsilon)$ be as in \secref{sss:dual triple}.

\medskip

One shows explicitly 
(see, e.g., \cite[Proposition IV.5.2]{Re}) that there is a canonical isomorphism
\begin{equation} \label{e:simple Sat tori}
\on{Fact}(\Rep(\cT))^\epsilon_{\CG_Z}\simeq \Shv_{\CG^T}(\Gr_T)_{/\Ran}
\end{equation}
as sheaves of factorization symmetric monoidal categories. 

\ssec{Twisted local systems}   \label{ss:tw loc syst}

Let $(H,\CG_Z)$ be as in \secref{ss:tw rep}. In this subsection we will 
introduce the notion of \emph{twisted local system} for $(H,\CG_Z)$.

\sssec{}  \label{sss:tw loc system naive}

Note that the category $\Rep(H)_{\CG_Z}(X)$ is naturally equipped with a t-structure. Namely, it is one for which the functor 
$$\Rep(H)_{\CG_Z}(X') \simeq \Rep(H)_{\CG_Z}(X)\simeq \Rep(H)(X') \overset{\on{forget}}\longrightarrow \Shv(X')$$
is t-exact for every \'etale $X'\to X$ over which $\CG_Z$ admits a trivialization. 

\medskip

By definition, a $\CG_Z$-twisted local system on $X$ with respect to $H$ is a t-exact $\Shv(X)$-linear symmetric monoidal functor
$$\Rep(H)_{\CG_Z}(X)\to \Shv(X).$$

\medskip

In \secref{ss:global Hecke} we will formulate a precise relationship between twisted local systems in the above sense
and objects appearing in the global metaplectic geometric theory. 

\begin{rem}
Presumably, twisted local systems as defined above are the same as Galois representations into the metaplectic L-group,
as defined in \cite{We}.
\end{rem}

\sssec{}  \label{sss:ev tw loc syst}

Let $\sigma$ be a twisted local system on $X$ as defined as above. The functoriality 
of the construction in \secref{ss:fact sym mon} defines a symmetric monoidal functor
$$\on{Fact}(\Rep(H))_{\CG_Z}(\Ran) \to \Shv(\Ran).$$

In particular, we obtain a \emph{monoidal} functor 
$$\on{Fact}(\Rep(H))^\epsilon_{\CG_Z}(\Ran) \to \Shv(\Ran).$$

\sssec{}

Assume now that $X$ is complete. Composing with the functor of direct image 
$$\Shv(\Ran)\to \Vect,$$
we thus obtain a functor
\begin{equation} \label{e:ev loc sys}
\on{Ev}_\sigma:\on{Fact}(\Rep(H))^\epsilon_{\CG_Z}(\Ran)\to \Vect.
\end{equation}

We will use the functor \eqref{e:ev loc sys} for the definition of the notion of \emph{twisted Hecke eigensheaf} 
with respect to $\sigma$. 

\sssec{}

Assume that $X$ is complete. We will now 
construct the \emph{derived} stack $\on{LocSys}_{H}^{\CG_Z}$ of $\CG_Z$-twisted local systems
on $X$. Its $k$-points will be the twisted local systems as defined in \secref{sss:tw loc system naive}.

\medskip

We follow the strategy of \cite[Sect. 10.2]{AG}. For a derived affine scheme $S$, we set
$$\Maps(S,\on{LocSys}_{H}^{\CG_Z})$$
to be the space of \emph{right t-exact} $\Shv(X)$-linear symmetric monoidal functors
$$\Rep(H)_{\CG_Z}(X)\to \QCoh(S)\otimes\Shv(X).$$ 

\medskip

One shows that $\on{LocSys}_{H}^{\CG_Z}$ defined in this way is representable 
by a quasi-smooth derived algebraic stack (see \cite[Sect. 8.1]{AG} for what this means). 

\sssec{}

As in \cite[Sect. 4.3]{Ga5}, we have a canonically defined (symmetric) monoidal functor
\begin{equation} \label{e:Loc}
\on{Loc}:\on{Fact}(\Rep(H))_{\CG_Z}(\Ran)\to \QCoh\left(\on{LocSys}_{H}^{\CG_Z}\right).
\end{equation}

\medskip

The following is proved in the same way as \cite[Proposition 4.3.4]{Ga5}\footnote{The proof is reproduced in \cite[Sect. 1.3]{Ro}.}:

\begin{prop} \label{p:Loc}
The functor \eqref{e:Loc} is a \emph{localization}, i.e., it admits a fully faithful right adjoint.
\end{prop}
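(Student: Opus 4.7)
The plan is to follow the strategy of the proof of [Ga5, Proposition 4.3.4], adapted to the $\CG_Z$-twisted setting as spelled out in [Ro, Sect. 1.3]. The argument is Tannakian, combined with a module-linearity reduction.

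First, I would unpack the construction of $\on{Loc}$. By definition of $\on{LocSys}_{H}^{\CG_Z}$, the identity morphism of this stack classifies a universal right t-exact symmetric monoidal functor
$$\Rep(H)_{\CG_Z}(X) \to \QCoh(\on{LocSys}_{H}^{\CG_Z}) \otimes \Shv(X).$$
Applying the construction of \secref{ss:fact sym mon} relative to the coefficient category $\QCoh(\on{LocSys}_{H}^{\CG_Z})$ produces a morphism of sheaves of symmetric monoidal categories over $\Ran$, and $\on{Loc}$ is obtained by passing to global sections. By construction, $\on{Loc}$ is a morphism of $\on{Fact}(\Rep(H))_{\CG_Z}(\Ran)$-module categories, where the target is a module via $\on{Loc}$ itself, and the unit is sent to $\CO_{\on{LocSys}_{H}^{\CG_Z}}$.

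Second, the right adjoint $\on{Loc}^R$ exists by the Adjoint Functor Theorem and automatically inherits a lax $\on{Fact}(\Rep(H))_{\CG_Z}(\Ran)$-linear structure. The essential step is to show that this lax structure is strict, i.e., the projection formula
$$\on{Loc}^R(\CF) \otimes V \overset{\sim}\to \on{Loc}^R(\CF \otimes \on{Loc}(V))$$
is an isomorphism. This follows from the fact that $\QCoh(\on{LocSys}_{H}^{\CG_Z})$ is rigid symmetric monoidal, since $\on{LocSys}_{H}^{\CG_Z}$ is quasi-smooth. Granted strict linearity, the counit $\on{Loc}\circ \on{Loc}^R\to \on{id}$ is itself strictly linear over $\on{Fact}(\Rep(H))_{\CG_Z}(\Ran)$; therefore, to prove it is an isomorphism, it suffices to check on the unit object $\CO_{\on{LocSys}_{H}^{\CG_Z}}$, where it is tautologically the identity.

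The main obstacle will be the verification that the interface between $\on{Fact}(\Rep(H))_{\CG_Z}(\Ran)$ and $\QCoh(\on{LocSys}_{H}^{\CG_Z})$ is tight enough for the projection formula to hold strictly at the level of $\infty$-categories, rather than merely of homotopy categories. As in [Ga5], this reduces to the assertion that $\on{Fact}(\Rep(H))_{\CG_Z}(X^I)$ is generated under colimits by the image of $\Rep(H)_{\CG_Z}(X)^{\otimes I}$, which is essentially definitional from \secref{ss:fact sym mon}. The presence of $\CG_Z$ is inert throughout: it merely relabels the objects of $\Rep(H)_{\CG_Z}(X)$ via the \'etale-local trivialization and does not affect the homological or monoidal manipulations used in [Ga5].
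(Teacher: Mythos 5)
Your reduction has the right shape, but two of its load-bearing steps are asserted rather than proved, and one of them is simply false.

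\medskip

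\noindent\emph{The projection formula.} You claim strictness of the lax linearity of $\on{Loc}^R$ follows from rigidity of the \emph{target} $\QCoh\bigl(\on{LocSys}_H^{\CG_Z}\bigr)$. The standard mechanism for deducing the projection formula for a right adjoint is rigidity (or generation by dualizable objects) of the \emph{source}: for dualizable $V\in\CA$, one has
$\Hom(W,V\otimes G(\CF))\simeq\Hom(V^\vee\otimes W,G(\CF))\simeq\Hom(F(V^\vee\otimes W),\CF)\simeq\Hom(W,G(F(V)\otimes\CF))$.
But the source here is the \emph{non-unital} factorization monoidal category $\on{Fact}(\Rep(H))_{\CG_Z}(\Ran)$, which is not rigid, and its objects are not dualizable for the chiral monoidal product. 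Rigidity of the target by itself does not yield the required base-change compatibility, so this step needs a different justification (in [Ga5]/[Ro] it comes from the explicit Ran-indexed presentation of $\on{LocSys}$, not from rigidity).

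\medskip

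\noindent\emph{The counit at the unit.} Even granting the projection formula and generation of $\QCoh\bigl(\on{LocSys}_H^{\CG_Z}\bigr)$ by the unit under $\on{Fact}(\Rep(H))_{\CG_Z}(\Ran)$-linear colimits, you are left to show that
$$\on{Loc}\bigl(\on{Loc}^R(\CO_{\on{LocSys}})\bigr)\to\CO_{\on{LocSys}}$$
is an isomorphism. You call this ``tautologically the identity.'' It is not. By the triangle identity this map is a \emph{split epimorphism} (the splitting is $\on{Loc}$ applied to the adjunction unit $\mathbf 1\to\on{Loc}^R(\CO)$), but whether it is an isomorphism is precisely the content of the proposition: it amounts to showing that $\on{Loc}^R(\CO_{\on{LocSys}})$ is an \emph{idempotent} commutative algebra in $\on{Fact}(\Rep(H))_{\CG_Z}(\Ran)$, or equivalently that the unit of adjunction becomes invertible after applying $\on{Loc}$. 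This is where the actual geometry of $\on{LocSys}_H^{\CG_Z}$ must enter, and where [Ga5, Prop.\ 4.3.4] and [Ro, Sect.\ 1.3] do real work. Your proposal dismisses it with a word, so the proof is incomplete.
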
 

\section{Metaplectic geometric Satake}   \label{s:Satake}

We take $G$ to be a reductive group. We will continue to assume that the order of the algebraic fundamental group
of the derived group of $G$ is prime to  $\on{char}(k)$. 

\medskip

We will define the metaplectic geometric Satake
functor and formulate the ``metaplectic vanishing conjecture" about the global Hecke action. 

\ssec{The metaplectic spherical Hecke category}

In this subsection we introduce the metaplectic spherical Hecke category, which is 
the recipient of the metaplectic geometric Satake functor. 

\sssec{}

Let ${\CG^G}$ be a factorization $E^{\times,\on{tors}}$-gerbe on $\Gr_G$. We define the sheaf of categories
$(\Sph_{\CG^G})_{/\Ran}$ as follows. For an affine test scheme $S$ and an $S$-point of $\Ran$, we define the corresponding category
by
\begin{equation} \label{e:Sph}
\Sph_{\CG^G}(S):=\Shv_{\CG^G\otimes \det_\fg^{\frac{1}{2}}|_S}(S\underset{\Ran}\times \Gr_G)^{\fL^+(G)|_S}.
\end{equation} 

In the above formula, $\fL^+(G)|_S$ denotes the value on $S$ of the factorization group-scheme $\fL^+(G)$.
The superscript $\fL^+(G)|_S$ indicates the equivariant category with respect to that group-scheme. Note that the latter makes 
sense due to the structure of equivariance
on the gerbe $\CG^G\otimes \det_\fg^{\frac{1}{2}}|_S$ with respect to $\fL^+(G)|_S$, which was constructed in \secref{ss:constr equiv}.

\medskip

By \propref{p:gerbe on loop group}, we obtain that the operation of convolution product defines on $(\Sph_{\CG^G})_{/\Ran}$
a structure of factorization sheaf of \emph{monoidal} categories over $\Ran$\footnote{We wish to emphasize that the above monoidal
structure $(\Sph_{\CG^G})_{/\Ran}$ \emph{cannot} be promoted to a symmetric monoidal structure in a way compatible with factorization,
even when $G$ is a torus and $\CG^G$ is trivial.}.

\medskip

By construction, $(\Sph_{\CG^G})_{/\Ran}$ carries a natural factorization structure, see \secref{sss:fact from Z}.

\sssec{}

Let $P$ be a parabolic subgroup of $G$ with Levi quotient $M$. Let us denote by $\CG^M$
the factorization gerbe on $\Gr_M$ corresponding to $\CG^G$.

\medskip

The functor \eqref{e:corrected Jacquet} naturally upgrades to a functor
between sheaves of categories
\begin{equation} \label{e:Jacquet for Sph}
J^G_M:(\Sph_{\CG^G})_{/\Ran}\to (\Sph_{\CG^M})_{/\Ran}. 
\end{equation} 

By construction, \eqref{e:Jacquet for Sph} respects the factorization structures, i.e., it is a 
functor between factorization sheaves of categories. 

\begin{rem}
We note that the functor \eqref{e:Jacquet for Sph} is \emph{not at all} compatible with the monoidal structures!
\end{rem} 

\ssec{The metaplectic geometric Satake functor}

Let $(H,\CG_Z,\epsilon)$ be the triple of \secref{sss:dual triple} corresponding to the factorization gerbe $\CG^G$.

\medskip

Metaplectic geometric Satake is a canonically defined functor between factorization sheaves of monoidal DG categories
\begin{equation} \label{e:twisted Satake}
\on{Sat}:\on{Fact}(\Rep(H))_{\CG_Z}^\epsilon\to (\Sph_{\CG^G})_{/\Ran}. 
\end{equation}

We will now explain how to obtain this functor from \cite[Theorem IV.8.3]{Re}\footnote{For a more detailed discussion on how to carry out this
extension see \cite[Sect. 6]{Ras2}, where the classical (i.e., non-metaplectic situation) is considered, but for this step, there is no difference between
the two cases.}.

\sssec{}

By \secref{sss:Ran expl}, the datum of a functor \eqref{e:twisted Satake} amounts to a compatible collection of functors
\begin{equation} \label{e:twisted Satake I}
\on{Sat}(I):\on{Fact}(\Rep(H))_{\CG_Z}^\epsilon(X^I)\to (\Sph_{\CG^G})_{/\Ran}(X^I),
\end{equation}
where $I$ runs over the category of finite non-empty sets and surjective morphisms.

\medskip

Both sides in \eqref{e:twisted Satake I} are equipped with t-structures; moreover one shows that 
$\on{Fact}(\Rep(H))_{\CG_Z}^\epsilon(X^I)$ identifies with the \emph{derived category} of
the heart of its t-structure\footnote{Here, the derived category is understood as a DG category, see \cite[Sect. 1.3.2]{Lu2}.},
i.e., the canonical map of \cite[Theorem 1.3.3.2]{Lu2}
$$D\left(\left(\on{Fact}(\Rep(H))_{\CG_Z}^\epsilon(X^I)\right)^\heartsuit\right)\to \on{Fact}(\Rep(H))_{\CG_Z}^\epsilon(X^I)$$
is an equivalence. 

\medskip

Now, \cite[Theorem IV.8.3]{Re} constructs an \emph{equivalence} of abelian categories
\begin{equation} \label{e:twisted Satake I ab}
\left(\on{Fact}(\Rep(H))_{\CG_Z}^\epsilon(X^I)\right)^\heartsuit\to \left((\Sph_{\CG^G})_{/\Ran}(X^I)\right)^\heartsuit.
\end{equation} 

Applying \cite[Theorem 1.3.3.2]{Lu2} again, we obtain a canonically defined functor
$$D\left(\left(\on{Fact}(\Rep(H))_{\CG_Z}^\epsilon(X^I)\right)^\heartsuit\right)\to (\Sph_{\CG^G})_{/\Ran}(X^I),$$
thus giving rise to the desired functor \eqref{e:twisted Satake I}.

\medskip

The functoriality with respect to the finite sets $I$, as well as compatibility with factorization is built into the
construction. 

\sssec{Example}

Take $\CG=\det_\fg^{\frac{1}{2}}$, so that $\Sph_{\CG^G}$ corresponds to the \emph{untwisted} category of
sheaves on $\Gr_G$.

\medskip

Note that in this case, the source of geometric Satake is a \emph{twisted} category of representations of $\cG$,
see \secref{sss:det example}. 

\ssec{Example: metaplectic geometric Satake for tori}

In this subsection we let $G=T$ be a torus\footnote{We will omit the gerbe $\det_\ft^{\frac{1}{2}}$ from the notation since it is trivial
in the case of tori.}. 

\sssec{}

Let $\Lambda^\sharp\subset \Lambda$ denote the kernel of $b$. 

\medskip

Direct image along the inclusion
\begin{equation} \label{e:Grass incl}
\Gr_{T^\sharp}\to \Gr_T
\end{equation} 
is a fully faithful functor
\begin{equation} \label{e:Grass functor incl}
\Shv_{\CG^{T^\sharp}}(\Gr_{T^\sharp})_{/\Ran}\to \Shv_{\CG^T}(\Gr_T)_{/\Ran},
\end{equation}
where we denote by $\CG^{T^\sharp}$ the restriction of $\CG^T$ along \eqref{e:Grass incl}.

\medskip

In this case, it follows from \secref{ss:another interp} that the forgetful functor
$$(\Sph_{\CG^T})_{/\Ran}\to \Shv_{\CG^T}(\Gr_T)_{/\Ran}$$
factors through the essential image of \eqref{e:Grass functor incl}, thereby giving rise to a functor
\begin{equation} \label{e:Grass Sph}
(\Sph_{\CG^T})_{/\Ran}\to \Shv_{\CG^{T^\sharp}}(\Gr_{T^\sharp})_{/\Ran},
\end{equation}
compatible with the factorization structures. 

\sssec{}

Furthermore, since the action of $\fL^+(T)$ on $\Gr_T$ is trivial, the functor \eqref{e:Grass Sph}
admits a canonically defined right inverse
\begin{equation} \label{e:Grass Sph inv}
\Shv_{\CG^{T^\sharp}}(\Gr_{T^\sharp})_{/\Ran}\to (\Sph_{\CG^T})_{/\Ran},
\end{equation}
which is \emph{monoidal} and compatible with the factorization structures. 

\sssec{}

By \propref{p:classify mult T}(b), the factorization gerbe $\CG^{T^\sharp}$ carries a canonical
multiplicative structure. Recall the equivalence
\begin{equation} \label{e:simple Sat tori again}
\on{Fact}(\Rep(H))_{\CG_Z}^\epsilon\simeq \Shv_{\CG^{T^\sharp}}(\Gr_{T^\sharp})_{/\Ran}
\end{equation} 
of \eqref{e:simple Sat tori}. 

\medskip

The geometric Satake functor for $T$ is the composite of \eqref{e:simple Sat tori again} and \eqref{e:Grass Sph inv}. 

\ssec{Compatibility with Jacquet functors}

\sssec{}

A key feature of the assignment
$$\CG^G\rightsquigarrow \CG^{\pi_{1,\on{alg}}(G^\sharp)\otimes \BG_m}$$
of \secref{sss:construct gerbe pi1} is compatibility with parabolics in the following sense.

\medskip

Note that for a parabolic $P$ of $G$ with Levi quotient $M$, the corresponding reductive group
$M^\sharp$ identifies with the Levi subgroup of $G^\sharp$, attached to the same subset of the Dynkin diagram.

\medskip

We have a canonical surjection 
\begin{equation} \label{e:map of fundamental groups}
\pi_{1,\on{alg}}(M^\sharp)\to \pi_{1,\on{alg}}(G^\sharp), 
\end{equation}
and the corresponding map of factorization Grassmannians
\begin{equation} \label{e:map of fundamental groups Gr}
\Gr_{\pi_{1,\on{alg}}(M^\sharp)\otimes \BG_m}\to \Gr_{\pi_{1,\on{alg}}(G^\sharp)\otimes \BG_m}.
\end{equation}

\medskip 

Let $\CG^M$ be the factorization gerbe on $\Gr_M$ such that corresponds to 
$\CG^G$ under the map of \secref{sss:gerbes G and M}. 

\medskip

Then the multiplicative gerbe $\CG^{\pi_{1,\on{alg}}(M^\sharp)\otimes \BG_m}$ on $\Gr_{\pi_{1,\on{alg}}(M^\sharp)\otimes \BG_m}$
attached to $\CG^M$ by \secref{sss:construct gerbe pi1}
identifies with the pullback with respect to \eqref{e:map of fundamental groups Gr}
of the multiplicative gerbe $\CG^{\pi_{1,\on{alg}}(G^\sharp)\otimes \BG_m}$ on $\Gr_{\pi_{1,\on{alg}}(G^\sharp)\otimes \BG_m}$
attached to $\CG^G$. 

\sssec{}

Let $M_H$ be the standard Levi quotient in $H$ corresponding to standard Levi $M^\sharp$ of $G^\sharp$. 
Corresponding to \eqref{e:map of fundamental groups} we have the inclusion
$$Z_{H}\to Z_{M_H}.$$

By the above, this inclusion is compatible with the corresponding datum of 
$$\epsilon:\pm 1\to Z_{H}(E),\,\, \epsilon:\pm 1\to Z_{M_H}(E)$$
and the corresponding $Z_{H}$- and $Z_{M_H}$-gerbes on $X$
(we denote both by $\CG_Z$). 

\medskip

Therefore, restriction along $M_H\to H$ defines a monoidal functor
$$\on{Res}^G_M:\on{Fact}(\Rep(H))_{\CG_Z}^\epsilon\to \on{Fact}(\Rep(M_H))_{\CG_Z}^\epsilon,$$
compatible with the factorization structures.

\sssec{}

The key feature of the monoidal functor \eqref{e:twisted Satake} is that it makes the following diagram commute:
$$
\CD
\on{Fact}(\Rep(H))_{\CG_Z}^\epsilon   @>{\on{Sat}}>>    (\Sph_{\CG^G})_{/\Ran}   \\
@V{\on{Res}^G_M}VV    @VV{J^G_M}V    \\
\on{Fact}(\Rep(M_H))_{\CG_Z}^\epsilon   @>{\on{Sat}}>>    (\Sph_{\CG^M})_{/\Ran},
\endCD
$$
where $J^G_M$ is the Jacquet functor of \eqref{e:Jacquet for Sph}. 

\ssec{Global Hecke action}    \label{ss:global Hecke}

In this subsection we will assume that $X$ is complete. We will define the notion of Hecke eigensheaf on $\Bun_G$ with
respect to a given twisted local system. 

\sssec{}  \label{sss:global Hecke action}

Consider category of \emph{global sections} of $(\Sph_{\CG^G})_{/\Ran}$ over $\Ran$ (see \secref{sss:global sections}), denote it by
$$\Sph_{\CG^G}(\Ran),$$
and note that it identifies with
$$\Shv_{\CG^G\otimes \det_\fg^{\frac{1}{2}}}(\Gr_G)^{\fL^+(G)}.$$

\medskip

As in \cite[Sect. 4.4]{Ga5}, the monoidal structure on $(\Sph_{\CG^G})_{/\Ran}$, and the operation of union of finite sets,
define a (non-unital) monoidal structure on $\Sph_{\CG^G}(\Ran)$. 

\medskip

Moreover, the Hecke action defines a monoidal action of $\Sph_{\CG^G}(\Ran)$ on $\Shv_{\CG^G\otimes \det_\fg^{\frac{1}{2}}}(\Bun_G)$, 
where by a slight abuse of notation we denote by the same symbols $\CG^G$ and $\det_\fg^{\frac{1}{2}}$ the corresponding 
$E^{\times,\on{tors}}$-gerbes on $\Bun_G$, see \secref{sss:global}. 

\sssec{}  \label{sss:Rep action}

Passing to global sections over $\Ran$ in \eqref{e:twisted Satake}, we obtain a monoidal functor 
$$\on{Fact}(\Rep(H))_{\CG_Z}^\epsilon(\Ran)\to \Sph_{\CG^G}(\Ran),$$
where we remind that $\on{Fact}(\Rep(H))_{\CG_Z}^\epsilon(\Ran)$ denotes the monoidal
category of global sections of $\on{Fact}(\Rep(H))_{\CG_Z}^\epsilon$.

\medskip

Thus, we obtain a monoidal action of $\on{Fact}(\Rep(H))_{\CG_Z}^\epsilon(\Ran)$ on 
$\Shv_{\CG^G\otimes \det_\fg^{\frac{1}{2}}}(\Bun_G)$. 

\sssec{Hecke eigensheaves}  

Let $\sigma$ be a twisted local system on $X$, as defined in \secref{sss:tw loc system naive}. Recall
(see \secref{sss:ev tw loc syst}) that $\sigma$ gives rise to a (symmetric) monoidal functor
$$\on{Ev}_\sigma:\on{Fact}(\Rep(H))_{\CG_Z}(\Ran)\to \Vect,$$
and hence, via the monoidal equivalence \eqref{e:remove epsilon}
to a monoidal functor
$$\on{Fact}(\Rep(H))_{\CG_Z}^\epsilon(\Ran)\to \Vect,$$
which we will denote by the same symbol $\on{Ev}_\sigma$. 

\medskip

We define the category of twisted Hecke eigensheaves with respect to $\sigma$ to be the DG category of functors
of $\on{Fact}(\Rep(H))_{\CG_Z}^\epsilon(\Ran)$-module categories
$$\Vect\to \Shv_{\CG^G\otimes \det_\fg^{\frac{1}{2}}}(\Bun_G),$$
where $\on{Fact}(\Rep(H))_{\CG_Z}^\epsilon(\Ran)$ acts on $\Vect$ via $\on{Ev}_\sigma$ and on 
$\Shv_{\CG^G\otimes \det_\fg^{\frac{1}{2}}}(\Bun_G)$ as in \secref{sss:Rep action}.

\ssec{The metaplectic vanishing conjecture}   \label{ss:global Hecke Dmod}

We continue to assume that $X$ is complete. 
Recall the (derived) stack $\on{LocSys}_{H}^{\CG_Z}$, see \secref{ss:tw loc syst}

\medskip

We will state a conjecture to the effect that the (non-unital) monoidal category
$$\QCoh(\on{LocSys}_{H}^{\CG_Z})$$ acts on the category
$$\Shv_{\CG^G\otimes \det_\fg^{\frac{1}{2}}}(\Bun_G).$$

\sssec{}

Recall (see \propref{p:Loc}) that we have a (symmetric) monoidal functor
$$\on{Loc}:\on{Fact}(\Rep(H))_{\CG_Z}(\Ran)\to \QCoh\left(\on{LocSys}_{H}^{\CG_Z}\right)$$
of \eqref{e:Loc} with a fully faithful right adjoint. Hence, by \eqref{e:remove epsilon}, we obtain a 
monoidal functor, denoted by the same symbol 
$$\on{Loc}:\on{Fact}(\Rep(H))_{\CG_Z}^\epsilon(\Ran)\to \QCoh\left(\on{LocSys}_{H}^{\CG_Z}\right),$$
also with a fully faithful right adjoint. 

\medskip

The following is an analog of \cite[Theorem 4.5.2]{Ga5} in the metaplectic case:

\begin{conj} \label{c:vanishing}
If an object of $\on{Fact}(\Rep(H))_{\CG_Z}^\epsilon(\Ran)$ lies in the kernel of the functor $\on{Loc}$, then this
object acts by zero on $\Shv_{\CG^G\otimes \det_\fg^{\frac{1}{2}}}(\Bun_G)$. 
\end{conj}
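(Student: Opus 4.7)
The plan is to adapt the proof of \cite[Theorem 4.5.2]{Ga5} to the metaplectic setting: one proceeds by induction on the semisimple rank of $G$, using compatibility of metaplectic Satake with Jacquet/constant-term functors to reduce to the case of a torus, where the statement is checked directly. The key new input beyond \emph{op.\,cit.} is that all signs, critical twists, and $Z_H$-gerbes have to be tracked through the Eisenstein/constant term formalism, and the spectral side $\on{LocSys}_H^{\CG_Z}$ has to be related to $\on{LocSys}_{M_H}^{\CG_Z}$ via pullback along $M_H\hookrightarrow H$.

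For the inductive step, fix a standard parabolic $P\subset G$ with Levi quotient $M$, and let $M_H\subset H$ be the corresponding Levi. I would first upgrade the local statement of Section~\ref{s:Satake} (commutativity of Satake with $J^G_M$ and $\on{Res}^G_M$) to the global compatibility
$$
\on{CT}^G_M\bigl(\CR\star \CF\bigr)\simeq \on{Res}^G_M(\CR)\star \on{CT}^G_M(\CF),\qquad \CR\in\on{Fact}(\Rep(H))_{\CG_Z}^\epsilon(\Ran),\ \CF\in\Shv_{\CG^G\otimes \det_\fg^{1/2}}(\Bun_G),
$$
where $\on{CT}^G_M$ is the appropriately twisted constant term functor associated to $\det_{\fn(P)}$ and $\CG^{\epsilon_P}$ (cf.\ \secref{sss:compare det}). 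This is obtained from the Beauville--Laszlo presentation of the diagram $\Bun_G\leftarrow\Bun_P\to\Bun_M$ and the corresponding uniformisation by $\Gr_G\leftarrow \Gr_P\to \Gr_M$, using \eqref{e:gerbes on P} to identify the gerbes on both sides. On the spectral side, the inclusion $M_H\hookrightarrow H$ induces a morphism $\on{LocSys}_{M_H}^{\CG_Z}\to \on{LocSys}_H^{\CG_Z}$, and the functor $\on{Res}^G_M$ intertwines $\on{Loc}^G$ with $\on{Loc}^M$ via pullback along this morphism (this is essentially formal from the construction in \propref{p:Loc}). Granting this, if $\CR\in\ker(\on{Loc}^G)$, then $\on{Res}^G_M(\CR)\in \ker(\on{Loc}^M)$, so by the inductive hypothesis $\on{Res}^G_M(\CR)$ acts by zero on $\Bun_M$, hence $\CR$ acts by zero on $\on{CT}^G_M(\CF)$ for every $\CF$ and every proper $P$.

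Ranging over all proper parabolics reduces the problem to the ``cuspidal-like'' part of $\Shv_{\CG^G\otimes \det_\fg^{1/2}}(\Bun_G)$, where the Hecke action is expected to factor through $\QCoh(\on{LocSys}_H^{\CG_Z})$ on the nose. Here I would argue as in \cite[Sect.~4.5]{Ga5}: restricting to a quasi-compact open substack and decomposing spectrally by connected components of $\on{LocSys}_H^{\CG_Z}$, the action of the kernel of $\on{Loc}$ is killed by the pseudo-identity / miraculous duality bookkeeping. The base case $G=T$ is handled using \eqref{e:simple Sat tori}, which identifies $\on{Fact}(\Rep(\cT))^\epsilon_{\CG_Z}$ with $\Shv_{\CG^T}(\Gr_T)_{/\Ran}$; under this identification the Hecke action on $\Shv_{\CG^T\otimes \det^{1/2}_\ft}(\Bun_T)$ is the tautological convolution coming from the uniformisation $\Gr_T\to\Bun_T$, and its factorisation through $\QCoh(\on{LocSys}_\cT^{\CG_Z})$ is immediate from the description of \corref{c:classify mult arb}, so objects in $\ker(\on{Loc})$ act by zero.

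The main obstacle is the \emph{cuspidal step}: one has to promote the inductive reduction from ``acts by zero on all $\on{CT}^G_M(\CF)$'' to ``acts by zero on $\CF$'', which in the non-metaplectic case uses the full force of the Drinfeld--Gaitsgory pseudo-identity and the fact that Eisenstein series generate the non-cuspidal part of $\Shv(\Bun_G)$. In the metaplectic setting, both of these ingredients have to be re-established with the gerbe $\CG^G\otimes\det^{1/2}_\fg$ present; the gluing of $\Shv_{\CG^G\otimes \det^{1/2}_\fg}(\Bun_G)$ from its cuspidal pieces along parabolics should go through essentially unchanged, but requires some care with the sign twist $\CG^{\epsilon_P}$ and the $\det^{1/2}_{\fn(P)}$ correction. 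A secondary, more bookkeeping-level, obstacle is checking that the various factorisation and monoidal compatibilities of Satake survive the passage from local categories over $\Ran$ to global sections over $\Bun_G$, which is where the non-unital monoidal structure on $\on{Fact}(\Rep(H))^\epsilon_{\CG_Z}(\Ran)$ must be matched with the Hecke action.
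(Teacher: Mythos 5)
This statement is labeled \texttt{Conjecture} \ref{c:vanishing} in the paper, and the paper offers no proof of it. The authors present it explicitly as an open analog of \cite[Theorem 4.5.2]{Ga5}, and the only further content in the text is the reformulation as \conjref{c:vanishing bis} and a remark that the torus case can be established by a Fourier--Mukai argument following \cite{Lys}. So there is no ``paper's own proof'' to compare against.

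Your sketch is a sensible program, and you correctly identify that the serious obstruction is the \emph{cuspidal} step: after the parabolic-induction reduction, one must promote ``acts by zero on all $\on{CT}^G_M(\CF)$'' to ``acts by zero on $\CF$,'' which in the untwisted case already requires heavy global machinery and is itself far from formal. But a proof attempt should be flagged as such: what you have written is a strategy outline with acknowledged gaps (your own word ``obstacle''), not an argument the paper would accept as closing the conjecture. A few specific points worth tightening if you pursue this. First, you invoke the pseudo-identity/miraculous duality bookkeeping from \cite[Sect.~4.5]{Ga5}; in that reference the vanishing theorem is proved for $GL_2$ only, so one is not merely ``adapting'' a known proof but first having to establish the analogous statement for general reductive $G$ even before twisting. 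Second, for the torus base case the paper points to \cite{Lys}, whose mechanism is Fourier--Mukai on $\Bun_T$, whereas your claim that factorization through $\QCoh(\on{LocSys}_{\cT}^{\CG_Z})$ ``is immediate from the description of \corref{c:classify mult arb}'' skips the actual content: one has to match the spectral decomposition of $\Shv_{\CG^T}(\Bun_T)$ with $\on{LocSys}_{\cT}^{\CG_Z}$, and this is exactly what the Heisenberg-group/Fourier--Mukai argument discussed in \secref{sss:nilp conj} accomplishes. Third, the asserted intertwining $\on{Res}^G_M$-with-$\on{Loc}$ via pullback along $\on{LocSys}_{M_H}^{\CG_Z}\to\on{LocSys}_H^{\CG_Z}$ deserves a real argument, since $\on{Loc}$ is defined through \propref{p:Loc} and the compatibility with the $\CG^{\epsilon_P}$- and $\det_{\fn(P)}^{1/2}$-twists in the global constant term is precisely the kind of sign bookkeeping that is easy to get wrong.
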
 

This conjecture can be restated as follows:

\begin{conj} \label{c:vanishing bis}
The action of $\on{Fact}(\Rep(H))_{\CG_Z}^\epsilon(\Ran)$ on 
$\Shv_{\CG^G\otimes \det_\fg^{\frac{1}{2}}}(\Bun_G)$ (uniquely) factors through an action of
$\QCoh\left(\on{LocSys}_{H}^{\CG_Z}\right)$.
\end{conj}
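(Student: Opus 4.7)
\medskip

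\noindent\textbf{Proof proposal for Conjecture \ref{c:vanishing bis}.}

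The plan is to follow the pattern of the proof of \cite[Theorem 4.5.2]{Ga5}, adapted to the metaplectic context by carefully tracking the gerbe twists $\CG^G$, $\det_\fg^{\frac{1}{2}}$, $\CG_Z$, and the sign twist $\epsilon$. Note first that Conjectures \ref{c:vanishing} and \ref{c:vanishing bis} are indeed equivalent: since the functor $\on{Loc}$ is a localization with fully faithful right adjoint (by \propref{p:Loc}), the category $\QCoh(\on{LocSys}_H^{\CG_Z})$ is the quotient of $\on{Fact}(\Rep(H))_{\CG_Z}^\epsilon(\Ran)$ by the full subcategory $\Ker(\on{Loc})$, so an action of the former monoidal category on any DG category extends uniquely from an action of the latter in which $\Ker(\on{Loc})$ acts by zero.

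\medskip

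I would proceed by induction on the semisimple rank of $G$. The base case is $G = T$ a torus. Here $H = \check{T}^\sharp$ is also a torus, and using the identification \eqref{e:simple Sat tori} together with the monoidal factorization \eqref{e:Grass Sph inv}, the metaplectic Satake functor is obtained essentially by pullback/direct image along $\Gr_{T^\sharp}\to\Gr_T$. In this case, $\on{LocSys}_{\check{T}^\sharp}^{\CG_Z}$ is a (twisted) affine derived stack whose quasi-coherent sheaves can be described Langlands-dually as modules over the de Rham cohomology of $X$ with coefficients in $\check{\mathfrak{t}}^\sharp$, and the action on $\Shv_{\CG^T \otimes \det_\ft^{\frac{1}{2}}}(\Bun_T)$ factors through this localization by direct inspection, because Hecke modifications at disjoint points commute and the global/local cohomology compatibility at each point is controlled by the (twisted) multiplicative structure on $\CG^{T^\sharp}$.

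\medskip

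For the inductive step, for each standard parabolic $P\subset G$ with Levi $M$, the compatibility
\[
J^G_M\circ \on{Sat}_G \simeq \on{Sat}_M\circ \on{Res}^G_{M_H}
\]
of Satake with the Jacquet and restriction functors (the commutative diagram just before \secref{ss:global Hecke}) upgrades globally to a compatibility between the $\on{Fact}(\Rep(H))_{\CG_Z}^\epsilon(\Ran)$-action on $\Shv_{\CG^G\otimes \det_\fg^{\frac{1}{2}}}(\Bun_G)$ and the $\on{Fact}(\Rep(M_H))_{\CG_Z}^\epsilon(\Ran)$-action on $\Shv_{\CG^M\otimes \det_\fm^{\frac{1}{2}}}(\Bun_M)$, intertwined by Eisenstein series and constant term functors. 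The central technical lemma to prove is a \emph{generation statement}: the subcategory $\Ker(\on{Loc})\subset \on{Fact}(\Rep(H))_{\CG_Z}^\epsilon(\Ran)$ is generated, as an ideal with respect to the (non-unital) monoidal structure, by the images under parabolic restriction of $\Ker(\on{Loc}_{M_H})$ for all proper Levi subgroups $M_H\subsetneq H$, together with a ``cuspidal'' part supported over the irreducible locus of $\on{LocSys}_H^{\CG_Z}$. Combining this with the inductive hypothesis applied to every proper Levi $M$, and with the fact that Eisenstein functors from proper parabolics, together with objects coming from individual $k$-points of the irreducible locus, generate $\Shv_{\CG^G\otimes \det_\fg^{\frac{1}{2}}}(\Bun_G)$ as a module over the Hecke category (an analog of the geometric Arthur--Langlands filtration), reduces the claim to the cuspidal strata, where it amounts to the usual Hecke eigenproperty.

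\medskip

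The main obstacle I expect is the generation statement at the ``cuspidal'' locus together with its metaplectic bookkeeping. In the classical case this step relies on spectral decomposition of $\QCoh(\on{LocSys}_{\cG})$ along the stratification by semisimple type, and on the nontrivial fact that the functor $\on{Loc}$ is compatible with this stratification. In our setting, the stratification of $\on{LocSys}_H^{\CG_Z}$ needs to be set up using $\CG_Z$-twisted reductions to Levis, and one must verify that the sign twist $\epsilon$, acting on $\on{Fact}(\Rep(H))_{\CG_Z}^\epsilon(\Ran)$ through the equivalence \eqref{e:remove epsilon}, does not interfere with the identification of kernels; this is a delicate but (I expect) formal consequence of the construction of $\epsilon$ from the 2-torsion quadratic data in \secref{sss:splitting}. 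A secondary obstacle is the fact that the Kan-extended global sections $\on{Fact}(\Rep(H))_{\CG_Z}^\epsilon(\Ran)$ may be non-unital and ill-behaved under filtered colimits; the cleanest resolution is probably to pass through the unital version of the Ran space, as in \cite[Sect. 4]{Ga5}, and to verify that $\on{Loc}$ on the unital side is again a localization with fully faithful right adjoint before extracting the conclusion.
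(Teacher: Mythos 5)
The statement you are trying to prove is a \emph{conjecture}. The paper does not prove \conjref{c:vanishing bis}: it states \conjref{c:vanishing}, asserts that it ``can be restated as'' \conjref{c:vanishing bis}, and then remarks only that the case $G=T$ is known (via Fourier--Mukai, by Lysenko). So there is no proof in the paper to compare your proposal against; the only substantive claim the paper makes about this statement is the equivalence with \conjref{c:vanishing}, and your first paragraph correctly justifies that equivalence using the localization property of $\on{Loc}$ from \propref{p:Loc}. That part is fine and is essentially the reasoning the authors have in mind.

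Everything after the first paragraph is a speculative strategy, not a proof, and it has gaps that are themselves of the same order of difficulty as the conjecture. The ``generation statement'' — that $\Ker(\on{Loc})$ is generated as a monoidal ideal by kernels coming from proper Levis together with a cuspidal part, combined with a generation of $\Shv_{\CG^G\otimes \det_\fg^{\frac{1}{2}}}(\Bun_G)$ by Eisenstein objects and cuspidal eigenobjects — is not established and is roughly as hard as a geometric Arthur--Langlands spectral decomposition in the twisted setting. Worse, at the cuspidal stratum you write that the claim ``amounts to the usual Hecke eigenproperty,'' but that eigenproperty (in the strong form needed, i.e.\ factorization of the Hecke action through $\QCoh(\on{LocSys})$ on the cuspidal part) is precisely the content of the vanishing conjecture in the hardest case, so the argument is circular there. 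The torus base case is also asserted ``by direct inspection,'' whereas the paper explicitly attributes that case to a nontrivial Fourier--Mukai argument. Finally, note that the route you propose — induction on semisimple rank and Eisenstein stratification — is not the route used in the untwisted case: the proof of \cite[Theorem 4.5.2]{Ga5} goes through the Whittaker/Poincar\'e category and the contractibility of the space of rational maps, which is a genuinely different mechanism; so even if one hoped to ``adapt'' the classical proof, the proposal does not do that. In summary: the equivalence reduction is correct, but the rest is an unsubstantiated program rather than a proof, and neither the paper nor your proposal establishes the conjecture.
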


\begin{rem}
Using Fourier-Mukai transform, one can show that \conjref{c:vanishing} holds when $G=T$ is a torus, see \cite{Lys}. 
\end{rem} 

\sssec{}

Let us assume \conjref{c:vanishing bis}, so that $\Shv_{\CG^G\otimes \det_\fg^{\frac{1}{2}}}(\Bun_G)$ becomes a module category
over $\QCoh\left(\on{LocSys}_{H}^{\CG_Z}\right)$. 

\medskip

As in the classical (i.e., non-metaplectic case), one expects that $\Shv_{\CG^G\otimes \det_\fg^{\frac{1}{2}}}(\Bun_G)$ is ``almost" free
of rank one, and the ``almost" has to do with temperedness.

\medskip

More precisely, one expects that the metaplectic geometric Satake functor \eqref{e:twisted Satake} extends to a 
\emph{derived metaplectic geometric Satake equivalence}, generalizing \cite[Sects. 4.6 and 4.7]{Ga5},
which one can use in order to define the \emph{tempered part} of $\Shv_{\CG^G\otimes \det_\fg^{\frac{1}{2}}}(\Bun_G)$,
as in \cite[Sect. 12.8]{AG}. 

\medskip

Now, one expects that the tempered subcategory of $\Shv_{\CG^G\otimes \det_\fg^{\frac{1}{2}}}(\Bun_G)$ \emph{is}
free of rank one as a module over $\QCoh\left(\on{LocSys}_{H}^{\CG_Z}\right)$. 

\medskip

However, it is not clear whether this module admits a distinguished generator. 

\sssec{}  \label{sss:nilp conj}

Furthermore, one expects that the entire $\Shv_{\CG^G\otimes \det_\fg^{\frac{1}{2}}}(\Bun_G)$ is \emph{non-canonically} equivalent to the
category $\IndCoh_{\on{nilp}}\left(\on{LocSys}_{H}^{\CG_Z}\right)$, where we refer the reader to \cite[Sect. 11.1]{AG} for 
the $\IndCoh_{\on{nilp}}$ notation. 

\sssec{}

When $G=T$ is a torus, we have 
$$\IndCoh_{\on{nilp}}\left(\on{LocSys}_{H}^{\CG_Z}\right)=\QCoh\left(\on{LocSys}_{H}^{\CG_Z}\right).$$

In particular, the equivalence of \secref{sss:nilp conj} says that for each $\sigma\in \on{LocSys}_{H}^{\CG_Z}$,
the corresponding category of Hecke eigensheaves is non-canonically equivalent to $\Vect$. This equivalence
can be made explicit as follows (see \cite{Lys} for more details): 

\medskip

A point $\sigma\in \on{LocSys}_{H}^{\CG_Z}$ gives rise to a trivialization of the pullback of the gerbe  
$\CG^T$ from $\Bun_T$ to $\Bun_{T^\sharp}$. Hence, it gives rise to a central extension
$$1\to E^\times\to \on{Heis}_\sigma\to \Bun_{\on{ker}(T^\sharp\to T)}\to 1,$$
which is easily seen to be of Heinsenberg type, i.e., corresponding to a \emph{non-degenerate} symplectic form
on $\on{ker}(T^\sharp\to T)$ with values in $E^\times$. 

\medskip

The category of Hecke eigensheaves with respect to $\sigma$ is \emph{canonically} equivalent to
$$(\Shv_{\CG^T}(\Bun_T))^{\Bun_{T^\sharp}},$$
where the $\Bun_{T^\sharp}$-equivariance makes sense due to the above trivialization of $\CG|_{\Bun_{T^\sharp}}$.
This category is \emph{canonically} equivalent to the category
of representations of $\on{Heis}_\sigma$, on which $E^\times$ acts by the standard character. 

\medskip

Since $\on{Heis}_\sigma$
is of Heinsenberg type, the above category is \emph{non-canonically} equivalent to $\Vect$. 

\sssec{}

At the moment, we \emph{do not} have a conjecture as to how to explicitly describe the category of Hecke eigensheaves in
the tempered subcategory of $\Shv_{\CG^G\otimes \det_\fg^{\frac{1}{2}}}(\Bun_G)$ with respect to a given $\sigma$
for a general reductive $G$. 

\appendix

\section{The affine Grassmannian attached to finitely generated abelian groups} \label{s:finite Gr}

\ssec{The group-stack attached to a finitely generated abelian group}  \label{ss:class finite} 

Let $\Gamma$ be a finitely generated abelian group, whose torsion part has order prime to  $\on{char}(k)$. 
We attach to it the group-stack 
$$\Gamma\otimes \BG_m,$$
as follows:

\medskip

Write $\Gamma$ as a quotient of two lattices $\Lambda_1/\Lambda_2$ with associated tori $T_i$. We set
$$\Gamma\otimes \BG_m:=T_1/T_2.$$

It is easy to see that this definition is canonically independent of the presentation of $\Gamma$ as a quotient.

\medskip

Explicitly, writing $\Gamma$ as $\Gamma^{\on{free}}\oplus \Gamma^{\on{tors}}$, we have
$$\Gamma\otimes \BG_m \simeq T \times B_{\on{et}}(\Gamma^{\on{tors}}(1)),$$
where $T$ is the torus whose lattice of cocharacters is $\Gamma^{\on{free}}$. 

\ssec{Maps from an algebraic group}    \label{ss:grp to pi1} 

Let $G$ be a reductive group (such that the torsion part of the fundamental group 
has order prime to $\on{char}(k)$). 

\medskip

We claim that we have a canonically defined map
$$G\to \pi_{1,\on{alg}}\otimes \BG_m.$$

Indeed, write $G$ as 
\begin{equation} \label{e:cover group again}
1\to T_2\to \wt{G}_1\to G\to 1
\end{equation}
as in \eqref{e:cover group}. Recall that $T_1$ denotes the torus $\wt{G}_1/[\wt{G}_1,\wt{G}_1]$. 

\medskip

From here we obtain a canonical map
$$G \simeq \wt{G}_1/T_2\to T_1/T_2 \simeq \pi_{1,\on{alg}}\otimes \BG_m.$$

\ssec{The affine Grassmannian}

Since $\Gamma\otimes \BG_m$ is a (commutative) group-object of $\on{PreStk}$, we can consider its
classifying space
$$B_{\on{et}}(\Gamma\otimes \BG_m)\in \on{Ptd}(\on{PreStk}),$$
and the corresponding affine Grassmannian,
$$\Gr_{\Gamma\otimes \BG_m}.$$ 

\sssec{}

Note that if $\Gamma$ is finite, we have
$$B_{\on{et}}(\Gamma\otimes \BG_m)=B^2_{\on{et}}(\Gamma(1)).$$

So in this case $\Gr_{\Gamma\otimes \BG_m}$ classifies $\Gamma(1)$-gerbes on the curve with a trivialization of the
punctured curves. 

\medskip

In other words, for a test scheme $S$ and an $S$-point of $\Ran$ given by $I\subset \Hom(S,X)$,
its lift to a point of $\Gr_{\Gamma\otimes \BG_m}$ is the same as a section of
$$\on{C}^\bullet(\Gamma_I,\iota^!(\Gamma_{S\times X}(1)[2]))\simeq 
\on{C}^\bullet(\Gamma_I,\pi^!(\Gamma_S)),$$
where we recall that $\pi$ denotes the projection $\Gamma_I\to S$. 

\sssec{}

Assume again that $\Gamma$ is written as a quotient of two lattices $\Gamma_1/\Gamma_2$.
Consider the corresponding map
\begin{equation} \label{e:Gr lattive to finite}
\Gr_{T_1}\to \Gr_{\Gamma\otimes \BG_m}.
\end{equation}

We claim:

\begin{thm} \label{t:Gr lattive to finite} \hfill

\smallskip

\noindent{\em(a)} The map \eqref{e:Gr lattive to finite} is ind-finite (i.e., its 
base change by an affine scheme $S$ yields an ind-scheme, which is ind-finite over $S$).

\smallskip

\noindent{\em(b)} The resulting map
$$\Gr_{T_1}/\Gr_{T_2}\to  \Gr_{\Gamma\otimes \BG_m}$$
is an isomorphism in the topology generated by finite surjective maps.

\end{thm} 

The rest of this subsection is devoted to the proof of this theorem. 
It is easy to see that it is sufficient to consider the case of
the short exact sequence
$$0\to \BZ \overset{n\cdot}\to \BZ\to \BZ/n\BZ\to 0.$$

\sssec{} \label{sss:bijective points}

First, we claim that the map 
$$\on{coFib}(\Gr_{\BG_m} \overset{n\cdot}\to \Gr_{\BG_m}) \to \Gr_{\BZ/n\BZ\otimes \BG_m}$$
is bijective at the level of field-valued points, where $\on{coFib}(-)$ is taken in the category
$\on{ComGrp}(\on{PreStk})$. 

\medskip

Indeed, for a curve $X$ with a point $x$, the space of its lifts to a point of $\Gr_{\BZ/n\BZ\otimes \BG_m}$
is the space of $\mu_n$-gerbes on $X$, equipped with a trivialization over $X-x$. 
The Kummer sequence
\begin{equation} \label{e:Kummer again again}
0\to \mu_n\to \CO^\times \overset{n}\to \CO^\times \to 0.
\end{equation}
identifies this space with $\BZ/n\BZ$, as required. 

\sssec{}

It follows from the definitions that the fiber of the map
$$\Gr_{T_1}\to \Gr_{\Gamma\otimes \BG_m}$$
(as group prestacks over $\Ran$) 
identifies canonically with $\Gr_{T_2}$. Hence, in order to prove point (b), it suffices to show that the map
\eqref{e:Gr lattive to finite} is surjective after sheafification in the topology generated by finite surjective maps.

\medskip

Given \secref{sss:bijective points}, we obtain that point (b) of the theorem follows from point (a). 

\medskip

Furthermore, by \secref{sss:bijective points}, for point (a), it suffices to show that the morphism \eqref{e:Gr lattive to finite} is ind-proper 
(i.e., its base change by a scheme $S$ yields an ind-scheme, which is ind-proper over $S$).

\medskip

We proceed with the proof of the latter fact. 

\sssec{}

For an affine test-scheme $S$, let us be given an $S$-point of $\Gr_{\BZ/n\BZ\otimes \BG_m}$. I.e., we have a finite 
non-empty subset
$$I\subset \Hom(S,X),$$
a $\mu_n$-gerbe $\CG$ on $S\times X$ equipped with a trivialization $\alpha$ over $U_I$. The fiber product
$$S\underset{\Gr_{\BZ/n\BZ\otimes \BG_m}}\times \Gr_{\BG_m}$$
is the functor on 
schemes over $S$ that attaches to $S'\to S$ the space of $(\CL',\alpha',\beta)$, where $\CL'$ is a line bundle on $S'\times X$, 
$\alpha'$ is its trivialization over $U'_I:=S'\underset{S}\times U_I$, and $\beta$ is an identification 
of $(\CG,\alpha)|_{S'}$ with the datum induced by $(\CL',\alpha')$
by the Kummer sequence \eqref{e:Kummer again again}. We need to show that this functor s representable 
by an ind-scheme locally of finite type over $S$.

\medskip

With no restriction of generality, we can assume that $X$ is affine. Then, 
after \'etale localization with respect to $S$, we can assume that $\CG$ can be trivialized. In this case, we can assume that the pair
$(\CG,\alpha)$ also comes from the Kummer sequence. I.e., it is given by a line bundle $\CL_U$ on $U_I$ whose $n$-th power
is extended over $S\times X$. 

\medskip

In this case, for $S'\to S$, the datum of $(\CL,\alpha',\beta)$ as above is equivalent to the datum 
of extension of $\CL_U|_{S'}$ to a line bundle over $S'\times X$. We wish to show that this functor is
representable by an ind-scheme which is ind-proper over $S$. This is a particular case of the 
following general assertion:

\begin{thm} \label{t:extension} Let $G$ be an algebraic group. Then for $S$ and $I\subset \Hom(S,X)$ as above, 
for a $G$-bundle $\CP_U$ over $U_I$, the functor $\on{Ext}(\CP_U)$ that attaches to $S'\to S$ the set of extensions of $\CP_U|_{U'_I}$ to
$S'\times X$ is representable by an ind-scheme locally of finite type over $S$. If $G$ is reductive, then this ind-scheme is ind-proper.
\end{thm}

\qed[\thmref{t:Gr lattive to finite}]

\ssec{Proof of \thmref{t:extension}}

\sssec{}

First off, it is easy to see that we can assume that $S$ is itself locally of finite type over the ground field. Further, it is easy to see that
in this case, the functor $\on{Ext}(\CP_U)$ is locally of finite type (see \cite[Chapter 3, Sect.1.5.2]{GR1}). 

\medskip

Below we will show that $\on{Ext}(\CP_U)$ is an ind-scheme\footnote{It is automatically of ind-finite type because 
$\on{Ext}(\CP_U)$ is locally of finite type as a functor.}. Once this is done, the proof that it is ind-proper 
(for $G$ reductive) would follow from the valuative criterion:

\medskip

Indeed, choose an embedding $G\hookrightarrow GL_n$. Since $G$ is reductive, the quotient $GL_n/G$ is affine.
A standard argument thus reduces the proof to the case $G=GL_n$.

\medskip

Let $S'=C$ be a curve with a marked point $c$, and let be given a map $C\to S$ and $(C-c)\to \on{Ext}(\CP_U)$. 
We wish to be able to extend the above map to a map $C\to \on{Ext}(\CP_U)$.

\medskip

By definition, we are given a vector bundle on $(C-c)\times X$ and $C\underset{S}\times U_I$ with a datum of
compatibility on the overlap. Now, the scheme $C\times X$ is a regular surface and the open subscheme
$$((C-c)\times X) \cup (C\underset{S}\times U_I) \subset C\times X$$
has a complement of codimension $2$. This implies that our $G$-bundle indeed admits a unique extension.

\sssec{}

To prove ind-representability, with no restriction of generality, we can assume that $X$ is complete and that
$U_I$ contains a subset of the form $S\times \{x_0\}$ for a point $x_0\in X$. 

\medskip

After localization with respect to $S$ we may assume that $\CP_U|_{S\times x_0}$ is trivial.

\medskip

Let $\Bun_G^{x_0}$ be the modui space of $G$-bundles on $X$ with a \emph{full} level structure at $x_0$. It is
known to be a \emph{scheme} (but of infinite type). We have a forgetful map
$$\on{Ext}(\CP_U)\to S\times \Bun_G^{x_0},$$
and it is sufficient to show that that it is a relative ind-scheme. 

\medskip

Given $S'\to S$ and an $S'$-point of $\Bun_G^{x_0}$, corresponding to a $G$-bundle $\CP'$ on $S'\times X$,
the datum of its lift to an $S'$-point of $\on{Ext}(\CP_U)$ is a choice of an isomorphism 
$$\CP'|_{U'_I}\simeq \CP_U|_{U'_I}$$
compatible with the level structures at $x_0$. 

\medskip

Let $\on{Isom}$ be the (affine) scheme over $U'_I$ that attaches to $T\to U'_I$ the set of isomorphisms
$$\CP'|_T\simeq \CP_U|_T$$
compatible with the level structures at $x_0$. 

\medskip

We claim that the functor on schemes over $S'$ that sends $S''\to S'$ to the set of maps of $S'$-schemes
$$S''\to \on{Isom}$$
is ind-representable. 

\sssec{}

We claim that the latter fact holds for $\on{Isom}$ replaced by any affine scheme $Z$ over $U'_I$. 

\medskip

Indeed, by a standard argument we can replace $Z$ by the affine line $\BA^1$ over $S'$. Then the functor in question
attaches to $S''\to S'$ the set of regular functions on $U'_I$. 

\medskip

Let $\CF$ be the direct image of the structure sheaf along $U'_I\to S'$. We can write it as a union of vector bundles 
$\CE_i$. The the above functor is the direct limit of the total spaces of these vector bundles. 
 
\qed[\thmref{t:extension}]

\section{Calculation of the \'etale cohomology of $B_{\on{et}}(G) $}  \label{s:BG}

\ssec{The Leray spectral sequence}

The calculation is based on considering the Leray spectral sequence associated with the 
projection
$$\pi:B(B)\to B_{\on{et}}(G) ,$$
where $B\subset G$ is the Borel subgroup. 

\medskip

Namely, let $\ul{A}$ denote the constant \'etale sheaf on either $B_{\on{et}}(G) $ or $B(B)$ with coefficients in $A$,
and let us consider the exact triangle
\begin{equation} \label{e:Leray}
\ul{A}\to \pi_*(\ul{A})\to \tau^{\geq 1}(R\pi_*(\ul{A})).
\end{equation} 

We note that each individual cohomology sheaf $R^i\pi_*(\ul{A})$ is constant with fiber $H^i_{\on{et}}(G/B,A)$. 

\medskip

Note also that the projection $B(B)\to B_{\on{et}}(T) $ defines an isomorphism an \'etale cohomology, so we obtain:

\begin{equation}
H^i_{\on{et}}(B(B),A)\simeq H^i_{\on{et}}(B_{\on{et}}(T) ,A)\simeq 
\begin{cases}
&0 \text{ for $i$ odd}; \\
&\Hom(\Lambda,A(-1)) \text{ for $i=2$}; \\
&\on{Quad}(\Lambda,A(-2)) \text{ for $i=4$}.
\end{cases}
\end{equation} 

\ssec{Cohomology in degrees $\leq 3$}

From the long exact cohomology sequence associated with \eqref{e:Leray} we immediately obtain that $H^1_{\on{et}}(B_{\on{et}}(G) ,A)=0$.

\medskip

Next, the fact that $H^1_{\on{et}}(G/B,A)=0$ implies that the map
$$H^2_{\on{et}}(B_{\on{et}}(G) ,A)\to H^2_{\on{et}}(B(B),A)$$ is injective with image equal to the kernel of the map
\begin{equation} \label{e:map H2}
H^2_{\on{et}}(B(B),A)\to H^2_{\on{et}}(G/B,A).
\end{equation} 

We identify $H^2_{\on{et}}(G/B,A)=\Hom(\Lambda_{\on{sc}},A(-1))$, where $\Lambda_{\on{sc}}$ is the coroot lattice in $\Lambda$, 
and the map \eqref{e:map H2} becomes the restriction map
\begin{equation} \label{e:map H2 again}
\Hom(\Lambda,A(-1)) \to \Hom(\Lambda_{\on{sc}},A(-1)).
\end{equation} 

Since $\pi_{1,\on{alg}}(G)=\Lambda/\Lambda_{\on{sc}}$, we obtain the desired identification
$$H^2_{\on{et}}(B_{\on{et}}(G) ,A)\simeq \Hom(\pi_{1,\on{alg}}(G),A).$$

Now, since $A$ was assumed divisible, the map \eqref{e:map H2 again} is surjective. Hence, the map
$$H^3_{\on{et}}(B_{\on{et}}(G) ,A)\to H^3_{\on{et}}(B(B),A)$$
is injective. Since $H^3_{\on{et}}(B(B),A)=0$, we obtain the desired $H^3_{\on{et}}(B_{\on{et}}(G) ,A)=0$.

\ssec{Cohomology in degree 4: injectivity}

We will now show that the map
$$H^4_{\on{et}}(B_{\on{et}}(G) ,A)\to H^4_{\on{et}}(B(B),A)$$
is injective.

\medskip

For this, it suffices to show that
$$H^3_{\on{et}}(B_{\on{et}}(G) ,\tau^{\geq 1}(R\pi_*(\ul{A})))=0.$$

Since, $H^3_{\on{et}}(G/B,A)=0$, we have
$$H^3_{\on{et}}(B_{\on{et}}(G) ,\tau^{\geq 1}(R\pi_*(\ul{A})))=H^1_{\on{et}}(B_{\on{et}}(G) ,H^2_{\on{et}}(G/B,A)),$$
and the latter vanishes as $H^1_{\on{et}}(B_{\on{et}}(G) ,-)=0$.

\medskip

Thus, we obtain an injection 
$$H^4_{\on{et}}(B_{\on{et}}(G) ,A)\hookrightarrow H^4_{\on{et}}(B(B),A)\simeq H^4_{\on{et}}(B_{\on{et}}(T) ,A)\simeq  \on{Quad}(\Lambda,A(-2)),$$
and out task is to show that its image equals $\on{Quad}(\Lambda,A(-2))^W_{\on{restr}}$. 

\ssec{Containment in one direction} \label{ss:containment one}

We will first show that the image of $H^4_{\on{et}}(B_{\on{et}}(G) ,A)$ in $\on{Quad}(\Lambda,A(-2))$
is contained in $\on{Quad}(\Lambda,A(-2))^W_{\on{restr}}$. 

\medskip 

Note that realizing $T$ is a Cartan \emph{subgroup} of $G$, we obtain a commutative diagram
$$
\CD
H^4_{\on{et}}(B_{\on{et}}(T) ,A)  @>{\simeq}>> \on{Quad}(\Lambda_G,A(-2)) \\
@AAA @AA{\on{id}}A \\
H^4_{\on{et}}(B_{\on{et}}(G) ,A)  @>{\simeq}>> \on{Quad}(\Lambda_G,A(-2)),
\endCD
$$
from which it follows that the image of $H^4_{\on{et}}(B_{\on{et}}(G) ,A)$ in $\on{Quad}(\Lambda,A(-2))$
is a priori contained in $\on{Quad}(\Lambda,A(-2))^W$. 

\medskip

Thus, it remains to show that for any $q\in \on{Quad}(\Lambda,A(-2))$ that lies in the image
of the above map, and any \emph{simple} coroot $\alpha_i$, we have
$$b(\alpha_i,\lambda)=\langle \check\alpha_i,\lambda\rangle\cdot q(\alpha_i)\text{ for any} \lambda\in \Lambda.$$ 

Let $P_i$ be the subminimal parabolic associated with $i$, and let $M_i$ be its Levi quotient. We have a commutative diagram
$$
\CD
H^4_{\on{et}}(B(M_i),A) @>>> H^4_{\on{et}}(B(B),A) \\
@V{\sim}VV  @VV{=}V   \\
H^4_{\on{et}}(B(P_i),A) @>>> H^4_{\on{et}}(B(B),A)  \\
@AAA  @AA{=}A \\
H^4_{\on{et}}(B_{\on{et}}(G) ,A) @>>> H^4_{\on{et}}(B(B),A), \\
\endCD
$$
which implies that it is sufficient to prove our claim for $G$ replaced by $M_i$, which is a reductive group of semi-simple rank $1$.

\ssec{Calculation for groups of semi-simple rank $1$} \label{ss:ss rank 1}

Any group $G$ of semi-simple rank $1$ is of the form 
$$G'\times T',$$
where $G'$ is $SL_2$, $PGL_2$ or $GL_2$ and $T'$ is a torus. 

\medskip

If $G'=SL_2$, then 
$$H^4_{\on{et}}(B_{\on{et}}(G) ,A)\simeq H^4_{\on{et}}(B(G'),A)\oplus H^4_{\on{et}}(B(T'),A).$$

Similarly, in this case, it is easy to see that in this case\footnote{Note that the formula below would be \emph{false} if we considered 
$\on{Quad}(\Lambda_G,A(-2))^W$ instead of $\on{Quad}(\Lambda_G,A(-2))^W_{\on{restr}}$; for example it would be false for the group
$SL_2\times \BG_m$.}
$$\on{Quad}(\Lambda_G,A(-2))^W_{\on{restr}}=\on{Quad}(\Lambda_{G'},A(-2))^W_{\on{restr}}\oplus \on{Quad}(\Lambda_{T'},A(-2)).$$
So the assertion reduces to the case $G=G'=SL_2$. Note, however, that in this case, the inclusions 
$$\on{Quad}(\Lambda_{SL_2},A(-2))^W_{\on{restr}}\subseteq \on{Quad}(\Lambda_{SL_2},A(-2))^W \subseteq \on{Quad}(\Lambda_{SL_2},A(-2))$$
are equalities, and the assertion follows.

\medskip

In the two cases of $G'=PGL_2$ or $G'=GL_2$, since the unique positive coroot is divisible by 2,  
the inclusion
$$\on{Quad}(\Lambda_G,A(-2))^W_{\on{restr}}\subseteq \on{Quad}(\Lambda_G,A(-2))^W$$
is an equality, and the assertion follows.

\ssec{The opposite containment}

It remains to show that any element $q\in \on{Quad}(\Lambda,A(-2))^W_{\on{restr}}$ lies in the image of
$H^4_{\on{et}}(B_{\on{et}}(G) ,A)$ in $\on{Quad}(\Lambda,A(-2))$.

\medskip

According to \secref{sss:quad restr bis}, we have to consider the following two cases: 

\medskip

\noindent(I) $q$ lies in the image of the map
$$\on{Quad}(\Lambda,\BZ)^W\underset{\BZ}\otimes A(-2)\to  \on{Quad}(\Lambda,A(-2))^W_{\on{restr}}.$$

\medskip

\noindent(II) $q$ comes from a quadratic form on $\pi_{1,\on{alg}}(G)$. 

\medskip

We first deal with case II. 
Consider the (2)-stack $B_{\on{et}}(\pi_{1,\on{alg}}(G)\otimes \BG_m)$, see \secref{ss:class finite}. 

\medskip

We have a canonical isomorphism $$H^4_{\on{et}}(B_{\on{et}}(\pi_{1,\on{alg}}(G)\otimes \BG_m),A)\simeq \on{Quad}(\pi_{1,\on{alg}}(G),A(-2))$$
(see \corref{c:classify mult arb new new}) which fits into the commutative diagram
$$
\CD
H^4_{\on{et}}(B_{\on{et}}(T) ,A)   @>{\sim}>>  \on{Quad}(\Lambda,A(-2))  \\
@AAA   @AAA  \\
H^4_{\on{et}}(B_{\on{et}}(\pi_{1,\on{alg}}(G)\otimes \BG_m),A)  @>{\sim}>> \on{Quad}(\pi_{1,\on{alg}}(G),A(-2)).
\endCD
$$

Now the desired containment follows from the commutative diagram
$$
\CD
H^4_{\on{et}}(B_{\on{et}}(G) ,A) @>>> H^4_{\on{et}}(B(B),A) @<{\sim}<< H^4_{\on{et}}(B_{\on{et}}(T) ,A)  \\
@AAA   & &  @AA{\on{id}}A  \\
H^4_{\on{et}}(B_{\on{et}}(\pi_{1,\on{alg}}(G)\otimes \BG_m),A)  & @>>> & H^4_{\on{et}}(B_{\on{et}}(T) ,A),
\endCD
$$
where the left vertical arrow comes from the canonical projection 
$$B_{\on{et}}(G) \to B_{\on{et}}(\pi_{1,\on{alg}}(G)\otimes \BG_m),$$
see \secref{ss:grp to pi1}. 

\medskip

In order to deal with case I, it suffices to show that for any $\ell$ comprime with $\on{char}(k)$, the map 
$$H^4_{\on{et}}(B_{\on{et}}(G) ,\BZ_\ell) \to \on{Quad}(\Lambda,\BZ_\ell(-2))^W$$
is an isomorphism. 

\ssec{Computation of the integral cohomology}

From the long exact cohomology sequence associated with \eqref{e:Leray}, we obtain that the image of
\begin{equation} \label{e:H4 int}
H^4_{\on{et}}(B_{\on{et}}(G) ,\BZ_\ell) \to H^4_{\on{et}}(B_{\on{et}}(T) ,\BZ_\ell)
\end{equation}
equals 
$$\on{ker}\left(\on{ker}(H^4_{\on{et}}(B_{\on{et}}(T) ,\BZ_\ell)\to H^4_{\on{et}}(G/B,\BZ_\ell))\to H^2_{\on{et}}(B_{\on{et}}(G) ,H^2_{\on{et}}(G/B,\BZ_\ell))\right).$$

Since both groups $H^4_{\on{et}}(G/B,\BZ_\ell)$ and 
$$H^2_{\on{et}}(B_{\on{et}}(G) ,H^2_{\on{et}}(G/B,\BZ_\ell))\simeq \Hom(\pi_{1,\on{alg}}(G),H^2_{\on{et}}(G/B,\BZ_\ell))$$
are torsion-free, we obtain that
the image of \eqref{e:H4 int} equals
$$H^4_{\on{et}}(B_{\on{et}}(T) ,\BZ_\ell)\cap \on{Im}\left(H^4_{\on{et}}(B_{\on{et}}(G) ,\BQ_\ell) \to H^4_{\on{et}}(B_{\on{et}}(T) ,\BQ_\ell)\right).$$

However,
$$H^4_{\on{et}}(B_{\on{et}}(T) ,\BZ_\ell)\simeq \on{Quad}(\Lambda,\BZ_\ell(-2)),$$
and rationally, we know that
$$H^4_{\on{et}}(B_{\on{et}}(G) ,\BQ_\ell) \simeq \on{Quad}(\Lambda,\BQ_\ell(-2))^W.$$

Hence, the image of \eqref{e:H4 int} equals
$$ \on{Quad}(\Lambda,\BZ_\ell(-2))\cap \on{Quad}(\Lambda,\BQ_\ell(-2))^W=\on{Quad}(\Lambda,\BZ_\ell(-2))^W,$$
as desired. 

\section{Factorization gerbes via bilinear forms}

The material in this section is informed by the contents of Deligne's letter to Lusztig, \cite[P.S.]{Del}. 
It will allow us to obtain an even more explicit parameterization of factorization gerbes. 

\ssec{The complex of bilinear forms}

Let $\Lambda$ be a lattice and $A$ a divisible torsion abelian group. We consider the following complex, to be denoted $\cD(\Lambda)$,
placed in degrees $[-2,0]$: 

\begin{equation} \label{e:Deligne}
\on{Quad}(\Lambda,A(-1)) \overset{d_2}\to \on{Bilin}(\Lambda,A(-1)) \overset{d_1}\to \on{Bilin}(\Lambda,A(-1)),
\end{equation} 
where 

\medskip

\noindent--the map $\on{Quad}(\Lambda,A(-1)) \overset{d_2}\to \on{Bilin}(\Lambda,A(-1))$ is the usual map from quadratic forms to (symmetric) bilinear forms;

\medskip

\noindent--the map $\on{Bilin}(\Lambda,A(-1)) \overset{d_1}\to \on{Bilin}(\Lambda,A(-1))$
is
$$b''\mapsto b', \quad b'(\lambda,\mu)=b''(\lambda,\mu)-b''(\mu,\lambda).$$

\sssec{}

This complex is acyclic in degree $-1$. Its $0$th cohomology identifies with 
$$\on{Quad}(\Lambda,A(-1))$$ via
$$b'\mapsto q, \quad q(\lambda)=b'(\lambda,\lambda).$$

Its cohomology in degree $(-2)$ identifies with
$$\Hom(\Lambda,A(-1)_{2\on{-tors}})\simeq \Hom(\Lambda,A_{2\on{-tors}}).$$

\sssec{}

We note that the complex $\cD(\Lambda)$ contains a quasi-isomorphic sub-complex, denoted $\wt\cD(\Lambda)$:

\begin{equation} \label{e:pre Deligne}
\on{Quad}(\Lambda,A_{2\on{-tors}})\overset{\wt{d}_2}\to \on{Alt}(\Lambda,A(-1)) \overset{\wt{d}_1}\to \on{Bilin}(\Lambda,A(-1)),
\end{equation} 
where 
$$\on{Alt}(\Lambda,A(-1)) \subset \on{Bilin}(\Lambda,A(-1))$$
is the subset of alternating forms $b''$, i.e., those forms for which $b''(\lambda,\lambda)=0$,
and we identify
$$\on{Quad}(\Lambda,A_{2\on{-tors}})=\on{Quad}(\Lambda,A(-1)_{2\on{-tors}})$$
with the preimage of $\on{Alt}(\Lambda,A(-1))$ under the differential $d_2$. 

\medskip

Note that the differential $\wt{d}_1$ is given by multiplication by $2$. 

\sssec{}

Note also that have an inclusion
$$\Hom(\Lambda,A_{2\on{-tors}})=\on{ker}(\wt{d}_2)\subset \on{Quad}(\Lambda,A_{2\on{-tors}}),$$
corresponding to those quadratic forms, whose associated symmetric bilinear form vanishes. 

\sssec{}

The object fundamental for this subsection will be the push-out
\begin{equation} \label{e:key pushout}
\CR(\Lambda):=B^2(\Hom(\Lambda,A))\underset{B^2(\Hom(\Lambda,A_{2\on{-tors}}))}\sqcup
\wt\cD(\Lambda).
\end{equation} 

We have:
$$\pi_i(\CR(\Lambda))=
\begin{cases}
&\on{Quad}(\Lambda,A(-1)), \text{ for } i=0,\\
&0, \text{ for } i=1, \\
&\Hom(\Lambda,A), \text{ for } i=2.
\end{cases}$$

\sssec{} \label{sss:basis splits}

We claim that a choice of a basis $\lambda_1,...,\lambda_n$ of $\Lambda$ gives rise to a splitting
$$\CR(\Lambda) \simeq \on{Quad}(\Lambda,A(-1)) \times B^2(\Hom(\Lambda,A)).$$

Indeed, the choice of a basis gives rise to a left inverse
\begin{equation} \label{e:left in split}
\Hom(\Lambda,A) \underset{\Hom(\Lambda,A_{2\on{-tors}})}\sqcup \on{Quad}(\Lambda,A_{2\on{-tors}})\to \Hom(\Lambda,A)
\end{equation}
of the inclusion
$$\Hom(\Lambda,A)\hookrightarrow \Hom(\Lambda,A) \underset{\Hom(\Lambda,A_{2\on{-tors}})}\sqcup \on{Quad}(\Lambda,A_{2\on{-tors}}).$$

\medskip

Namely, the restriction of \eqref{e:left in split} to $\on{Quad}(\Lambda,A_{2\on{-tors}})$ sends a given $q''\in \on{Quad}(\Lambda,A_{2\on{-tors}})$ 
to the map $\Lambda\to A$ defined by $\lambda_i\mapsto q''(\lambda_i)$.

\ssec{Relation to braided monoidal categories}


\medskip

Let $\cD_{\on{top}}(\Lambda,A)$ (resp., $\CR_{\on{top}}(\Lambda,A)$) 
be a version of $\CR_{\on{top}}(\Lambda)$, in which we replace $A(-1)$ by $A$. 

\medskip

The key assertion in \cite[P.S.]{Del} is that the complex $\CR_{\on{top}}(\Lambda,A)$, viewed as a connective spectrum, is naturally isomorphic to
$$\Maps_{\on{Ptd}(\Spc)}(B^2(\Lambda),B^4(A)),$$
i.e., it classifies braided monoidal groupoids $\CC$ with
$\pi_0(\CC)\simeq \Lambda$ and $\pi_1(\CC)\simeq A$, 
see Remark \ref{r:Ek top}. 

\sssec{}

Let us construct the corresponding map
\begin{equation} \label{e:from bilin to categ}
\cD_{\on{top}}(\Lambda,A)\to \Maps_{\on{Ptd}(\Spc)}(B^2(\Lambda),B^4(A)).
\end{equation} 


For a bilinear form $b'$ we let $\CC_{b'}$ be $\Lambda\times B(A)$ as a \emph{monoidal} groupoid, with the braiding defined
by $b'$.  

\medskip

When we modify $b'$ by the coboundary of $b''\in \on{Bilin}(\Lambda,A)$, we let
$$\phi_{b''}:\CC_{b'}\simeq \CC_{b'+d_1(b'')}$$
be the identity functor at the level of plain groupoids, but with the monoidal structure given by $b''$:
$$
\CD
\phi_{b''}(\lambda_1) \otimes \phi_{b''}(\lambda_2) @>>> \phi_{b''}(\lambda_1+\lambda_2) \\
@V{=}VV @VV{=}V  \\
\lambda_1+\lambda_2   @>{b''(\lambda_1,\lambda_2)}>> \lambda_1+\lambda_2. 
\endCD
$$

When we modify $b''$ by the coboundary of $q''\in \on{Quad}(\Lambda,A)$, we construct an isomorphism
$$\phi_{b''}\simeq \phi_{b''+d_2(q'')}$$
by letting its value on $\lambda$ be equal to $q''(\lambda)$.

\sssec{}

Note that $\Maps_{\on{Ptd}(\Spc)}(B^2(\Lambda),B^4(A))$ is the same as
$$\Maps_{\on{Ptd}(\Spc)}(B(T_{\on{top}}),B^4(A)),$$
where $T_{\on{top}}$ is the topological torus corresponding to $\Lambda$. Hence, its homotopy groups are given by
$$\pi_2(\Maps_{\on{Ptd}(\Spc)}(B^2(\Lambda),B^4(A)))\simeq H^2_{\on{top}}(B(T_{\on{top}}),A)\simeq \Hom(\Lambda,A),$$ 
$$\pi_1(\Maps_{\on{Ptd}(\Spc)}(B^2(\Lambda),B^4(A)))\simeq H^3_{\on{top}}(B(T_{\on{top}}),A)=0,$$
$$\pi_0(\Maps_{\on{Ptd}(\Spc)}(B^2(\Lambda),B^4(A)))\simeq H^4_{\on{top}}(B(T_{\on{top}}),A)=\on{Quad}(\Lambda,A),$$
and the other homotopy groups vanish.

\medskip

At the level of homotopy groups, the map \eqref{e:from bilin to categ} induces the maps
$$\pi_2(\cD_{\on{top}}(\Lambda,A)) \simeq \Hom(\Lambda,A_{2\on{-tors}}) \to \Hom(\Lambda,A) \simeq \pi_2(\Maps_{\on{Ptd}(\Spc)}(B^2(\Lambda),B^4(A)))$$
$$\pi_1(\cD_{\on{top}}(\Lambda,A)) =0 = \pi_1(\Maps_{\on{Ptd}(\Spc)}(B^2(\Lambda),B^4(A)))$$
and
$$\pi_0(\cD_{\on{top}}(\Lambda,A)) \simeq \on{Quad}(\Lambda,A)\simeq \pi_0(\Maps_{\on{Ptd}(\Spc)}(B^2(\Lambda),B^4(A))).$$
 
Hence, the map \eqref{e:from bilin to categ} induces an \emph{isomorphism} 
\begin{equation} \label{e:from bilin to categ isom}
\CR_{\on{top}}(\Lambda,A)\simeq \Maps_{\on{Ptd}(\Spc)}(B^2(\Lambda),B^4(A)).
\end{equation} 

\sssec{}

Let $\lambda_1,...,\lambda_n$ be a basis of $\Lambda$, and recall (see \secref{sss:basis splits})
that in this case we have a canonically defined splitting 
$$\CR_{\on{top}}(\Lambda,A)\simeq \on{Quad}(\Lambda,A)\times B^2(\Hom(\Lambda,A)).$$

The resulting splitting
$$\Maps_{\on{Ptd}(\Spc)}(B^2(\Lambda),B^4(A)) \simeq \on{Quad}(\Lambda,A)\times B^2(\Hom(\Lambda,A))$$
is defined as follows: the corresponding map
$$\Maps_{\on{Ptd}(\Spc)}(B^2(\Lambda),B^4(A)) \to B^2(\Hom(\Lambda,A))\simeq (B^2(A))^{\times n}$$ is
given by
$$\Maps_{\on{Ptd}(\Spc)}(B^2(\Lambda),B^4(A)) \to \Maps_{\on{Ptd}(\Spc)}(\Lambda,B^2(A))  \overset{\lambda_i}\to B^2(A).$$

\sssec{Variant}

Let $\Gamma$ be a finitely generated abelian group. Write it as $\Gamma\simeq \Lambda/\Lambda'$, where $\Lambda$
is a lattice. Set
$$\CR_{\on{top}}(\Gamma,A):=\on{Fib}\left(\CR_{\on{top}}(\Lambda,A)\to \CR_{\on{top}}(\Lambda',A)\right) \underset{\on{Quad}(\Lambda,A)}\times \on{Quad}(\Gamma,A).$$

It is easy to see that $\CR_{\on{top}}(\Gamma,A)$ is canonically independent of the presentation of $\Gamma$ as a quotient. 

\medskip

It follows from \eqref{e:from bilin to categ isom} that we have a canonical idomorphism 
\begin{equation} \label{e:from bilin to categ isom tors}
\CR_{\on{top}}(\Gamma,A) \simeq \Maps_{\on{Ptd}(\Spc)}(B^2(\Gamma),B^4(A)).
\end{equation} 

\ssec{Relation to the \'etale theory} \label{ss:etale computation}

\sssec{}

Let $T$ be the algebro-geometric torus over $k$ with cocharacter lattice $\Lambda$. 

\medskip

We are going to construct a canonical isomorphism
\begin{equation} \label{e:etale Q}
\Maps_{\on{Ptd}(\on{PreStk})}(B_{\on{et}}(T),B_{\on{et}}^4(A(1)))\simeq \CR(\Lambda).
\end{equation}

\sssec{}

Let $\Lambda'\subset \Lambda$
be a sub-lattice such that
$$\Gamma:=\Lambda/\Lambda'$$
is torsion (of order prime to  $\on{char}(k)$). Let $T'$ be the (algebro-geometric) torus corresponding to the lattice 
$\Lambda'$.

\medskip

Note that we have a short exact sequence
$$0\to \Gamma(1)\to T'\to T\to 0,$$
and corresponding to it the fiber sequence 
$$B_{\on{et}}( \Gamma(1))\to B_{\on{et}}(T')\to B_{\on{et}}(T)$$
in group-objects in $\on{Stk}$. 

\medskip

From here we obtain a map
$$B_{\on{et}}(T)\to B_{\on{et}}^2(\Gamma(1)),$$
and thus a map
$$\Maps_{\on{Ptd}(\on{PreStk})}(B_{\on{et}}^2(\Gamma(1)),B_{\on{et}}^4(A(1)))\to 
\Maps_{\on{Ptd}(\on{PreStk})}(B_{\on{et}}(T),B_{\on{et}}^4(A(1))).$$

\medskip

A computation of cohomology groups shows that the resulting map
$$\underset{\Lambda'\subset \Lambda}{\on{colim}}\,  
\Maps_{\on{Ptd}(\on{PreStk})}(B_{\on{et}}^2(\Gamma(1)),B_{\on{et}}^4(A(1)))\to 
\Maps_{\on{Ptd}(\on{PreStk})}(B_{\on{et}}(T),B_{\on{et}}^4(A(1)))$$
is an isomorphism. 

\sssec{}

Note that since $\Gamma(1)$ is discrete (as an object of algebraic geometry), the map
$$\Maps_{\on{Ptd}(\Spc)}(B^2(\Gamma(1)),B^4(A(1)))\to \Maps_{\on{Ptd}(\on{PreStk})}(B_{\on{et}}^2(\Gamma(1)),B_{\on{et}}^4(A(1)))$$
is an isomorphism. 

\medskip

Hence, in order to establish \eqref{e:etale Q}, it suffices to construct an isomorphism 
\begin{equation} \label{e:pullback from finite colim top}
\underset{\Lambda'\subset \Lambda}{\on{colim}}\,  \Maps_{\on{Ptd}(\Spc)}(B^2(\Gamma(1)),B^4(A(1))) \simeq \CR(\Lambda).
\end{equation} 

\sssec{}

By \eqref{e:from bilin to categ isom tors}, we can identify
$$\Maps_{\on{Ptd}(\Spc)}(B^2(\Gamma(1)),B^4(A(1))) \simeq \CR_{top}(\Gamma(1),A(1)).$$

Note that we have a canonical identification
$$\CR_{\on{top}}(\Gamma(1),A(1)) \simeq \CR(\Gamma,A(-1)).$$

Hence, the colimit in the left-hand side of \eqref{e:pullback from finite colim top} can be rewritten as 
$$\underset{\Lambda'\subset \Lambda}{\on{colim}}\, \CR(\Gamma,A),$$
which does indeed map isomorphically to $\CR(\Lambda,A)$.

\ssec{Relation to factorization gerbes}

We now claim that there exists a canonically defined map
\begin{equation} \label{e:Del to fact}
\CR(\Lambda)\to \Theta(\Lambda),
\end{equation} 
where $\Theta(\Lambda)$ is as in \secref{sss:Theta}. 

\sssec{}

We start by constructing a map
$$\cD(\Lambda)\to \Theta(\Lambda).$$

\medskip

Namely, given $b'\in \on{Bilin}(\Lambda,A(-1))$, we attach to it the quadratic form
$$q(\lambda):=b'(\lambda,\lambda),$$
and the system of $A$-gerbes
$$\CG^\lambda:=(\omega_X^{\otimes -1})^{q(\lambda)}.$$

\medskip

The isomorphisms $c_{\lambda_1,\lambda_2}$ are defined as follows: they are obtained by tensoring 
the tautological isomorphism
$$(\omega_X^{\otimes -1})^{q(\lambda_1+\lambda_2)}\simeq (\omega_X^{\otimes -1})^{(q(\lambda_1)}\otimes 
(\omega_X^{\otimes -1})^{q(\lambda_2)}\otimes (\omega_X^{\otimes -1})^{b(\lambda_1,\lambda_2)},$$
by the $A$-torsor $(-1)^{b'(\lambda_1,\lambda_2)}$. The datum of $h_{\lambda_1,\lambda_2}$ comes from the
identification
$$(-1)^{b'(\lambda_2,\lambda_1)}\simeq (-1)^{b'(\lambda_1,\lambda_2)}\otimes (-1)^{b(\lambda_1,\lambda_2)},$$
which in turn results from 
$$(-1)^{b'(\lambda_1,\lambda_2)}\simeq (-1)^{-b'(\lambda_1,\lambda_2)} \text{ and }
b(\lambda_1,\lambda_2)=b'(\lambda_1,\lambda_2)+b'(\lambda_2,\lambda_1).$$

\sssec{}

When we modify $b'$ by the coboundary of $b''\in \on{Bilin}(\Lambda,A(-1))$, we apply the system of automorphisms
of $\CG^\lambda$, given by $(-1)^{b''(\lambda,\lambda)}$.

\medskip

Finally, the result of the modification of $b''$ by 
$$q''\in \on{Quad}(\Lambda,A(-1))$$
acts as an automorphism of the identity map on $\CG^\lambda$ given by $q''(\lambda)$. 

\sssec{}

By construction, we have a map of fiber sequences
$$
\CD
B^2(\Hom(\Lambda,A_{2\on{-tors}})) @>>> \wt\cD(\Lambda,A(-1)) @>>> \on{Quad}(\Lambda,A(-1)) \\
@VVV @VVV @VV{\on{id}}V \\
\Theta^0(\Lambda)  @>>> \Theta(\Lambda)   @>>> \on{Quad}(\Lambda,A(-1)),
\endCD 
$$
where the left vertical arrow is the map
$$B^2(\Hom(\Lambda,A_{2\on{-tors}})) \to B^2(\Hom(\Lambda,A))
\to \Maps(X,B^2_{\on{et}}(\Hom(\Lambda,A)))\simeq \Theta^0(\Lambda).$$

\medskip

From here, we obtain the desired map \eqref{e:Del to fact}, which fits in the diagram of fiber sequences
$$
\CD
B^2(\Hom(\Lambda,A)) @>>> \CR(\Lambda) @>>> \on{Quad}(\Lambda,A(-1)) \\
@VVV @VVV @VV{\on{id}}V \\
\Theta^0(\Lambda)  @>>> \Theta(\Lambda)   @>>> \on{Quad}(\Lambda,A(-1)).
\endCD 
$$

In particular, we obtain that the induced map
\begin{equation} \label{e:Del to fact equiv}
\Maps(X,B^2_{\on{et}}(\Hom(\Lambda,A))) \underset{B^2(\Hom(\Lambda,A))}\sqcup\, \CR(\Lambda)\to \Theta(\Lambda)
\end{equation} 
is an equivalence. 

\sssec{}

We claim

\begin{thm} \label{t:from bilin to fact}
The following diagram commutes
\begin{equation} \label{e:from bilin to fact}
\CD
\Maps_{\on{Ptd}(\on{PreStk})}(B_{\on{et}}(T),B_{\on{et}}^4(A(1))) @>{\rm{pullback}}>> \Maps_{\on{Ptd}(\on{PreStk})}(B_{\on{et}}(T)\times X,B_{\on{et}}^4(A(1)))  \\
@V{\sim}V{\text{\eqref{e:etale Q}}}V    @V{\sim}V{\text{\eqref{e:from BG infty}}}V  \\
\CR(\Lambda)  & & \on{FactGe}_A(\Gr_T) \\
@VV{\text{\eqref{e:Del to fact}}}V  @V{\sim}V{\text{\eqref{e:theta}}}V  \\
\Theta(\Lambda) @>{\on{id}}>> \Theta(\Lambda).
\endCD
\end{equation} 
\end{thm} 

The proof of this theorem will be given in the next subsection. 

\sssec{} \label{sss:neutral case}

Before we launch the proof of \thmref{t:from bilin to fact}, let us note that the pre-composition of the diagram \eqref{e:from bilin to fact} with the map
$$B^2(\Hom(\Lambda,A)) \to \Maps_{\on{Ptd}(\on{PreStk})}(B_{\on{et}}(T),B_{\on{et}}^4(A(1)))$$
commutes, and so does the post-composition with the map
$$\Theta(\Lambda)\to \on{Quad}(\Lambda,A(-1)).$$

In other words, the two circuits of the diagrams give rise to canonically isomorphic maps 
$$B^2(\Hom(\Lambda,A))\to \Theta^0(\Lambda).$$

\sssec{}

Consider now the full subspace of $\Maps_{\on{Ptd}(\on{PreStk})}(B_{\on{et}}(T),B_{\on{et}}^4(A(1)))$ equal to
\begin{equation} \label{e:mult subspace}
\Maps_{\on{Ptd}(\on{PreStk})}(B_{\on{et}}(T),B_{\on{et}}^4(A(1))) \underset{\on{Quad}(\Lambda,A(-1))}\times \Hom(\Lambda,A_{2\on{-tors}}).
\end{equation}

Recall (see Remark \ref{r:Ek top}) that the forgetful functor
\begin{multline} \label{e:forget Einfty}
\Maps_{\BE_\infty(\Spc)}(\Lambda,B^2(A)) \simeq \Maps_{\BE_\infty(\Spc)}(\Lambda(1),B^2(A(1))) \simeq \\
\simeq \Maps_{\BE_\infty(\Spc)}(B^2(\Lambda(1)),B^4(A(1)))
\to \Maps_{\on{Ptd}(\Spc)}(B^2(\Lambda(1)),B^4(A(1)))
\end{multline}
is fully faithful with essential image equal to \eqref{e:mult subspace}.

\medskip

\medskip

Let us note that we have three(!) {\it a priori} different ways of mapping the space $\Maps_{\BE_\infty(\Spc)}(\Lambda,B^2(A))$ to $\Theta(\Lambda)$:

\medskip

\noindent(i) The first way is the composition of \eqref{e:forget Einfty} and clockwise circuit in \eqref{e:from bilin to fact}. 

\medskip

\noindent(ii) The second way is the composition of \eqref{e:forget Einfty} and counter-clockwise circuit in \eqref{e:from bilin to fact}. 

\medskip

\noindent(iii) The third way comes from \eqref{e:Einfty to Theta}. In more detail, a point of $\Maps_{\BE_\infty(\Spc)}(\Lambda,B^2(A))$
can be seen as a system of assignments
$$\lambda\in \Lambda \rightsquigarrow \CG^\lambda\in B^2(A)\simeq \on{Ge}_A(\on{pt}),$$
endowed with a associativity and commutativity constraints, and we can pull it back along $X\to \on{pt}$ to form an
object of $\Theta(\Lambda)$ as in \secref{sss:expl mult}. 

\medskip

The assertion of \thmref{t:from bilin to fact} implies, in particular, that (i) and (ii) are canonically isomorphic.  We claim that (ii) and (iii)
differ by a map 
\begin{equation} \label{e:mult obstr}
\Hom(\Lambda,A_{2\on{-tors}})\to \Theta^0(\Lambda)
\end{equation}
that can be described explicitly as follows.

\medskip

The map \eqref{e:mult obstr} is obtained as a tensor product of the following two maps:

\medskip

One is the map that sends $q\in \Hom(\Lambda,A_{2\on{-tors}})$ to the object of $\Theta^0(\Lambda)$ given by
\begin{equation} \label{e:omega gerbe}
\lambda \mapsto (\omega_X^{\otimes -1})^{q(\lambda)}.
\end{equation} 

The second map factors as
$$\Hom(\Lambda,A_{2\on{-tors}})\to B^2(\Hom(\Lambda,A))\to \Theta^0(\Lambda),$$
where the first map is described as follows:

\medskip

For a given 
$$q\in \Hom(\Lambda,A_{2\on{-tors}}),$$ a choice of 
$$\wt{q}\in \Hom(\Lambda,A),$$ 
with $2\wt{q}=q$
trivializes the resulting point of $B^2(\Hom(\Lambda,A))$. Two choices for $\wt{q}$, which differ
by 
$$\wt{\wt{q}}\in \Hom(\Lambda,A_{2\on{-tors}}),$$ 
result in the change of trivialization by the map
$$\Lambda\to B(\Hom(\Lambda,A)), \quad \lambda\mapsto (-1)^{\wt{\wt{q}}(\lambda)}.$$

\ssec{Proof of \thmref{t:from bilin to fact}}

\sssec{}

By \secref{sss:neutral case}, the obstruction to the commutativity of \eqref{e:from bilin to fact} is a map
\begin{equation} \label{e:obstruction}
\on{Quad}(\Lambda,A(-1))\to \Theta^0(\Lambda).
\end{equation} 

\medskip

We wish to show that this map vanishes. 

\sssec{}

Consider the canonical maps
$$\on{Cone}\left(\on{Bilin}(\Lambda,A(-1))\overset{d_1}\to \on{Bilin}(\Lambda,A(-1))\right) \to  \cD(\Lambda)\to  \on{Quad}(\Lambda,A(-1)).$$
We claim that their composition with \eqref{e:obstruction} does vanish.

\medskip

Indeed, this calculation has been performed in \secref{ss:matching}: we factor the given bilinear form $b'$ via a finite quotient
$$\Lambda\to \Gamma,$$
and we take $A_1=A_2=\Gamma(1)$. 

\medskip

Thus, mapping
$$\on{Cone}\left(\on{Alt}(\Lambda,A(-1))\overset{d_1}\to \on{Bilin}(\Lambda,A(-1))\right)\to
\on{Cone}\left(\on{Bilin}(\Lambda,A(-1))\overset{d_1}\to \on{Bilin}(\Lambda,A(-1))\right),$$ 
we obtain that the obstruction to the commutativity of \eqref{e:from bilin to fact} is a map 
$$\on{ker}(d_1)\to \Hom(\Lambda,A),$$
where $\Hom(\Lambda,A)$ is 
the group of automorphisms of the identity map of the unit object of $\Theta^0(\Lambda)$. 

\medskip

We identify 
$$\on{ker}(d_1)\simeq \on{Alt}(\Lambda,A_{2\on{-tors}}).$$

\medskip

Thus, we have refined the obstruction to a map of abelian groups
\begin{equation} \label{e:obstruction refined}
\on{Alt}(\Lambda,A_{2\on{-tors}})\to \Hom(\Lambda,A).
\end{equation}

\medskip

The map \eqref{e:obstruction refined} automatically factors as 
\begin{equation} \label{e:obstruction refined refined}
\on{Alt}(\Lambda/2\Lambda,A_{2\on{-tors}})\to \Hom(\Lambda/2\Lambda,A_{2\on{-tors}}).
\end{equation}

\sssec{}

By functoriality with respect to $A$, we can assume that $A_{2\on{-tors}}\simeq \BZ/2\BZ$. Hence, 
we can interpret \eqref{e:obstruction refined refined} as a map
\begin{equation} \label{e:obstruction refined refined ref}
\Lambda^2(V)\to V
\end{equation}
for a finite-dimensional vector space $V$ over $\BZ/2\BZ$ (our $V=(\Lambda/2\Lambda)^\vee$), functorial in $V$. 

\medskip

However, we claim any map as in \eqref{e:obstruction refined refined ref}, functorial in $V$, is necessarily zero. 
Indeed, $\Lambda^2(V)$ is an irreducible representation of $GL(V)$, non-isomorphic to $V$. 

\qed[\thmref{t:from bilin to fact}]

%

\ssec{The reductive case} \label{ss:expl K}

Let $G$ be a reductive group with Cartan group $T$. 

\medskip

Let $\CR(\Lambda)_G$ denote the following subcomplex of $\CR(\Lambda)$:

\medskip

\begin{itemize}

\item In the $0$th term we replace $\on{Bilin}(\Lambda,A(-1))$ with
$$\on{Bilin}(\Lambda,A(-1))\underset{\on{Quad}(\Lambda,A9-1))}\times \on{Quad}(\Lambda,A(-1))^W_{\on{restr}},$$

\item The $(-1)$st term remains the same;

\item In the $(-2)$nd term we replace 
$$\Hom(\Lambda,A)\underset{\Hom(\Lambda,A_{2\on{-tors}})}\sqcup\, \on{Quad}(\Lambda,A_{2\on{-tors}})$$
with the kernel of the map
\begin{equation} \label{e:ev on roots again}
\Hom(\Lambda,A)\underset{\Hom(\Lambda,A_{2\on{-tors}})}\sqcup\, \on{Quad}(\Lambda,A_{2\on{-tors}})\to
\underset{i\in I}\Pi\, A,
\end{equation} 
where:

\begin{itemize}

\item $I$ is the set of simple roots;

\item The restriction of the map \eqref{e:ev on roots again} to $\Hom(\Lambda,A)$ is given by evaluation on the
simple roots;

\item The restriction of the map \eqref{e:ev on roots again} to $\on{Quad}(\Lambda,A_{2\on{-tors}})$ is given by evaluation on the
simple roots followed by $A_{2\on{-tors}}\hookrightarrow A$.

\end{itemize}

\end{itemize} 

\sssec{}

The $0$-th cohomology of $\CR(\Lambda)_G$ identifies with $\on{Quad}(\Lambda,A(-1))^W_{\on{restr}}$, the cohomology in degree $(-1)$
vanishes, and the cohomology in degree $(-2)$ identifies with
$$\Hom(\pi_{1,\on{alg}}(G),A).$$

\sssec{} \label{sss:expl K}

Our current goal is to prove the following assertion:

\begin{thm} \label{t:expl K}
There exists a canonical equivalence
\begin{equation} \label{e:etale Q G}
\Maps_{\on{Ptd}(\on{PreStk})}(B_{\on{et}}(G),B_{\on{et}}^4(A(1)))\simeq \CR(\Lambda)_G.
\end{equation}
\end{thm} 

Note that the assertion of this theorem identifies explicitly the complex $\CQ$ from \eqref{e:K}. The rest of this subsection is devoted
to the proof of the theorem.

\sssec{}

Consider the diagram
\begin{equation} \label{e:G T diagram}
\CD
\Maps_{\on{Ptd}(\on{PreStk})}(B_{\on{et}}(G),B_{\on{et}}^4(A(1))) & & \CR(\Lambda)_G \\
@VVV @VVV \\
\Maps_{\on{Ptd}(\on{PreStk})}(B_{\on{et}}(T),B_{\on{et}}^4(A(1))) @>{\sim}>> \CR(\Lambda). 
\endCD
\end{equation}

By \secref{ss:containment one}, we obtain that at the level of $\pi_0$, the image
of the left vertical arrow indeed hits 
$$\on{Quad}(\Lambda,A(-1))^W_{\on{restr}}\subset \on{Quad}(\Lambda,A(-1)).$$

\medskip

Hence, in order to construct the upper horizontal arrow in \eqref{e:G T diagram}, it suffices to show that
for each simple root, the map

\begin{multline} \label{e:root eval}
\Maps_{\on{Ptd}(\on{PreStk})}(B_{\on{et}}(G),B_{\on{et}}^4(A(1))) \to \Maps_{\on{Ptd}(\on{PreStk})}(B_{\on{et}}(T),B_{\on{et}}^4(A(1))) \to \\
\to \CR(\Lambda) \to B^2(A)
\end{multline} 
admits a canonical null-homotopy, where the last arrow is
\begin{multline*}
\CR(\Lambda,A) = B^2(\Hom(\Lambda,A))\underset{B^2(\Hom(\Lambda,A_{2\on{-tors}}))}\sqcup
\wt\cD(\Lambda) \to \\
\to B^2(\Hom(\Lambda,A))\underset{B^2(\Hom(\Lambda,A_{2\on{-tors}}))}\sqcup
B^2(\on{Quad}(\Lambda,A_{2\on{-tors}}))\to B^2(A),
\end{multline*}
with the first arrow given by the projection on the term in degree $-2$
$$\wt\cD(\Lambda)\to B^2(\on{Quad}(\Lambda,A_{2\on{-tors}})),$$
and the last arrow is given by evaluation on the simple root $\alpha_i$.

\medskip

Restricting to the copy of $SL_2$ mapping to $G$ corresponding to $i$, we reduce the last assertion to the case
when $G=SL_2$.

\sssec{}

For $G=SL_2$ and $\Lambda=\BZ$ we note that the projection
$$\on{Bilin}(\BZ,A(-1))\to \on{Quad}(\BZ,A(-1))\simeq A(-1)$$
is an isomorphism, while $\on{Alt}(\BZ,A(-1))=0$.

\medskip

Hence, the complex $\wt\cD(\Lambda)$ equals, as a complex, to
$$B^2(A_{2\on{-tors}})\oplus A(-1),$$
so
\begin{equation} \label{e:R Z}
\CR(\Lambda) \simeq B^2(A)\oplus A(-1).
\end{equation}

In terms of this identification, the map
$$\Maps_{\on{Ptd}(\on{PreStk})}(B_{\on{et}}(T),B_{\on{et}}^4(A(-1))) \to \CR(\Lambda) \overset{\text{evaluation on root}}\longrightarrow B^2(A)$$
that appears in \eqref{e:root eval} is the projection 
$$B^2(A)\oplus A(-1)\to B^2(A).$$

\medskip

Thus, we need to show that the map 
$$\Maps_{\on{Ptd}(\on{PreStk})}(B_{\on{et}}(G),B_{\on{et}}^4(A(1))) \to \Maps_{\on{Ptd}(\on{PreStk})}(B_{\on{et}}(T),B_{\on{et}}^4(A(1))) \simeq  \CR(\Lambda)
\simeq B^2(A)\oplus A(-1).$$
factors via
$$\Maps_{\on{Ptd}(\on{PreStk})}(B_{\on{et}}(G),B_{\on{et}}^4(A(1))) \to A(-1)\to B^2(A)\oplus A(-1).$$

\sssec{}

For an integer $n$, we have
$$\Maps_{\on{Ptd}(\on{PreStk})}(B_{\on{et}}(T),B_{\on{et}}^4(A_{n\on{-tors}}(1)))\simeq
\on{Fib}(\CR(\Lambda) \overset{n\cdot}\to \CR(\Lambda)) \simeq B^2(A_{n\on{-tors}})\oplus A_{n\on{-tors}}(-1).$$

It suffices to show that for all $n$, the map
\begin{multline*} 
\Maps_{\on{Ptd}(\on{PreStk})}(B_{\on{et}}(G),B_{\on{et}}^4(A_{n\on{-tors}}(1))) \to 
\Maps_{\on{Ptd}(\on{PreStk})}(B_{\on{et}}(T),B_{\on{et}}^4(A_{n\on{-tors}}(1))) \to \\
\to B^2(A_{n\on{-tors}})\oplus A_{n\on{-tors}}(-1)
\end{multline*}
factors via 
$$\Maps_{\on{Ptd}(\on{PreStk})}(B_{\on{et}}(G),B_{\on{et}}^4(A_{n\on{-tors}}(1))) \to A_{n\on{-tors}}(-1)\to B^2(A_{n\on{-tors}})\oplus A_{n\on{-tors}}(-1).$$

With no restriction of generality we can replace $A_{n\on{-tors}}$ by $\BZ/n\BZ$. Passing to the inverse limit over $n$, its suffices to show that the map

\begin{equation} \label{e:R Z 1 inv}
\Maps_{\on{Ptd}(\on{PreStk})}(B_{\on{et}}(G),B_{\on{et}}^4(\wh\BZ(1))) \to 
\Maps_{\on{Ptd}(\on{PreStk})}(B_{\on{et}}(T),B_{\on{et}}^4(\wh\BZ(1))) 
\simeq B^2(\wh\BZ)\oplus \wh\BZ(-1)
\end{equation}
factors via
$$\Maps_{\on{Ptd}(\on{PreStk})}(B_{\on{et}}(G),B_{\on{et}}^4(\wh\BZ(1))) \to \wh\BZ(-1)  \to B^2(\wh\BZ)\oplus \wh\BZ(-1).$$

\sssec{}

Note now that the map
$$\Maps_{\on{Ptd}(\on{PreStk})}(B_{\on{et}}(G),B_{\on{et}}^4(\wh\BZ(1))) \to 
\Maps_{\on{Ptd}(\on{PreStk})}(B_{\on{et}}(T),B_{\on{et}}^4(\wh\BZ(1)))$$
factors via
\begin{multline*}  
\Maps_{\on{Ptd}(\on{PreStk})}(B_{\on{et}}(G),B_{\on{et}}^4(\wh\BZ(1))) \to 
\Maps_{\on{Ptd}(\on{PreStk})}(B_{\on{et}}(T),B_{\on{et}}^4(\wh\BZ(1)))^W\to \\
\to \Maps_{\on{Ptd}(\on{PreStk})}(B_{\on{et}}(T),B_{\on{et}}^4(\wh\BZ(1))),
\end{multline*}
where $W\simeq S_2$ is the Weyl group, which acts by inversion on $T$.

\medskip

In terms of the identification 
$$\Maps_{\on{Ptd}(\on{PreStk})}(B_{\on{et}}(T),B_{\on{et}}^4(\wh\BZ(1)))\simeq  B^2(\wh\BZ)\oplus \wh\BZ(-1),$$
the Weyl group acts as identity on $\wh\BZ(-1)$ and as inversion on $B^2(\wh\BZ)$. 

\medskip

Now, the required factorization of \eqref{e:R Z 1 inv} follows from the fact that the forgetful map
$$(B^2(\wh\BZ))^W\to B^2(\wh\BZ)$$
is zero, which in turn follows from the corresponding fact for the map
$$(\wh\BZ)^W\to \wh\BZ,$$
where $W=S_2$ acts on $\wh\BZ$ by inversion. 

\sssec{}

Thus, we have constructed a map
\begin{equation} \label{e:etale Q G bis}
\Maps_{\on{Ptd}(\on{PreStk})}(B_{\on{et}}(G),B_{\on{et}}^4(A(1))) \to \CR(\Lambda)_G.
\end{equation}

By construction, the maps on the homotopy groups are
$$\pi_2\left(\Maps_{\on{Ptd}(\on{PreStk})}(B_{\on{et}}(G),B_{\on{et}}^4(A(1)))\right) \simeq
\Hom(\pi_{1,\on{alg}}(G),A)\simeq \pi_2(\CR(\Lambda)_G),$$
$$\pi_1\left(\Maps_{\on{Ptd}(\on{PreStk})}(B_{\on{et}}(G),B_{\on{et}}^4(A(1))) \right) =0=
\pi_1(\CR(\Lambda)_G)$$
and
$$\pi_0\left(\Maps_{\on{Ptd}(\on{PreStk})}(B_{\on{et}}(G),B_{\on{et}}^4(A(1))) \right) \simeq \on{Quad}(\Lambda,A(-1))^W_{\on{restr}}
\simeq 
\pi_0(\CR(\Lambda)_G).$$

Hence, \eqref{e:etale Q G bis} is an isomorphism. 

\qed 

\ssec{Factorization gerbes in the reductive case}

\sssec{}

Recall the space $\Theta(\Lambda)_G$, see \secref{sss:theta G new 1}.  It follows from the construction of
the map \eqref{e:Del to fact} that we have a canonically defined map
\begin{equation} \label{e:Del to fact G}
\CR(\Lambda)_G\to \Theta(\Lambda)_G
\end{equation}
that fits into the commutative diagram
\begin{equation} \label{e:Del to fact G diag}
\CD
\CR(\Lambda)_G @>>>  \Theta(\Lambda)_G \\
@VVV @VVV  \\
 \CR(\Lambda)_T @>>>  \Theta(\Lambda)_T
 \endCD
 \end{equation}
and the commutative diagram of fiber sequences:
$$
\CD
B^2(\Hom(\pi_{1,\on{alg}}(G),A)) @>>> \CR(\Lambda)_G @>>> \on{Quad}(\Lambda,A(-1))^W_{\on{restr}} \\
@VVV @VVV @VV{\on{id}}V \\
\Theta^0(\Lambda)_G  @>>> \Theta(\Lambda)_G   @>>> \on{Quad}(\Lambda,A(-1))^W_{\on{restr}}.
\endCD 
$$

In particular, we obtain that the induced map
$$\Maps(X,B^2_{\on{et}}(\Hom(\pi_{1,\on{alg}}(G),A))) \underset{B^2(\Hom(\pi_{1,\on{alg}}(G),A))}\sqcup\, \CR(\Lambda)_G\to \Theta(\Lambda)_G$$
is an equivalence.

\sssec{}

We claim: 

\begin{thm} 
The following diagram commutes
$$
\CD
\Maps_{\on{Ptd}(\on{PreStk})}(B_{\on{et}}(G),B_{\on{et}}^4(A(1))) @>{\rm{pullback}}>> \Maps_{\on{Ptd}(\on{PreStk})}(B_{\on{et}}(G)\times X,B_{\on{et}}^4(A(1)))  \\
@V{\sim}V{\rm{\thmref{t:expl K}}}V    @V{\sim}V{\text{\eqref{e:from BG infty}}}V  \\
\CR(\Lambda)_G  & & \on{FactGe}_A(\Gr_G) \\
@VV{\text{\eqref{e:Del to fact G}}}V  @V{\sim}V{\text{\eqref{e:theta enh}}}V  \\
\Theta(\Lambda)_G @>{\on{id}}>> \Theta(\Lambda)_G.
\endCD
$$
\end{thm} 

\begin{proof}

The composites of both circuits with the forgetful map $\Theta(\Lambda)_G\to \Theta(\Lambda)$ are isomorphic by
\thmref{t:from bilin to fact}. Hence, it suffices to show that for every simple coroot $\alpha_i$, the composites of 
both circuits with the evaluation map
$$\Theta(\Lambda)_G\mapsto \on{Ge}_A(X)$$
produce isomorphic results.

\medskip

This reduces the verification to the case of $G=SL_2$. However, in the latter case, $\Theta(\Lambda)_G$ is
\emph{discrete}, i.e., the map
$$\Theta(\Lambda)_G\to \on{Quad}(\Lambda,A(-1))^W_{\on{restr}}$$
is an isomorphism, and the assertion follows.

\end{proof} 

\section{Proof of \propref{p:parameterization} by global methods} \label{s:proof of class}

In this section we will supply a proof of \propref{p:parameterization}, which is different from both the 
original (but containing gaps) proof in \cite{Re} and the recent proof in \cite{Zhao}.

\ssec{The simply connected case} \label{ss:simply conn}

In this subsection we will assume that $G$ is simply-connected, and we will prove that \eqref{e:from BG infty} is an isomorphism 
in this case.

\sssec{}

Recall the notations from \secref{sss:global}. A variant of the construction in Sects. \ref{sss:construct gerbe}-\ref{sss:construct gerbe inter next}
defines a map
\begin{equation} \label{e:upper bound}
\Maps_{\on{Ptd}(\on{PreStk})}(B_{\on{et}}(G)\times X,B^4_{\on{et}}(A(1))\to \on{Ge}_A(\Bun_G(\ol{X};D);\on{unit}),
\end{equation} 
(the notation ``$-;\on{unit}$" means ``gerbes trivialized at the unit"), 
so that we have a commutative diagram
\begin{equation} \label{e:global picture}
\CD
\Maps_{\on{Ptd}(\on{PreStk})}(B_{\on{et}}(G)\times X,B^4_{\on{et}}(A(1)) @>{\text{\eqref{e:from BG infty}}}>>   \on{FactGe}_{A}(\Gr_G)  \\
@VVV @VVV \\
\on{Ge}_A(\Bun_G(\ol{X};D);\on{unit}) @>{\sim}>> \on{Ge}_{A}(\Gr_G;\on{unit}). 
\endCD
\end{equation} 

We will show that all arrows in the diagram \eqref{e:global picture} are isomorphisms.

\sssec{}

First, recall that for $G$ simply connected, the pullback map
$$\Maps_{\on{Ptd}(\on{PreStk})}(B_{\on{et}}(G),B^4_{\on{et}}(A(1)) \to
\Maps_{\on{Ptd}(\on{PreStk})}(B_{\on{et}}(G)\times X,B^4_{\on{et}}(A(1))$$
is an isomorphism, and both spaces are discrete with $\pi_0$ given by $H^4_{\on{et}}(B_{\on{et}}(G),A(1))$. 

\sssec{}

Next, we claim that $\on{Ge}_A(\Bun_G(\ol{X};D);\on{unit})$ is also discrete that the left vertical arrow in \eqref{e:global picture}
induces an isomorphism on $\pi_0$. 

\medskip

Indeed, it follows from the Atiyah-Bott formula for the cohomology of $\Bun_G(\ol{X};D),A)$ (see \cite[Theorem 5.4.5]{GL1} or \cite[Theorem 19.1.4]{Ga7}) that
$$H^0_{\on{et}}(\Bun_G(\ol{X};D),A)\simeq A, \,\, H^1_{\on{et}}(\Bun_G(\ol{X};D),A)=0,$$
while the map 
$$H^0_{\on{et}}\left(X,H^4_{\on{et}}(B_{\on{et}}(G),A(1))\right) \to H^2_{\on{et}}(\Bun_G(\ol{X};D),A)$$
induced by \eqref{e:upper bound}, is an isomorphism. 

\sssec{}

Thus, we obtain that the forgetful map
\begin{equation} \label{e:forget factor}
\on{FactGe}_{A}(\Gr_G) \to \on{Ge}_{A}(\Gr_G;\on{unit})
\end{equation} 
realizes $\on{Ge}_{A}(\Gr_G;\on{unit})$ as a retract of$ \on{FactGe}_{A}(\Gr_G)$. Hence, it remains to prove that
the map \eqref{e:forget factor} is fully faithful. 

\medskip

I.e., we have to show that if a gerbe $\CG$ on $\Gr_G$ admits a factorization structure, then this structure is unique. 
However, this follows from the fact that the projection
$$\Gr_G\to \Ran$$
is ind-proper with connected and simply-connected fibers.

\ssec{Retraction for an arbitrary group}

In this subsection we let $G$ be an arbitrary reductive group. 

\sssec{}

Consider the forgetful map
$$\on{FactGe}_{A}(\Gr_G)\to \on{FactGe}_{A}(\Gr_T)\to \Theta(\Lambda).$$

We claim that this map canonically factors via a map
$$\on{FactGe}_{A}(\Gr_G)\to \Theta(\Lambda)_G\to \Theta(\Lambda).$$

First, we claim that the quadratic form $q$ associated to an object of $\on{FactGe}_{A}(\Gr_G)$ belongs
to $\on{Quad}(\Lambda,A(-1))^W_{\on{restr}}$. This follows as in \secref{ss:ss rank 1}. 

\medskip

To construct the identifications 
$$\CG^{\alpha_i} \simeq (\omega_X^{-1})^{q(\alpha_i)},$$
it is enough to consider the case of $G=SL_2$, and the assertion follows from the simply-connected case
established above. 

\sssec{}

Consider the composition
$$\Maps_{\on{Ptd}(\on{PreStk})}(B_{\on{et}}(G) \times X,B^4_{\on{et}}(A(1)))\to \on{FactGe}_{A}(\Gr_G)\to \Theta(\Lambda)_G.$$

It induces an isomorphism on homotopy groups, and hence is an equivalence. Hence, we obtain that the map
\begin{equation} \label{e:fact ge to theta}
\on{FactGe}_{A}(\Gr_G)\to \Theta(\Lambda)_G
\end{equation} 
realizes $\Theta(\Lambda)_G$ as a retract of $\on{FactGe}_{A}(\Gr_G)$. 

\sssec{}

Hence, in order to show that \eqref{e:from BG infty} is an isomorphism,  
it remains to show that the map \eqref{e:fact ge to theta} is fully faithful.

\ssec{The case of a simply-connected derived group} \label{ss:der simpl conn}

In this subsection we will assume that the derived group $G'$ of $G$ is simply-connected. 

\sssec{}

Let $T_0$ be the quotient of $G$ by its derived group $G'$. By the assumption on $G$, the map
$$\pi_{1,\on{alg}}(G)\to \pi_{1,\on{alg}}(T_0)$$
is an isomorphism.

\medskip

We have a commutative diagram
$$
\CD
\on{FactGe}_{A}(\Gr_{G'})  @>{\sim}>>  \Theta(\Lambda)_{G'} \\
@AAA @AAA \\
\on{FactGe}_{A}(\Gr_G) @>>> \Theta(\Lambda)_G \\
@AAA  @AAA \\
\on{FactGe}_{A}(\Gr_{T_0})  @>{\sim}>>  \Theta(\Lambda)_{T_0}
\endCD
$$

Hence, in order to prove that \eqref{e:from BG infty} is an isomorphism, 
it suffices to show that the pullback map
$$\on{FactGe}_A(\Gr_{T_0})\to \on{Fib}(\on{FactGe}_A(\Gr_G)\to \on{FactGe}_A(\Gr_{G'})$$
is an isomorphism.

\sssec{}

Note that the map
$$\Gr_G\to \Gr_{T_0}$$
is ind-proper with fibers that are connected and simply-connected. Hence, the map
$$\on{FactGe}_{A}(\Gr_{T_0}) \to \on{Ge}_{A}(\Gr_{T_0}) \underset{\on{Ge}_{A}(\Gr_G)}\times \on{FactGe}_{A}(\Gr_G)$$
is an isomorphism.

\medskip

We claim that 
\begin{equation} \label{e:fib Gr}
\on{Ge}_{A}(\Gr_{T_0}) \to \on{Ge}_{A}(\Gr_G)\to \on{Ge}_{A}(\Gr_{G'})
\end{equation}
is a fiber sequence.

\medskip

Once we prove this, we can identify 
$$\on{Ge}_{A}(\Gr_{T_0}) \underset{\on{Ge}_{A}(\Gr_G)}\times \on{FactGe}_{A}(\Gr_G)
\simeq \on{Fib}(\on{FactGe}_{A}(\Gr_G)\to \on{Ge}_{A}(\Gr_{G'})),$$
and since
$$\on{FactGe}_{A}(\Gr_{G'})\to \on{Ge}_{A}(\Gr_{G'})$$
is an equivalence (see \secref{ss:simply conn}), further with
$$\on{Fib}(\on{FactGe}_{A}(\Gr_G)\to \on{FactGe}_{A}(\Gr_{G'})),$$
as desired.

\sssec{}

To prove that \eqref{e:fib Gr} is a fiber sequence, it is enough to prove that
$$\on{Ge}_{A}(\Bun_{T_0}(\ol{X};D))\to \on{Ge}_{A}(\Bun_G(\ol{X};D))\to \on{Ge}_{A}(\Bun_{G'}(\ol{X};D))$$
is a fiber sequence.

\medskip

The latter follows from the fact that 
$$\Bun_G(\ol{X};D)\to \Bun_{T_0}(\ol{X};D)$$
is a fibration, locally trivial in the smooth topology with typical fiber $\Bun_{G'}(\ol{X};D)$,
which is simply-connected. 

\ssec{The general case}

We now let $G$ be a general reductive group. 

\sssec{}

Let 
$$1\to T\to \wt{G}\to G\to 1$$
be as in \eqref{e:cover group}. Let $\wt{G}^\bullet$ be the \v{C}ech nerve of the map $\wt{G}\to G$. 

\medskip

Note that the map
$$B_{\on{et}}(\wt{G})\to B_{\on{et}}(G)$$
is an \'etale surjection, and 
$$\Gr_{\wt{G}}\to \Gr_G$$
becomes a surjection after sheafification with respect to the topology generated by finite surjective maps. 

\medskip

Hence the maps
$$\Maps_{\on{Ptd}(\on{PreStk})}(B_{\on{et}}(G) \times X,B^4_{\on{et}}(A(1)))\to \on{Tot}
\left(\Maps_{\on{Ptd}(\on{PreStk})}(B_{\on{et}}(\wt{G}^\bullet) \times X,B^4_{\on{et}}(A(1)))\right)$$
and 
$$\on{FactGe}_{A}(\Gr_G)\to \on{Tot}\left(\on{FactGe}_{A}(\Gr_{\wt{G}^\bullet})\right)$$
are both isomorphisms.

\medskip

Now, the map
$$\Maps_{\on{Ptd}(\on{PreStk})}(B_{\on{et}}(\wt{G}^\bullet) \times X,B^4_{\on{et}}(A(1)))\to
\on{FactGe}_{A}(\Gr_{\wt{G}^\bullet})$$
is a term-wise isomorphism by \secref{ss:der simpl conn}. 

\medskip

Hence, \eqref{e:from BG infty} is also an isomorphism, as desired. 

\section{Twisting of factorization categories by gerbes}  \label{s:twist by sign}

\ssec{The context}

Let $\CC$ be a symmetric monoidal category, and let $A$ be a finite abelian group that 
acts by automorphisms of the identity functor on $\CC$ (viewed as a symmetric monoidal functor).
We will assume that the orders of elements in $A$ are prime to  $\on{char}(k)$. 

\medskip

Up to passing to a colimit, we can write 
$$A=\Hom(\Gamma,E^{\times,\on{tors}}),$$ 
where $\Gamma$ is the dual finite abelian group. 

\medskip

By \secref{sss:splitting}, we can think of a pair $(\CG_A \in \on{Ge}_A(X),\epsilon\in A_{2\on{-tors}})$ 
as a multiplicative factorization gerbe $\CG$ on  $\Gr_{\Gamma\otimes \BG_m}$ with respect to
$E^{\times,\on{tors}}$. 
(Recall also that the multiplicative structure on $\CG$ automatically lifts to a commutative one, see 
Remark \ref{r:com mult arb}.)

\medskip

We will show how to perform a twist of $\on{Fact}(\CC)$ by means of $\CG$ and obtain a new sheaf of 
symmetric monoidal
categories over $\Ran$, denoted $\on{Fact}(\CC)_\CG$, equipped with a factorization structure. 

\medskip

This construction will contain both twisting constructions, mentioned in 
Sects. \ref{sss:tw sym mon gerbe} and \ref{sss:tw sym mon sign}, respectively.

\ssec{Two symmetric monoidal structures on $\Vect^\Gamma$}

Consider the category $\Vect^\Gamma$ of $\Gamma$-graded $E$-vector spaces. 
For $\gamma\in \Gamma$, let $E^\gamma$ denote the corresponding 1-dimensional object in $\Vect^\Gamma$. 

\medskip

The category $\Vect^\Gamma$ has two symmetric monoidal structures. One, denoted by $\otimes$, is given
by convolution along $\gamma$:
$$E^{\gamma_1}\otimes E^{\gamma_2}=E^{\gamma_1+\gamma_2},$$

\medskip

The other one, denoted $\ast$, is given by component-wise tensor product. Note that this symmetric 
is non-unital, unless $\Gamma$ is finite. 

\medskip

Note that these two symmetric monoidal structures are \emph{lax-compatible} in the sense that there
exists a natural transformation
$$(V_1\star W_1) \otimes (V_2\star W_2)\to (V_1\otimes V_2)\star (W_1\otimes W_2)$$
satisfying a homotopy-coherent system of compatibilities.

\ssec{Interpreting the action}

Let us be given an action of $A$ on $\CC$. The datum of such an action is equivalent to that of an 
action on $\CC$ of $\Vect^\Gamma$, equipped with the $\ast$ monoidal structure. 

\medskip

Explicitly, this means that 
every object $c\in \CC$ can be canonically written
as $$c\simeq \underset{\gamma\in \Gamma}\oplus\, c^\gamma,$$
in a way compatible with the symmetric monoidal structure on $\CC$. In the above notations, 
$$E^\gamma\star c=c^\gamma.$$

\medskip

The $\star$-action of $\Vect^\Gamma$ on $\CC$ is lax-compatible with the symmetric
monoidal structure on $\CC$ in the sense that we have a natural transformation
$$(V_1\star c_1)\otimes (V_2\star c_2)\to (V_1\otimes V_2) \star (c_1\otimes c_2),$$
satisfying a homotopy-coherent system of compatibilities.

\ssec{Creating factorization categories}

The assignment
\begin{equation} \label{e:factor functor}
\CC\mapsto \on{Fact}(\CC)
\end{equation} 
is functorial with respect to lax symmetric monoidal functors.

\medskip

Hence, we obtain that $\on{Fact}(\Vect^\Gamma_{\otimes})$ (i.e., the sheaf of symmetric monoidal categories
obtained from $\Vect^\Gamma$ in the $\otimes$ symmetric monoidal structure) acquires another symmetric
monoidal structure given by $\ast$, which is lax-compatible with one given by $\otimes$.

\medskip

Similarly, the $\ast$-action of $\Vect^\Gamma$ on $\CC$ implies that $\on{Fact}(\Vect^\Gamma_{\otimes})$, viewed as
a sheaf of monoidal categories over $\Ran$ with respect to $\ast$, acts on $\on{Fact}(\CC)$. 
This action is lax-compatible with $\otimes$-symmetric monoidal structure on $\on{Fact}(\Vect^\Gamma_{\otimes})$,
and the given one on $\on{Fact}(\CC)$. 

\ssec{Relation to the affine Grassmannian}

We note now that there exists a canonical equivalence of sheaves of categories over $\Ran$
\begin{equation} \label{e:A vs Gr}
\on{Fact}(\Vect^\Gamma_{\otimes})\simeq \Shv(\Gr_{\Gamma\otimes \BG_m})/_{\Ran},
\end{equation} 
compatible with the factorization structures. 

\medskip

Under this equivalence, the $\otimes$-symmetric monoidal structure on $\on{Fact}(\Vect^\Gamma_{\otimes})$ corresponds to
the symmetric monoidal structure on $\Shv(\Gr_{\Gamma\otimes \BG_m})/_{\Ran}$ given by convolution along the
group structure on $\Gr_{\Gamma\otimes \BG_m}$.  The $\ast$-symmetric monoidal structure on $\on{Fact}(\Vect^\Gamma_{\otimes})$ 
corresponds to the symmetric monoidal structure on $\Shv(\Gr_{\Gamma\otimes \BG_m})/_{\Ran}$ given by pointwise
!-tensor product. 

\ssec{The twisting construction}

We obtain that $\on{Fact}(\CC)$ acquires an action of $\Shv(\Gr_{\Gamma\otimes \BG_m})/_{\Ran}$, viewed as
a symmetric monoidal category with respect to the pointwise !-tensor product. 

\medskip

Now, using \thmref{t:1-aff}, we obtain that we can upgrade $\on{Fact}(\CC)$ to a sheaf of categories over 
$\Gr_{\Gamma\otimes \BG_m}$, compatible with the factorization structure. 

\medskip

Hence, the construction of \secref{sss:twist sheaves of categ} allows to twist $\on{Fact}(\CC)$ by any factorization 
$E^{\times,\on{tors}}$-gerbe $\CG$ on $\Gr_{\Gamma\otimes \BG_m}$, and obtain another factorization sheaf of categories, 
to be denoted $\on{Fact}(\CC)_\CG$.

\medskip

Since the action of $\Shv(\Gr_{\Gamma\otimes \BG_m})/_{\Ran}$ on $\on{Fact}(\CC)$ is lax-compatible with
the symmetric monoidal structure on $\Shv(\Gr_{\Gamma\otimes \BG_m})/_{\Ran}$ given by convolution and
the existing symmetric monoidal structure on $\on{Fact}(\CC)$, if $\CG$ carries a \emph{commutative} structure
with respect to the group structure on $\Gr_{\Gamma\otimes \BG_m}$, the twisted sheaf of categories $\on{Fact}(\CC)_\CG$
carries a symmetric monoidal structure.


\begin{thebibliography}{999}

\bibitem[AG]{AG} D.~Arinkin and D.~Gaitsgory, {\it Singular support of coherent sheaves, and the geometric Langlands conjecture}, 
Selecta Math. N.S. {\bf 21} (2015), 1--199.

\bibitem[BD1]{BD1} A.~Beilinson and V.~Drinfeld, {\it Chiral algebras}, AMS Colloquium Publications {\bf 51}, AMS (2004)

\bibitem[BD2]{BD2} A.~Beilinson and V.~Drinfeld, {\it Quantization of Hitchin's integrable system and Hecke eigensheaves}, \hfill \newline 
available at http://www.math.harvard.edu/$\sim$gaitsgde/grad2009/

\bibitem[BrDe]{BrDe} J.-L.~Brylinski and P.~Deligne, {\it Central extensions of reductive groups by $K_2$}, Publ. Math. 
Inst. Hautes E?tudes Sci., {\bf 94} (2001), 5--85.

\bibitem[Del]{Del} P.~Deligne, {\it Letter to Lusztig from April 14, 2009}. 

\bibitem[DrGa]{DrGa} V.~Drinfeld and D. ~Gaitsgory, {\it On some finiteness questions for algebraic stacks}, 
Geometric and Functional Analysis {\bf 23} (2013), 149--294.

\bibitem[DrSi]{DrSi} V.~Drinfeld and C.~Simpson, {\it $B$-structures on $G$-bundles and local triviality}, MRL {\bf 2} (1995), 823--829. 

\bibitem[FL]{FL} M.~Finkelberg and S.~Lysenko, {\it Twisted geometric Satake equivalence}, J. Inst. Math. Jussieu {\bf 9} (2010), 719--739.

\bibitem[Fu]{Fu} K.~Fujiwara, {\it Theory of tubular neignborhood in etale topology}, Duke Math Journal {\bf 80} (1995), 15--57. 

\bibitem[Ga1]{Ga1} D.~Gaitsgory, {\it Sheaves of categories and the notion of 1-affineness}, in: Stacks and categories in geometry, topology, 
and algebra, 127--225, Contemp. Math. {\bf 643}, AMS. 

\bibitem[Ga2]{Ga2}  D.~Gaitsgory, {\it The Atiyah-Bott formula for the cohomology \\ of the moduli space of bundles on a curve}, 
arXiv: 1505.02331. 

\bibitem[Ga3]{Ga3}  D.~Gaitsgory, {\it Contractibility of the space of rational maps},  Invent. Math. {\bf 191} (2013), 91--196.

\bibitem[Ga4]{Ga4}  D.~Gaitsgory, {\it Quantum Langlands Correspondence}, arXiv:1601.05279. 

\bibitem[Ga5]{Ga5}  D.~Gaitsgory, {\it Outline of the proof of the geometric Langlands conjecture for GL(2)},  Ast\'erisque {\bf 370} (2015), 1--112.

\bibitem[Ga6]{Ga6}  D.~Gaitsgory, {\it Parameterization of factorizable line bundles by K-theory and motivic cohomology}, \\ arXiv:1804.02567.

\bibitem[Ga7]{Ga7}  D.~Gaitsgory, {\it The Atiyah-Bott formula for the cohomology of the moduli space of bundles on a curve}. 
arXiv:1505.02331.

\bibitem[GL1]{GL1} D.~Gaitsgory and J.~Lurie, {\it Weil's conjecture for function fields-I},  \hfill \newline 
available from http://www.math.harvard.edu/$\sim$lurie. 

\bibitem[GL2]{GL2} D.~Gaitsgory and J.~Lurie, {\it Weil's conjecture for function fields},  \hfill \newline 
available from http://www.math.harvard.edu/$\sim$lurie. 

\bibitem[GR1]{GR1} D.~Gaitsgory and N.~Rozenblyum, {\it D-modules and crystals}, Pure Appl. Math. Q. {\bf 10} (2014), 57--154.

\bibitem[GR2]{GR2} D.~Gaitsgory and N.~Rozenblyum, {\it A study in derived algebraic geometry Volume 1},  AMS (2017). 

\bibitem[Lu1]{Lu1} J.~Lurie, {\it Higher Topos Theory}, Princeton University Press (2009). 

\bibitem[Lu2]{Lu2} J.~Lurie, {\it Higher Algebra}, available from http://www.math.harvard.edu/$\sim$lurie. 

\bibitem[Lus]{Lus} G.~Lusztig, {\it  Introduction to quantum groups}, Progress in Mathematics {\bf 110}, Birkh\"auser Boston (1993).

\bibitem[Lys]{Lys} S.~Lysenko, {\it Twisted Geometric Langlands Correspondence for a Torus}, IMRN, Vol. 2015, No. 18, 8680--8723. 

\bibitem[Re]{Re} R.~Reich, {\it Twisted geometric Satake equivalence via gerbes on the factorizable Grassmannian}, 
Represent. Theory {\bf 16} (2012), 345--449.

\bibitem[Ro]{Ro} N.~Rozenblyum, {\it The local-to-gobal principle}, Talk IV.1 at the workshop ``Towards the proof of the geometric Langlands
conjecture", available from https://sites.google.com/site/geometriclanglands2014/notes

\bibitem[Sou]{Sou} C.~Soul\'e, {\em K-th\'eorie des anneaux d'entiers de corps de nombres et cohomologie \'etale}, Inv. Math. {\bf 55} (1979),
251--295.

\bibitem[Ras1]{Ras1} S.~Raskin, {\it Chiral categories}, available from http://math.mit.edu/$\sim$sraskin/

\bibitem[Ras2]{Ras2} S.~Raskin, {\it Chiral principal series categories I: finite-dimensional calculations}, \newline
available from http://math.mit.edu/$\sim$sraskin/

\bibitem[TZ]{TZ} J.~Tao. and Y.~Zhao, {\it Extensions by $K_2$ and factorizable line bundles}, arXiv:1901.08760.

\bibitem[We]{We} M.~Weissman, {\it L-groups and parameters for covering groups}, arXiv:1507.0104.

\bibitem[Zhao]{Zhao} Y.~Zhao, {\it Tame twistings and $\Theta$-data}, preprint, arXiv:2004.09671.


\end{thebibliography}
\end{document}